\newsavebox\CBox
\newcommand\hcancel[2][0.5pt]{%
	\ifmmode\sbox\CBox{$#2$}\else\sbox\CBox{#2}\fi%
	\makebox[0pt][l]{\usebox\CBox}%
	\rule[0.5\ht\CBox-#1/2]{\wd\CBox}{#1}}
\newtheorem{theorem}{Theorem}
\newtheorem{corollary}[theorem]{Corollary}
\newtheorem{lemma}[theorem]{Lemma}
\newtheorem{proposition}[theorem]{Proposition}
\newtheorem{remark}[theorem]{Remark}
\newtheorem{example}[theorem]{Example}
\numberwithin{equation}{section}
\begin{document}
	
\title{\textbf{Plumbings of lens spaces and crepant resolutions of compound $A_n$ singularities}}\author{Bilun Xie \and Yin Li}
\newcommand{\Addresses}{{% additional braces for segregating \footnotesize
		\bigskip
		\footnotesize
		Bilun Xie, \textsc{Department of Mathematics, Uppsala University, 753 10 Uppsala, Sweden}\par\nopagebreak\textit{E-mail address}: \texttt{bilun.xie.3601@student.uu.se}
		
		\medskip
		Yin Li, \textsc{Department of Mathematics, Uppsala University, 753 10 Uppsala, Sweden}\par\nopagebreak
		\textit{E-mail address}: \texttt{yin.li@math.uu.se}

}}
\date{}\maketitle

\begin{abstract}
For many compound $A_n$ ($cA_n$) singularities $R_f=\mathbb{C}[u,v,x,y]/(uv-f(x,y))$ with crepant resolutions $Y_f$, their mirrors are affine $A_n$ plumbings $\ring{W}_f$ of $3$-dimensional lens spaces along circles. We prove two versions of homological mirror symmetry for these Stein $3$-folds.
\begin{itemize}
	\item[(i)] The uncompleted version: there is an equivalence $D^\mathit{perf}\mathcal{W}(\ring{W}_f)\simeq D^b\mathit{Coh}(\ring{Y}_f)$ between the derived wrapped Fukaya category and the bounded derived category of coherent sheaves on some divisor complement $\ring{Y}_f=Y_f\setminus D$.
	\item[(ii)] The completed version: there is an equivalence $D^\mathit{perf}\widehat{\mathcal{W}}(\ring{W}_f)\simeq D^b\mathit{Coh}(\widehat{Y}_f)$, where $\widehat{\mathcal{W}}(\ring{W}_f)$ is the completion of $\mathcal{W}(\ring{W}_f)$ with respect to the word-length filtration of Hamiltonian chords, and $\widehat{Y}_f$ is the complete local version of $Y_f$.
\end{itemize}
As an application of (i), we show that certain infinitely generated subgroup of the pure braid group $\mathit{PBr}_{n+2}$ split injects into the compactly supported symplectic mapping class group of $\ring{W}_f$ as long as $R_f$ is isolated, generalizing the work of Keating-Smith \cite{kss} in the case of a conifold smoothing. Applying categorical localization to (ii), we obtain an equivalence between the (uncompleted) derived wrapped Fukaya category of the corresponding (non-affine) $A_n$ plumbing $W_f$ of lens spaces along circles and the relative singularity category of $Y_f$. This generalizes the result of Smith-Wemyss \cite{sw} in the case of double bubble plumbings and partially answers their realization question. 
\end{abstract}

\section{Introduction}

\subsection{Background}\label{section:bg}

The symplectic topologies of cotangent bundles of smooth manifolds and their plumbings (at points) have been studied extensively in the literature, see for example \cite{ase,jas,ekle,etlep,klw}. It is natural to consider the next simplest case, namely the plumbings of cotangent bundles of manifolds which intersect with each other cleanly along smooth submaifolds of dimension $\geq1$, where some foundational results have been established by Abouzaid \cite{maa2}. In \cite{sw}, Smith and Wemyss studied the plumbings of two copies of $T^\ast S^3$ along a circle that is unknotted in both core spheres. These $6$-dimensional Weinstein manifolds $W_k$ are known as \textit{double bubble plumbings} and their geometries depend on the identifications of the normal bundle of the intersecting unknot, which are parametrized by $k\in\mathbb{Z}$. Under mirror symmetry, the double bubble plumbings correspond to certain local Calabi-Yau $3$-folds containing two floppable $(-1,-1)$-curves meeting at a point. The wrapped Fukaya categories $\mathcal{W}(W_k;\mathbb{K})$ of $W_k$ over an arbitrary field $\mathbb{K}$ are computed in \cite{alp} as localizations of a fiberwise version of the partially wrapped Fukaya category introduced earlier by Abouzaid-Auroux \cite{aah}. The same computational techniques actually generalize to many $A_n$ plumbings of cotangent bundles of $3$-dimensional lens spaces along circles, and they correspond, under mirror symmetry, to crepant resolutions of certain $cA_n$ singularities.

In this paper, we study a different class of plumbings of $3$-dimensional lens spaces along circles, in the sense that the plumbing diagram is cyclic instead of linear. More precisely, consider the total space of a Morse-Bott fibration
\begin{equation}\label{eq:MBf}
\ring{\pi}:\ring{W}_f\rightarrow\mathbb{C}^\ast
\end{equation}
over the cylinder. The smooth fibers of $\ring{\pi}$ are isomorphic to $(\mathbb{C}^\ast)^2$, and there are $n+1$ singular fibers located at the $(n+1)$-th roots of unity, i.e. $c_i=\exp(\frac{2\pi i}{n+1})$, where $i=0,\cdots,n$. The fibers over the critical values $c_i$ are all isomorphic to $(\mathbb{C}\vee\mathbb{C})\times\mathbb{C}^\ast$, so the critical loci of $\ring{\pi}$ are disjoint unions of $n+1$ copies of $\mathbb{C}^\ast$, although the corresponding vanishing cycles may be different. The fibration $\ring{\pi}$ has (outer) monodromies both near infinity and around the origin, and we assume that the latter is trivial. Since the monodromy maps around the large circle near infinity are fibered Dehn twists \cite{ww}, which act trivially on the homology of the fiber, we can fix a global basis $a,b$ of $H_1(T^2;\mathbb{Z})$. We assume that with respect to the basis $a,b$, the vanishing cycles at the critical values $c_i$, $i=0,\dots,n$ are given respectively by
\begin{equation}\label{eq:kl}
k_ia\pm l_ib \textrm{ for }k_i,l_i\in\mathbb{Z}_{\geq0}\textrm{ such that at least one of }k_i\textrm{ and }l_i\textrm{ equals }1.
\end{equation}
We can equip $\ring{W}_f$ with a symplectic form $\omega_{\ring{W}}$ so that the smooth fibers of $\ring{\pi}$ are symplectomorphic to $T^\ast T^2$ and symplectic parallel transport is globally defined away from the critical values of $\ring{\pi}$, see Section \ref{section:plumbing}. Consider the arc $\alpha_i\subset\mathbb{C}^\ast$ connecting the critical values $c_i$ and $c_{i+1\textrm{ mod }n}$ and avoiding the other critical values of $\ring{\pi}$. Parallel transporting the zero section of the fiber $T^\ast T^2$ along the arc $\alpha_i$ defines a closed exact Lagrangian submanifold $Q_i\subset\ring{W}_f$, and we can realize $\ring{W}_f$ as the affine $A_n$ plumbing of the cotangent bundles $T^\ast Q_i$ along circles $Z_i=Q_i\cap Q_{i+1\textrm{ mod }n+1}$, where $i=0,\cdots,n$, see Section \ref{section:plumbing}. Since $Q_i$ has Heegaard genus $\leq1$, we conclude that it is diffeomorphic either to $S^1\times S^2$ or a lens space $L(p,q)$ for some $(p,q)=1$, where $(p,q)\in\mathbb{N}^2$ depends explicitly on the slopes $(k_i,\pm l_i)$ and $(k_{i+1},\pm l_{i+1})$ of the vanishing cycles.

\begin{figure}
\centering
\begin{tikzpicture}
\draw (0,0) circle [radius=2.5];
\draw [dashed] (0,0) circle [radius=1.25];
\draw (1.25,0) node[circle,fill,inner sep=1pt] {};
\draw (-1.25,0) node[circle,fill,inner sep=1pt] {};
\draw (0.625,1.0825) node[circle,fill,inner sep=1pt] {};
\draw (0.625,-1.0825) node[circle,fill,inner sep=1pt] {};
\draw (-0.625,1.0825) node[circle,fill,inner sep=1pt] {};
\draw (-0.625,-1.0825) node[circle,fill,inner sep=1pt] {};
\draw (1.5,0) node {$c_0$};
\draw (-1.5,0) node {$c_3$};
\draw (0.8,1.2) node {$c_1$};
\draw (-0.8,1.2) node {$c_2$};
\draw (-0.8,-1.2) node {$c_4$};
\draw (0.8,-1.2) node {$c_5$};

\draw [orange] (0,0)--(2.165,1.25);
\draw [orange] (0,0)--(0,2.5);
\draw [orange] (0,0)--(0,-2.5);
\draw [orange] (0,0)--(-2.165,-1.25);
\draw [orange] (0,0)--(-2.165,1.25);
\draw [orange] (0,0)--(2.165,-1.25);
\draw (0,0) node {$\times$};
\draw [orange] (2.4,1.4) node {$\gamma_1$};
\draw [orange] (2.4,-1.4) node {$\gamma_0$};
\draw [orange] (0,2.7) node {$\gamma_2$};
\draw [orange] (0,-2.7) node {$\gamma_5$};
\draw [orange] (-2.4,1.4) node {$\gamma_3$};
\draw [orange] (-2.4,-1.4) node {$\gamma_4$};

\draw [teal] (1.25,0)--(0.625,1.0825);
\draw [teal] (0.625,1.0825)--(-0.625,1.0825);
\draw [teal] (-0.625,1.0825)--(-1.25,0);
\draw [teal] (-1.25,0)--(-0.625,-1.0825);
\draw [teal] (-0.625,-1.0825)--(0.625,-1.0825);
\draw [teal] (0.625,-1.0825)--(1.25,0);
\end{tikzpicture}
\caption{Base of the Morse-Bott fibration $\ring{\pi}$ when $n=5$. The arcs $\alpha_i$ (in teal) connecting the critical values $c_i$ and $c_{i+1\textrm{ mod }n+1}$ are matching paths of the Lagrangians $Q_0,\cdots,Q_n$, while the cotangent fibers $L_0,\cdots,L_n$ of $T^\ast Q_0,\cdots,T^\ast Q_n$ fiber over the arcs $\gamma_0,\cdots,\gamma_5$ (in orange).} \label{fig:base}
\end{figure}

\begin{example}\label{example:SYZ}
Let $n=1$, $(k_0,\pm l_0)=(1,0)$ and $(k_1,\pm l_1)=(0,1)$, then $\ring{W}_f$ is the plumbing of two $T^\ast S^3$'s along a Hopf link. It can also be realized as the complete intersection $\ring{W}_f\subset\mathbb{C}^\ast\times\mathbb{C}^4$ defined by the equations
\begin{equation}
\left\{\begin{array}{ll}
u_1v_1=z+1,\\
u_2v_2=z-1,
\end{array}\right.
\end{equation}
where $u_1,u_2,v_1,v_2\in\mathbb{C}$ and $z\in\mathbb{C}^\ast$. This log Calabi-Yau $3$-fold is one of the most studied examples in mirror symmetry, see for example \cite{aakl}, Section 11. It carries a Lagrangian torus fibration and the SYZ mirror construction yields $\ring{Y}_f=Y_f\setminus D$, where $Y_f$ is the resolved conifold $\mathcal{O}_{\mathbb{P}^1}(-1)\oplus\mathcal{O}_{\mathbb{P}^1}(-1)$ and $D$ is the pullback of the divisor $\{xy=0\}$ on the conifold $\{uv=(1+x)(1+y)\}\subset\mathbb{C}^4$.
\end{example}

\begin{example}\label{example:double}
Let $n=2$, $(k_0,\pm l_0)=(1,0)$, $(k_1,\pm l_1)=(0,1)$ and $(k_2,\pm l_2)=(1,\pm k)$ for some $k\in\mathbb{Z}_{\geq0}$, then $\ring{W}_f=\ring{W}_k$ is an open dense subset of the double bubble plumbing $W_k$ obtained by removing a smooth fiber from the Morse-Bott fibration $\pi:W_k\rightarrow\mathbb{C}$ described in \cite{sw}, Section 4.1. In this case, $Q_1$ and $Q_2$ are both diffeomorphic to $S^3$. As one of the two (mutually diffeomorphic) Morse-Bott surgeries of $Q_1$ and $Q_2$, $Q_0$ is diffeomorphic to $S^1\times S^2$ when $k=0$ and diffeomorphic to the lens space $L(k,1)$ when $k\geq1$.\footnote{The reader should notice the notation difference with \cite{alp}, where the Lagrangian cores spheres in the plumbing $W_k$ are denoted by $Q_0$ and $Q_1$. Our notation here is compatible with \cite{sw}.}
\end{example}

\subsection{Homological mirror symmetry}\label{section:hms}

We can encode the geometric data of the Morse-Bott fibration $\ring{\pi}:\ring{W}_f\rightarrow\mathbb{C}^\ast$ in the polynomial factorization
\begin{equation}\label{eq:f}
f(x,y)=f_0(x,y)\cdots f_n(x,y),
\end{equation}
where
\begin{equation}\label{eq:f'}
f_i(x,y)=y^{k_i}\pm x^{l_i}.
\end{equation}
This explains the notation $\ring{W}_f$ used in Section \ref{section:bg} for the affine $A_n$ plumbing of $T^\ast Q_i$'s with the prescribed data. By our assumption (\ref{eq:kl}), each $\{f_i(x,y)=0\}\subset\mathbb{K}^2$ defines a smooth affine algebraic curve, so we obtain a $cA_n$ singularity
\begin{equation}\label{eq:Rf}
R_f=\frac{\mathbb{K}[u,v,x,y]}{uv-f(x,y)},
\end{equation}
which admits a crepant resolution (cf. \cite{sks} or Lemma \ref{lemma:resolution} below)
\begin{equation}\label{eq:resol}
\phi:Y_f\rightarrow\mathrm{Spec}(R_f),
\end{equation}
where $Y_f$ is a normal Calabi-Yau variety and the exceptional loci $C:=C_1\cup\cdots\cup C_n$ consist of a chain of transversely intersecting rational curves. Note that the ordering of the polynomials $f_i(x,y)$ in the factorization (\ref{eq:f}) of $f(x,y)$ uniquely determines the specific crepant resolution $\phi$, and different resolutions are related by flops. The $cA_n$ singularity (\ref{eq:Rf}) is in general not isolated. For it to be isolated, one needs to impose the assumption
\begin{equation}\label{eq:distinct}
(k_i,\pm l_i)\neq(k_j,\pm l_j)\textrm{ for }0\leq i\neq j\leq n.
\end{equation}
Symplectically, this is equivalent to requiring that none of the Lagrangian submanifolds $Q_0,\cdots,Q_n$ in the plumbing $\ring{W}_f$, nor the Morse-Bott surgeries (along intersecting circles) among any subcolletions of of them, are diffeomorphic to $S^1\times S^2$.

The wrapped Fukaya category $\mathcal{W}(\ring{W}_f;\mathbb{K})$ of $\ring{W}_f$ can be computed via counting holomorphic sections of the Morse-Bott fibration $\ring{\pi}:\ring{W}_f\rightarrow\mathbb{C}^\ast$, see Section \ref{section:computation}. On the other hand, as we have observed in Example \ref{example:SYZ}, there is a natural choice of a divisor $D\subset Y_f$ given by pulling back the divisor $\{xy=0\}$ on the singular affine variety $\mathrm{Spec}(R_f)$ along $\phi$. Define $\ring{Y}_f:=Y_f\setminus D$ to be the complement. Note that the exceptional curves $C_1,\cdots,C_n\subset Y_f$ are disjoint from $D$, therefore they appear in $\ring{Y}_f$.

The first version of homological mirror symmetry that we shall prove in this paper is the following.

\begin{theorem}\label{theorem:main}
Let $\mathbb{K}$ be any field, and suppose that the pairs $(k_i,\pm l_i)\in\mathbb{Z}_{\geq0}\times\mathbb{Z}$ for $i=0,\cdots,n$ are as in (\ref{eq:kl}). Then there is an equivalence
\begin{equation}\label{eq:HMS-un}
D^\mathit{perf}\mathcal{W}(\ring{W}_f;\mathbb{K})\simeq D^b\mathit{Coh}(\ring{Y}_f)
\end{equation}
between the derived wrapped Fukaya category of $\ring{W}_f$ and the bounded derived category of coherent sheaves on $\ring{Y}_f$.
\end{theorem}

\begin{remark}
In the simplest case when $\ring{W}_f$ is the plumbing of two $T^\ast S^3$'s along a Hopf link as in Example \ref{example:SYZ}, the mirror equivalence (\ref{eq:HMS-un}) is proved by Chan-Pomerleano-Ueda \cite{cpul}. As in \cite{cpul}, we can actually write down an explicit functor inducing the equivalence (\ref{eq:HMS-un}), see Lemma \ref{lemma:functor}.
\end{remark}

Let $\mathcal{F}(\ring{W}_f;\mathbb{K})$ be the Fukaya category of compact exact Lagrangians, which embeds as a full $A_\infty$-subcategory inside the wrapped Fukaya category $\mathcal{W}(\ring{W}_f;\mathbb{K})$, and let $D^b\mathit{Coh}_C(\ring{Y}_f)\subset D^b\mathit{Coh}(\ring{Y}_f)$ be the full subcategory of complexes whose cohomology sheaves are (set-theoretically) supported on the exceptional loci $C=\bigcup_{i=1}^nC_i\subset\ring{Y}_f$. Theorem \ref{theorem:main} implies the following result on the homological mirror symmetry for compact cores.

\begin{corollary}\label{corollary:core-A}
With the same assumptions as in Theorem \ref{theorem:main}, there is a fully faithful embedding
\begin{equation}
D^b\mathit{Coh}_C(\ring{Y}_f)\hookrightarrow D^\mathit{perf}\mathcal{F}(\ring{W}_f;\mathbb{K}),
\end{equation}
whose image is generated by the Lagrangians $Q_0,\cdots,Q_n\subset\ring{W}_f$.
\end{corollary}

\begin{remark}
In \cite{iua}, Theorem 28, it is proved that there is a fully faithful embedding
\begin{equation}\label{eq:2D}
D^b\mathit{Coh}_C(X_n)\hookrightarrow D^\mathit{perf}\mathcal{F}(\ring{A}_n;\mathbb{K})
\end{equation}
from the bounded derived category of coherent sheaves supported on the exceptional curves of the minimal resolution $X_n\rightarrow\mathrm{Spec}\left(\frac{\mathbb{K}[u,v,x]}{uv-x^{n+1}}\right)$ of the $A_n$ surface singularity into the derived Fukaya category of compact Lagrangians in the affine $A_n$ plumbing
\begin{equation}
\ring{A}_n:=\left.\left\{(x,y,z)\in\mathbb{C}^2\times\mathbb{C}^\ast\right\vert xy=z^{n+1}-1\right\}
\end{equation}
of $T^\ast S^2$'s. Corollary \ref{corollary:core-A} above extends this result to dimension $3$.
\end{remark}

To state our second version of the homological mirror symmetry for $\ring{W}_f$, first note that since the fibration $\ring{\pi}:\ring{W}_f\rightarrow\mathbb{C}^\ast$ carries a fiberwise Hamiltonian $T^2$-action, $\mathcal{W}(\ring{W}_f;\mathbb{K})$ is an $A_\infty$-category over the Laurent polynomial ring $\mathbb{K}[s_1^{\pm1},s_2^{\pm1}]$, where the variables $s_1$ and $s_2$ arise as Seidel elements (see Section \ref{section:Seidel}) in the unit group $\mathit{SH}^0(\ring{W}_f;\mathbb{K})^\times\subset\mathit{SH}^0(\ring{W}_f;\mathbb{K})$ of the degree $0$ symplectic cohomology associated to the Hamiltonian $T^2$-action. We denote by $\mathcal{W}(\ring{W}_f;\mathbb{K})[\![s_1+1,s_2+1]\!]$ the completion of $\mathcal{W}(\ring{W}_f;\mathbb{K})$ at $s_1=s_2=-1$. After changing the variables $x=s_1+1$ and $y=s_2+1$, we will simply write
\begin{equation}\label{eq:comp1}
\mathcal{W}(\ring{W}_f;\mathbb{K})[\![x,y]\!]
\end{equation}
for the wrapped Fukaya category completed at $-1$ of the Seidel elements. On the other hand, since $\ring{W}_f$ is a Weinstein manifold, the morphism spaces of the wrapped Fukaya category $\mathcal{W}(\ring{W}_f;\mathbb{K})$ carry filtrations by word-length of the concatenations of Hamiltonian chords. This gives rise to a second completion of $\mathcal{W}(\ring{W}_f;\mathbb{K})$, which is also an $A_\infty$-category
\begin{equation}
\widehat{\mathcal{W}}(\ring{W}_f;\mathbb{K})
\end{equation}
over $\mathbb{K}[\![x,y]\!]$, and can be regarded as a further completion of (\ref{eq:comp1}). Taking a Legendrian surgery perspective, it is not hard to see that $\widehat{\mathcal{W}}(\ring{W}_f;\mathbb{K})$ is equivalently described by the \textit{completed Chekanov-Eliashberg dg algebra} introduced by Ekholm-Lekili \cite{ekle}, see Section \ref{section:CE}. 

On the B-side, we consider the complete local version
\begin{equation}
\widehat{R}_f=\frac{\mathbb{K}[\![u,v,x,y]\!]}{uv-f(x,y)}
\end{equation}
of the $cA_n$ singularity $R_f$, with the crepant resolution
\begin{equation}
\widehat{\phi}:\widehat{Y}_f\rightarrow\mathrm{Spec}(\widehat{R}_f)
\end{equation}
being a Noetherian variety.

\begin{theorem}\label{theorem:HMS1}
With the same assumptions as in Theorem \ref{theorem:main}, there is an equivalence
\begin{equation}\label{eq:HMS1}
D^\mathit{perf}\widehat{\mathcal{W}}(\ring{W}_f;\mathbb{K})\simeq D^b\mathit{Coh}(\widehat{Y}_f),
\end{equation}
where $D^\mathit{perf}\widehat{\mathcal{W}}(\ring{W}_f;\mathbb{K})$ is the derived category of perfect $A_\infty$-modules over $\widehat{\mathcal{W}}(\ring{W}_f;\mathbb{K})$ and $D^b\mathit{Coh}(\widehat{Y}_f)$ is the bounded derived category of coherent sheaves on $\widehat{Y}_f$.
\end{theorem}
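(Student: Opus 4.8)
\emph{Strategy.} The plan is to recognise both sides of (\ref{eq:HMS1}) as the derived category of perfect modules over one and the same complete algebra $\widehat{\Lambda}$: the A-side via the computation of $\mathcal{W}(\ring{W}_f;\mathbb{K})$ carried out in Section~\ref{section:computation}, and the B-side via the classical tilting theory for crepant resolutions of $cA_n$ singularities.

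\emph{The A-side.} By the generation result of Ganatra-Pardon-Shende, $\mathcal{W}(\ring{W}_f;\mathbb{K})$ is split-generated by finitely many explicit Lagrangians --- the lifts through $\ring{\pi}$ of the arcs $\gamma_0,\dots,\gamma_n$ of Figure~\ref{fig:base}, carrying the fibrewise Hamiltonian $T^2$-structure --- so that $\widehat{\mathcal{W}}(\ring{W}_f;\mathbb{K})$ is quasi-equivalent to the category of perfect $A_\infty$-modules over the complete $A_\infty$-algebra $\widehat{A}$ of morphisms among these generators (equivalently, one may work with the completed Chekanov-Eliashberg dg algebra of the attaching link of $\ring{W}_f$ from Section~\ref{section:CE}). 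The count of holomorphic sections of $\ring{\pi}$ carried out in Section~\ref{section:computation}, performed over $\mathbb{K}[s_1^{\pm1},s_2^{\pm1}]$ by means of the $T^2$-action and then completed at $s_1=s_2=-1$, identifies $\widehat{A}$, up to quasi-isomorphism, with the complete path algebra $\widehat{\Lambda}$ of the cyclic $(n+1)$-vertex quiver associated to the factorisation $f=f_0\cdots f_n$ of (\ref{eq:f}), modulo the relations of an explicit potential built from the $f_i$. This $\widehat{\Lambda}$ is precisely the algebra that the conjecture of Lekili-Segal \cite{lese} predicts as the mirror of the crepant resolution, so Section~\ref{section:computation} verifies that conjecture in the present setting.

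\emph{The B-side.} By Lemma~\ref{lemma:resolution} the resolution $\hat{\phi}:\widehat{Y}_f\rightarrow\mathrm{Spec}(\widehat{R}_f)$ fixed by the chosen ordering in (\ref{eq:f}) is a smooth crepant resolution, so $D^b\mathit{Coh}(\widehat{Y}_f)=D^\mathit{perf}(\widehat{Y}_f)$. By the tilting theory for such resolutions (Van den Bergh; Iyama-Wemyss), $\widehat{Y}_f$ carries a tilting bundle $\mathcal{T}=\mathcal{O}_{\widehat{Y}_f}\oplus\mathcal{L}_1\oplus\cdots\oplus\mathcal{L}_n$ whose endomorphism algebra is the non-commutative resolution $\mathrm{End}_{\widehat{R}_f}(\widehat{R}_f\oplus M_1\oplus\cdots\oplus M_n)$, of finite global dimension, so that $\mathrm{RHom}(\mathcal{T},-)$ induces an equivalence $D^b\mathit{Coh}(\widehat{Y}_f)\simeq D^\mathit{perf}(\mathrm{End}_{\widehat{R}_f}(\widehat{R}_f\oplus M_1\oplus\cdots\oplus M_n))$. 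This endomorphism algebra admits a presentation by the cyclic $(n+1)$-vertex quiver with exactly the potential relations of the A-side, hence is isomorphic to $\widehat{\Lambda}$; concatenating the equivalences gives
\[
D^\mathit{perf}\widehat{\mathcal{W}}(\ring{W}_f;\mathbb{K})\simeq D^\mathit{perf}(\widehat{\Lambda})\simeq D^b\mathit{Coh}(\widehat{Y}_f).
\]

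\emph{The main obstacle.} The crux is the Floer-theoretic computation of Section~\ref{section:computation}: one must control the moduli spaces of holomorphic sections of the Morse-Bott fibration $\ring{\pi}$ passing through its $n+1$ degenerate fibres, read off the full $A_\infty$-structure --- in particular the higher products responsible for the potential relations --- and verify that, after completing at $s_1=s_2=-1$, no contributions survive beyond the path algebra and its potential relations. A secondary difficulty is the non-isolated case, where (\ref{eq:distinct}) fails and some $Q_i$, or a Morse-Bott surgery among them, is diffeomorphic to $S^1\times S^2$: there the modules $M_i$ and the line bundles $\mathcal{L}_i$ must be adapted to the crepant resolution picked out by the ordering in (\ref{eq:f}), and one must check that its presentation still matches the algebra $\widehat{\Lambda}$ produced on the symplectic side.
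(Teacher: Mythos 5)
Your proposal is correct and follows essentially the same route as the paper: the wrapped $A_\infty$-algebra of the cocores is computed by counting holomorphic sections of $\ring{\pi}$ (Proposition \ref{proposition:conclusion}; note that it is concentrated in degree $0$, so the relations come from the triangle product $\mu^2$ rather than from higher $A_\infty$-products), its completion is identified with the complete cyclic quiver algebra $\widehat{\ring{\mathcal{A}}}_n\otimes_{\widehat{R}}\mathbb{K}[\![x,y]\!]$ (Proposition \ref{proposition:qis}), and the latter is matched with $D^b\mathit{Coh}(\widehat{Y}_f)$ via Van den Bergh's tilting bundle (Proposition \ref{proposition:complete}, which deliberately avoids your smoothness assumption by invoking only normality of $\widehat{Y}_f$). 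The one point to be careful about is that completing at $s_1=s_2=-1$ completes only the coefficient ring and yields $\mathcal{W}(\ring{W}_f;\mathbb{K})[\![x,y]\!]$, i.e.\ the \emph{uncompleted} path algebra over $\mathbb{K}[\![x,y]\!]$ (Proposition \ref{proposition:precise}); the theorem concerns the strictly finer word-length completion $\widehat{\mathcal{W}}(\ring{W}_f;\mathbb{K})$, and it is this further completion that produces the complete path algebra $\widehat{\ring{\mathcal{A}}}_n$ matching the tilting description.
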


\begin{remark}\label{remark:MS}
Note that in the equivalence (\ref{eq:HMS1}), no divisor is removed from $\widehat{Y}_f$, just as no divisor is removed on the left-hand side of the fully faithful embedding (\ref{eq:2D}). This can be explained as follows. The completed wrapped Fukaya category $\widehat{\mathcal{W}}(\ring{W}_f;\mathbb{K})$ is related to the $A_\infty$-subcategory of the compact Fukaya category $\mathcal{F}(\ring{W}_f;\mathbb{K})$ split-generated by $Q_0,\cdots,Q_n$ via $A_\infty$-Koszul duality, see Section \ref{section:CE}. On the B-side, a parallel fact is proved by Ballard, see \cite{mbd}, Theorem 1 and Theorem 3.2, although when applying his result to Noetherian schemes, additional care needs to be taken (cf. \cite{mbd}, Remark 3.8). If one assumes Ballard's result holds for $\widehat{Y}_f$, then Theorem \ref{theorem:HMS1} would give rise to a fully faithful embedding
\begin{equation}
D^b\mathit{Coh}_C(\widehat{Y}_f)\hookrightarrow D^\mathit{perf}\mathcal{F}(\ring{W}_f;\mathbb{K}),
\end{equation}
from the bounded derived category of coherent sheaves supported on the exceptional curves $C_1\cup\cdots\cup C_n\subset\widehat{Y}_f$ to the derived compact Fukaya category.
\end{remark}

We discuss some consequences of Theorem \ref{theorem:HMS1}. Let $D_n\subset\mathbb{C}^\ast$ be the subset consisting of the $n+1$ distinct points $c_0,\cdots,c_n\in\mathbb{C}^\ast$, one can define the relative wrapped Fukaya category $\mathcal{W}(T^\ast S^1,D_n)$, which is an $A_\infty$-category linear over the power series ring $\widehat{R}=\mathbb{K}[\![t_0,\cdots,t_n]\!]$. It is proved in \cite{evle} that after the base change
\begin{equation}
\widehat{R}\rightarrow\mathbb{K}[\![x,y]\!],\textrm{ }t_i\mapsto f_i(x,y),i=0,\cdots,n,
\end{equation}
we have an equivalence
\begin{equation}
D^\mathit{perf}\widehat{\mathcal{W}}(T^\ast S^1,D_n)\otimes_{\widehat{R}}\mathbb{K}[\![x,y]\!]\simeq D^b\mathit{Coh}(\widehat{Y}_f),
\end{equation}
where $\widehat{\mathcal{W}}(T^\ast S^1,D_n)$ is again the completion of $\mathcal{W}(T^\ast S^1,D_n)$ with respect to the word-length filtration on its morphism spaces. Combining with Theorem \ref{theorem:HMS1} we get the equivalence
\begin{equation}
D^\mathit{perf}\widehat{\mathcal{W}}(T^\ast S^1,D_n)\otimes_{\widehat{R}}\mathbb{K}[\![x,y]\!]\simeq D^\mathit{perf}\widehat{\mathcal{W}}(\ring{W}_f;\mathbb{K}).
\end{equation}
In fact, during the process of establishing Theorem \ref{theorem:HMS1}, we will prove a stronger result (see also Proposition \ref{proposition:precise} for its precise statement).

\begin{proposition}\label{proposition:eq}
Assuming (\ref{eq:kl}), there is an equivalence
\begin{equation}\label{eq:eq}
D^\mathit{perf}\mathcal{W}(T^\ast S^1,D_n)\otimes_{\widehat{R}}\mathbb{K}[\![x,y]\!]\simeq D^\mathit{perf}\mathcal{W}(\ring{W}_f;\mathbb{K})[\![x,y]\!],
\end{equation}
where the right-hand side is the derived category of perfect modules over the $A_\infty$-category (\ref{eq:comp1}).
\end{proposition}

In \cite{lese}, the authors studied the Fukaya categories of algebraic torus fibrations (i.e. $(\mathbb{C}^\ast)^n$-fibrations over Liouville manifolds carrying fiberwise Hamiltonian $T^n$-actions) and conjectured the relations between the wrapped Fukaya category of the total space and that of its base. In particular, our Morse-Bott fibration $\ring{\pi}:\ring{W}_f\rightarrow\mathbb{C}^\ast$ is an algebraic torus fibration in the sense of \cite{lese}, and Proposition \ref{proposition:eq} confirms \cite{lese}, Conjecture E\footnote{In \cite{lese}, Conjecture E, Lekili-Segal considered only the simplest case when the algebraic torus fibration has rank $n=1$, but the same heuristic argument can be extended to higher rank cases.} in this case. For other interesting instances of their conjecture, see \cite{lllm}.

After filling in the origin of the base of the Morse-Bott fibration $\ring{\pi}:\ring{W}_f\rightarrow\mathbb{C}^\ast$ with a smooth fiber isomorphic to $(\mathbb{C}^\ast)^2$ (recall that the monodromy of $\ring{\pi}$ around the origin is assumed to be trivial), we get a new Morse-Bott fibration
\begin{equation}
\pi:W_f\rightarrow\mathbb{C},
\end{equation}
see Figure \ref{fig:base1} for a description of its base (after deformation). Equipping $W_f$ with an appropriate symplectic structure $\omega_W$, it is symplectically equivalent to an $A_n$ plumbing of the cotangent bundles $T^\ast Q_1\cdots,T^\ast Q_n$ along circles. Following \cite{ksws}, we shall refer to these Stein $3$-folds as \textit{multi-bubble plumbings}. Localizing on both sides of the equivalence (\ref{eq:HMS1}) with respect to the idempotents associated to the cotangent fiber $L_0$ of $Q_0$ and the structure sheaf $\mathcal{O}_{\widehat{Y}_f}$, respectively, we get the equivalence
\begin{equation}\label{eq:HMS2-}
D^\mathit{perf}\widehat{\mathcal{W}}(W_f;\mathbb{K})\simeq D^b\mathit{Coh}(\widehat{Y}_f)/\langle\mathcal{O}_{\widehat{Y}_f}\rangle,
\end{equation}
where $\widehat{\mathcal{W}}(W_f;\mathbb{K})$ is the completion with respect to the word-length filtration of the wrapped Fukaya category $\mathcal{W}(W_f;\mathbb{K})$, and $D^b\mathit{Coh}(\widehat{Y}_f)/\langle\mathcal{O}_{\widehat{Y}_f}\rangle$ is known as the \textit{relative singularity category} associated to the crepant resolution $\hat{\phi}:\widehat{Y}_f\rightarrow\mathrm{Spec}(\widehat{R}_f)$ in the sense of Kalck-Yang \cite{ky}. More interestingly, the following \textit{uncompleted} version of (\ref{eq:HMS2-}) holds.

\begin{theorem}\label{theorem:WAn}
Assume (\ref{eq:kl}) holds. Suppose in addition that the Lagrangian submanifold $Q_0\subset\ring{W}_f$ is not diffeomorphic to $S^1\times S^2$, or in other words $(k_0,\pm l_0)\neq(k_n,\pm l_n)$, then we have an equivalence
\begin{equation}\label{eq:HMS2}
D^\mathit{perf}\mathcal{W}(W_f;\mathbb{K})\simeq D^b\mathit{Coh}(Y_f)/\langle\mathcal{O}_{Y_f}\rangle.
\end{equation}
between the derived wrapped Fukaya category of the multi-bubble plumbing and the relative singularity category associated to the crepant resolution $\phi:Y_f\rightarrow\mathrm{Spec}(R_f)$.
\end{theorem}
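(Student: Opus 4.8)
The plan is to deduce the non-complete-local equivalence (\ref{eq:HMS2}) from the complete local one (\ref{eq:HMS1}) by a localization argument, running it in parallel on the A-side and the B-side and matching the quotients. The starting point is the equivalence of Theorem \ref{theorem:HMS1}, which I first refine to an algebraic (non-completed) statement: by Proposition \ref{proposition:eq}, the wrapped category $\mathcal{W}(\ring{W}_f;\mathbb{K})[\![x,y]\!]$ is equivalent to $\mathcal{W}(T^\ast S^1,D_n)\otimes_{\widehat{R}}\mathbb{K}[\![x,y]\!]$, and the latter has an evident non-completed model over the polynomial ring. I would argue, as in Remark \ref{remark:MS}(ii), that the relevant category of B-side objects — the tilting-type generator on $Y_f$ obtained from $\mathcal{O}_{Y_f}$ together with the modules supported on $C_1\cup\cdots\cup C_n$ (equivalently the NCCR of $R_f$) — is a finitely generated algebra whose completion recovers the completed-local picture, so that the equivalence of Theorem \ref{theorem:HMS1} descends to an equivalence $D^\mathit{perf}\mathcal{W}(\ring{W}_f;\mathbb{K})\simeq D^b\mathit{Coh}(Y_f)$ of the uncompleted categories.

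Next I localize both sides at the distinguished object. On the symplectic side, filling in the puncture of the base of $\ring{\pi}$ with a smooth $(\mathbb{C}^\ast)^2$ fiber is a \emph{stop removal} (equivalently, capping off one of the $\gamma_i$ in Figure \ref{fig:base}), so by the Ganatra--Pardon--Shende localization theorem \cite{gps} one has $\mathcal{W}(W_f;\mathbb{K})\simeq \mathcal{W}(\ring{W}_f;\mathbb{K})/\mathcal{D}$, where $\mathcal{D}$ is the subcategory generated by the cocores that are swallowed by the cap — and under our dictionary $\mathcal{D}$ is generated by the cotangent fiber of $Q_0$ (this is where the hypothesis $(k_0,\pm l_0)\neq(k_n,\pm l_n)$, i.e. $Q_0\not\cong S^1\times S^2$, enters: it guarantees $Q_0$ is a genuine lens space so that the corresponding vertex in the $A_n$ diagram can be removed and $W_f$ really is the $A_n$ plumbing of $T^\ast Q_1,\dots,T^\ast Q_n$, rather than something where the homology-sphere bookkeeping degenerates). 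On the algebraic side, removing the corresponding idempotent from the NCCR kills the summand $\mathcal{O}_{Y_f}$, and by the Kalck--Yang description of the relative singularity category \cite{ky}, the Verdier quotient $D^b\mathit{Coh}(Y_f)/\langle\mathcal{O}_{Y_f}\rangle$ is exactly what one obtains. Since $D^\mathit{perf}(-)$ sends a quotient by an idempotent-generated subcategory to the corresponding Verdier quotient, the equivalence of the previous paragraph passes to the quotients, yielding (\ref{eq:HMS2}).

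The main obstacle I expect is the de-completion step: ensuring that Theorem \ref{theorem:HMS1}, which is proved in the complete local setting over $\mathbb{K}[\![x,y]\!]$, genuinely has an uncompleted model over $\mathbb{K}[x,y]$ compatible with the two different completions ($s_i$-adic versus word-length) discussed before Theorem \ref{theorem:HMS1}. Concretely, one must check that the word-length completion $\widehat{\mathcal{W}}(\ring{W}_f;\mathbb{K})$ and the finite model $\mathcal{W}(\ring{W}_f;\mathbb{K})[\![x,y]\!]$ have the same perfect derived category after the appropriate base change — this is essentially the content of Proposition \ref{proposition:eq} and its precise form Proposition \ref{proposition:precise} — and that on the B-side one may replace $\widehat{Y}_f$ by $Y_f$ without losing the generating object, i.e. that the relevant tilting bundle is defined over $R_f$ and not merely over $\widehat{R}_f$; this is guaranteed by the explicit NCCR for $cA_n$ singularities (cf. \cite{sks}). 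A secondary technical point is the compatibility of the GPS stop-removal localization with the $\mathbb{K}[x,y]$-linear (Seidel-element) structure, so that the quotient functor is a functor of module categories and not just of the underlying triangulated categories; once that is in place, the matching of the two quotients is formal.
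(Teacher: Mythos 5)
Your overall skeleton (localize the A-side at the cocore of $Q_0$, match the quotient with the relative singularity category via the Evans--Lekili model) is the same as the paper's, but your first step contains a genuine gap that the rest of the argument inherits. You propose to first ``de-complete'' Theorem \ref{theorem:HMS1} to an equivalence $D^\mathit{perf}\mathcal{W}(\ring{W}_f;\mathbb{K})\simeq D^b\mathit{Coh}(Y_f)$ and only then localize. This uncompleted equivalence is false as stated and is not available in the paper: by Proposition \ref{proposition:conclusion} the uncompleted wrapped algebra $\ring{\mathcal{W}}_f$ contains the Laurent polynomial ring $\mathbb{K}[z_1^{\pm1},z_2^{\pm1}]$ in its center, whereas $D^b\mathit{Coh}(Y_f)$ is linear over the polynomial ring $\mathbb{K}[x,y]$; under $z_i\mapsto x-1,\,y-1$ the two only agree after completing at $z_1=z_2=-1$, which is exactly the content (and the limitation) of Proposition \ref{proposition:precise}. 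The expected uncompleted mirror of $\ring{W}_f$ is the \emph{open} log Calabi--Yau $\ring{Y}_f=Y_f\setminus D$, not $Y_f$ itself, and even that equivalence is only conjectural in the paper (Remark \ref{remark:MS}(ii), proved only for the conifold). No finiteness property of the NCCR or of the tilting bundle over $R_f$ repairs this, because the discrepancy lives on the A-side.

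The paper avoids this by reversing the order of operations: it localizes \emph{first}, using Lemma \ref{lemma:ex} and \cite{gps} to get $\mathcal{W}(W_f;\mathbb{K})\simeq\mathcal{W}(\ring{W}_f;\mathbb{K})/\langle L_0\rangle$ (which, note, holds unconditionally --- the hypothesis on $Q_0$ is not needed to remove the vertex, contrary to your reading), and then shows in Proposition \ref{proposition:loc} that the Drinfeld quotient introduces degree $-1$ generators with $d\varepsilon_0=f_0(z_1,z_2)e_1$ and $d\varepsilon_n=f_n(z_1,z_2)e_n$, forcing the relations $z_1^{l_0}\pm z_2^{k_0}=z_1^{l_n}\pm z_2^{k_n}=0$ in cohomology. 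The hypothesis $(k_0,\pm l_0)\neq(k_n,\pm l_n)$ enters precisely here: a case analysis shows these two relations make $z_1$ and $z_2$ torsion units, so $z_1^{-1}$ and $z_2^{-1}$ are already expressible as positive powers and the completion becomes redundant \emph{after} the quotient. This is the key mechanism that turns the complete-local statement into the algebraic one, and it is absent from your proposal; without it the de-completion you flag as ``the main obstacle'' cannot be carried out in the order you propose.
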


Theorem \ref{theorem:WAn} partially answers the realization question raised by Smith-Wemyss, see \cite{sw}, Section 7. This also generalizes the homological mirror symmetry for the double bubble plumbing $W_k$ proved in \cite{alp} and \cite{sw}, which holds for any $k\geq1$. 

\begin{remark}
The Lagrangian submanifold $Q_0\subset W_f$ is obtained by performing iterated Morse-Bott surgeries of the cleanly intersecting Lagrangian submanifolds $Q_1,\cdots,Q_n$. In this sense, the assumption $(k_0,\pm l_0)\neq(k_n,\pm l_n)$ in Theorem \ref{theorem:WAn} is a natural generalization of the condition that $k\neq0$ in the case of double bubble plumbings $W_k$.
\end{remark}

The reason that an algebraic, rather than complete local version of homological mirror symmetry as in Theorem \ref{theorem:WAn} would hold is that the equivalence
\begin{equation}
D^\mathit{perf}\mathcal{W}(\mathbb{C},D_n)\otimes_R\mathbb{K}[x,y]\simeq D^b\mathit{Coh}(Y_f)/\langle\mathcal{O}_{Y_f}\rangle
\end{equation}
between the derived relative wrapped Fukaya category of $(\mathbb{C},D_n)$ (after base change) and the relative singularity category proved in \cite{evle} already holds before passing to the completion, where $D_n\subset\mathbb{C}$ is the set of points $\{c_0,\cdots,c_n\}$ and $R=\mathbb{K}[t_0,\cdots,t_n]$. In the proof of the equivalence (\ref{eq:HMS2-}), the only place that requires taking the completion is \cite{lese}, Conjecture E (cf. Proposition \ref{proposition:eq} above). However, after passing from $T^\ast S^1$ to $\mathbb{C}$, we have an uncompleted analogue of the equivalence (\ref{eq:eq}), see also Remark \ref{remark:Seidel}.

\begin{proposition}\label{proposition:eq2}
Under the same assumptions as in Theorem \ref{theorem:WAn}, there is an equivalence
\begin{equation}
D^\mathit{perf}\mathcal{W}(\mathbb{C},D_n)\otimes_R\mathbb{K}[x,y]\simeq D^\mathit{perf}\mathcal{W}(W_f;\mathbb{K}).
\end{equation}
\end{proposition}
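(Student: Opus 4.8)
The plan is to deduce Proposition~\ref{proposition:eq2} from Proposition~\ref{proposition:eq} by a stop-removal localization, the essential new input being that filling in the fiber over the origin makes the passage to completions unnecessary. Since $\ring{W}_f=W_f\setminus\pi^{-1}(0)$ is obtained from $W_f$ by removing a smooth fiber, the work of Ganatra-Pardon-Shende \cite{gps} gives a localization $D^\mathit{perf}\mathcal{W}(W_f;\mathbb{K})\simeq D^\mathit{perf}\mathcal{W}(\ring{W}_f;\mathbb{K})/\mathcal{D}$, where $\mathcal{D}$ is the thick subcategory split-generated by the linking disk of the fiber $\pi^{-1}(0)$ at infinity; on the base this mirrors the localization $D^\mathit{perf}\mathcal{W}(\mathbb{C},D_n)\simeq D^\mathit{perf}\mathcal{W}(T^\ast S^1,D_n)/\mathcal{D}'$, with $\mathcal{D}'$ split-generated by the cotangent fiber of the zero section of $T^\ast S^1$ carrying the stops $D_n$. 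First I would check that, under the equivalence of Proposition~\ref{proposition:eq} and after the base change $t_i\mapsto f_i(x,y)$, the image of $\mathcal{D}'$ coincides with the image of $\mathcal{D}$: this is a direct comparison of the two explicit generating Lagrangian disks, using the compatibility of the Abouzaid-Auroux fiberwise-wrapped model \cite{aah,alp} with stop removal that already underlies the proof of Proposition~\ref{proposition:eq}. Equivalently, one may bypass $\ring{W}_f$ and simply repeat the holomorphic-section computation of Section~\ref{section:computation} directly for $\pi:W_f\to\mathbb{C}$.

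The crux is then to show that these Verdier quotients do not see the completions. The word-length (equivalently $(x,y)$-adic) completion on the $T^\ast S^1$-side is forced solely by holomorphic strips wrapping around the origin of the base cylinder $\mathbb{C}^\ast$; once the origin is filled in, the base becomes contractible, so for any fixed pair of asymptotic Hamiltonian chords only finitely many homotopy classes of holomorphic sections of $\pi:W_f\to\mathbb{C}$ contribute, and hence all $A_\infty$ structure maps of $\mathcal{W}(W_f;\mathbb{K})$ are polynomial, not merely formal power series, in the Seidel variables $x,y$ (this is the phenomenon recorded in Remark~\ref{remark:Seidel}). The analogous boundedness holds on the base, where $\mathcal{W}(\mathbb{C},D_n)$ is already defined over the polynomial ring $R=\mathbb{K}[t_0,\dots,t_n]$ by \cite{psc,evle}. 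Thus both quotient $A_\infty$-categories admit honest models over $\mathbb{K}[x,y]$, and the matching of generators together with the algebraic localization identifies them over $\mathbb{K}[x,y]$ --- realizing the rank-$2$ case of \cite{lese}, Conjecture~E without any completion, and upgrading to a quasi-equivalence of $A_\infty$-categories as in Proposition~\ref{proposition:precise}.

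The hypothesis $(k_0,\pm l_0)\neq(k_n,\pm l_n)$, i.e. that $Q_0$ is not diffeomorphic to $S^1\times S^2$, enters in guaranteeing that the stop removal from $\ring{W}_f$ to $W_f$ is the one matching the base-side stop removal: it keeps $Q_0$ and all of its iterated Morse-Bott surgeries lens spaces, so that the linking disk of $\pi^{-1}(0)$ split-generates the expected $\mathcal{D}$ --- mirror, after passing to $\widehat{Y}_f$, to $\langle\mathcal{O}_{\widehat{Y}_f}\rangle$ --- and the localized category does not acquire an extra $\mathbb{C}^\ast$-factor in its coefficient ring. I expect the main obstacle to be the boundedness claim itself, namely that the localized category is defined over $\mathbb{K}[x,y]$ rather than $\mathbb{K}[\![x,y]\!]$: this requires an energy and winding estimate for holomorphic sections of $\pi:W_f\to\mathbb{C}$ with prescribed asymptotics, combining the contractibility of the base, the explicit form~(\ref{eq:kl}) of the vanishing cycles, and a minimal-model argument showing that the resulting $A_\infty$-operations terminate. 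On the mirror side the same point is the statement that the relevant quotient of the category of Proposition~\ref{proposition:eq} is quasi-equivalent, after passing to perfect modules, to its own $\mathbb{K}[x,y]$-linear decompletion --- the finiteness property of the contraction algebra of the crepant resolution $\phi:Y_f\to\mathrm{Spec}(R_f)$ that holds under the stated assumption.
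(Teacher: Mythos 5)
Your overall architecture --- deduce the statement from Proposition \ref{proposition:eq} by localizing both sides at a single object and then arguing that the completion becomes redundant --- is the same as the paper's: Lemma \ref{lemma:ex} together with \cite{gps}, Proposition 11.2 identifies $\mathcal{W}(W_f;\mathbb{K})$ with the Drinfeld localization $\mathcal{W}(\ring{W}_f;\mathbb{K})/\langle L_0\rangle$, the localizing object being the cocore $L_0$ of $Q_0$ fibered over $\gamma_0$ (you would need to check that this agrees with your ``linking disk of $\pi^{-1}(0)$''), and on the base $\mathcal{W}(\mathbb{C},D_n)\simeq\mathcal{W}(T^\ast S^1,D_n)/\langle\gamma_0\rangle$. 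The genuine gap is in your treatment of the crux, the decompletion. The completion in Proposition \ref{proposition:eq} is not forced by holomorphic curves wrapping the origin of the base: the structure constants of $\mathcal{W}(\ring{W}_f;\mathbb{K})$ are already Laurent polynomials in the fiberwise variables $z_1,z_2$ (Proposition \ref{proposition:conclusion}), and the completion at $z_1=z_2=-1$ is needed only because the Evans--Lekili model is linear over $\mathbb{K}[x,y]$ with $x=z_1+1$, $y=z_2+1$, so that inverting $z_1$ and $z_2$ requires the power series $-\sum_{j\geq0}x^j$ and $-\sum_{j\geq0}y^j$. Filling in the fiber over the origin changes neither the fibers nor the fiberwise $T^2$-action, so your contractibility-of-the-base finiteness argument does not address why $z_1^{-1}$ and $z_2^{-1}$ become expressible over $\mathbb{K}[x,y]$ after localization.

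The actual mechanism (Proposition \ref{proposition:loc}) is algebraic: the Drinfeld localization at the idempotent $\vartheta_{0,0,0}^0$ adjoins degree $-1$ generators $\varepsilon_0,\varepsilon_n$ with $d\varepsilon_0=f_0(z_1,z_2)\vartheta_{0,0,0}^1$ and $d\varepsilon_n=f_n(z_1,z_2)\vartheta_{0,0,0}^n$, which impose the relations $z_1^{l_0}\pm z_2^{k_0}=z_1^{l_n}\pm z_2^{k_n}=0$ in cohomology; a case analysis using (\ref{eq:kl}) and $(k_0,\pm l_0)\neq(k_n,\pm l_n)$ then shows that $z_1$ and $z_2$ are torsion units, so their inverses are positive powers of themselves and no completion is needed. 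This is where the hypothesis enters --- not, as you suggest, in guaranteeing that the two stop removals match (that identification holds unconditionally). A useful sanity check: your proposed finiteness argument is insensitive to the hypothesis and would equally ``prove'' the uncompleted equivalence when $(k_0,\pm l_0)=(k_n,\pm l_n)$, e.g.\ for the double bubble plumbing $W_0$, where the base of $\pi$ is just as contractible but $\mathit{SH}^0(W_f;\mathbb{K})$ retains non-torsion units and the uncompleted statement is precisely what fails; this shows the mechanism you propose cannot be the right one.
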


Just as in the affine case, Theorem \ref{theorem:WAn} would imply a version of homological mirror symmetry for compact cores in the multi-bubble plumbing $W_f$.

\begin{corollary}\label{corollary:core}
Under the same assumption as in Theorem \ref{theorem:WAn}, we have a fully faithful embedding
\begin{equation}\label{eq:core}
D^b\mathit{Coh}_C(Y_f)\hookrightarrow D^\mathit{perf}\mathcal{F}(W_f;\mathbb{K}),
\end{equation}
which is an equivalence if the Lagrangian submanifolds $Q_1,\cdots,Q_n\subset W_f$ split-generate the compact Fukaya category $\mathcal{F}(W_f;\mathbb{K})$.
\end{corollary}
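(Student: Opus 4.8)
The plan is to deduce Corollary \ref{corollary:core} from Theorem \ref{theorem:WAn} by transporting the generalized Eilenberg-Moore / Koszul-duality formalism of Ekholm-Lekili through the mirror equivalence, and matching it with the analogous B-side reconstruction of compact-support sheaves from the relative singularity category. Concretely, on the A-side the compact Fukaya category $\mathcal{F}(W_f;\mathbb{K})$ is recovered from $\mathcal{W}(W_f;\mathbb{K})$ as (a thick subcategory of) the category of modules over the Chekanov-Eliashberg algebra of the attaching Legendrian that are \emph{proper} (finite-dimensional cohomology), equivalently via the wrapped-to-compact functor sending a compact exact Lagrangian to the $\mathcal{W}(W_f;\mathbb{K})$-module it represents; the essential image is split-generated by the cotangent fibers' Koszul-dual objects. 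On the B-side, the relative singularity category $D^b\mathit{Coh}(Y_f)/\langle\mathcal{O}_{Y_f}\rangle$ has a corresponding \emph{finite-length} or properly-supported subcategory, and one checks — this is the content of ``the parallel statement on the B-side'' alluded to in Section \ref{section:9} — that it is exactly $D^b\mathit{Coh}_c^\ast(Y_f)=\langle\mathcal{O}_{C_1}(-1),\dots,\mathcal{O}_{C_n}(-1)\rangle$. The key point is that the line bundles $\mathcal{O}_{C_i}(-1)$ are precisely the simple objects whose images in the (completed) endomorphism algebra of a tilting-type generator are the one-dimensional modules, so they are the Koszul duals of the resolution generators, matching the Lagrangians $Q_1,\dots,Q_n$ on the symplectic side.

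First I would make the A-side statement precise: recall from Section \ref{section:CE} that $\widehat{\mathcal{W}}(W_f;\mathbb{K})$ is the completed Chekanov-Eliashberg dg algebra $\mathcal{A}$ of the Legendrian link $\Lambda\subset\partial W_f$, and that Ekholm-Lekili's Eilenberg-Moore equivalence identifies (a completion of) the compact Fukaya category with the category of proper dg modules over $\mathcal{A}$, under which each $Q_i$ goes to the augmentation module $\mathbb{K}_i$ corresponding to its own Legendrian component. Dually, $\mathcal{A}$ itself is the Koszul dual $\mathit{RHom}_{\mathcal{F}}(\bigoplus Q_i,\bigoplus Q_i)$. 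Thus $\langle Q_1,\dots,Q_n\rangle\subset\mathcal{F}(W_f;\mathbb{K})$ is identified, after passing to perfect modules and using that $\mathcal{A}$ is (derived) Morita-equivalent to a finite-dimensional-over-$\mathbb{K}\llbracket x,y\rrbracket$ algebra, with $D^{\mathit{perf}}$ of the subcategory of proper modules split-generated by the $\mathbb{K}_i$. Then I would transport this through the equivalence of Theorem \ref{theorem:WAn}: the generator $\bigoplus_{i} Q_i$ of the relevant subcategory corresponds under (\ref{eq:HMS2}) to the appropriate generator of $D^b\mathit{Coh}(Y_f)/\langle\mathcal{O}_{Y_f}\rangle$ (a tilting bundle $\mathcal{N}=\bigoplus N_i$ for the resolution, with $N_0=\mathcal{O}_{Y_f}$ killed in the quotient), so that its endomorphism algebra is the contraction algebra / relevant quotient of $\mathrm{End}(\mathcal{N})$, and the simple modules over this algebra are exactly the $\mathcal{O}_{C_i}(-1)$ for $i=1,\dots,n$ — this is the standard dictionary for $cA_n$ crepant resolutions (e.g.\ via Van den Bergh's noncommutative crepant resolution and the perverse-sheaf heart) whose ``$\mathbb{K}$-dimension-finite'' objects form exactly $D^b\mathit{Coh}_c^\ast(Y_f)$. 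Matching proper-module subcategories on the two sides then yields the fully faithful embedding (\ref{eq:core}), the image being $\langle Q_1,\dots,Q_n\rangle$; when these split-generate $\mathcal{F}(W_f;\mathbb{K})$ the embedding is onto, giving the claimed equivalence.

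The main obstacle I anticipate is the precise compatibility between the \emph{two} reconstruction procedures — Ekholm-Lekili's Eilenberg-Moore on the A-side and the Kalck-Yang / perverse-heart reconstruction of $D^b\mathit{Coh}_c^\ast$ on the B-side — i.e.\ showing that the equivalence of Theorem \ref{theorem:WAn} intertwines the two ``take proper/finite-length modules over the Koszul-dual algebra'' operations, rather than merely noticing that both sides abstractly produce module categories over Koszul-dual algebras. This requires identifying the distinguished generators on both sides and checking that the equivalence sends one to the other up to the automorphisms (shifts, line-bundle twists) that do not affect the statement; concretely one must verify that the idempotent/object used in the localization of Theorem \ref{theorem:WAn} (cotangent fiber of $Q_0$ $\leftrightarrow$ $\mathcal{O}_{Y_f}$) is compatible with the Koszul-dual picture, so that the augmentation modules $\mathbb{K}_1,\dots,\mathbb{K}_n$ match $\mathcal{O}_{C_1}(-1),\dots,\mathcal{O}_{C_n}(-1)$ and not some other collection of simples. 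Once that bookkeeping is in place, the fully-faithfulness is formal (Koszul duality is fully faithful on the relevant proper/perfect subcategories, cf.\ Section \ref{section:CE}), and the final clause about split-generation is immediate since an equivalence restricted to a split-generating subcategory is essentially surjective exactly when that subcategory is everything.
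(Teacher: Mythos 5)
Your proposal is correct and follows essentially the same route as the paper: the paper proves $\mathcal{Q}_f\simeq R\hom_{\mathcal{W}_f}(\Bbbk,\Bbbk)$ (the Ekholm--Lekili Eilenberg--Moore equivalence, Proposition \ref{proposition:EMA}) and $\mathcal{C}_f\simeq R\hom_{\mathcal{B}_f}(\Bbbk,\Bbbk)$ (since the $\mathcal{O}_{C_i}(-1)[1]$ are the simples over the noncommutative crepant resolution, Proposition \ref{proposition:EMB}), then applies Theorem \ref{theorem:WAn} to conclude $\mathcal{Q}_f\simeq\mathcal{C}_f$. The compatibility issue you flag is handled implicitly because the equivalence of Theorem \ref{theorem:WAn} is induced by an explicit algebra isomorphism matching the idempotents $\vartheta_{0,0,0}^i\leftrightarrow e_i$, so the two augmentations to $\Bbbk$ correspond.
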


\begin{remark}
It is proved in \cite{sw}, Section 4.6 that $Q_1,Q_2$ split-generate $\mathcal{F}(W_k;\mathbb{K})$ when $k=1$ or $k=p$ is an odd prime and $\mathrm{char}(\mathbb{K})=p$. In general, one can equip the lens spaces $Q_i$ with non-trivial local systems to produce mutually orthogonal objects in the Fukaya category $\mathcal{F}(W_k;\mathbb{K})$, so (\ref{eq:core}) is in general not an equivalence.
\end{remark}

\subsection{Symplectic mapping class group}\label{section:MCG}

The homological mirror symmetry equivalence proved in Theorem \ref{theorem:main} has interesting applications in the study of the symplectic mapping class groups of the Stein $3$-folds $\ring{W}_f$. Let $\mathit{Symp}_c(\ring{W}_f)$ be the group of compactly supported symplectomorphisms, it is proved by Keating-Smith (\cite{kss}, Theorem 1.1) that when $\ring{W}_f$ is the plumbing of two copies of $T^\ast S^3$ along a Hopf link as in Example \ref{example:SYZ}, there is a split-injection
\begin{equation}
\mathbb{Z}^{\ast\infty}\rightarrow\pi_0\mathit{Symp}_c(\ring{W}_f)
\end{equation}
from the free group on countably many generators to the symplectic mapping class group. The group $\mathbb{Z}^{\ast\infty}$ arises as a subgroup of the pure braid group $\mathit{PBr}_3$, which acts on the mapping class group $\pi_0\mathit{Symp}_\mathit{ex}(\ring{W}_f)$, where $\mathit{Symp}_\mathit{ex}(\ring{W}_f)$ is the group of exact symplectomorphisms of the affine $A_1$ plumbing $\ring{W}_f$. In general, there is a map
\begin{equation}
\mathit{PBr}_{n+2}\rightarrow\pi_0\mathit{Symp}_\mathit{ex}(\ring{W}_f)
\end{equation}
as long as the $cA_n$ singularity $R_f$ is isolated, or equivalently (\ref{eq:distinct}) holds. Inside $\mathit{PBr}_{n+2}$ there is a distinguished subgroup $\mathit{PBr}_{n+2}^c$, defined as
\begin{equation}
\mathit{PBr}_{n+2}^c:=\ker\left(\mathit{PBr}_{n+2}\xrightarrow{\varphi}\mathbb{Z}^{n+1}\right),
\end{equation}
where the map $\varphi$ forgets $n$ of the first $n+1$ strands, note that any choice of the $n$ strands gives rise to a map to $\mathit{PBr}_2\cong\mathbb{Z}$. When $n=1$, we have $\mathit{PBr}_3^c\cong\mathbb{Z}^{\ast\infty}$, and $\mathit{PBr}_{n+2}^c$ is always an infinitely generated subgroup of $\mathit{PBr}_{n+2}$ for any $n\geq1$. Generalizing Keating-Smith's result, we prove the following.

\begin{theorem}\label{theorem:MCG}
Assume (\ref{eq:distinct}) holds. There is a split injection
\begin{equation}
\mathit{PBr}_{n+2}^c\rightarrow\pi_0\mathit{Symp}_c(\ring{W}_f).
\end{equation}
In particular, the symplectic mapping class group $\pi_0\mathit{Symp}_c(\ring{W}_f)$ is infinitely generated.
\end{theorem}

Note that even for the $cA_1$ singularities $R_f=\mathbb{K}[u,v,x,y]/\left(uv-x(x+y^k)\right)$ with $k\geq2$, in which case $\ring{W}_f$ is the plumbing of two copies of $T^\ast L(k,1)$ along a Hopf link\footnote{Here by ``Hopf link" we mean the core of the two Heegaard tori in the decomposition of $L(k,1)$.}, our result is new. The proof of Theorem \ref{theorem:MCG} is similar to that of \cite{kss}, Theorem 1.1, which relies on Toda's computation \cite{yt1,yt2} of the autoequivalence group of $D^b\mathit{Coh}_C(\ring{Y}_f)$. Our Theorem \ref{theorem:main} now enables us to exploit Toda's result to its full generality.

By attaching Weinstein $2$-handles to the Stein $3$-folds $\ring{W}_f$, we obtain the following, which differs with the case of compactly supported smooth mapping class groups.

\begin{corollary}
There exist infinitely many $3$-dimensional Stein domains $(\overline{X},\partial\overline{X})$, with both $\overline{X}$ and $\partial\overline{X}$ simply-connected, and $\pi_0\mathit{Symp}_c(\overline{X})$ infinitely generated.
\end{corollary}

On the other hand, if we consider the graded symplectic mapping class group $\pi_0\mathit{Symp}^\mathit{gr}(\ring{W}_f)$, then Toda's result \cite{yt1,yt2} combined with Theorem \ref{theorem:main} implies the following.

\begin{theorem}\label{theorem:MCG1}
Under the assumption (\ref{eq:distinct}), there is a split injection
\begin{equation}
\mathit{PBr}_{n+2}\rightarrow\pi_0\mathit{Symp}^\mathit{gr}(\ring{W}_f).
\end{equation}
\end{theorem}

\begin{remark}
In \cite{evle}, an action of the affine pure braid group $\mathit{PBr}_{n+1}^\mathit{aff}$ is obtained on the relative wrapped Fukaya category $D^\mathit{perf}\mathcal{W}(T^\ast S^1,D_n)$. By Proposition \ref{proposition:eq}, $\mathit{PBr}_{n+1}^\mathit{aff}$ also acts on $D^\mathit{perf}\mathcal{W}(\ring{W}_f;\mathbb{K})$, and such an action factors through the graded symplectic mapping class group $\pi_0\mathit{Symp}^\mathit{gr}(\ring{W}_f)$. Since $\mathit{PBr}_{n+1}^\mathit{aff}$ can be realized as a subgroup of $\mathit{PBr}_{n+2}$, it follows from Theorem \ref{theorem:MCG1} that the representation
\begin{equation}
\mathit{PBr}_{n+1}^\mathit{aff}\rightarrow\pi_0\mathit{Symp}^\mathit{gr}(\ring{W}_f)
\end{equation}
is faithful when (\ref{eq:distinct}) holds. This is mirror to the injectivity of the map
\begin{equation}\label{eq:aff-Br}
\mathit{PBr}_{n+1}^\mathit{aff}\rightarrow\mathrm{Auteq}D^b\mathit{Coh}(\widehat{Y}_f)
\end{equation}
when the $cA_n$ singularity $\widehat{R}_f$ is isolated. In fact, the injectivity of (\ref{eq:aff-Br}), with $\mathit{PBr}_{n+1}^\mathit{aff}$ replaced with the fundamental group of the complement of the complexification of an affine hyperplane arrangement, is expected to be true for all $3$-fold flopping contractions, see \cite{mwa}, Section 1.9. The faithfulness of the affine braid group action on the wrapped Fukaya categories of other Stein $3$-folds has been studied in \cite{gtwc}.
	
If one considers the multi-bubble plumbing $W_f$, similar considerations as above give rise to a map
\begin{equation}
\mathit{PBr}_{n+1}\rightarrow\pi_0\mathit{Symp}^\mathit{gr}(W_f).
\end{equation}
It can be shown that this map is a split injection, see \cite{ksws}, Section 5. On the B-side, the faithfulness of the (generalized) pure braid group action on $D^b\mathit{Coh}(\widehat{Y}_f)$ is established for all $3$-fold flopping contractions by Hirano-Wemyss \cite{hwf}.
\end{remark}

We end the introduction by providing an outline of the paper. In Section \ref{section:pre}, we recall some preliminary facts about plumbings of cotangent bundles and crepant resolutions of $cA_n$ singularities. In Section \ref{section:computation}, we compute the wrapped Fukaya category of $\ring{W}_f$ over any field $\mathbb{K}$. This is then used in Section \ref{section:HMS} to prove the results concerning homological mirror symmetry claimed in Section \ref{section:hms}. In Section \ref{section:Symp} we study the compactly supported symplectic mapping class group of the Stein manifolds $\ring{W}_f$ using homological mirror symmetry, and prove Theorem \ref{theorem:MCG}.

\section*{Acknowledgements}
Many of the results presented in this paper are part of the master thesis of BX \cite{bxh}. We thank Uppsala University for providing the excellent research environment. YL would also like to thank Johan Asplund for the collaboration of \cite{alp} and Yank{\i} Lekili for useful discussions related to \cite{lese}, Conjecture E in 2024.

\section{Preliminaries}\label{section:pre}

We collect in this section some preliminary facts that will be relevant to the main contents of this paper.

\subsection{Plumbings of cotangent bundles}\label{section:plumbing}

Let $Q_1$ and $Q_2$ be two closed $n$-dimensional smooth manifolds which intersect along a smooth $d$-dimensional submanifold $Z=Q_1\cap Q_2$ for some $d<n$. Given any identification
\begin{equation}\label{eq:idn}
\eta:\nu_{Z/Q_1}\xrightarrow{\simeq}\nu_{Z/Q_2}
\end{equation}
between the normal bundles of $Z$, we can construct a Stein manifold $W_\eta=T^\ast Q_1\#_{Z,\eta}T^\ast Q_2$, called the \textit{plumbing} of the cotangent bundles of $Q_1$ and $Q_2$ along $Z$ with respect to the identification $\eta$. More precisely, let $D^\ast Q_i$, $i=1,2$ be the disc cotangent bundles, then by the Weinstein neighborhood theorem there exist open neighborhoods $U(Z)_i$ of $D^\ast Z\subset D^\ast Q_i$, which are symplectomorphic to disc subbundles of $\nu_{Z/Q_i}\times\mathbb{C}$ over $Z$. Using the identification
\begin{equation}
\eta\otimes\sqrt{-1}:U(Z)_1\xrightarrow{\simeq}U(Z)_2,
\end{equation}
we can glue the Weinstein neighborhoods together to obtain a Stein domain $\overline{W}_\eta$. The plumbing $W_\eta$ is the completion of $\overline{W}_\eta$. Repeating this construction, one can construct plumbings of cotangent bundles of cleanly intersecting smooth manifolds according to any plumbing diagram.

When $Q_1$ and $Q_2$ in the above are two closed $3$-manifolds intersecting cleanly along a circle $Z=Q_1\cap Q_2\cong S^1$, then since the normal bundles $\nu_{Z/Q_1}$ and $\nu_{Z/Q_2}$ are trivial, the possible isomorphisms (\ref{eq:idn}) form a torsor for $\mathbb{Z}$, and we may therefore write $\eta=k\in\mathbb{Z}$. 

More specifically, if both $Q_1$ and $Q_2$ are diffeomorphic to $S^3$, it is proved in \cite{sw}, Lemma 2.4 that any one of the two Morse-Bott surgeries $K\subset T^\ast Q_1\#_{Z,k}T^\ast Q_2$ of $Q_1$ and $Q_2$ along $Z$ satisfies $H^\ast(K;\mathbb{Z})\cong H^\ast(S^1\times S^2;\mathbb{Z})$ when $k=0$ and $H^\ast(K;\mathbb{Z})\cong\mathbb{Z}/|k|$ when $k\neq0$. In particular, when $Z$ is an unknot in both $Q_1$ and $Q_2$, then $K\subset W_k$ is diffeomorphic either to $S^1\times S^2$ when $k=0$ or a lens space $L(|k|,1)$ when $k\neq0$. The same argument as in the proof of \cite{sw}, Lemma 2.4 actually extends to the case when $Q_1$ and $Q_2$ are lens spaces, and we have the following.

\begin{proposition}
Let $Q_1$ and $Q_2$ be lens spaces, then their Morse-Bott surgery $K$ of $Q_1$ and $Q_2$ in the plumbing $T^\ast Q_1\#_{Z,k}T^\ast Q_2$ is either a homology $S^1\times S^2$ or a homology lens space.
\end{proposition}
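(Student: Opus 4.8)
The plan is to mimic the Mayer–Vietoris / handle-decomposition argument used in \cite{sw}, Lemma 2.4 for the case $Q_1\cong Q_2\cong S^3$, replacing the explicit knowledge of $H^\ast(S^3)$ with the general structure of lens space cohomology. First I would set up the decomposition of the Morse-Bott surgery $K$. Recall that $K$ is built from $Q_1$ and $Q_2$ by removing tubular neighborhoods $N(Z)_i\subset Q_i$ of the circle $Z$ and regluing the resulting manifolds-with-torus-boundary along their common boundary $\partial N(Z)_1\cong\partial N(Z)_2\cong T^2$ via a gluing map determined (up to isotopy) by the integer $k$. Concretely, writing $P_i=Q_i\setminus\mathrm{int}\,N(Z)_i$, we have $K=P_1\cup_{T^2}P_2$ where the gluing identifies the boundary tori by a diffeomorphism whose effect on $H_1(T^2;\mathbb{Z})$ is the matrix encoding the framing difference $k$ together with the meridian–longitude data coming from how $Z$ sits inside each lens space.

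Next I would compute $H_\ast(K;\mathbb{Z})$ by Mayer–Vietoris applied to $K=P_1\cup P_2$ with $P_1\cap P_2\simeq T^2$. The key inputs are: each $P_i$ is the complement of an embedded circle in a lens space, so $H_\ast(P_i;\mathbb{Z})$ is concentrated in degrees $0,1,2$, with $H_0=\mathbb{Z}$, $H_1$ a finitely generated abelian group of rank $\le 1$ (the rank-one summand being killed only if $Z$ is null-homologous over $\mathbb{Q}$ in $Q_i$, which happens for lens spaces since $H_1$ is torsion — so in fact $H_1(P_i;\mathbb{Q})$ has rank exactly $1$, generated by the meridian), and $H_2(P_i;\mathbb{Z})$ controlled by Poincaré–Lefschetz duality $H_2(P_i,\partial P_i)\cong H^1(P_i)$. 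Feeding these into the long exact sequence
\begin{equation*}
\cdots\to H_n(T^2)\to H_n(P_1)\oplus H_n(P_2)\to H_n(K)\to H_{n-1}(T^2)\to\cdots
\end{equation*}
and tracking the maps — in particular using that the class of $Z$ in $H_1(Q_i;\mathbb{Q})$ vanishes to pin down the rational ranks — I expect to get $H_1(K;\mathbb{Q})$ of rank $1$ exactly when the gluing is "balanced" (the analogue of $k=0$), forcing $b_1(K)=1$, $b_2(K)=1$ and hence $K$ a rational homology $S^1\times S^2$; otherwise $H_1(K;\mathbb{Z})$ is finite and $H_2(K;\mathbb{Z})=0$ by Poincaré duality (and Euler characteristic zero), so $K$ is a rational homology lens space. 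To upgrade "rational homology" to "homology" it suffices to observe that $H_1(K;\mathbb{Z})$ has no more torsion than is forced and that $K$ is an orientable closed $3$-manifold with $b_1\le 1$, whence $H_\ast(K;\mathbb{Z})$ is determined by $H_1(K;\mathbb{Z})$ via Poincaré duality and the universal coefficient theorem, giving precisely the homology of $S^1\times S^2$ (in the balanced case) or of a lens space $L(m,q)$ with $m=|H_1(K;\mathbb{Z})|$.

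The main obstacle I anticipate is book-keeping the Mayer–Vietoris boundary maps carefully enough to identify the single integer that governs the dichotomy, and in particular verifying that the rank-one free part of $H_1$ survives (or dies) exactly as claimed. This requires knowing how the meridian and the Seifert-framed longitude of $Z$ generate $H_1(P_i;\mathbb{Z})$ in terms of the lens space data $(p_i,q_i)$ and the unknottedness (or lack thereof) of $Z$; this is precisely where the explicit dependence of $(p,q)$ on $(k_i,\pm l_i),(k_{i+1},\pm l_{i+1})$ from the discussion preceding the proposition enters. Once the homology of $K$ is computed, the conclusion is immediate since a closed oriented $3$-manifold is a homology $S^1\times S^2$ or a homology lens space according to whether $b_1=1$ or $b_1=0$, so no further geometric input is needed. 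I would remark at the end that when $Q_1,Q_2$ and the gluing are such that (\ref{eq:distinct}) holds one in fact lands in the homology-lens-space case, consistent with the isolatedness of the associated $cA_n$ singularity.
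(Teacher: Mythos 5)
Your route is genuinely different from the paper's. The paper stays inside the Fukaya category: it writes $K\simeq\mathit{Cone}(Q_1\xrightarrow{\kappa}Q_2)$ using the surgery exact triangle for clean intersections, uses exactness of $K$ to identify $\hom_{\mathcal{F}}(K,K)\cong H^\ast(K;\mathbb{Z})$, and reduces everything to a single integer $\lambda$ appearing in the Massey product $\mu^3(e,f,e)=\lambda\cdot f^\vee$. You instead compute $H_\ast(K;\mathbb{Z})$ directly by Mayer--Vietoris on the topological decomposition $K=P_1\cup_{T^2}P_2$ with $P_i=Q_i\setminus\mathrm{int}\,N(Z)_i$. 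That decomposition is the correct topological model of the Morse--Bott surgery, your description of $H_\ast(P_i)$ is right (the long exact sequence of $(Q_i,P_i)$ gives an extension $0\to\mathbb{Z}\to H_1(P_i)\to H_1(Q_i)\to0$ with the meridian generating the free part, and $b_2(P_i)=0$), and the identification $H_2(K)=\ker\phi$, $H_1(K)=\mathrm{coker}\,\phi$ for $\phi\colon H_1(T^2)\to H_1(P_1)\oplus H_1(P_2)$ correctly determines the rational homology.

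The gap is in your final step. The assertion that a closed oriented $3$-manifold is a homology $S^1\times S^2$ or a homology lens space according to whether $b_1=1$ or $b_1=0$ is false: a homology lens space requires $H_1(K;\mathbb{Z})$ to be \emph{cyclic}, and a rational homology sphere can have arbitrary finite abelian $H_1$ (likewise $b_1=1$ does not preclude extra torsion). Your Mayer--Vietoris setup does not by itself force cyclicity: since $H_1(P_i)$ sits in $0\to\mathbb{Z}\to H_1(P_i)\to\mathbb{Z}/p_i\to0$, the group $H_1(P_1)\oplus H_1(P_2)$ may need up to four generators, and the cokernel of the rank-two map $\phi$ can then fail to be cyclic --- for instance, if $Z$ were null-homologous in both $Q_i$ and $\gcd(p_1,p_2)>1$, the summand $\mathbb{Z}/p_1\oplus\mathbb{Z}/p_2$ survives. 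So your remark that ``no further geometric input is needed'' is precisely where the argument breaks: one must use how $Z$ sits inside each $Q_i$. In the setting of this paper $Z$ is the core circle of a genus-one Heegaard splitting of each $Q_i$ (it is the vanishing circle of the matching cycle), so each $P_i$ is a solid torus, $H_1(P_i)\cong\mathbb{Z}$, each component of $\phi$ is surjective, and Smith normal form forces $\mathrm{coker}\,\phi$ to be $\mathbb{Z}$ or cyclic finite. With that input your argument closes up --- and in fact shows $K$ is a genuine lens space or $S^1\times S^2$, not merely a homology one, which is more elementary than the paper's Floer-theoretic computation --- but as written the crucial cyclicity is asserted rather than proved.
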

\begin{proof}
For the reader's convenience, we briefly sketch the argument. Using the surgery exact triangle associated to clean intersections \cite{mw1}, we see that in the compact Fukaya category $\mathcal{F}(T^\ast Q_1\#_{Z,k}T^\ast Q_2;\mathbb{Z})$ with integer coefficients, there is a quasi-isomorphism
\begin{equation}
K\simeq\mathit{Cone}(Q_1\xrightarrow{\kappa}Q_2)
\end{equation}
between the Morse-Bott surgery $K$ of $Q_1$ and $Q_2$ and the mapping cone associated to the morphism $\kappa\in\mathit{HF}^1(Q_1,Q_2)$, which is a multiple of the unique degree $1$ generator corresponding to the fundamental class of $Z$. Since the mapping cone is an indecomposable object in $\mathcal{F}(T^\ast Q_1\#_{Z,k}T^\ast Q_2;\mathbb{Z})$, $\kappa\in\mathit{HF}^1(Q_1,Q_2)$ is primitive. Since $K$ is exact, the endomorphism space $\hom_\mathcal{F}(K,K)$ of $\mathit{Cone}(Q_1\xrightarrow{\kappa}Q_2)$ in the Fukaya category $\mathcal{F}(T^\ast Q_1\#_{Z,k}T^\ast Q_2;\mathbb{Z})$ is isomorphic to $H^\ast(K;\mathbb{Z})$, so we only need to compute $\hom_\mathcal{F}(K,K)$, which is determined by the Massey product $\mu^3(e,f,e)$ in a minimal model of the Fukaya $A_\infty$-algebra $\bigoplus_{1\leq i,j\leq2}\mathit{CF}^\ast(Q_i,Q_j)$ over $\mathbb{Z}$, where $e\in\mathit{HF}^1(Q_1,Q_2)$ and $f\in\mathit{HF}^2(Q_2,Q_1)$ are generators. Since $\mu^3(e,f,e)=\lambda\cdot f^\vee$ for some $\lambda\in\mathbb{Z}$ and $f^\vee\in\mathit{HF}^2(Q_1,Q_2)$, the result follows.
\end{proof}

In general, the Morse-Bott surgery $K$ of two lens spaces $Q_1$ and $Q_2$ can fail to be a lens space. However, as we will see blow, this is not going to happen for the cases treated in this paper.

We now restrict ourselves to the specific situation of the Stein $3$-folds $\ring{W}_f$ introduced in the introduction, which is defined as the total space of a Morse-Bott fibration $\ring{\pi}:\ring{W}_f\rightarrow\mathbb{C}^\ast$, with data specified by the factorization (\ref{eq:f}). We equip it with a plurisubhmaronic function $\rho:\ring{W}_f\rightarrow(0,\infty)$ as follows. Start with the function
\begin{equation}\label{eq:pf}
\rho_F(z_1,z_2)=|z_1|^2+|z_2|^2+\frac{1}{|z_1|^2}+\frac{1}{|z_2|^2}
\end{equation}
on a smooth fiber of $\ring{\pi}$, where $z_1,z_2$ are complex coordinates on $(\mathbb{C}^\ast)^2$. One can view the Morse-Bott fibration $\ring{\pi}:\ring{W}_f\rightarrow\mathbb{C}^\ast$ as a fiber connected sums of $n+1$ elementary Morse-Bott fibrations $\pi_i:E_i\rightarrow\mathbb{C}$, $i=0,\cdots,n$, such that each $\pi_i$ has a unique singular fiber over $c_i$. After taking the completion, each $\pi_i$ is equivalent to the standard fibration $\mathbb{C}^2\times\mathbb{C}^\ast\rightarrow\mathbb{C}$ given by $xy$, where $(x,y,z)\in\mathbb{C}^2\times\mathbb{C}^\ast$ are complex coordinates. Since the coordinate $z_1$ corresponds to either $x\neq0$ or $y\neq0$, while $z_2$ corresponds to $z$, it is clear that the function $\rho_F$ can be extended to the singular fiber of $\pi_i$, giving a function on $E_i$. One can then use the biholomorphic coordinate transformations between different $E_i$'s to get a function on $\ring{W}_f$, which we still denote by $\rho_F:\ring{W}_f\rightarrow(0,\infty)$ by abuse of notations. Define the function $\rho_B:\mathbb{C}^\ast\rightarrow(0,\infty)$ on the base to be $2\left(|z_3|^2+\frac{1}{|z_3|^2}\right)$, where $z_3$ is the complex coordinate on $\mathbb{C}^\ast$ and the constant $2$ is added for computational convenience. The plurisubharmonic function $\rho:\ring{W}_f\rightarrow(0,\infty)$ is defined as
\begin{equation}
\rho=\rho_F+\ring{\pi}^\ast\rho_B.
\end{equation}
We equip $\ring{W}_f$ with the K\"{a}hler form $\omega_{\ring{W}}:=-dd^\mathbb{C}\rho$. Let $Z$ be the Liouville vector field that is dual to the $1$-form $\theta_{\ring{W}}=-d^\mathbb{C}\rho$, and denote by $\phi_Z^t$ its time-$t$ flow. Recall that the \textit{skeleton} of $\ring{W}_f$ is the subset $\mathrm{Sk}(\ring{W}_f):=\bigcap_{t>0}\phi_Z^{-t}(\overline{\ring{W}}_f)$, where $\overline{\ring{W}}_f\subset\ring{W}_f$ is the corresponding Liouville domain. Since $\ring{W}_f$ is Weinstein, $\mathrm{Sk}(\ring{W}_f)$ is a Lagrangian subset. To compute it, first observe the following.

\begin{lemma}\label{lemma:parallel}
Symplectic parallel transport is globally defined for paths away from critical values of $\ring{\pi}:\ring{W}_f\rightarrow\mathbb{C}^\ast$, and it is compatible with the Hamiltonian $T^2$-action rotating the fibers $\ring{\pi}$.
\end{lemma}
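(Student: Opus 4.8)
The plan is to prove global definedness by showing that symplectic parallel transport preserves the fibrewise plurisubharmonic function $\rho_F$, which is exhausting on each smooth fibre, and to prove the $T^2$-equivariance directly from the fact that the Hamiltonian $T^2$-action preserves all the ingredients of the symplectic connection. First I would set up the connection: over the smooth locus $\mathbb{C}^\ast\setminus\{c_0,\dots,c_n\}$ write $T^v=\ker d\ring{\pi}$; since $\omega_{\ring{W}}$ is a K\"ahler form the fibres are symplectic submanifolds, so $T^h:=(T^v)^{\omega_{\ring{W}}}$ is a genuine complement, and parallel transport along a path $\gamma$ avoiding the critical values is the flow of the horizontal lift of $\dot\gamma$.

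The key point is that $T^h\subseteq\ker d\rho_F$, so that $\rho_F$ is constant along parallel transport. In each of the local charts modelled on the elementary pieces $E_i$ one has $\rho=\rho_F(z_1,z_2)+\ring{\pi}^\ast\rho_B(z_3)$, hence $\omega_{\ring{W}}=\omega_F+\ring{\pi}^\ast\omega_B$ with $\omega_F=-dd^{\mathbb{C}}\rho_F$ having no component involving the base coordinate; a one-line computation then identifies $T^h$ with the span of the base coordinate directions, which annihilate $d\rho_F$. Since $\rho_F$ is by construction a globally well-defined function on $\ring{W}_f$ (the transition biholomorphisms between the $E_i$'s respect it), this inclusion holds globally over the smooth locus, so $\frac{d}{dt}\rho_F(\widetilde\gamma(t))=0$ for any horizontal lift $\widetilde\gamma$.

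Completeness then follows easily. Given $\gamma\colon[0,1]\to\mathbb{C}^\ast\setminus\{c_0,\dots,c_n\}$, the image $\gamma([0,1])$ is compact, so $\ring{\pi}^\ast\rho_B$ is bounded along $\widetilde\gamma$; combined with $\rho_F(\widetilde\gamma(t))\equiv\rho_F(\widetilde\gamma(0))$ this shows $\rho$ stays bounded along $\widetilde\gamma$, so $\widetilde\gamma$ remains inside a compact sublevel set of the exhausting function $\rho$. Hence the (time-dependent) horizontal lift vector field is complete and parallel transport is defined on all of $\ring{\pi}^{-1}(\gamma(0))$. For the compatibility with the $T^2$-action I would use that the Hamiltonian action is globally defined on $\ring{W}_f$, acts fibrewise — so $\ring{\pi}\circ\psi_\theta=\ring{\pi}$ and therefore $d\ring{\pi}\circ d\psi_\theta=d\ring{\pi}$ — and preserves $\omega_{\ring{W}}$ (in the charts it changes only the phases of $z_1,z_2$, leaving $\rho=\rho_F+\ring{\pi}^\ast\rho_B$ invariant since $\rho_F$ depends only on $|z_1|,|z_2|$). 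Consequently $d\psi_\theta$ preserves $T^v$ and its $\omega_{\ring{W}}$-orthogonal $T^h$, and sends the horizontal lift of a vector field on the base to a horizontal vector field covering the same base vector field, i.e.\ to itself; thus every horizontal lift is $T^2$-invariant, its flow commutes with $\psi_\theta$, and parallel transport intertwines the fibrewise $T^2$-actions on the fibres.

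The main obstacle is the inclusion $T^h\subseteq\ker d\rho_F$: it rests on the product form of $\rho$ in the local charts together with the fact that the fibre connected sum defining $\ring{W}_f$ is carried out so that the gluing biholomorphisms between the $E_i$'s preserve $\rho_F$ — in other words, that the symplectic connection is adapted to $\rho_F$. Once this is granted, completeness is immediate, and the splitting of $\omega_{\ring{W}}$ together with the $T^2$-invariance of the connection are purely formal. Since parallel transport is only considered over compact paths in the smooth locus, the behaviour of $\rho_F$ near the singular fibres and near the ends of the base plays no role.
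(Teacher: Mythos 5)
Your $T^2$-equivariance argument is fine, and the overall strategy (find a quantity conserved by parallel transport whose level sets in each fibre are compact, deduce completeness of the horizontal lifts) is the right one; but the conserved quantity you chose is wrong. The inclusion $T^h\subseteq\ker d\rho_F$ fails: $\rho_F$ is \emph{not} preserved by symplectic parallel transport. The point is that the elementary pieces are modelled on the conic fibration $(x,y,z)\mapsto xy$ on $\mathbb{C}^2\times\mathbb{C}^\ast$ with potential $|x|^2+|y|^2+|z|^2+|z|^{-2}$, and this is not a symplectic product over the base, not even after deleting the singular fibre. Writing $w=xy$, the $\omega$-horizontal lift of $\partial_w+\partial_{\bar w}$ is $\bigl(\bar y\partial_x+\bar x\partial_y+y\partial_{\bar x}+x\partial_{\bar y}\bigr)/(|x|^2+|y|^2)$, and pairing this with $d(|x|^2+|y|^2)$ gives $4\operatorname{Re}(w)/(|x|^2+|y|^2)\neq0$ in general; more invariantly, on the fibre $\{xy=w\}$ one has $(|x|^2+|y|^2)^2=(|x|^2-|y|^2)^2+4|w|^2$, and parallel transport preserves $|x|^2-|y|^2$ while $|w|$ of course varies along the path. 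So your ``one-line computation'' identifying $T^h$ with the base coordinate directions presupposes a trivialisation in which $\rho_F$ is pulled back from the fibre, and no such trivialisation exists in these charts.

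The correct conserved quantity is the moment map $\mu=(\mu_1,\mu_2)$ of the fibrewise Hamiltonian $T^2$-action: its generating vector fields $X_{\mu_i}$ are vertical because the action preserves the fibres, so for $v\in T^h=(T^v)^{\omega_{\ring{W}}}$ one gets $d\mu_i(v)=\omega_{\ring{W}}(X_{\mu_i},v)=0$. Each level set of $\mu$ inside a smooth fibre $\cong T^\ast T^2$ is a compact torus orbit, so a horizontal lift over a compact path $\gamma$ stays in the compact set $\mu^{-1}(\mu_0)\cap\ring{\pi}^{-1}(\gamma([0,1]))$, which yields completeness; your equivariance argument (the action preserves $\omega_{\ring{W}}$ and the fibration, hence $T^v$, hence $T^h$) then applies verbatim. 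Note also that establishing equivariance first makes the conservation of $\mu$ immediate. This repaired argument is essentially the one behind the paper's proof, which simply patches the $T^2$-equivariant parallel transports of the elementary pieces $E_i$ as in \cite{sw}, Lemmas 4.1 and 4.5, using triviality of the monodromy around the origin for global consistency of the patching.
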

\begin{proof}
This is similar to \cite{sw}, Lemma 4.1, see also Lemma 4.5 loc. cit. The global parallel transport is obtained by viewing $\ring{W}_f$ as a fiber sum of elementary Morse-Bott fibrations $\pi_i:E_i\rightarrow\mathbb{C}$ and patching together locally defined parallel transport maps, which are $T^2$-equivariant. Here we use the fact that the monodromy of $\ring{\pi}$ around the origin is trivial.
\end{proof} 

The following is a generalization of \cite{kss}, Lemma 3.3.

\begin{lemma}\label{lemma:plumbing}
The Weinstein manifold $\ring{W}_f$ is symplectically equivalent to the affine $A_n$ plumbing of the cotangent bundles $T^\ast Q_0,\cdots,T^\ast Q_n$ along circles.
\end{lemma}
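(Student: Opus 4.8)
The plan is to show that the skeleton $\mathrm{Sk}(\ring{W}_f)$ coincides with the cyclic chain $Q_0\cup\cdots\cup Q_n$ of closed $3$-manifolds glued along the circles $Z_i=Q_i\cap Q_{i+1}$, and that in a neighbourhood of this skeleton the Weinstein structure is precisely the one carried by the affine $A_n$ plumbing of the $T^\ast Q_i$. Granting this, a neighbourhood of $\mathrm{Sk}(\ring{W}_f)$ is Weinstein-isomorphic to a neighbourhood of the skeleton of the plumbing, so by the standard principle that a Weinstein manifold is determined up to symplectomorphism by the germ of its structure near the skeleton, the completions are symplectomorphic. To set things up I would choose the matching arcs $\alpha_i\subset\mathbb{C}^\ast$ to be the arcs of the unit circle $\{|z_3|=1\}$; since all the critical values $c_0,\dots,c_n$ lie on this circle, $\bigcup_i\alpha_i=\{|z_3|=1\}$, and this is exactly the point at which the cyclic shape of the plumbing diagram gets built in.

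For the skeleton computation I would split the Liouville vector field according to $\rho=\rho_F+\ring{\pi}^\ast\rho_B$. The base function $\rho_B=2(|z_3|^2+|z_3|^{-2})$ attains its minimum precisely along $\{|z_3|=1\}$, so the base component of the Liouville flow contracts $\mathbb{C}^\ast$ onto the unit circle; by Lemma \ref{lemma:parallel} the flow is compatible with parallel transport, and over the complement of the $c_i$ the fibrewise function $\rho_F$ contracts $T^\ast T^2$ onto its zero section, so there the skeleton is swept out by the regular parts of the $Q_i$. Near a critical value $c_{i+1}$ I would pass to the elementary Morse--Bott model $\pi_{i+1}\colon E_{i+1}=\mathbb{C}^2_{x,y}\times\mathbb{C}^\ast_z\to\mathbb{C}$, $(x,y,z)\mapsto xy$, after a fibrewise symplectomorphism of $T^\ast T^2$ carrying the vanishing cycle $k_{i+1}a\pm l_{i+1}b$ to a standard coordinate circle; a direct computation in this model shows that the local skeleton is the union of the two Lagrangian solid tori obtained by transporting the collapsed zero section along the two arcs $\alpha_i,\alpha_{i+1}$ incoming at $c_{i+1}$, meeting cleanly along $\{x=y=0\}\times S^1=Z_i$, whose normal bundle in each solid torus is the trivial rank-$2$ bundle. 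Patching these local pictures around the cyclic diagram — using that the monodromy of $\ring{\pi}$ around the origin is trivial, cf. Lemma \ref{lemma:parallel} — yields $\mathrm{Sk}(\ring{W}_f)=Q_0\cup\cdots\cup Q_n$ and simultaneously exhibits each $Q_i$ as two solid tori glued along their boundary $T^2$, recovering the asserted diffeomorphism type.

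It then remains to upgrade this into an identification with the plumbing. Each $Q_i$ is a closed exact Lagrangian, so the Weinstein neighbourhood theorem gives a symplectomorphism from a neighbourhood of $Q_i$ in $\ring{W}_f$ onto a disc cotangent bundle $D^\ast Q_i$. In the local model $E_{i+1}$, a neighbourhood of $Z_i$ is symplectomorphic to $T^\ast(D^2\times S^1)\#_{Z_i,\eta_i}T^\ast(D^2\times S^1)$, where $\eta_i\in\mathbb{Z}$ is the identification of normal bundles of $Z_i$ determined by the local monodromy around $c_{i+1}$; one reads $\eta_i$ off explicitly from the pair $(k_i,\pm l_i)$, $(k_{i+1},\pm l_{i+1})$, and it is the same integer that governs the Heegaard description of $Q_i$. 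Gluing the cotangent neighbourhoods $D^\ast Q_i$ along these $Z_i$-neighbourhoods produces a Weinstein domain, deformation-equivalent to the given Liouville domain of $\ring{W}_f$ and by construction equal to the affine $A_n$ plumbing of the $T^\ast Q_i$; passing to completions gives the claimed symplectic equivalence.

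The main obstacle is the local analysis at the critical circles in the second step: one must check carefully that the skeleton really closes up to $\bigcup_i Q_i$ with no extra strata arising from the singular-fibre behaviour of $\rho_F$ (which forces the nonstandard extension of $\rho_F$ across the singular fibres described in Section \ref{section:plumbing}), and that the locally defined plumbing identifications $\eta_i$ are mutually compatible as one traverses the cyclic diagram, so that they assemble into an honest affine $A_n$ plumbing rather than a twisted version of it. The triviality of the monodromy of $\ring{\pi}$ around the origin is precisely what makes this global patching possible; modulo these points the argument is a routine generalization of \cite{sw}, Lemma 4.5 and \cite{kss}, Lemma 3.3.
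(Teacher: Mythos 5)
Your outline follows the paper's strategy (split the Liouville field into fiber and base components, locate the skeleton, identify it with $\bigcup_iQ_i$), but the key step in the base direction contains a genuine error. You assert that the base component of the Liouville flow is governed by $\rho_B=2\bigl(|z_3|^2+|z_3|^{-2}\bigr)$ alone, so that the skeleton projects onto the unit circle and the matching arcs $\alpha_i$ may be taken to be arcs of $\{|z_3|=1\}$. This decoupling fails: the fiberwise potential $\rho_F$, restricted to the locus $\{|z_1|=|z_2|=1\}$ where $Z_F$ vanishes, is \emph{not} constant along the base. Near a critical value $c_i$ the local model replaces the fiber coordinate by $x,y$ with $xy$ equal to the local base coordinate, and on $\{|x|=|y|\}$ one has $|x|^2+|y|^2=2|xy|=2|z_3-c_i|$; summing these contributions, the effective potential on the candidate skeleton locus is $\rho(z_3)=2\sum_{i=0}^n|z_3-c_i|+2\bigl(|z_3|^2+|z_3|^{-2}\bigr)$, which is equation (\ref{eq:potential}) in the paper. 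The residual Liouville flow is the gradient flow of this function, not of $\rho_B$; the unit circle is not invariant under it, and the correct picture is that there are $n+1$ saddle points $s_i$ with $\tfrac12<|s_i|<1$ lying strictly inside the unit circle, with the matching path $\alpha_i$ the concatenation of the flow lines from $s_i$ to $c_i$ and $c_{i+1}$. Your own closing paragraph flags the ``nonstandard extension of $\rho_F$ across the singular fibres'' as a point needing care, but does not register that this extension is precisely what invalidates the unit-circle claim on which your skeleton computation rests.

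The downstream conclusion is unaffected in the end (the $Q_i$ are still matching cycles over arcs joining $c_i$ to $c_{i+1}$, isotopic to the ones you describe), and your additional material in the second and third paragraphs --- the local solid-torus picture near $Z_i$, the identification of the normal-bundle parameter $\eta_i$, and the Weinstein-neighbourhood gluing --- is a reasonable and somewhat more detailed route to the plumbing identification than the paper's terse ``the skeleton is $\bigcup_iQ_i$, therefore $\ring{W}_f$ is the plumbing.'' But as written the argument does not establish what the skeleton is, which is the substantive content of the lemma; you need to replace the $\rho_B$-only analysis with the critical-point analysis of the full restricted potential (\ref{eq:potential}).
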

\begin{proof}
The structure of the Morse-Bott fibration $\ring{\pi}:\ring{W}_f\rightarrow\mathbb{C}^\ast$ induces in a natural way the splitting of the Liouville vector field $Z=Z_F+Z_B$ away from the critical loci of $\ring{\pi}$.

When restricted to a smooth fiber $F$ of $\ring{\pi}$, the vector field $Z_F$ is dual to the Liouville $1$-form $i^\ast\theta_{\ring{W}}$, where $i:F\hookrightarrow\ring{W}_f$ is the inclusion of the fiber. By our construction of the K\"{a}hler potential $\rho$, it follows that
\begin{equation}
i^\ast\theta_{\ring{W}}=-i^\ast\left(d^\mathbb{C}\left(|z_1|^2+|z_2|^2+\frac{1}{|z_1|^2}+\frac{1}{|z_2|^2}\right)\right).
\end{equation}
Thus $Z_F$ vanishes whenever $|z_1|=|z_2|=1$, and its flow scales $\log|z_i|$ over positive time for $i=1,2$.

Symplectic parallel transport is well-defined by Lemma \ref{lemma:parallel}, and it is compatible with the fiberwise Hamiltonian $T^2$-action on $\ring{\pi}:\ring{W}_f\rightarrow\mathbb{C}^\ast$, preserving $|z_1|$ and $|z_2|$. In particular, any point in $\ring{W}_f$ which does not satisfy $|z_1|=|z_2|=1$ flows off to infinity in the fiber direction, so the Lagrangian skeleton $\mathrm{Sk}(\ring{W}_f)$ is contained in the subset $\left\{|z_1|=|z_2|=1\right\}\subset\ring{W}_f$.

To determine $\mathrm{Sk}(\ring{W}_f)\subset\ring{W}_f$, it remains to analyze the Liouville flow on $\left\{|z_1|=|z_2|=1\right\}$. $T^2$-equivariance implies that the symplectic form $\omega_{\ring{W}}$ restricted to $\left\{|z_1|=|z_2|=1\right\}$ is the pullback of a symplectic form $\omega_B$ on the base $\mathbb{C}^\ast$. By construction, the K\"{a}hler potential $\rho$ restricted to $\left\{|z_1|=|z_2|=1\right\}$ is given by
\begin{equation}\label{eq:potential}
\rho(z_3)=2\sum_{i=0}^n|z_3-c_i|+2\left(|z_3|^2+\frac{1}{|z_3|^2}\right).
\end{equation}
On $\left\{|z_1|=|z_2|=1\right\}$, $Z=Z_B$ is the gradient vector field of $\rho(z_3)$ with respect to the restriction of $\omega_{\ring{W}}$ to $\left\{|z_1|=|z_2|=1\right\}$. This is the same as the pullback of the gradient flow of $\rho(z_3)$ on $\mathbb{C}^\ast$ with respect to the metric associated to $\omega_B$. See Figure \ref{fig:gradient} for a plot of the vector field $\nabla\rho(z_3)$ when $n=3$.

Recall that the $c_i$'s are $n+1$ roots of unity. Direct computation shows that the critical points of (\ref{eq:potential}) satisfy the equation
\begin{equation}\label{eq:crit}
\sum_{i=0}^n\frac{z_3-c_i}{|z_3-c_i|}=2z_3\left(\frac{1}{|z_3|^4}-1\right),
\end{equation}
from which it is clear that the set critical points of $\rho(z_3)$ is symmetric under the rotation by multiples of $\frac{2\pi}{n+1}$. Although the closed forms of the critical points are not available, by analyzing the equations (\ref{eq:potential}) and (\ref{eq:crit}), one can show that there exist $n+1$ saddle points $s_0,\cdots,s_n\in\mathbb{C}^\ast$ with angular coordinates $\arg(c_i)<\arg(s_i)<\arg(c_{i+1\textrm{ mod }n+1})$ and modulus $\frac{1}{2}<|s_i|<1$ for $i=0,\cdots,n$. After a small perturbation of $\omega_B$, there are unique gradient flow lines from $s_i$ to $c_i$ and $c_{i+1}$, whose concatenation $\alpha_i\subset\mathbb{C}^\ast$ gives the matching path for the Lagrangian core $Q_i$. This proves that $\mathrm{Sk}(\ring{W}_f)=\bigcup_{i=0}^nQ_i$, therefore $\ring{W}_f$ is the plumbing of $T^\ast Q_0,\cdots,T^\ast Q_n$ along circles.
\end{proof}

\begin{figure}
	\centering
	\includegraphics[width=0.7\linewidth]{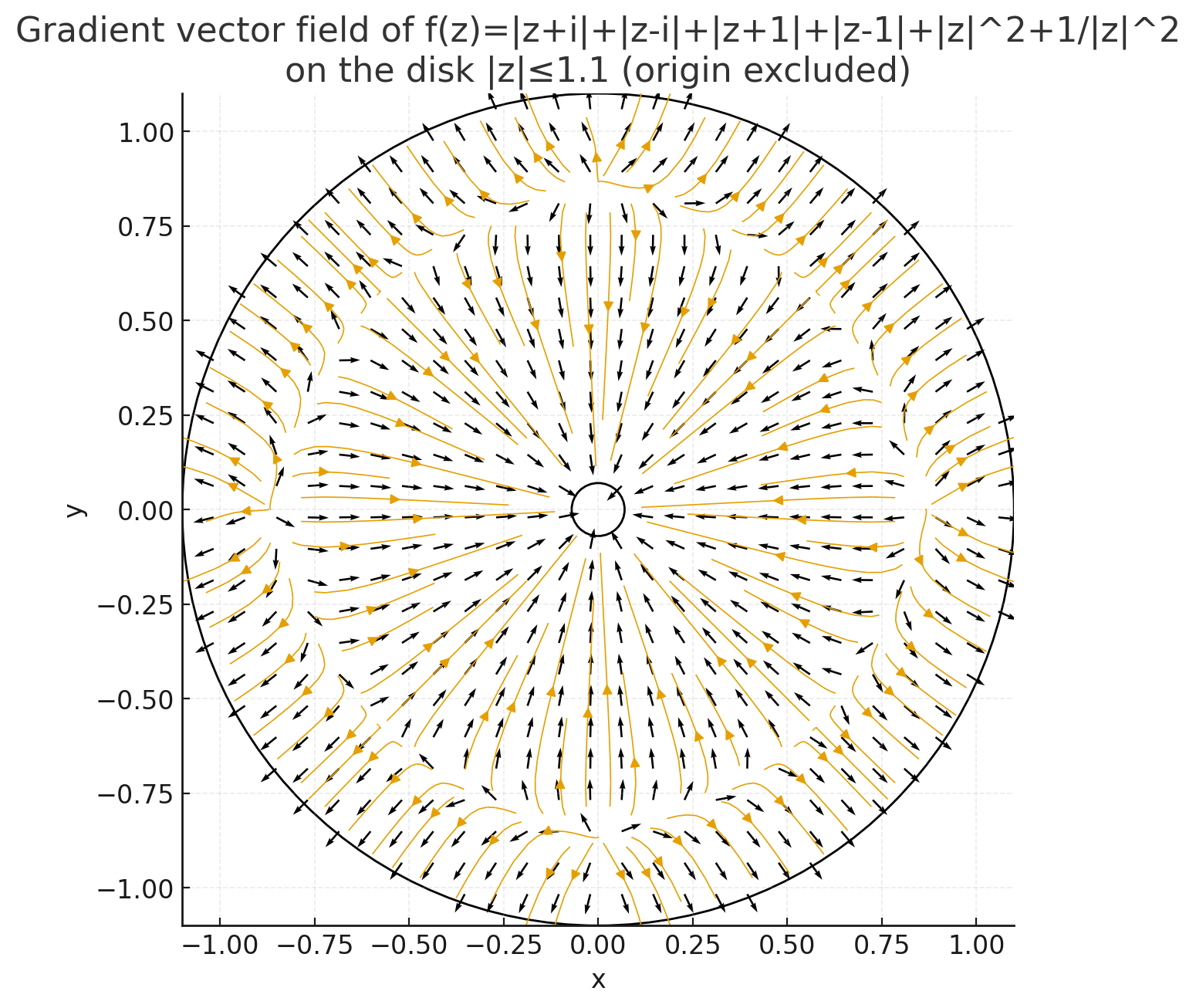}
	\caption{Gradient vector field and flow lines (in orange) of the function (\ref{eq:potential}) when $n=3$ (picture created by ChatGPT)}
	\label{fig:gradient}
\end{figure}

Let $Q_i,Q_{i+1\textrm{ mod }n+1}\subset\ring{W}_f$ be two adjacent Lagrangian lens spaces in the affine $A_n$ plumbing. In this specific setting, up to Hamiltonian isotopy the Morse-Bott surgery $K\subset\ring{W}_f$ is one of the two (non-isotopic) matching cycles associated to the paths connecting the critical values $c_i$ to $c_{i+2\textrm{ mod }n+1}$ of $\ring{\pi}$, which is necessarily diffeomorphic to $L(p,q)$ for some coprime $(p,q)\in\mathbb{Z}_{\geq0}^2$ (recall that $S^1\times S^2=L(0,1)$). The same is true for Morse-Bott surgeries of more than two Lagrangian matching cycles.

\begin{remark}
The circle $Z_i=Q_i\cap Q_{i+1}$ is in general not an unknot in $Q_i$ (or $Q_{i+1}$), in the sense that it does not bound an embedded disc. It is an unknot in $Q_i$ (or $Q_{i+1}$), however, when $Q_i$ (or $Q_{i+1}$) is diffeomorphic to $S^3$.
\end{remark}

Consider the partial compactification $W_f$ of $\ring{W}_f$ by filling in a smooth fiber $(\mathbb{C}^\ast)^2$ above the origin of the Morse-Bott fibration $\ring{\pi}:\ring{W}_f\rightarrow\mathbb{C}$. Without changing the complex structure on $W_f$, we can deform the Morse-Bott fibration $\pi:W_f\rightarrow\mathbb{C}$ obtained by filling in the origin of $\ring{\pi}$ so that the new critical values $c_0',\cdots,c_n'$ now lie on the real line. Equip $W_f$ with a symplectic form $\omega_W$ coming from the K\"{a}hler potential $\rho'=\rho_F+\pi^\ast\rho_\mathbb{C}$, where $\rho_F:W_f\rightarrow(0,\infty)$ is the fiberwise potential defined in the same way as above, which takes the form (\ref{eq:pf}) when restricted to a smooth fiber of $\pi:W_f\rightarrow\mathbb{C}$, and $\rho_\mathbb{C}=2|z_3|^2$, we see that $W_f$ is a Weinstein manifold. Similar argument as in the proof of Lemma \ref{lemma:plumbing} implies the following.

\begin{lemma}\label{lemma:plumbing1}
The Weinstein manifold $W_f$ is symplectically equivalent to the $A_n$ plumbing of the cotangent bundles $T^\ast Q_1,\cdots,T^\ast Q_n$ along circles.
\end{lemma}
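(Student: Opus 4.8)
The plan is to mimic the proof of Lemma \ref{lemma:plumbing} step by step, with the only genuine change being the new base potential $\rho_\mathbb{C}=2|z_3|^2$ (instead of $\rho_B=2(|z_3|^2+|z_3|^{-2})$) and the fact that the base is now $\mathbb{C}$ rather than $\mathbb{C}^\ast$. First I would record that, exactly as before, the Morse-Bott fibration $\pi\colon W_f\to\mathbb{C}$ carries a fiberwise Hamiltonian $T^2$-action and that symplectic parallel transport is globally defined away from the critical values $c_0',\dots,c_n'$; the argument of Lemma \ref{lemma:parallel} applies verbatim, since filling in the origin with a smooth fiber only makes the statement easier (there is no longer a puncture whose monodromy must be controlled). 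Next I would split the Liouville vector field as $Z'=Z_F+Z_B$ away from the critical loci. The fiberwise piece $Z_F$ is unchanged: it is dual to $i^\ast\theta_{W}=-i^\ast d^\mathbb{C}\rho_F$, hence vanishes exactly on $\{|z_1|=|z_2|=1\}$ and pushes every point with $|z_1|\ne 1$ or $|z_2|\ne 1$ off to infinity. Combined with the $T^2$-equivariance of parallel transport (which preserves $|z_1|,|z_2|$), this shows $\mathrm{Sk}(W_f)\subset\{|z_1|=|z_2|=1\}$, just as in Lemma \ref{lemma:plumbing}.

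The remaining work is to analyze the Liouville flow on $\{|z_1|=|z_2|=1\}$. By $T^2$-equivariance, $\omega_W$ restricts there to the pullback of a symplectic form $\omega_\mathbb{C}$ on $\mathbb{C}$, and the Kähler potential restricts to
\begin{equation}
\rho(z_3)=2\sum_{i=0}^n|z_3-c_i'|+2|z_3|^2,
\end{equation}
so that $Z=Z_B$ there is (the pullback of) the gradient flow of this function on $\mathbb{C}$. I would then compute its critical points: the $n+1$ critical values $c_i'$ on the real line are minima of the smooth part and produce conical singularities of $\rho$, while the $+2|z_3|^2$ term forces the flow to be inward at infinity, so between consecutive $c_i'$ and $c_{i+1}'$ there is exactly one saddle point $s_i$ (for $i=1,\dots,n$), and these are the only critical points other than the $c_i'$. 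After a small perturbation of $\omega_\mathbb{C}$ there is, for each $i=1,\dots,n$, a unique pair of gradient trajectories from $s_i$ down to $c_i'$ and to $c_{i+1}'$, whose concatenation $\alpha_i\subset\mathbb{C}$ is a matching path for the Lagrangian $Q_i$. Hence $\mathrm{Sk}(W_f)=\bigcup_{i=1}^n Q_i$, and $W_f$ is the $A_n$ plumbing of $T^\ast Q_1,\dots,T^\ast Q_n$ along the circles $Z_i=Q_i\cap Q_{i+1}$.

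The main obstacle — really the only point requiring care rather than bookkeeping — is the Morse-theoretic count on the base: one must check that with the potential $2\sum_i|z_3-c_i'|+2|z_3|^2$ and the $c_i'$ real and (say) in increasing order, there are precisely $n$ saddles interleaving the $n+1$ conical minima, and no spurious critical points (in particular none off the real axis and none "outside" the outermost $c_i'$). This is where filling in the origin matters: since the base is now $\mathbb{C}$ and the potential $2|z_3|^2$ has a single honest minimum behavior at $0$ rather than the two-ended behavior of $2(|z_3|^2+|z_3|^{-2})$ on $\mathbb{C}^\ast$, the cyclic chain of $n+1$ saddles of Lemma \ref{lemma:plumbing} degenerates to a linear chain of $n$ saddles — which is exactly what is needed for an $A_n$ (rather than affine $A_n$) plumbing. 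I would make this rigorous either by a direct computation with the piecewise-smooth function above, or, more cleanly, by invoking the standard local model: near each $c_i'$ the elementary fibration is the $xy$-model $\mathbb{C}^2\times\mathbb{C}^\ast\to\mathbb{C}$ and the skeleton contribution is understood, while globally $W_f$ is a fiber connected sum of $n+1$ such elementary pieces arranged in a line, so the resulting plumbing diagram is the $A_n$ quiver.
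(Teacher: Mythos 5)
Your proposal is correct and follows essentially the same route as the paper: the paper's proof simply reruns the argument of Lemma \ref{lemma:plumbing} and observes that the gradient flow lines of $\rho'(z_3)=2\sum_{i=0}^n|z_3-c_i'|+2|z_3|^2$ shrink to the line segments joining the real critical values $c_0',\dots,c_n'$, which is exactly your saddle-point analysis on the base (your indexing of the saddles has a harmless off-by-one, since the $n$ saddles sit in the gaps between $c_{i}'$ and $c_{i+1}'$ for $i=0,\dots,n-1$).
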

\begin{proof}
Running the same argument as in the proof of Lemma \ref{lemma:plumbing}, we are reduced to studying the restriction of the K\"{a}hler potential $\rho'=\rho_F+\pi^\ast\rho_\mathbb{C}$ to the subset $\{|z_1|=|z_2|=1\}\subset W_f$, where $z_1$ and $z_2$ are complex coordinates on the fiber of $\pi:W_f\rightarrow\mathbb{C}$, which is given explicitly by
\begin{equation}
\rho'(z_3)=2\sum_{i=0}^n|z_3-c_i'|+2|z_3|^2.
\end{equation}
In this case, the gradient flow lines of $\rho'$ shrink to line segments connecting $c_0',\cdots,c_n'\in\mathbb{R}$ when $t\rightarrow-\infty$.
\end{proof}

\subsection{Crepant resolutions of $cA_n$ singularities}\label{section:resolution}

In this subsection we work in the algebraic setting and consider the crepant resolution of the $cA_n$ singularity
\begin{equation}
R_f=\frac{\mathbb{K}[u,v,x,y]}{uv-f(x,y)},
\end{equation}
where $f(x,y)=f_0(x,y)\cdots f_n(x,y)$ is defined as in (\ref{eq:f'}). We assume $\mathbb{K}$ to be an arbitrary field. When $\mathrm{char}(\mathbb{K})=0$, everything discussed here is well-known, see \cite{sks}. The story in the complete local case is similar.

\begin{lemma}\label{lemma:resolution}
Assume that (\ref{eq:distinct}) holds. The affine variety $\mathrm{Spec}(R_f)$ admits a specific crepant resolution $\phi:Y_f\rightarrow\mathrm{Spec}(R_f)$ containing a chain of rational curves $C_1,\cdots,C_n$ such that $C_i\cap C_{i+1}$ transversely at a unique point for $i=0,\cdots,n-1$ and $C_i\cap C_j=\emptyset$ if $|i-j|>1$. Moreover, each $C_i$ is either a $(-1,-1)$-curve or a $(0,-2)$-curve.
\end{lemma}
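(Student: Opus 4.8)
The strategy is to build $\phi\colon Y_f\to\mathrm{Spec}(R_f)$ by hand, as an iterated blow-up of explicit (non-Cartier) divisorial ideals, so that every claim in the statement becomes a finite calculation in affine charts with coefficients in the prime field of $\mathbb{K}$; the characteristic-zero description of \cite{sks} then serves only as a guide, and nothing is lost when $\mathrm{char}(\mathbb{K})>0$. Concretely, set $g_j:=f_0f_1\cdots f_{j-1}$ for $j=1,\dots,n$ and let $Y_f$ be the closure in $\mathrm{Spec}(R_f)\times(\mathbb{P}^1)^n$ of the graph of the rational map $(u,v,x,y)\mapsto\bigl([u:g_1],\dots,[u:g_n]\bigr)$, which is defined wherever $u\neq0$; equivalently, $Y_f$ is obtained from $\mathrm{Spec}(R_f)$ by successively blowing up the Weil divisors $\{u=g_j=0\}$, $j=1,\dots,n$. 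The morphism $\phi$ is the projection, which is projective, birational, and an isomorphism away from $\{u=v=0\}$, and the ordering of the factorisation fixed in (\ref{eq:f}) is exactly what pins down this particular resolution.

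Next I would spell out an affine cover $Y_f=W_0\cup\cdots\cup W_n$, where $W_j$ is the chart on which the first $j$ of the homogeneous coordinates are normalised one way and the remaining $n-j$ the other way. This is the step that uses Assumption (\ref{eq:kl}): since each $f_i=y^{k_i}\pm x^{l_i}$ has $k_i=1$ or $l_i=1$, in each chart one of the defining equations can be solved for $y$ (resp.\ $x$) as a polynomial in the remaining coordinates, and a direct computation identifies each $W_j$ with affine $3$-space $\mathbb{A}^3_{\mathbb{K}}$. Hence $Y_f$ is smooth over any field, and in particular normal; and since (\ref{eq:distinct}) forces the fibres of $\phi$ to be finite unions of points and copies of $\mathbb{P}^1$, the morphism $\phi$ is small. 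The crepancy $K_{Y_f}=\phi^\ast K_{\mathrm{Spec}(R_f)}$ then follows either formally (a small birational morphism between normal Gorenstein varieties is crepant) or---and this is the verification I would actually record---by checking chart by chart that the pullback of the Gorenstein volume form $\Omega=(du\wedge dx\wedge dy)/u$ extends to a nowhere-vanishing $3$-form on $Y_f$, which at the same time exhibits $Y_f$ as Calabi-Yau.

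It remains to read off $\phi^{-1}(0)$. Writing $h_j:=f_j\cdots f_n$, over the locus $\{u=v=0,\ g_j=h_j=0\}$ the $j$-th $\mathbb{P}^1$-factor is unconstrained, and following this through the two charts $W_{j-1}$ and $W_j$ that meet it produces a smooth rational curve $C_j\cong\mathbb{P}^1$. Comparing the charts shows that $C_j$ and $C_{j+1}$ meet transversally in a single reduced point, while $C_j\cap C_k=\emptyset$ when $|j-k|\geq2$, so $\phi^{-1}(0)=C_1\cup\cdots\cup C_n$ is a transverse chain of rational curves. Finally, the normal bundle $N_{C_j/Y_f}$ is computed from the transition functions of those two charts: since $Y_f$ is Calabi-Yau one has $\deg N_{C_j/Y_f}=-2$, and the computation returns $\mathcal{O}(-1)\oplus\mathcal{O}(-1)$ when the branch of $\{g_j=0\}$ and the branch of $\{h_j=0\}$ through the relevant node meet transversally there, and $\mathcal{O}\oplus\mathcal{O}(-2)$ when they are tangent; the smoothness of each individual curve $\{f_i=0\}$ excludes $\mathcal{O}(1)\oplus\mathcal{O}(-3)$ and anything worse.

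The main obstacle here is bookkeeping rather than ideas: one has to set up the charts $W_j$ and their transition maps precisely enough that, first, the isomorphisms $W_j\cong\mathbb{A}^3$ hold uniformly in $\mathrm{char}(\mathbb{K})$---this is exactly the point at which the hypothesis (\ref{eq:kl}) must not be dropped---and, second, the normal-bundle computation produces the clean $(-1,-1)$ / $(0,-2)$ dichotomy rather than something coarser. A secondary point worth making explicit is that, since resolution of singularities is unavailable in positive characteristic, crepancy has to be deduced from the explicit model---via smallness and normality, or via the extension of $\Omega$---and cannot simply be quoted.
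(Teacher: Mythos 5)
Your construction is the standard one and most of it goes through: the one\-/shot graph closure agrees with the paper's iterated blow\-/up of the ideals $(u,f_0),(u,f_1),\dots$; the middle chart $W_j$ is $\mathrm{Spec}\,\mathbb{K}[x,y,T,S]/\bigl(TS-f_j(x,y)\bigr)$ with $T=u/g_j$ and $S=g_{j+1}/u$, which is indeed $\mathbb{A}^3$ exactly because (\ref{eq:kl}) lets you eliminate $y$ or $x$; and deducing crepancy from smallness (or from the extension of the volume form) is a legitimate, characteristic\-/free alternative to the paper's route, which instead verifies $R\phi_\ast\mathcal{O}_{Y_f}=\mathcal{O}_{\mathrm{Spec}(R_f)}$ by Zariski's main theorem and \v{C}ech computations and determines the normal bundles by induction, reducing to the residual isolated $cA_1$ singularity $vT'=f_{n-1}f_n$.

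The gap is in the final dichotomy. Your criterion --- transversality at ``the relevant node'' of ``the branch of $\{g_j=0\}$'' and ``the branch of $\{h_j=0\}$'' --- is not well defined: all branches $\{f_0=0\},\dots,\{f_{j-1}=0\}$ of $g_j$ and all branches of $h_j$ pass through the same point, so for $2\leq j\leq n-1$ there is no single branch to choose; and if one reads the criterion as transversality of the divisors $\{g_j=0\}$ and $\{h_j=0\}$, it fails outright, since for $j\geq 2$ the curve $\{g_j=0\}$ is already singular at that point and the two divisors are never transverse there, which would force every middle $C_j$ to be a $(0,-2)$-curve (false, e.g., for $f=(y-x)(y+x)(y-x^2)$ in characteristic $\neq 2$, where all three curves in the chain are $(-1,-1)$). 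The correct statement --- which is what both the paper's induction and your own transition\-/function computation actually yield --- is that only the two \emph{adjacent} factors matter: $C_j$ meets only the charts $W_{j-1}$ and $W_j$, and in the normal coordinates the transition matrix is lower triangular with diagonal entries $1$ and $S^2$ (where $S=g_j/u$ restricts to an affine coordinate on $C_j$) and off\-/diagonal entry $c\,S$, with $c$ equal, up to a unit, to the Jacobian of $(f_{j-1},f_j)$ at the point. Hence $C_j$ is a $(-1,-1)$-curve precisely when $f_{j-1}$ and $f_j$ meet transversally, i.e.\ $\dim_\mathbb{K}\mathbb{K}[x,y]/(f_{j-1},f_j)=1$, and a $(0,-2)$-curve otherwise; the triangular shape of the matrix (a consequence of $x$ surviving as a coordinate in both charts, i.e.\ of (\ref{eq:kl})) is also the clean reason $(1,-3)$ and worse cannot occur. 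Identifying \emph{which} branches govern $N_{C_j/Y_f}$ is the substantive content of this step, so the chart computation has to be carried out rather than gestured at.
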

\begin{proof}
One can argue similarly as in \cite{sw}, Lemma 3.1. In our case, however, we shall use in addition the fact that
\begin{equation}\label{eq:cA1}
\varphi_k:Z_k\rightarrow\mathrm{Spec}\left(\frac{\mathbb{K}[u,v,x,y]}{uv-x(x+y^k)}\right),
\end{equation}	
which is the blow-up of the ideal $(u,x)$, is a projective birational morphism with a single $(-2,0)$-curve as the exceptional locus when $k>1$, regardless of $\mathrm{char}(\mathbb{K})$. This is proved in the same way as in the case $k=1$, which is done in \cite{sw}, Lemme 3.1, by applying Zariski's main theorem and writing down the \v{C}ech complex for $\mathcal{O}_{Z_k}$, since the partial resolution $\varphi_k$ is obtained by blowing up the same ideal.
	
Without loss of generality, we can assume $n\geq2$. To construct the specific crepant resolution $\phi$, we first blow up the ideal $(u,f_0)$ to obtain $\phi_1:Y_1\rightarrow\mathrm{Spec}(R_f)$, where the variety $Y_1$ is covered by two open charts
\begin{equation}
\mathrm{Spec}\left(\mathbb{K}[u,x,S]\right)\textrm{ and }\mathrm{Spec}\left(\frac{\mathbb{K}[v,x,y,T]}{\left(vT-f_1(x,y)\cdots f_n(x,y)\right)}\right),
\end{equation}
where $S=\frac{f_0}{u}$ and $T=\frac{u}{f_0}$ patch to define a $\mathbb{P}^1\subset Y_1$. Since $\mathrm{Spec}(R_f)$ is normal, Zariski's main theorem combined with \v{C}ech cohomology calculations show that the morphism $\phi_1$ is crepant, i.e. $R(\phi_1)_\ast\mathcal{O}_{Y_1}=\mathcal{O}_{\mathrm{Spec}(R_f)}$.

One then blows up the ideals $(u,f_i)$ for $i=1,\cdots,{n-1}$ to get a sequence of birational morphisms
\begin{equation}
Y_f\xrightarrow{\phi_n}Y_{n-1}\xrightarrow{\phi_{n-1}}\cdots\xrightarrow{\phi_1}\mathrm{Spec}(R_f),
\end{equation}
whose composition $\phi=\phi_n\circ\cdots\circ\phi_1:Y_f\rightarrow\mathrm{Spec}(R_f)$ gives the desired crepant resolution of the $cA_n$ singularity $R_f$. Note that the crepancy of $\phi$ follows from the crepancy of each individual birational morphism $\phi_i$, where $1\leq i\leq n$, which is proved using Zariski's main theorem and \v{C}ech cohomology calculations.

The partial resolution $Y_{n-1}$ is covered by two affine charts 
\begin{equation}
\mathrm{Spec}\left(\mathbb{K}[u,x,S']\right)\textrm{ and }
\mathrm{Spec}\left(\frac{\mathbb{K}[v,x,y,T']}{vT'-f_{n-1}(x,y)f_n(x,y)}\right),
\end{equation}
where $S'=\frac{f_{n-2}}{u}$ and $T'=\frac{u}{f_{n-2}}$. In $Y_{n-1}$, the coordinates $S'$ and $T'$ patch together to an exceptional curve $\mathbb{P}^1\subset Y_{n-1}$, which passes through the node $\left\{vT=f_{n-1}(x,y)f_n(x,y)\right\}$ in the second chart. Arguing by induction, we see that in the full resolution $Y_f$ we have a chain of transversely intersecting $\mathbb{P}^1$'s, which we denote by $C_1,\cdots,C_n$.

To find the normal bundles of $C_1,\cdots,C_n\subset Y_f$, note that the last of which, $C_n$, comes from resolution of the $cA_1$ singularity $\frac{\mathbb{K}[v,x,y,T']}{vT'-f_{n-1}(x,y)f_n(x,y)}$, which is isolated by our assumption. Since any isolated $cA_1$ singularity is equivalent to $\frac{\mathbb{K}[u,v,x,y]}{uv-x(x+y^k)}$ for some $k\geq1$, $C_n$ is a $(-1,-1)$-curve when $k=1$, and a $(0,-2)$-curve when $k>1$. Contracting $C_n$ results in a $cA_{n-1}$ singularity $\frac{\mathbb{K}[u,v,x,y]}{(uv-f_1(x,y)\cdots f_{n-1}(x,y))}$. We can then argue inductively to determine the normal bundles of $C_1,\cdots,C_{n-1}$.
\end{proof}

\begin{remark}
Taking into account of the data on the mirror side, one can prove that the exceptional curve $C_i\subset Y_f$ is a $(-1,-1)$-curve if the corresponding spherical Lagrangian submanifold $Q_i\subset\ring{W}_f$ is diffeomorphic to $S^3$, otherwise it is a $(0,-2)$-curve. 

For simplicity and for our applications in mind, we have assumed in the above that the $cA_n$ singularity $R_f$ is isolated. When it is not isolated, since in our case the singularity $R_f$ is of a specific form, a similar argument as above still works and produces a crepant resolution $Y_f\rightarrow\mathrm{Spec}(R_f)$ (cf. the $n=0$ case of \cite{sw}, Lemma 3.1).
\end{remark}

\subsection{Seidel representation}\label{section:Seidel}

Let $\mathbb{K}$ be an arbitrary field. For a closed symplectic manifold $X$ satisfying certain monotonicity conditions, Seidel defined in \cite{pspi} a group homomorphism
\begin{equation}
\mathcal{R}_X:\pi_1\left(\mathit{Ham}(X)\right)\rightarrow\mathit{QH}^0(X;\mathbb{K})^\times
\end{equation}
from the fundamental group of the space Hamiltonian diffeomorphisms of $X$ to invertible elements in even degree quantum cohomology. The map $\mathcal{R}_X$ is known as the \textit{Seidel representation}, and it defined via counting holomorphic sections of a symplectic fiber bundle $E\rightarrow S^2$ constructed by patching together two copies of trivial bundles $D^2\times X$, whose transition function is determined by some fixed element $g\in\pi_1\left(\mathit{Ham}(X)\right)$. An element $\mathcal{R}_X(g)\in\mathit{QH}^0(X;\mathbb{K})^\times$ is known as a \textit{Seidel element} associated to $g$.

There is an analogue of the map $\mathcal{R}_X$ above for Liouville manifolds, whose construction is due to Oh-Tanaka \cite{otc}, which takes the form
\begin{equation}\label{eq:SeiRep}
\mathcal{R}_M:\pi_1\left(\mathit{Aut}(M)\right)\rightarrow\mathit{SH}^0(M;\mathbb{K})^\times,
\end{equation}
where $\mathit{Aut}(M)$ is the topological group of graded Liouville automorphisms of $M$. Instead of mimicking Seidel's original construction, \cite{otc} produces a coherent action of $\mathit{Aut}(M)$ on the wrapped Fukaya category $\mathcal{W}(M;\mathbb{K})$ by studying the universal Liouville bundle $E\mathit{Aut}(M)\rightarrow B\mathit{Aut}(M)$, therefore simultaneously produces higher homotopical analogues of the map $\mathcal{R}_M$. As above, associated to each $g\in\pi_1\left(\mathit{Aut}(M)\right)$ there is a Seidel element $\mathcal{R}_M(g)\in\mathit{SH}^0(M;\mathbb{K})^\times$.

In general, if the Liouville manifold $M$ carries a Hamiltonian $S^1$-action $g$, the action will not be compatible with the Reeb flow on the ideal boundary $\partial_\infty M$, therefore the orbit of this action does not automatically give the Seidel element $\mathcal{R}(g)$. However, there are some exceptions. For example, when $M$ is not Liouville, but some non-exact convex symplectic manifolds like the total space of negative line bundles, the Hamiltonian $S^1$-action rotating the fibers of the line bundle is compatible with the Reeb flow on circle bundle, and the Seidel element is represented by a Hamiltonian $S^1$-orbit, see \cite{arf}. The situation treated in this paper is another example, where the fiberwise Hamiltonian circle actions of $\ring{\pi}:\ring{W}_f\rightarrow\mathbb{C}^\ast$ give rise to orbits of a particular wrapping Hamiltonian $H_W:\ring{W}_f\rightarrow[0,\infty)$ (see Section \ref{section:setup} below) that is adapted to the Morse-Bott fibration $\ring{\pi}$. Although $H_W$ differs from the quadratic Hamiltonians that are usually used to define the symplectic cohomology $\mathit{SH}^\ast(\ring{W}_f;\mathbb{K})$, its cofinality (cf. Lemma \ref{lemma:cofinal} in the open string case, for the closed string case, see \cite{jzp}, Lemma 5.3.7 for a similar situation) would imply that the Hamiltonian Floer cohomology of $H_W$ coincides with $\mathit{SH}^\ast(\ring{W}_f;\mathbb{K})$. In this paper, we will actually use an alternative argument, see Lemma \ref{lemma:vertical}.

\subsection{Legendrian surgery}\label{section:CE}

The completion of the wrapped Fukaya category and the general fact that the $A_\infty$-category of compact cores in a Weinstein manifold can be recovered from the wrapped Fukaya category are used in several places in this paper. These are originally considered from a Legendrian surgery perspective by Ekholm-Lekili \cite{ekle}. For the reader's convenience, we briefly recall some of their results in this subsection. We will work in the slightly more general set up when the skeleton consists of Lagrangian submanifolds intersecting with each other cleanly, although transverse intersection is assumed throughout in \cite{ekle}.

Recall that any $2n$-dimensional Weinstein manifold $W$ can be constructed from a subcritical Weinstein domain $\overline{W}_0$, by attaching Weinstein handles of index $n$ at the contact boundary $\partial\overline{W}_0$ along a link of Legendrian spheres. Fix a finite set $\Gamma$. For each $v\in\Gamma$, let $\overline{Q}_v\subset\overline{W}_0$ be an oriented connected Lagrangian submanifold whose boundary $\partial\overline{Q}_v\subset\partial\overline{W}_0$ is a Legendrian sphere $\Lambda_v$. For simplicity, we also assume that $\overline{Q}_v$ is \textit{Spin} for each $v\in\Gamma$, which is always true for our purposes since in this paper we are dealing with Weinstein $6$-manifolds. Moreover, we assume that the Lagrangian submanifolds $\{\overline{Q}_v\}_v\in\Gamma$ intersect with each other cleanly, and the intersections are contained in the interior of $\overline{W}_0$. It follows that $\Lambda=\bigsqcup_{v\in\gamma}\Lambda_v$ is a link of Legendrian spheres in $\partial\overline{W}_0$. Attaching $n$-handles to $\overline{W}_0$ along $\Lambda$, we get a Weinstein domain $\overline{W}$. By the works \cite{beee,cdrgg,ekle,gps}, it is now well-known that the wrapped Fukaya category $\mathcal{W}(W;\mathbb{K})$ can be computed through Legendrian surgery. In fact, let $\mathcal{R}$ be the set of Reeb chords with ends on $\Lambda$. As a $\mathbb{K}$-module, the Chekanov-Eliashberg dg algebra of $\Lambda\subset\partial\overline{W}_0$ is given by
\begin{equation}
\mathit{CE}^\ast(\Lambda):=\bigoplus_{i=0}^\infty\mathbb{K}\langle\mathcal{R}\rangle^{\otimes i}.
\end{equation}
The differential on $\mathit{CE}^\ast(\Lambda)$ counts anchored holomorphic disks with boundary punctures in the symplectization $\mathbb{R}\times\partial\overline{W}_0$. The boundary components of these holomorphic curves are mapped to $\mathbb{R}\times\Lambda$, and the asymptotics at the punctures are given by the Reeb chords in $\mathcal{R}$. See \cite{beee}, Section 4.1. It is clear that $\mathit{CE}^\ast(\Lambda)$ is a dg algebra over the semisimple ring $\Bbbk:=\bigoplus_{v\in\Gamma}\mathbb{K}e_v$, with $e_w\mathcal{R}e_v$ being the set of Reeb chords from starting $\Lambda_w$ and ending at $\Lambda_v$. The Lagrangian filling $\bigcup_{v\in\Gamma}\overline{Q}_v\subset\overline{W}_0$ of the Legendrian link $\Lambda$ induces an augmentation
\begin{equation}
\varepsilon_Q:\mathit{CE}^\ast(\Lambda)\rightarrow\Bbbk,
\end{equation}
making the Chekanov-Eliashberg dg algebra an augmented dg algebra.

Let $\{L_v\}_{v\in\Gamma}$ be the Lagrangian cocore discs of the Weinstein $n$-handles attached along $\Lambda\subset\partial\overline{W}_0$, which generate the wrapped Fukaya category $\mathcal{W}(W;\mathbb{K})$ \cite{cdrgg,gps}, so there is an equivalence
\begin{equation}\label{eq:generation}
D^\mathit{perf}\mathcal{W}(W;\mathbb{K})\simeq D^\mathit{perf}(\mathcal{W}_W)
\end{equation}
between the derived wrapped Fukaya category of $W$ and the derived category of perfect modules over the wrapped Fukaya $A_\infty$-algebra
\begin{equation}
\mathcal{W}_W:=\bigoplus_{v,w\in\Gamma}\mathit{CW}^\ast(L_v,L_w).
\end{equation}
By \cite{beee,ekle}, there is a surgery quasi-isomorphism
\begin{equation}\label{eq:surgery}
\mathcal{W}_W\simeq\mathit{CE}^\ast(\Lambda)
\end{equation}
between $A_\infty$-algebras over $\Bbbk$, which reduces the computation of the wrapped Fukaya category to the analysis of Morse flow trees \cite{tem}.

Equivalently, we can write $\mathit{CE}^\ast(\Lambda)$ as
\begin{equation}
\mathit{CE}^\ast(\Lambda)=\Bbbk\oplus\bigoplus_{i=1}^\infty\overline{\mathit{LC}}_\ast(\Lambda)[-1]^{\otimes_\Bbbk i},
\end{equation}
where by $\mathit{LC}_\ast(\Lambda)$ we mean the graded $\Bbbk$-bimodule generated by $\mathcal{R}$, with grading shifted down by $1$, and $\overline{\mathit{LC}}_\ast(\Lambda)\subset\mathit{LC}_\ast(\Lambda)$ is the submodule without the idempotents. Using the same holomorphic curves as in the definition of the differential on $\mathit{CE}^\ast(\Lambda)$ and the augmentation $\varepsilon_Q$, one can equip $\mathit{LC}_\ast(\Lambda)$ with the structure of an $A_\infty$-coalgebra. Now the dg algebra $\mathit{CE}^\ast(\Lambda)$ can be written as a cobar construction
\begin{equation}\label{eq:cobar}
\mathit{CE}^\ast(\Lambda)=\Omega\mathit{LC}_\ast(\Lambda).
\end{equation}
Define the \textit{Legendrian $A_\infty$-algebra} of $\Lambda$ to be the graded $\Bbbk$-linear dual
\begin{equation}
\mathit{LA}^\ast(\Lambda):=\mathit{LC}_\ast(\Lambda)^\#.
\end{equation}
Let $Q_v\subset W$ be the closed exact Lagrangian submanifold obtained as the union of $\overline{Q}_v$ and the Lagrangian core discs of the corresponding Weinstein $n$-handle. Consider the Fukaya $A_\infty$-algebra
\begin{equation}
\mathcal{Q}_W:=\bigoplus_{v,w\in\Gamma}\mathit{CF}^\ast(Q_v,Q_w),
\end{equation}
which is also augmented by the trivial projection to the idempotents.

\begin{proposition}\label{proposition:qis1}
There is a quasi-isomorphism $\mathcal{Q}_W\simeq\mathit{LA}^\ast(\Lambda)$ between $A_\infty$-algebras over $\Bbbk$.
\end{proposition}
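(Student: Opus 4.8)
The plan is to exhibit the quasi-isomorphism $\mathcal{Q}_W\simeq\mathit{LA}^\ast(\Lambda)$ as an instance of $A_\infty$-Koszul duality, using the descriptions of $\mathit{CE}^\ast(\Lambda)$ as a cobar construction (\ref{eq:cobar}) and of $\mathcal{W}_W$ via the surgery quasi-isomorphism (\ref{eq:surgery}). The key observation is that $\mathcal{W}_W\simeq\mathit{CE}^\ast(\Lambda)=\Omega\mathit{LC}_\ast(\Lambda)$ is, up to quasi-isomorphism, the cobar of the $A_\infty$-coalgebra $\mathit{LC}_\ast(\Lambda)$, so that the bar-cobar adjunction identifies the Koszul dual of $\mathcal{W}_W$ with $\mathit{LC}_\ast(\Lambda)$, hence its linear dual with $\mathit{LA}^\ast(\Lambda)$. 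On the symplectic side, the Fukaya $A_\infty$-algebra $\mathcal{Q}_W$ generated by the closed Lagrangians $Q_v$ is, by the generalized Eilenberg–Moore / wrap-up equivalence of Ekholm–Lekili \cite{ekle}, the Koszul dual (over the semisimple ring $\Bbbk$) of the wrapped algebra $\mathcal{W}_W$ of the cocore discs $L_v$; this is precisely the Lagrangian-filling analogue of the statement that the endomorphisms of a cotangent fiber are Koszul dual to the chains on the based loop space of the zero section. Combining these two identifications of "the Koszul dual of $\mathcal{W}_W$" gives the claimed quasi-isomorphism.

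Concretely I would proceed as follows. First, recall from \cite{ekle} that the augmented $A_\infty$-algebra $\mathcal{Q}_W=\bigoplus_{v,w}\mathit{CF}^\ast(Q_v,Q_w)$ and the augmented $A_\infty$-algebra $\mathcal{W}_W=\bigoplus_{v,w}\mathit{CW}^\ast(L_v,L_w)$ are related by the Koszul dualizing bimodule coming from the fact that each cocore $L_v$ meets the core Lagrangian $Q_v$ transversely in a single point and is disjoint from $Q_w$ for $w\neq v$; this gives $\bigoplus_v \mathit{CW}^\ast(L_v, -)$ applied to the $Q_w$'s the structure of the "diagonal" $\Bbbk$-module, and the resulting bar-type functor sends $\mathcal{Q}_W$ to something quasi-isomorphic to $\mathcal{W}_W$. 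Second, translate this through the surgery isomorphism (\ref{eq:surgery}): under $\mathcal{W}_W\simeq\mathit{CE}^\ast(\Lambda)$, the Koszul-dual bimodule matches the augmentation $\varepsilon_Q$ induced by the Lagrangian filling $\bigcup_v\overline{Q}_v$, and the $A_\infty$-coalgebra structure on $\mathit{LC}_\ast(\Lambda)$ defined using the same holomorphic disks (with the extra marked point feeding into $\varepsilon_Q$) is exactly the bar construction of $\mathit{CE}^\ast(\Lambda)$ with respect to $\varepsilon_Q$. Third, dualize: $\mathit{LA}^\ast(\Lambda)=\mathit{LC}_\ast(\Lambda)^\#$ is then the linear dual of $B_{\varepsilon_Q}\mathit{CE}^\ast(\Lambda)$, and standard Koszul duality (finiteness of $\mathcal{R}$ in each degree, so that dualization is well behaved, and properness of the relevant modules) identifies this with $\mathrm{RHom}_{\mathit{CE}^\ast(\Lambda)}(\Bbbk,\Bbbk)\simeq\mathrm{RHom}_{\mathcal{W}_W}(\Bbbk,\Bbbk)$. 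Finally, match this last $\mathrm{Ext}$-algebra with $\mathcal{Q}_W$ using the geometric computation of $\mathit{CW}^\ast(L_v,L_w)$-module maps between the $\Bbbk$-modules represented by the $Q_v$'s — i.e. the generation result of \cite{cdrgg,gps} plus the fact that $Q_v$ is, as a $\mathcal{W}_W$-module, the simple module $\Bbbk e_v$.

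The main obstacle I expect is the bookkeeping of signs, gradings, and the role of the semisimple coefficient ring $\Bbbk$ throughout: the Koszul duality statements in the literature (e.g. \cite{ekle}) are often phrased over a field, whereas here everything is relative to $\Bbbk=\bigoplus_v\mathbb{K}e_v$, so one must be careful that bar/cobar, linear duality, and the identification of $Q_v$ with the simple module all take place in the category of $\Bbbk$-bimodules and that the various augmentations are compatible. A secondary technical point is checking that the $A_\infty$-coalgebra structure on $\mathit{LC}_\ast(\Lambda)$ built from holomorphic disks genuinely agrees, up to $A_\infty$-isomorphism, with the (co)bar differential on $\mathit{CE}^\ast(\Lambda)$ — this is implicit in \cite{beee,ekle} but needs to be invoked carefully in the clean-intersection setting, where the $Q_v$'s are not disjoint and the augmentation $\varepsilon_Q$ encodes the intersection data. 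Given the Weinstein $6$-manifold and Spin hypotheses already imposed, orientation and grading issues are controlled, so I anticipate the proof is essentially an assembly of \cite{ekle} and \cite{beee} rather than requiring new input; the burden is to state the Koszul-duality dictionary precisely enough that "linear dual of the Chekanov–Eliashberg cobar" literally becomes "Fukaya algebra of the cores."
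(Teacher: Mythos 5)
Your route is genuinely different from the paper's. The paper does not pass through module categories or the bar--cobar formalism at all: it directly constructs a geometric $A_\infty$-morphism $\varphi:\mathcal{Q}_W\rightarrow\mathit{LA}^\ast(\Lambda)$ by counting holomorphic discs with $m$ positive boundary punctures asymptotic to Lagrangian intersection points of (parallel copies of) $\bigcup_v Q_v$ and a single negative puncture asymptotic to a Reeb chord in $\mathcal{R}$, exactly as in \cite{ekle}, Theorem 63, and then proves $\varphi$ is a quasi-isomorphism via the long exact sequence coming from splitting $\mathit{CW}^\ast(\overline{Q}_v,\overline{Q}_w)$ (computed in the subcritical piece $\overline{W}_0$) into its high-energy and low-energy parts. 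Your formal Koszul-duality assembly is a reasonable alternative in principle, but note that within this paper the logical order is the reverse of what you assume: the generalized Eilenberg--Moore equivalence (\ref{eq:GEM}), i.e. $R\hom_{\mathcal{W}_W}(\Bbbk,\Bbbk)\simeq\mathcal{Q}_W$, is \emph{deduced} from Proposition \ref{proposition:qis1} together with (\ref{eq:surgery}) and (\ref{eq:cobar}); and in \cite{ekle} that equivalence is itself established by constructing the very morphism $\varphi$ above. So invoking ``the Eilenberg--Moore / wrap-up equivalence of Ekholm--Lekili'' as an input is, at best, citing the transverse case of the statement you are trying to prove. Your fallback — generation by cocores plus full faithfulness of the Yoneda functor plus $\mathit{CW}^\ast(L_w,Q_v)\cong\Bbbk e_v\delta_{vw}$ — would break the circle, but then the identification $R\hom_{\Omega\mathit{LC}_\ast(\Lambda)}(\Bbbk,\Bbbk)\simeq\mathit{LC}_\ast(\Lambda)^\#$ still needs the conilpotence/finiteness hypotheses you only gesture at (the paper explicitly warns, right after the proposition, that bar and cobar do not invert each other here without completion).

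The more serious gap is that you treat the clean-intersection setting as a ``secondary technical point,'' whereas it is the entire content of the proposition beyond \cite{ekle}, Theorem 63: here the $Q_v$'s meet along circles, so $\mathcal{Q}_W$ itself and the disc counts defining $\varphi$ require either a perturbation to transverse position (checking this does not change $\mathit{LA}^\ast(\Lambda)$ up to quasi-isomorphism) or a genuine Morse--Bott/pearly-tree model as in \cite{psh,nso}. That issue does not disappear in your module-theoretic reformulation — it resurfaces in the very definition of $\mathcal{Q}_W$ and in the full faithfulness of the restriction functor on the cleanly intersecting cores — and your proposal offers no mechanism for handling it.
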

\begin{proof}
This is essentially the same as \cite{ekle}, Theorem 63, except that in our case the Lagrangian submanifolds $\{Q_v\}_{v\in\Gamma}$ are allowed to have clean, instead of transverse intersections. When the intersections are transverse, an $A_\infty$-morphism $\varphi:\mathcal{Q}_W\rightarrow\mathit{LA}^\ast(\Lambda)$ can be defined by counting holomorphic discs with $m+1$ boundary punctures so that the boundary components are mapped to parallel copies of $\bigcup_{v\in\Gamma}\overline{Q}_v$, such that the $m$ positive punctures are asymptotic to Lagrangian intersection points, and the unique negative puncture is asymptotic to a Reeb chord in $\mathcal{R}$. Recall that all of the intersections among the Lagrangian submanifolds $\{Q_v\}_{v\in\Gamma}$ appear already in the subcritical Weinstein domain $\overline{W}_0$. In our case, to define the map $\varphi:\mathcal{Q}_W\rightarrow\mathit{LA}^\ast(\Lambda)$, one can either perturb the $Q_v$'s so that the intersections become transverse (which does not affect $\mathit{LA}^\ast(\Lambda)$ up to quasi-isomorphism), or one can work directly with the Morse-Bott model, with punctured holomorphic discs replaced with \textit{punctured pearly trees}, see Figure \ref{fig:pearl}. These are similar to the pearly trees of holomorphic discs considered in \cite{psh,nso}, except that at the output we have a single puncture asymptotic to the Reeb chords in $\mathcal{R}$. The fact that $\varphi$ is a quasi-isomorphism follows from the long exact sequence for the wrapped Floer cohomologies $\mathit{HW}^\ast(\overline{Q}_v,\overline{Q}_w)$ (computed in $\overline{W}_0$) induced by the subdivision of the wrapped Floer cochain complex into the high energy part and the low energy part.
\end{proof}

\begin{figure}
\centering
\begin{tikzpicture}
\draw (0,0.5)--(1.5,0.5);
\draw (0,-0.5)--(1.5,-0.5);
\draw (1.5,-0.5) arc [start angle=-90, end angle=90, radius=0.5];
\draw [decoration={markings, mark=at position 0.5 with {\arrow{<}}},postaction={decorate}] (1.933,0.25)--(2.799,0.75);
\draw [decoration={markings, mark=at position 0.5 with {\arrow{<}}},postaction={decorate}] (1.933,-0.25)--(3.232,-1);
\draw (3.232,1) circle [radius=0.5];
\draw (3.665,-1.25) circle [radius=0.5];
\draw [decoration={markings, mark=at position 0.5 with {\arrow{<}}},postaction={decorate}] (3.665,1.25)--(4.531,1.75);
\draw (4.964,2) circle [radius=0.5];
\draw [decoration={markings, mark=at position 0.5 with {\arrow{<}}},postaction={decorate}] (5.464,2)--(6.464,2);
\draw (6.464,2) node[circle,fill,inner sep=1pt] {};
\draw [decoration={markings, mark=at position 0.5 with {\arrow{<}}},postaction={decorate}] (3.665,0.75)--(6.464,0.25);
\draw (6.464,0.25) node[circle,fill,inner sep=1pt] {};
\draw [decoration={markings, mark=at position 0.5 with {\arrow{<}}},postaction={decorate}] (4.098,-1)--(6.464,-0.5);
\draw (6.464,-0.5) node[circle,fill,inner sep=1pt] {};
\draw [decoration={markings, mark=at position 0.5 with {\arrow{<}}},postaction={decorate}] (4.098,-1.5)--(6.464,-2);
\draw (6.464,-2) node[circle,fill,inner sep=1pt] {};
\draw (3.7,-2.2) node {$\overline{Q}_{v_0}$};
\draw (5.3,-1.25) node {$\overline{Q}_{v_1}$};
\draw (4,0) node {$\overline{Q}_{v_2}$};
\draw (5.5,1.2) node {$\overline{Q}_{v_3}$};
\draw (3.3,2) node {$\overline{Q}_{v_4}$};
\draw (-0.8,0) node {$\Lambda_{v_0}\sqcup\Lambda_{v_4}$};
\end{tikzpicture}
\caption{A punctured pearly tree contributing to the map $\varphi:\mathcal{Q}_W\rightarrow\mathit{LA}^\ast(\Lambda)$, where $m=4$. The Morse flow lines are connected by the holomorphic discs with boundaries on the (clean) intersections of the Lagrangian submanifolds $\overline{Q}_{v_i}$, the inputs on the right-hand side are Morse critical points, while the unique output on the right-hand side is a puncture with asymptotic conditions given by Reeb chords on $\Lambda_{v_0}\sqcup\Lambda_{v_4}$.} \label{fig:pearl}
\end{figure}

Combining (\ref{eq:surgery}), (\ref{eq:cobar}) and Proposition \ref{proposition:qis1}, we obtain the \textit{generalized Eilenberg-Moore equivalence}
\begin{equation}\label{eq:GEM}
R\hom_{\mathcal{W}_W}(\Bbbk,\Bbbk)\simeq\mathcal{Q}_W,
\end{equation}
where in the above $\Bbbk$ is regarded as a right $\mathcal{W}_W$-module.

However, it is in general not true that the bar construction $\mathrm{B}\mathit{LA}^\ast(\Lambda)^\#$ of the graded linear dual of the Legendrian $A_\infty$-algebra of $\Lambda$ gives the Chekanov-Eliashberg dg algebra. Instead, we get from this the completed tensor algebra
\begin{equation}
\widehat{\mathit{CE}}^\ast(\Lambda):=\Bbbk\left\langle\!\left\langle\overline{\mathit{LC}}_\ast(\Lambda)[-1]\right\rangle\!\right\rangle,
\end{equation}
which we call the \textit{completed Chekanov-Eliashberg dg algebra}. In other words, since $\mathit{CE}^\ast(\Lambda)$ is generated as a $\Bbbk$-module by words of Reeb chords in $\mathcal{R}$, $\widehat{\mathit{CE}}^\ast(\Lambda)$ is the completion of $\mathit{CE}^\ast(\Lambda)$ with respect to the word length filtration. By the surgery quasi-isomorphism (\ref{eq:surgery}) and the generation result (\ref{eq:generation}), we obtain an equivalence
\begin{equation}
D^\mathit{perf}\widehat{\mathcal{W}}(W;\mathbb{K})\simeq D^\mathit{perf}\left(\widehat{\mathit{CE}}^\ast(\Lambda)\right)
\end{equation}
between the completed derived wrapped Fukaya category and the derived category of perfect modules over the completed Chekanov-Eliashberg dg algebra.

\section{Computing the wrapped Fukaya category}\label{section:computation}

In this section, we compute the wrapped Fukaya category $\mathcal{W}(\ring{W}_f;\mathbb{K})$ over any field $\mathbb{K}$. The computation actually works over $\mathbb{Z}$ (cf. \cite{alp}, Remark 2.15).

\subsection{The setup}\label{section:setup}

Our computational strategy is similar to that of \cite{alp} in the case of double bubble plumbings, making use of the Morse-Bott fibration $\ring{\pi}:\ring{W}_f\rightarrow\mathbb{C}^\ast$ and counting its holomorphic sections with boundaries on certain Lagrangian submanifolds with are ``admissible" with respect to $\ring{\pi}$. 

We first specify the objects in the wrapped Fukaya category $\mathcal{W}(\ring{W}_f;\mathbb{K})$ to work with. Consider the arcs $\gamma_0,\cdots,\gamma_n\subset\mathbb{C}^\ast$ in Figure \ref{fig:base}, they intersect the unit circle at the points
\begin{equation}
\zeta_i:=\exp\left(\frac{(4i+1)\pi}{2n+2}\sqrt{-1}\right),\textrm{ }i=0,\cdots,n.
\end{equation}
Since $\ring{\pi}^{-1}(\zeta_i)$ is isomorphic to $(\mathbb{C}^\ast)^2$, we can pick the Lagrangian $(\mathbb{R}_+)^2$ in the fiber and denote it by $\ell_i$. Parallel transporting $\ell_i\subset\ring{\pi}^{-1}(\zeta_i)$ along the arc $\gamma_i$ defines a Lagrangian submanifold $L_i\subset\ring{W}_f$ diffeomorphic to $\mathbb{R}^3$. Note that under $\ring{\pi}$, the compact cores $Q_0,\cdots, Q_n\subset\ring{W}_f$ project to arcs (i.e. gradient flow lines of (\ref{eq:potential}) in the proof of Lemma \ref{lemma:plumbing}) connecting $z_i$ to $z_{i+1\textrm{ mod }n+1}$ when $i=0,\cdots,n$. In particular, $L_i$ intersects $Q_i$ transversely at a unique point in $\ring{\pi}^{-1}(\zeta_i)$, and is disjoint from $Q_j$ for $j\neq i$. In fact, analyzing the Liouville vector field $Z=Z_F+Z_B$ in Lemma \ref{lemma:plumbing}, it is not hard to see that the Lagrangian submanifolds $L_0,\cdots,L_n$ are Hamiltonian isotopic to the cotangent fibers of $T^\ast Q_0,\cdots,T^\ast Q_n$, respectively. In particular, they generate the wrapped Fukaya category of the plumbing $\mathcal{W}(\ring{W}_f;\mathbb{K})$, see \cite{cdrgg,gps}. Our goal here is then to compute the endomorphism $A_\infty$-algebra
\begin{equation}\label{eq:WFA}
\ring{\mathcal{W}}_f:=\bigoplus_{0\leq i,j\leq n}\mathit{CW}^\ast(L_i,L_j).
\end{equation}

Next, we specify our wrapping Hamiltonian used to define the Floer cochain complexes $\mathit{CW}^\ast(L_i,L_j)$, where $0\leq i,j\leq n$. To do this, we start with the fact (used in Section \ref{section:plumbing}) that $\ring{\pi}:\ring{W}_f\rightarrow\mathbb{C}^\ast$ is obtained by patching together elementary Morse-Bott fibrations $\pi_i:E_i\rightarrow\mathbb{C}$ with a unique singular fiber isomorphic to $(\mathbb{C}\vee\mathbb{C})\times\mathbb{C}^\ast$ over the critical value $c_i$. Since $\pi_i$ can be identified with the product of the standard Lefschetz fibration $\pi_\mathit{std}:\mathbb{C}^2\rightarrow\mathbb{C}$ and $\mathbb{C}^\ast$, we start with the function $h:T^\ast S^1\rightarrow[0,\infty)$ which vanishes near the zero section, and equals $r^2$ when the radial coordinate $r$ on the cylinder is large enough. Since $h$ vanishes near the critical point, it extends to a fiberwise Hamiltonian $\tilde{h}:\mathbb{C}^2\rightarrow[0,\infty)$ by parallel transport. Denote by $\pi_{\mathbb{C}^2}:E_i\rightarrow\mathbb{C}^2$ and $\pi_{\mathbb{C}^\ast}:E_i\rightarrow\mathbb{C}^\ast$ the natural projections, define
\begin{equation}
H_{F,i}:=\pi_{\mathbb{C}^2}^\ast\tilde{h}+\pi_{\mathbb{C}^\ast}^\ast h.
\end{equation}
This gives a fiberwise Hamiltonian on the total space $E_i$ of the elementary Morse-Bott fibration $\pi_i$. One then uses the fiberwise biholomorphism of $(\mathbb{C}^\ast)^2$ to patch the locally defined fiberwise Hamiltonians $H_{F,i}$ together to a globally defined fiberwise Hamiltonian $H_F:\ring{W}_f\rightarrow[0,\infty)$. On the other hand, define $H_B=r^2$ to be the quadratic Hamiltonian on the base $(\mathbb{R}\times S^1,rd\theta)$. Finally, the wrapping Hamiltonian $H_W:\ring{W}_f\rightarrow[0,\infty)$ is defined to be a small Morse perturbation of
\begin{equation}\label{eq:HW}
\ring{\pi}^\ast H_B+H_F.
\end{equation}

\begin{lemma}\label{lemma:cofinal}
The wrapping of the Lagrangian submanifolds $L_i\subset\ring{W}_f$ induced by the Hamiltonian $H_W$ is cofinal for $i=0,\cdots,n$.
\end{lemma}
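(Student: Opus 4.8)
The plan is to recall the standard criterion for a wrapping to be cofinal, and then to verify it for the specific Hamiltonian $H_W$ of (\ref{eq:HW}), using the fibration structure of $\ring{\pi}$ in an essential way.

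Since $\ring{W}_f$ is the completion of a Weinstein domain, its ideal contact boundary $\partial_\infty\ring{W}_f$ is a closed contact manifold, so every positive wrapping of $L_i$ (an eventually conical positive Lagrangian isotopy) has a finite maximal slope along $\partial_\infty\ring{W}_f$. The wrapping $t\mapsto\phi^t_{H_W}(L_i)$ is then cofinal in the poset of all positive wrappings of $L_i$ as soon as (i) $L_i$ and all $\phi^t_{H_W}(L_i)$ are conical at infinity, and (ii) the slope of $\phi^t_{H_W}(L_i)$ along $\partial_\infty\ring{W}_f$ tends to $+\infty$ uniformly as $t\to\infty$; for then any fixed wrapping $w$ of maximal slope $\lambda$ is dominated by $\phi^t_{H_W}(L_i)$ once $t$ is large enough. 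Condition (ii) holds whenever $H_W$ is proper, bounded below, and grows at least quadratically along the Liouville flow in the cylindrical end, since $\phi^t_{H_W}$ moves points at symplectization radius $R$ by roughly $t\,\partial_R H_W\sim 2Rt$ along the Reeb flow, which is unbounded in $t$. So it suffices to establish this quadratic growth for $H_W$.

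To do this I would cover a collar of $\partial_\infty\ring{W}_f$ by three regions adapted to $\ring{\pi}$, using that symplectic parallel transport is globally defined and $T^2$-equivariant away from the critical loci (Lemma \ref{lemma:parallel}): a horizontal region over the two cylindrical ends of the base $(\mathbb{R}\times S^1, r\,d\theta)$ with bounded fiber coordinate; a vertical region over a compact part of the base with unbounded fiber coordinate; and a corner region where both are unbounded. On the vertical region $H_W$ agrees, up to the $C^2$-small Morse term, with the fiberwise Hamiltonian $H_F$, which on each smooth fiber $T^\ast T^2$ equals the square of the radial coordinate for large radius, hence is fiberwise quadratic; since parallel transport trivializes this region as a product, the amount of Reeb wrapping grows linearly in $t$. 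On the horizontal region $H_W$ agrees, up to the same term, with $\ring{\pi}^\ast H_B$, $H_B=r^2$, which is quadratic in the base Liouville coordinate, and the ambient Liouville flow there projects to that of the base, so again the wrapping is linear in $t$. On the corner region $H_W=\ring{\pi}^\ast H_B+H_F$ is a sum of functions each quadratic in its own set of variables, whose flows commute by Lemma \ref{lemma:parallel}, so the two contributions add and still tend to $+\infty$. That $L_i$ is conical and remains conical under $\phi^t_{H_W}$ is clear from its description as the parallel transport of the product Lagrangian $(\mathbb{R}_+)^2\subset(\mathbb{C}^\ast)^2$ along $\gamma_i$, since $H_B$ and the fiberwise model Hamiltonians preserve the associated conical ends, and the Morse perturbation does not affect the behavior at infinity.

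The step requiring the most care is the part of $\partial_\infty\ring{W}_f$ lying over neighborhoods of the critical values $c_0,\cdots,c_n$, where the product description degenerates. There one uses that $\ring{\pi}$ is locally the product of the standard Lefschetz fibration $\pi_\mathit{std}:\mathbb{C}^2\to\mathbb{C}$ with $\mathbb{C}^\ast$, and that $H_F$ was built from a function $h$ on $T^\ast S^1$ that vanishes near the zero section and equals $r^2$ at infinity, so that its $\mathbb{C}^2$-component $\tilde{h}$ vanishes near the critical point of $\pi_\mathit{std}$ and is fiberwise quadratic at infinity of $\mathbb{C}^2$; hence $H_W$ stays proper and fiberwise superlinear over these neighborhoods, and the estimates above carry over with the smooth fiber replaced by $(\mathbb{C}\vee\mathbb{C})\times\mathbb{C}^\ast$. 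The remaining verifications are bookkeeping, modeled on the analogous argument in \cite{alp}, and since $L_0,\cdots,L_n$ are all parallel transports of the same product Lagrangian, the reasoning applies uniformly in $i$.
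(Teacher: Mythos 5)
Your proposal is correct and follows essentially the same route as the paper: both arguments rest on the product structure of the cylindrical end of $\ring{W}_f$ induced by $\ring{\pi}$, the product form of $H_W$ with respect to that structure, and the standard slope-to-infinity criterion for cofinality. The paper simply cites \cite{gps}, Proposition 7.4 at the point where you unpack the verification by hand via the horizontal/vertical/corner decomposition, so your write-up is a more detailed version of the same proof rather than a different one.
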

\begin{proof}
As we have seen in the proof of Lemma \ref{lemma:plumbing}, the cylindrical end $[r_0,\infty)\times\partial\overline{\ring{W}}_f$ for $r_0\gg1$ of $\ring{W}_f$ has a product structure induced by the Morse-Bott fibration $\ring{\pi}:\ring{W}_f\rightarrow\mathbb{C}^\ast$, so are the cylindrical ends $[1,\infty)\times\partial\overline{L}_i$, $0\leq i\leq n$ of our Lagrangian submanifolds, where $\overline{L}_i:=L_i\cap\overline{\ring{W}}_f$, since by construction they are fibered over the arcs $\gamma_i\subset\mathbb{C}^\ast$, $i=0,\cdots,n$. On the other hand, our wrapping Hamiltonian $H_W:\ring{W}_f\rightarrow[0,\infty)$ defined by (\ref{eq:HW}) has product form with respect to the product structure on the cylindrical end of $\ring{W}_f$. The lemma now follows from \cite{gps}, Proposition 7.4, which ensures the cofinality of the product wrapping.
\end{proof}

It follows from Lemma \ref{lemma:cofinal} that we have a quasi-isomorphism between the Fukaya $A_\infty$-algebra defined using $H_W$ and the Fukaya $A_\infty$-algebra defined using a wrapping Hamiltonian that is quadratic with respect to the usual cylindrical end $[r_0,\infty)\times\partial\overline{\ring{W}}_f$ coming from the Weinstein structure on the plumbing $\ring{W}_f$. Because of this, we do not distinguish between the wrapped Floer cochain complexes defined using the product type wrapping Hamiltonian $H_W$ adapted to the Morse-Bott fibration $\ring{\pi}$ and a usual quadratic wrapping Hamiltonian, and will simply write $\mathit{CW}^\ast(L_i,L_j)$ for both of the wrapped Floer cochain complexes.

An $\omega_{\ring{W}}$-compatible almost complex structure $J$ on $\ring{W}_f$ is called \textit{admissible} if the fibration $\ring{\pi}$ is $J$-holomorphic outside of some small disjoint discs $\Delta_0,\cdots,\Delta_n\subset\mathbb{C}^\ast$ centered at $c_0,\cdots,c_n$, respectively. We denote by $\mathcal{J}(\ring{\pi})$ the space of admissible almost complex structures on $\ring{W}_f$. For any $J\in\mathcal{J}(\ring{\pi})$, it follows from \cite{alp}, Lemma 2.2 that there exists a $J$-plurisubharmonic function $\hbar:\ring{W}_f\rightarrow[0,\infty)$ such that
\begin{itemize}
	\item $\hbar$ restricted to each fiber of $\ring{\pi}$ is proper,
	\item $d\hbar(X_{H_F})=0$ outside of $\ring{W}_f^\mathrm{in}$, a closed subset of $\ring{W}_f$ whose intersection with every fiber of $\ring{\pi}$ is a sublevel set of $\hbar$,
	\item $d\hbar(\xi^\#)=d^\mathbb{C}\hbar(\xi^\#)=0$ outside of $\ring{W}_f^\mathrm{in}\cup\bigcup_{i=0}^n\pi^{-1}(\Delta_i)$ for the horizontal lift $\xi^\#$ of some vector $\xi$ on $\mathbb{C}^\ast$,
	\item $\mathcal{L}_{X_{H_F}}(d^\mathbb{C}\hbar)=\mathcal{L}_{\xi^\#}(d^\mathbb{C}\hbar)=0$,
	\item $d^\mathbb{C}\hbar(X_{H_F})\geq0$.
\end{itemize}
The existence of such a function $\hbar$ ensures that the (perturbed) $J$-holomorphic curves defining the $A_\infty$-structure of the wrapped Fukaya category $\mathcal{W}(\ring{W}_f;\mathbb{K})$ are contained in a sublevel set of $\hbar:\ring{W}_f\rightarrow[0,\infty)$, see \cite{aah}, Proposition 3.11. On the other hand, we have a maximum principle with respect to the projection $\ring{\pi}:\ring{W}_f\rightarrow\mathbb{C}^\ast$, which ensures that all the Floer trajectories with Lagrangian boundary conditions on $L_0,\cdots,L_n$ are projected to a compact subset of the base $\mathbb{C}^\ast$, see \cite{aah}, Proposition 3.10 (which deals with the case when the fibration on the symplectic manifold is over $\mathbb{C}$, but the same argument applies to our situation, where the base is $\mathbb{C}^\ast$). Thus we have a well-defined $A_\infty$-category $\mathcal{W}(\ring{W}_f;\mathbb{K})$ over any field $\mathbb{K}$.

\subsection{The case of a single Lagrangian}\label{section:single}

We first focus on the wrapped Floer cochain complex $\mathit{CW}^\ast(L_i,L_i)$ of a single Lagrangian submanifold $L_i\subset\ring{W}_f$, where $0\leq i\leq n$, defined in Section \ref{section:setup} using the wrapping Hamiltonian $H_W:\ring{W}_f\rightarrow[0,\infty)$. 

\begin{proposition}\label{proposition:single}
For any $i=0,\cdots,n$, we have an isomorphism
\begin{equation}\label{eq:Laurent}
\mathit{HW}^\ast(L_i,L_i)\cong\mathbb{K}[z_1^{\pm1},z_2^{\pm1},z_3^{\pm1}].
\end{equation}
between $\mathbb{K}$-modules, where the variables $z_1,z_2,z_3$ have degree $0$.
\end{proposition}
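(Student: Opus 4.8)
The plan is to identify $\mathit{CW}^\ast(L_i,L_i)$ with a Floer complex that can be computed fiberwise, using the product structure of the wrapping Hamiltonian $H_W$ constructed in Section~\ref{section:setup}. The Lagrangian $L_i$ is obtained by parallel transporting $\ell_i=(\mathbb{R}_+)^2\subset\ring{\pi}^{-1}(\zeta_i)\cong(\mathbb{C}^\ast)^2$ along the radial arc $\gamma_i$, so $L_i$ sits inside $\ring{\pi}^{-1}(\gamma_i)$, and $\gamma_i$ avoids all the critical values $c_0,\dots,c_n$. By Lemma~\ref{lemma:parallel} symplectic parallel transport along $\gamma_i$ is globally defined and $T^2$-equivariant, so $\ring{\pi}^{-1}(\gamma_i)$ is symplectomorphic to $\mathbb{C}^\ast\times (\mathbb{C}^\ast)^2 \cong T^\ast S^1\times (T^\ast S^1)^2$ (after the fiberwise identification $(\mathbb{C}^\ast)^2\cong T^\ast T^2$), and under this identification $L_i$ corresponds to a product of three cotangent fibers. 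Since $H_W$ is (up to a small Morse perturbation) of the product form $\ring{\pi}^\ast H_B + H_F$ with $H_B=r^2$ on the base cylinder and $H_F$ built from fiberwise quadratic Hamiltonians, and since by Lemma~\ref{lemma:cofinal} this wrapping is cofinal, the wrapped Floer complex $\mathit{CW}^\ast(L_i,L_i)$ computed with $H_W$ agrees with the one computed in $\ring{\pi}^{-1}(\gamma_i)$ with a genuine product wrapping Hamiltonian.

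Next I would invoke a Künneth-type argument. The Floer complex of a cotangent fiber of $T^\ast S^1$ with respect to the quadratic wrapping Hamiltonian $r^2$ has cohomology $\mathit{HW}^\ast(T^\ast_q S^1, T^\ast_q S^1)\cong H_{-\ast}(\Omega S^1;\mathbb{K})\cong\mathbb{K}[z^{\pm 1}]$, concentrated in degree $0$, generated by the classes of the iterated loops (the chords wrapping $m$ times, $m\in\mathbb{Z}$). Taking the product of three such factors and applying the Künneth theorem for wrapped Floer cohomology of products of Liouville manifolds (e.g.\ \cite{gps} or the relevant Künneth statement used in \cite{alp}), one gets
\begin{equation}
\mathit{HW}^\ast(L_i,L_i)\cong \mathbb{K}[z_1^{\pm 1}]\otimes_{\mathbb{K}}\mathbb{K}[z_2^{\pm 1}]\otimes_{\mathbb{K}}\mathbb{K}[z_3^{\pm 1}]\cong\mathbb{K}[z_1^{\pm 1},z_2^{\pm 1},z_3^{\pm 1}],
\end{equation}
with all generators in degree $0$ (the degree count follows because each cotangent fiber in $T^\ast S^1$ is itself a circle, so the Maslov grading of every chord is $0$). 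The variables $z_1,z_2$ are the Seidel elements of the fiberwise Hamiltonian $T^2$-action and $z_3$ is the wrapping generator coming from the base $\mathbb{C}^\ast$.

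Alternatively, and perhaps more in keeping with the section's ``count holomorphic sections'' philosophy, I would set up the computation directly: the generators of $\mathit{CW}^\ast(L_i,L_i)$ are time-one chords of $H_W$ from $L_i$ to itself, which by the product structure are triples $(m_1,m_2,m_3)\in\mathbb{Z}^3$ recording the winding in each $S^1$-factor; admissibility of $J$ (so that $\ring{\pi}$ is holomorphic away from small discs around the $c_i$) together with the maximum principle of \cite{aah}, Proposition~3.10, confines all Floer strips to $\ring{\pi}^{-1}(\gamma_i)$, away from the critical fibers, so no section bubbling occurs and the differential vanishes identically for degree reasons. The multiplication then matches the group algebra product on $\mathbb{Z}^3$.

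\textbf{Main obstacle.} The technical heart is confining the relevant holomorphic curves to the region $\ring{\pi}^{-1}(\gamma_i)$ and away from the singular fibers, so that the product/Künneth description is valid — i.e.\ combining the maximum principle with respect to $\ring{\pi}$ (\cite{aah}, Proposition~3.10) with the fiberwise $J$-convexity function $\hbar$ from Section~\ref{section:setup} to rule out both escape to infinity in the fiber direction and any interaction with the vanishing cycles. Once curves are trapped in $T^\ast S^1\times(T^\ast S^1)^2$ with a product almost complex structure and product Hamiltonian, the identification $\mathit{HW}^\ast(L_i,L_i)\cong\mathbb{K}[z_1^{\pm1},z_2^{\pm1},z_3^{\pm1}]$ is the standard computation for cotangent fibers of tori, and the degree-$0$ claim is immediate.
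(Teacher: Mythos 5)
Your second (``alternatively'') paragraph is essentially the paper's own proof: the time-one chords of $H_W$ from $L_i$ to itself are indexed by $(p,q,r)\in\mathbb{Z}^3$ (two fiberwise windings plus the winding of the base arc), all lie in degree $0$ for the standard grading, so the differential vanishes automatically and the $\mathbb{K}$-module isomorphism follows. So the proof of the proposition as stated is fine.

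Two of your side claims, however, are false, and believing them would derail you immediately afterwards. First, the holomorphic curves with boundary on $L_i$ and its wrappings are \emph{not} confined to $\ring{\pi}^{-1}(\gamma_i)$ away from the singular fibers: the wrapped arcs $\gamma_i^{(1)},\gamma_i^{(2)}$ wind all the way around the cylinder, and the triangles they bound together with $\gamma_i$ can cover all of the critical values $c_0,\dots,c_n$. The maximum principle of \cite{aah}, Proposition~3.10 only confines curves to a compact subset of the base; it does not keep them away from the critical fibers. Second, and as a consequence, the multiplication on $\mathit{HW}^\ast(L_i,L_i)$ is \emph{not} the group-algebra product on $\mathbb{Z}^3$ (equivalently, the K\"unneth identification cannot hold as a ring isomorphism): Proposition~\ref{proposition:single2} shows that $x_{-1}\cdot x_1=f(z_1,z_2)$ rather than $1$, precisely because of sections of $\ring{\pi}$ passing over the singular fibers. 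None of this affects the statement at hand, which is only about the underlying $\mathbb{K}$-module and needs nothing beyond the vanishing of the differential for degree reasons, but you should drop the final sentence about the product and the claim of confinement away from the vanishing cycles.
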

\begin{proof}
Denote by $L_i^{(1)}:=\phi_{H_W}^1(L_i)$ the wrapping of $L_i$ under the time-$1$ flow of the Hamiltonian vector field $X_{H_W}$. Under the projection $\ring{\pi}$, the Lagrangians $L_i$ and $L_i^{(1)}$ define arcs $\gamma_i,\gamma_i^{(1)}\subset\mathbb{C}^\ast$. Since the Hamiltonian $H_B:\mathbb{C}^\ast\rightarrow[0,\infty)$ on the base of $\ring{\pi}:\ring{W}_f\rightarrow\mathbb{C}^\ast$ is quadratic and the arc $\gamma_i\subset T^\ast S^1$ can be identified with the cotangent fiber, we get the intersection points $\gamma_i\cap\gamma_i^{(1)}=\{x_r\}_{r\in\mathbb{Z}}$ in the base cylinder $\mathbb{C}^\ast$, see Figure \ref{fig:wrapping}. By our choice of the wrapping Hamiltonian $H_W:\ring{W}_f\rightarrow[0,\infty)$, above each intersection point $x_r$, the fiberwise Lagrangians $\ell_{i,r},\ell_{i,r}^{(1)}:=\phi_{H_F}^1(\ell_{i,r})\subset\ring{\pi}^{-1}(x_r)\cong(\mathbb{C}^\ast)^2$, which are obtained by parallel transporting $\ell_i,\ell_i^{(1)}:=\phi_{H_F}^1(\ell_i)\subset\ring{\pi}^{-1}(\zeta_i)$ along the path on $\gamma_i$ connecting $\zeta_i$ to $x_r$, intersect at the points $\{y_{p,q,r}\}_{(p,q)\in\mathbb{Z}^2}$. It follows that the wrapped Floer cochain complex can be identified with
\begin{equation}\label{eq:id}
\mathit{CW}^\ast(L_i,L_i)\cong\bigoplus_{(p,q,r)\in\mathbb{Z}^3}\mathbb{K}\cdot\vartheta_{p,q,r}^i,
\end{equation}
where $\vartheta_{0,0,r}^i$ corresponds to $x_r\in\mathbb{C}^\ast$ and $\vartheta_{p,q,0}^i$ corresponds to the intersection point $y_{p,q}$ in the fiber $\ring{\pi}^{-1}(\zeta_i)$. The grading on the fibered Lagrangians $L_i,L_i^{(1)}\subset\ring{W}_f$ are induced from the choices of gradings on the fiberwise Lagrangians $\ell_i,\ell_i^{(1)}\subset\ring{\pi}^{-1}(\zeta_i)\cong(\mathbb{C}^\ast)^2$ and the base arcs $\gamma_i,\gamma_i^{(1)}\subset\mathbb{C}^\ast$. It follows that if we use the standard grading on the fiberwise Lagrangians, then each generator $\vartheta_{p,q,r}^i$ is supported in degree $0$. Thus the Floer differential vanishes automatically. After identifying the generator $\vartheta_{p,q,r}^i$ with the monomial $z_1^pz_2^qz_3^r$, we obtain the isomorphism (\ref{eq:Laurent}).
\end{proof}

\begin{figure}
	\centering
	\begin{tikzpicture}
	\filldraw [draw=black, color={black!15}] (7.5,0) cos (7,-1.25) sin (5,-1.25) sin (6,0);
	\filldraw [draw=black,color={black!6}] (7,-1.25) sin (7.5,0) cos (8,-1.25) sin (8.5,-2.5)--(8,-2.5) cos (7,-1.25);
	\filldraw [draw=black,color={black!15}] (8,-2.5) cos (9,-1.25) sin (8.5,-2.5)--(8,-2.5);
	\draw (0,0) to (10,0);
	\draw (0,-2.5) to (10,-2.5);
	\draw (-0.5,-1.25) to (10.5,-1.25);
	\draw [dash dot] (-0.5,0) to (0,0);
	\draw [dash dot] (10,0) to (10.5,0);
	\draw [dash dot] (-0.5,-2.5) to (0,-2.5);
	\draw [dash dot] (10,-2.5) to (10.5,-2.5);
	\draw (10.8,-1.25) node {\small$L_i$};
	\draw [dotted] (0,0) arc(90:270:0.3 and 1.25);
	\draw (0,-2.5) arc(-90:90:0.3 and 1.25);
	\draw [dotted] (10,0) arc(90:270:0.3 and 1.25);
	\draw (10,-2.5) arc(-90:90:0.3 and 1.25);
	\draw [blue,dotted] (-0.5,-2) sin (0,-2.5);
	\draw [blue] (0,-2.5) cos (1,-1.25);
	\draw [blue] (1,-1.25) sin (2,0);
	\draw [blue,dotted] (2,0) cos (3,-1.25);
	\draw [blue,dotted] (3,-1.25) sin (4,-2.5);
	\draw [blue] (4,-2.5) cos (5,-1.25);
	\draw [blue] (5,-1.25) sin (6,0);
	\draw [blue,dotted] (6.,0) cos (7,-1.25);
	\draw [blue,dotted] (7,-1.25) sin (8,-2.5);
	\draw [blue] (8,-2.5) cos (9,-1.25);
	\draw [blue] (9,-1.25) sin (10,0);
	\draw [blue,dotted] (10,0) cos (10.5,-0.5);
		
	\draw [red,dotted] (-0.5,0) cos (0,-1.25);
	\draw [red,dotted] (0,-1.25) sin (0.5,-2.5);
	\draw [red] (0.5,-2.5) cos (1,-1.25);
	\draw [red] (1,-1.25) sin (1.5,0);
	\draw [red,dotted] (1.5,0) cos (2,-1.25);
	\draw [red,dotted] (2,-1.25) sin (2.5,-2.5);
	\draw [red] (2.5,-2.5) cos (3,-1.25);
	\draw [red] (3,-1.25) sin (3.5,0);
	\draw [red,dotted] (3.5,0) cos (4,-1.25);
	\draw [red,dotted] (4,-1.25) sin (4.5,-2.5);
	\draw [red] (4.5,-2.5) cos (5,-1.25);
	\draw [red] (5,-1.25) sin (5.5,0);
	\draw [red,dotted] (5.5,0) cos (6,-1.25);
	\draw [red,dotted] (6,-1.25) sin (6.5,-2.5);
	\draw [red] (6.5,-2.5) cos (7,-1.25);
	\draw [red] (7,-1.25) sin (7.5,0);
	\draw [red,dotted] (7.5,0) cos (8,-1.25);
	\draw [red,dotted] (8,-1.25) sin (8.5,-2.5);
	\draw [red] (8.5,-2.5) cos (9,-1.25);
	\draw [red] (9,-1.25) sin (9.5,0);
	\draw [red,dotted] (9.5,0) cos (10,-1.25);
	\draw [red,dotted] (10,-1.25) sin (10.5,-2.5);
	\draw [violet] (1,-1.25) node[circle,fill,inner sep=1pt] {};
	\draw [violet] (9,-1.25) node[circle,fill,inner sep=1pt] {};
	\draw [violet] (5,-1.25) node[circle,fill,inner sep=1pt] {};
	\draw [violet] (7,-1.25) node[circle,fill,inner sep=1pt] {};
	\draw [violet] (1.3,-1.5) node {\small$x_{-1}$};
	\draw [violet] (5.2,-1.5) node {\small$x_0$};
	\draw [violet] (9.2,-1.5) node {\small$x_1$};
	\draw [violet] (7.3,-1.5) node {\small$\tilde{x}_1$};
	\draw [blue] (10.9,-0.5) node {\small$L_i^{(1)}$};
	\draw [red] (10.9,-2.5) node {\small$L_i^{(2)}$};
	\end{tikzpicture}
	\caption{Wrapping in the cylinder \label{fig:wrapping}}
\end{figure}

\begin{paragraph}{Convention.}
We shall use monomials in $z_3$ to denote the generators corresponding to the intersection points $x_r\in\gamma_i\cap\gamma_i^{(1)}$ on the base $\mathbb{C}^\ast$, and use monomials in the variables $z_1,z_2$ to denote the generators in the fibers of $\ring{\pi}:\ring{W}_f\rightarrow\mathbb{C}^\ast$.
\end{paragraph}
\bigskip

Due to the existence of singular fibers of $\ring{\pi}:\ring{W}_f\rightarrow\mathbb{C}^\ast$, the triangle product
\begin{equation}\label{eq:prod-H}
\mu^2:\mathit{HW}^\ast(L_i,L_i)\otimes\mathit{HW}^\ast(L_i,L_i)\rightarrow\mathit{HW}^\ast(L_i,L_i)
\end{equation}
on the wrapped Floer cohomology of $L_i$ can potentially differ from that on the Laurent polynomial ring, therefore giving rise to a deformation of $\mathbb{K}[z_1^{\pm1},z_2^{\pm1},z_3^{\pm1}]$ as a $\mathbb{K}$-algebra. We shall determine it explicitly in the following proposition.

\begin{proposition}\label{proposition:single2}
There is a $\mathbb{K}$-algebra isomorphism
\begin{equation}\label{eq:dring}
\mathit{HW}^\ast(L_i,L_i)\cong\frac{\mathbb{K}\left[z_1^{\pm1},z_2^{\pm1},(x_r)_{r\in\mathbb{Z}}\right]}{\left(x_{-1}\cdot x_1-f(z_1,z_2),\textrm{ }x_r\cdot x_{r'}-x_{r+r'}\textrm{ when }rr'\geq0\right)},
\end{equation}
where $f(z_1,z_2)=f_0(z_1,z_2)\cdots f_n(z_1,z_2)$ is the polynomial (\ref{eq:f}) and all the generators are supported in degree $0$. Here, we regard $x_r\in\mathit{CW}^\ast(\gamma_i,\gamma_i)$ in the base $\mathbb{C}^\ast$ as a generator of $\mathit{CW}^\ast(L_i,L_i)$ by identifying it with $\vartheta_{0,0,r}^i$ in (\ref{eq:id}).
\end{proposition}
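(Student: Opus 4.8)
The plan is to work entirely within the Morse-Bott fibration $\ring{\pi}:\ring{W}_f\rightarrow\mathbb{C}^\ast$ and to reduce the computation of the product $\mu^2$ on $\mathit{HW}^\ast(L_i,L_i)$ to two separate pieces: a ``base'' computation in $T^\ast S^1$ and a ``fiber'' computation in $(\mathbb{C}^\ast)^2$, glued together by the structure of the singular fibers over $c_0,\dots,c_n$. By Proposition \ref{proposition:single} we already know the underlying $\mathbb{K}$-module is the Laurent polynomial ring in $z_1,z_2,z_3$, with $z_1,z_2$ detecting the fiber directions and $z_3$ the base direction, and all generators in degree $0$; so the Floer differential vanishes and the only thing to pin down is the associative product. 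Writing $x_r:=\vartheta^i_{0,0,r}$ for the generator over the base intersection point $x_r\in\gamma_i\cap\gamma_i^{(1)}$ (see Figure \ref{fig:wrapping}), the claim is that $x_r\cdot x_{r'}=x_{r+r'}$ whenever $r$ and $r'$ have the same sign (including the case where one of them is $x_0$, the unit), and that $x_{-1}\cdot x_1=f(z_1,z_2)$.

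First I would treat the products that do not see the singular fibers. For $rr'\geq 0$, a holomorphic triangle contributing to $\mu^2(x_r,x_{r'})$ projects under $\ring{\pi}$ to a holomorphic triangle in $\mathbb{C}^\ast$ with boundary on $\gamma_i,\gamma_i^{(1)},\gamma_i^{(2)}$, and because the three arcs in the base spiral monotonically in the same direction (the wrapping Hamiltonian $H_B=r^2$ is convex), the relevant base triangle is unique, rigid, and avoids all of $c_0,\dots,c_n$ and the perturbation discs $\Delta_j$. Over such a base triangle the fibration is trivial, parallel transport is globally defined (Lemma \ref{lemma:parallel}), and the count of holomorphic sections reduces to the standard computation in $T^\ast T^2\cong(\mathbb{C}^\ast)^2$, giving exactly the Laurent-monomial product. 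The same argument handles products $x_r\cdot z_1^{p}z_2^{q}$ and $z_1^{p}z_2^{q}\cdot z_1^{p'}z_2^{q'}$: these are governed by triangles lying over a contractible region of the base and are computed by the product structure of $\mathit{HW}^\ast$ of the cotangent fiber of $T^2$, which is $\mathbb{K}[z_1^{\pm 1},z_2^{\pm 1}]$. This establishes all relations except the one involving $x_{-1}\cdot x_1$.

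The main point, and the main obstacle, is the product $x_{-1}\cdot x_1$. Here $x_{-1}$ and $x_1$ lie on opposite sides of the basepoint, so a triangle with boundary on $\gamma_i,\gamma_i^{(1)},\gamma_i^{(2)}$ and these two inputs projects to a base triangle that must wind once around the cylinder and hence necessarily encloses all $n+1$ critical values $c_0,\dots,c_n$. To organize the count I would degenerate the base triangle by stretching the neck around each small circle enclosing a single $c_j$, so that a holomorphic section of $\ring{\pi}$ over the winding triangle breaks into a ``main'' section over the part of the base away from the $\Delta_j$'s together with, for each $j$, a local contribution from a section of the elementary Morse-Bott fibration $\pi_j:E_j\rightarrow\mathbb{C}$ near its singular fiber $(\mathbb{C}\vee\mathbb{C})\times\mathbb{C}^\ast$. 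Each such local piece is precisely the computation underlying the $cA_1$ model: a disk passing near the node of $(\mathbb{C}\vee\mathbb{C})$ contributes the factor $f_j(z_1,z_2)=y^{k_j}\pm x^{l_j}$ (this is the $A_\infty$-structure of the relative Fukaya category of a single point in $\mathbb{C}$, as in \cite{evle} and the $n=1$ analysis of \cite{alp}), while the passage through a smooth fiber contributes $1$. Multiplying over $j=0,\dots,n$ and combining with the trivial main section yields $x_{-1}\cdot x_1=f_0(z_1,z_2)\cdots f_n(z_1,z_2)=f(z_1,z_2)$. The hard part is justifying this degeneration rigorously: one needs a gluing/compactness argument (in the style of \cite{aah}, using the plurisubharmonic function $\hbar$ to confine curves in the fiber direction and the maximum principle of \cite{aah}, Proposition 3.10 to confine their projections to the base) showing that every rigid section of the winding triangle arises uniquely from such a decomposition, and that signs and $T^2$-weights match so that the fiber classes assemble to the monomials appearing in each $f_j$. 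Once $x_{-1}\cdot x_1=f(z_1,z_2)$ and $x_r\cdot x_{r'}=x_{r+r'}$ for $rr'\geq 0$ are established, associativity forces all remaining products (e.g.\ $x_{-r}\cdot x_{r'}$ for general signs) to be the images of $f(z_1,z_2)^{\min(r,r')}$ times the appropriate monomial, so the presentation (\ref{eq:dring}) follows, with $z_3=x_1$ inverted via $x_1 x_{-1}=f$ only after localizing—hence the generators are exactly $z_1^{\pm 1},z_2^{\pm 1},(x_r)_{r\in\mathbb{Z}}$ with the stated relations.
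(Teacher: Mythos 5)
Your proposal is correct and follows essentially the same route as the paper: establish $x_r\cdot x_{r'}=x_{r+r'}$ for $rr'\geq0$ by shrinking the base triangle away from the critical values, reduce everything else by associativity to the single product $x_{-1}\cdot x_1$, and compute that product by degenerating the winding base triangle into local pieces around each $c_j$, each contributing the factor $f_j(z_1,z_2)$ from the two homotopy classes of sections through the singular fiber. The gluing step you flag as the hard part is exactly what the paper supplies in Lemma \ref{lemma:poly}, via the Abouzaid--Auroux device of replacing the $L_i$'s by Lagrangian tori with rank-one local systems so that the section count and its $T^2$-weights can be read off from boundary holonomies of the classes $[D_\pm]$.
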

\begin{proof}
Let $L_i^{(1)},L_i^{(2)}\subset\ring{W}_f$ be the Lagrangian submanifolds obtained as wrappings of $L_i$ under the time-$1$ and time-$2$ flows of $X_{H_W}$, respectively. Consider the cochain level map
\begin{equation}\label{eq:prod}
\mu^2:\mathit{CW}^\ast\left(L_i^{(1)},L_i\right)\otimes\mathit{CW}^\ast\left(L_i^{(2)},L_i^{(1)}\right)\rightarrow\mathit{CW}^\ast\left(L_i^{(2)},L_i\right)
\end{equation}
underlying the product (\ref{eq:prod-H}). It is defined by counting (perturbed) $J$-holomorphic triangles $u:\mathbb{D}\setminus\{p_0,p_1,p_2\}\rightarrow\left(\ring{W}_f,L_i\cup L_i^{(1)}\cup L_i^{(2)}\right)$, where $\mathbb{D}$ is the closed unit disc and $p_0,p_1,p_2\in\partial\mathbb{D}$ are ordered counterclockwisely, with the connected boundary components between $p_0,p_1$, $p_1,p_2$ and $p_2,p_0$ mapped to the Lagrangian submanifolds $L_i,L_i^{(1)}$ and $L_i^{(2)}$, respectively. It follows from the open mapping principle that the image of $u$ is either contained in some smooth fiber of $\ring{\pi}:\ring{W}_f\rightarrow\mathbb{C}^\ast$, or it is a $J$-holomorphic section of $\ring{\pi}$ over some triangle formed by the arcs $\gamma_i,\gamma_i^{(1)},\gamma_i^{(2)}\subset\mathbb{C}^\ast$ for some $J\in\mathcal{J}(\ring{\pi})$, which are projections of the Lagrangians $L_i,L_i^{(1)},L_i^{(2)}$ under $\ring{\pi}$, respectively. Figures \ref{fig:wrapping} and \ref{fig:triangle} are two equivalent descriptions of the wrappings in the base, where the shaded regions are examples of a triangle formed by $\gamma_i,\gamma_i^{(1)}$ and $\gamma_i^{(2)}$, over which a $J$-holomorphic section of $\ring{\pi}$ appears. Since the smooth fibers of $\ring{\pi}$ are symplectomorphic to $T^\ast T^2$ and our wrapping Hamiltonian $H_W:\ring{W}_f\rightarrow[0,\infty)$ has product form in the fiber direction, the counting of the $J$-holomorphic triangles in the fiber always gives $1$ if it is non-trivial. This enables us to identify the fiberwise Floer cochain complexes $\mathit{CW}^\ast\left(\ell_{i,r},\ell_{i,r}^{(1)}\right)$ over some fixed $x_r\in\gamma_i\cap\gamma_i^{(1)}$ together with their triangle products as Laurent polynomial rings $\mathbb{K}[z_1^{\pm1},z_2^{\pm1}]$, and counting $J$-holomorphic sections of $\ring{\pi}$ over a fixed triangle with vertices $x_r\in\gamma_i\cap\gamma_i^{(1)}$, $x_{r'}\in\gamma_i^{(1)}\cap\gamma_i^{(2)}$ and $\tilde{x}_{r+r'}\in\gamma_i\cap\gamma_i^{(2)}$ in $\mathbb{C}^\ast$ gives rise to a map
\begin{equation}\label{eq:fiberwise}
\mathbb{K}[z_1^{\pm1},z_2^{\pm1}]\otimes\mathbb{K}[z_1^{\pm1},z_2^{\pm1}]\rightarrow\mathbb{K}[z_1^{\pm1},z_2^{\pm1}].
\end{equation}
In our case, depending on the positions of the generators on the base, there are two possibilities, shown in Figures \ref{fig:triangle} and \ref{fig:triangle2}, respectively. We remark that the intersection points $x_r\in\gamma_i\cap\gamma_i^{(1)}$ are labeled so that $x_0=\zeta_i$, the points $x_r$ with $r>0$ satisfy $|x_r|>1$ and $|x_{r+1}|>|x_r|$, while $x_r$ with $r<0$ satisfy $0<|x_r|<1$ and $|x_{r+1}|<|x_r|$.

When the intersections $x_r\in\gamma_i\cap\gamma_i^{(1)}$ and $x_{r'}\in\gamma_i^{(1)}\cap\gamma_i^{(2)}$ satisfy $rr'\geq0$, the triangle formed by the arcs $\gamma_i,\gamma_i^{(1)},\gamma_i^{(2)}$ does not contain any of the critical values of $\ring{\pi}$, see Figure \ref{fig:triangle}. It follows that we can shrink such a triangle to a single point by deforming the arcs $\gamma_i,\gamma_i^{(1)},\gamma_i^{(2)}$ using a compactly supported isotopy in $\mathbb{C}^\ast$, which reduces the counting to that inside a single fiber of $\ring{\pi}$, see for example \cite{aah}, Proposition 5.14. This shows that there is precisely one $J$-holomorphic section of $\ring{\pi}$ over the triangle and the map (\ref{eq:fiberwise}) is nothing but usual multiplication between Laurent polynomials. This establishes the relation $x_r\cdot x_{r'}=x_{r+r'}$ when $rr'\geq0$ in the isomorphism (\ref{eq:dring}). Note that we have identified $\tilde{x}_{r+r'}$ with $x_{r+r'}$.

When the intersections $x_r\in\gamma_i\cap\gamma_i^{(1)}$ and $x_{r'}\in\gamma_i^{(1)}\cap\gamma_i^{(2)}$ satisfy $rr'<0$, the (immersed) triangles formed by the arcs $\gamma_i,\gamma_i^{(1)},\gamma_i^{(2)}$ contain all the critical values of $\ring{\pi}$ (although some critical values may be multiply covered), see Figure \ref{fig:triangle2}. Since the general case can be recovered from the products $x_{-1}\cdot x_1$ and the relation $x_r\cdot x_{r'}=x_{r+r'}$ when $rr'\geq0$ established above, we only need to do the computation in the case when $r=-1$, $r'=1$. In this case, every critical value of $\ring{\pi}$ is covered once by the projection of the image of $u$ to $\mathbb{C}^\ast$. The map (\ref{eq:fiberwise}) is given by the composition of the product of Laurent polynomials (which comes from the triangle products of generators in the fiber direction) and the multiplication with $g(z_1,z_2)\in\mathbb{K}[z_1^{\pm1},z_2^{\pm1}]$ determined by the weights of the count of $J$-holomorphic sections of $\ring{\pi}:\ring{W}_f\rightarrow\mathbb{C}^\ast$, i.e.
\begin{equation}\label{eq:g}
\mathbb{K}[z_1^{\pm1},z_2^{\pm1}]\otimes\mathbb{K}[z_1^{\pm1},z_2^{\pm1}]\xrightarrow{\cdot}\mathbb{K}[z_1^{\pm1},z_2^{\pm1}]\xrightarrow{\cdot g(z_1,z_2)}\mathbb{K}[z_1^{\pm1},z_2^{\pm1}].
\end{equation}
The fact that $g(z_1,z_2)=f(z_1,z_2)$ is established in Lemma \ref{lemma:poly} below. This proves the relation $x_{-1}\cdot x_1=f(z_1,z_2)$ in (\ref{eq:dring}), since $x_0$ corresponds to the idempotent $e_i\in\mathit{CW}^0(L_i,L_i)$.
\end{proof}

\begin{figure}
	\centering
	\begin{tikzpicture}
	\filldraw [draw=black, color={black!15}] (5.5,-1.25)--(6.5,0)--(7,0)--(6.5,-1.25)--(5.5,-1.25);
	\filldraw [draw=black, color={black!15}] (6.5,-2.5)--(7.5,-1.25)--(7,-2.5);
	\draw (0,0) to (11,0);
	\draw (0,-2.5) to (11,-2.5);
	\draw (0,-1.25) to (11,-1.25);
	\draw (11.2,-1.25) node {\small$L_i$};
	\draw (5.5,-0.625) node[circle,fill,inner sep=1pt] {};
	\draw (5.5,-1.875) node[circle,fill,inner sep=1pt] {};
	\draw (5.3,-0.625) node {\small$c_i$};
	\draw (5.8,-1.875) node {\small$c_\ast$};
	\draw [red] (11.4,0) node {\small$L_i^{(2)}$};
	\draw [blue] (11.4,-1.875) node {\small$L_i^{(1)}$};
	\draw [dashed] (5.5,0)--(5.5,-2.5);
	
	\draw [red] (0,-2.5) to (1,0);
	\draw [red] (1,-2.5) to (2,0);
	\draw [red] (2,-2.5) to (3,0);
	\draw [red] (3,-2.5) to (4,0);
	\draw [red] (4,-2.5) to (5,0);
	\draw [red] (5,-2.5) to (6,0);
	\draw [red] (6,-2.5) to (7,0);
	\draw [red] (7,-2.5) to (8,0);
	\draw [red] (8,-2.5) to (9,0);
	\draw [red] (9,-2.5) to (10,0);
	\draw [red] (10,-2.5) to (11,0);
	
	\draw [blue] (0,-0.625) to (0.5,0);
	\draw [blue] (0.5,-2.5) to (2.5,0);
	\draw [blue] (2.5,-2.5) to (4.5,0);
	\draw [blue] (4.5,-2.5) to (6.5,0);
	\draw [blue] (6.5,-2.5) to (8.5,0);
	\draw [blue] (8.5,-2.5) to (10.5,0);
	\draw [blue] (10.5,-2.5) to (11,-1.875);
	
	\draw [violet] (5.5,-1.25) node[circle,fill,inner sep=1pt] {};
	\draw [violet] (6.5,-1.25) node[circle,fill,inner sep=1pt] {};
	\draw [violet] (7.5,-1.25) node[circle,fill,inner sep=1pt] {};
	\draw [violet] (5.7,-1.45) node {\small$x_0$};
	\draw [violet] (6.7,-1.45) node {\small$\tilde{x}_1$};
	\draw [violet] (7.7,-1.45) node {\small$x_1$};
	\end{tikzpicture}
    \caption{The triangle product of generators above $x_0$ and $x_1$, where the upper edge and the lower edge of the strip are identified. The shaded triangle corresponds precisely to the one in Figure \ref{fig:wrapping}. $c_\ast$ represents the critical values other than $c_i$.}\label{fig:triangle}
\end{figure}

\begin{figure}
	\centering
	\begin{tikzpicture}
	\filldraw [draw=black, color={black!15}] (3.5,-1.25)--(4.5,0)--(6,0)--(5.5,-1.25)--(3.5,-1.25);
	\filldraw [draw=black, color={black!15}] (6.5,0)--(4.5,-2.5)--(6,-2.5)--(7,0)--(6.5,0);
	\filldraw [draw=black, color={black!15}] (6.5,-2.5)--(7,-2.5)--(7.5,-1.25)--(6.5,-2.5);
	\draw (0,0) to (11,0);
	\draw (0,-2.5) to (11,-2.5);
	\draw (0,-1.25) to (11,-1.25);
	\draw (11.2,-1.25) node {\small$L_i$};
	\draw (5.5,-0.625) node[circle,fill,inner sep=1pt] {};
	\draw (5.5,-1.875) node[circle,fill,inner sep=1pt] {};
	\draw (5.3,-0.625) node {\small$c_i$};
	\draw (5.8,-1.875) node {\small$c_\ast$};
	\draw [red] (11.4,0) node {\small$L_i^{(2)}$};
	\draw [blue] (11.4,-1.875) node {\small$L_i^{(1)}$};
	\draw [dashed] (5.5,0)--(5.5,-2.5);
	
	\draw [red] (0,-2.5) to (1,0);
	\draw [red] (1,-2.5) to (2,0);
	\draw [red] (2,-2.5) to (3,0);
	\draw [red] (3,-2.5) to (4,0);
	\draw [red] (4,-2.5) to (5,0);
	\draw [red] (5,-2.5) to (6,0);
	\draw [red] (6,-2.5) to (7,0);
	\draw [red] (7,-2.5) to (8,0);
	\draw [red] (8,-2.5) to (9,0);
	\draw [red] (9,-2.5) to (10,0);
	\draw [red] (10,-2.5) to (11,0);
	
	\draw [blue] (0,-0.625) to (0.5,0);
	\draw [blue] (0.5,-2.5) to (2.5,0);
	\draw [blue] (2.5,-2.5) to (4.5,0);
	\draw [blue] (4.5,-2.5) to (6.5,0);
	\draw [blue] (6.5,-2.5) to (8.5,0);
	\draw [blue] (8.5,-2.5) to (10.5,0);
	\draw [blue] (10.5,-2.5) to (11,-1.875);
	
	\draw [violet] (5.5,-1.25) node[circle,fill,inner sep=1pt] {};
	\draw [violet] (7.5,-1.25) node[circle,fill,inner sep=1pt] {};
	\draw [violet] (3.5,-1.25) node[circle,fill,inner sep=1pt] {};
	\draw [violet] (3.8,-1.45) node {\small$x_{-1}$};
	\draw [violet] (5.8,-1.45) node {\small$\tilde{x}_0$};
	\draw [violet] (7.7,-1.45) node {\small$x_1$};
    \end{tikzpicture}
    \caption{The triangle product of generators above $x_{-1}$ and $x_1$}\label{fig:triangle2}
\end{figure}

\begin{lemma}\label{lemma:poly}
The polynomial $g(z_1,z_2)$ in (\ref{eq:g}) equals $f(z_1,z_2)$.
\end{lemma}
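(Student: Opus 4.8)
The plan is to compute the polynomial weight $g(z_1,z_2)$ by factoring the holomorphic section count into local contributions, one for each critical value $c_i$ of $\ring{\pi}$, and identifying each local contribution with the corresponding factor $f_i(z_1,z_2)$. The starting point is the observation already made in the proof of Proposition \ref{proposition:single2}: for the product $x_{-1}\cdot x_1$, the immersed triangle in $\mathbb{C}^\ast$ formed by $\gamma_i,\gamma_i^{(1)},\gamma_i^{(2)}$ covers each critical value $c_0,\dots,c_n$ exactly once, so a $J$-holomorphic section $u$ of $\ring{\pi}$ over this triangle picks up, near each $c_j$, a ``vanishing-cycle'' contribution, while the behaviour of $u$ away from small disks $\Delta_j$ is governed purely by the fiberwise Floer theory on $T^\ast T^2$ and contributes trivially. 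Concretely, I would first use the description of $\ring{\pi}:\ring{W}_f\rightarrow\mathbb{C}^\ast$ as a fiber connected sum of the elementary Morse-Bott fibrations $\pi_j:E_j\rightarrow\mathbb{C}$, each of which (after completion) is the product of the standard Lefschetz fibration $\pi_\mathit{std}:\mathbb{C}^2\rightarrow\mathbb{C}$ with $\mathbb{C}^\ast$, so that $E_j$ has vanishing cycle $k_ja\pm l_jb$ in the notation of (\ref{eq:kl}).

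Next I would carry out the local model computation. Degenerating the triangle so that it decomposes into $n+1$ ``thin'' pieces each enclosing a single $c_j$ (this is the standard gluing/splitting argument for holomorphic sections over a Lefschetz-type base, cf. the references to \cite{aah} already used, and the fiberwise maximum principle ensures no escape), the section count over the full triangle becomes the product of the section counts over the $n+1$ thin triangles. For the thin triangle around $c_j$, the relevant geometry is exactly that of the local Lefschetz fibration $\pi_\mathit{std}\times\mathrm{id}_{\mathbb{C}^\ast}$, where the cotangent fiber $L_i$ restricted there is a parallel-transported product Lagrangian whose vanishing-cycle-direction component is $k_ja\pm l_jb$; the count of holomorphic sections over a triangle enclosing the single critical value of $\pi_\mathit{std}$, weighted by the $T^2$-equivariant data recorded by the Seidel elements $z_1,z_2$, is then $z_2^{k_j}\pm z_1^{l_j}$ — that is, exactly $f_j(z_1,z_2)$ from (\ref{eq:f'}). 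This local identification is the technical core: it amounts to the statement that the Seidel-element deformation of the wrapped Floer product across one nodal degeneration of $(\mathbb{C}^\ast)^2$ along the curve $k_ja\pm l_jb$ is multiplication by $z_2^{k_j}\pm z_1^{l_j}$, which one can extract either from a direct count in the local Morse-Bott model (pearly trees / flow trees as in \cite{tem,nso}) or by comparison with the known $n=1$ and double-bubble cases in \cite{alp,sw} together with the $cA_1$ computation implicit in Lemma \ref{lemma:resolution} (equation (\ref{eq:cA1})).

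Finally, multiplying the $n+1$ local contributions gives
\begin{equation}
g(z_1,z_2)=\prod_{j=0}^n f_j(z_1,z_2)=f(z_1,z_2),
\end{equation}
as claimed, where the equality with (\ref{eq:f}) is just the definition of $f$. Two points require care and I would address them explicitly. First, the consistency of signs and of the identification of the global basis $a,b$ of $H_1(T^2;\mathbb{Z})$ with the Seidel variables $z_1,z_2$: one must check that parallel transport of the product Lagrangians along the arcs $\gamma_i$ does not permute or reparametrize this basis, which follows from Lemma \ref{lemma:parallel} (the monodromies are fibered Dehn twists acting trivially on $H_1$). Second, orientation/sign coherence of the holomorphic triangle counts over $\mathbb{K}$ — since every generator sits in degree $0$ the differentials vanish and the only subtlety is the $\pm1$ weights, which are pinned down by the local models and match the $\pm$ in (\ref{eq:f'}). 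The main obstacle I anticipate is precisely the local model identification ``one nodal degeneration along $ka\pm lb$ $\leadsto$ multiplication by $z_2^k\pm z_1^l$'' in full generality (arbitrary $k,l$ with one of them equal to $1$), as opposed to the already-understood cases $(k,l)=(1,0),(0,1),(1,k)$; here I would invoke the product structure of the wrapping Hamiltonian in the fiber direction and the fact — noted in the proof of Proposition \ref{proposition:single2} — that a nontrivial fiberwise triangle contributes exactly $1$, reducing everything to counting sections over the base, which is a one-variable Lefschetz-fibration computation.
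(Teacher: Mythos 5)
Your overall strategy --- degenerating the base region so that the section count localizes near each critical value $c_j$, identifying the local contribution with $f_j(z_1,z_2)$, and multiplying --- is the same route the paper takes: it smooths the triangle to a curve $\tilde{\gamma}_i$, pinches that curve to a concatenation of basic loops $\beta_j$ around the $c_j$, and expresses the moduli space of sections as a fiber product of local moduli spaces over small discs $D_{ij}$. So the skeleton is right, and you correctly flag the local identification as the technical core. The problem is that the two steps which actually make the lemma true are asserted rather than carried out.

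First, you never specify the mechanism by which a holomorphic section acquires a monomial weight in $z_1,z_2$ at all; ``weighted by the $T^2$-equivariant data recorded by the Seidel elements'' is not something one can compute with. The paper's device is the Abouzaid--Auroux replacement of the fibered Lagrangians $L_i^{(t)}$ by Lagrangian cylinders $T_z^{(t)}$ obtained by parallel transporting a product torus $\tau_z\subset\ring{\pi}^{-1}(\zeta_i)$ carrying a rank-one local system with holonomies $z_1^{-1},z_2^{-1}$; the weight of a section is then the holonomy of this local system along its boundary, and \cite{alp}, Lemma 2.17 transfers the answer back to the original Lagrangians. Second, granting this, one must still decide which of the two local section classes (one through each component $Z_j^{\pm}$ of the singular fiber over $c_j$) carries the weight $z_2^{k_j}$ and which carries $z_1^{l_j}$, and show that each homotopy class contributes exactly one $T^2$-orbit of regular sections. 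The paper does the former by constructing a relative $2$-chain $D_{+-}$ interpolating between the classes $[D_\pm]$, computing $[\partial D_{+-}]=(-k_j,l_j,0)$ and $[\partial D_-]=(0,-l_j,1)$ in $H_1(T^3;\mathbb{Z})$, and reading off the holonomies; it does the latter by the complex-analytic argument of \cite{aah}, Proposition 5.24. Your fallback of ``comparison with the known cases in \cite{alp,sw}'' only covers $(k_j,l_j)\in\{(1,0),(0,1),(1,k)\}$ and does not by itself yield the general $(k_j,\pm l_j)$ with one entry equal to $1$, so as written the core computation is a genuine gap rather than a deferred routine check.
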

\begin{proof}
In order to determine the Laurent polynomial $g(z_1,z_2)\in\mathbb{K}[z_1^{\pm1},z_2^{\pm1}]$, we use a trick of Abouzaid-Auroux \cite{aah}, which replaces the fibered Lagrangians $L_i,L_i^{(1)},L_i^{(2)}\subset\ring{W}_f$ with Lagrangian cylinders defined as follows. For each $z=(z_1,z_2)\in(\mathbb{K}^\ast)^2$, let $\tau_z\subset\ring{\pi}^{-1}(\zeta_i)$ be a product torus in $(\mathbb{C}^\ast)^2$ equipped with a rank one local system over $\mathbb{K}$ whose holonomy around the $j$-th $S^1$-factor is $z_j^{-1}\in\mathbb{K}^\ast$ for $j=1,2$. Note that $\tau_z$ is invariant under parallel transport between the fibers of $\ring{\pi}$, and we can choose the wrapping Hamiltonian $H_W:\ring{W}_f\rightarrow[0,\infty)$ in Section \ref{section:setup} so that $\tau_z$ is invariant under the flow of the fiberwise Hamiltonian vector field $X_{H_F}$. Let $T_z^{(t)}\subset\ring{W}_f$ be the Lagrangian cylinder defined by parallel transporting $\tau_z$ over the curve $\gamma_i^{(t)}$ obtained as the time-$t$ flow of $\gamma_i$ under $X_{H_B}$. Note that $T_z^{(t)}\cong T^2\times\mathbb{R}$ is also equipped with a rank one local system by parallel transporting the one on $\tau_z$.

As a $\mathbb{K}$-module, the wrapped Floer cochain complex $\mathit{CW}^\ast\left(T_z,T_z^{(1)}\right)$ can be identified with $H^\ast(T^2;\mathbb{K})[z_3^{\pm1}]$ by a similar argument as in Proposition \ref{proposition:single}. Consider the triangle product
\begin{equation}\label{eq:pt}
\mu^2:\mathit{CW}^\ast\left(T_z^{(1)},T_z\right)\otimes\mathit{CW}^\ast\left(T_z^{(2)},T_z^{(1)}\right)\rightarrow\mathit{CW}^\ast\left(T_z^{(2)},T_z\right)
\end{equation}
obtained by replacing the Lagrangians $L_i,L_i^{(1)}$ and $L_i^{(2)}$ in (\ref{eq:prod}) with the Lagrangian cylinders $T_z,T_z^{(1)}$ and $T_z^{(2)}$, respectively. It follows from \cite{alp}, Lemma 2.17 that under the identification $\mathit{CW}^\ast\left(T_z,T_z^{(t)}\right)\cong H^\ast(T^2;\mathbb{K})[z_3^{\pm1}]$, the product (\ref{eq:pt}), which is a map
\begin{equation}
H^\ast(T^2;\mathbb{K})[z_3^{\pm1}]\otimes H^\ast(T^2;\mathbb{K})[z_3^{\pm1}]\rightarrow H^\ast(T^2;\mathbb{K})[z_3^{\pm1}]
\end{equation}
can be identified with the composition of the cup product on $H^\ast(T^2;\mathbb{K})$ and the multiplication by $g(z_1,z_2)\in\mathbb{K}$ for any $z=(z_1,z_2)\in(\mathbb{K}^\ast)^2$ in the fiber direction of $\ring{\pi}:\ring{W}_f\rightarrow\mathbb{C}^\ast$, while in the base direction it is given by the usual multiplication of Laurent polynomials in $z_3$. Here $g(z_1,z_2)$ is the Laurent polynomial in (\ref{eq:g}). Thus we have turned the problem of counting $J$-holomorphic sections of $\ring{\pi}$ with boundary on $L_i\cup L_i^{(1)}\cup L_i^{(2)}$ into the equivalent problem of counting $J$-holomorphic sections of $\ring{\pi}$ with boundary on $T_z\cup T_z^{(1)}\cup T_z^{(2)}$. To deal with the latter problem, first note that since all of the singular fibers of $\ring{\pi}$ are isomorphic to $(\mathbb{C}\vee\mathbb{C})\times\mathbb{C}^\ast$, there are altogether $2^{n+1}$ different homotopy classes in $\pi_2\left(\ring{W}_f,T_z\cup T_z^{(1)},T_z^{(2)}\right)$ of $J$-holomorphic sections of $\ring{\pi}$ over the embedded triangle formed by the arcs $\gamma_i,\gamma_i^{(1)},\gamma_i^{(2)}$, which in this case contains all the critical values $c_0,\cdots,c_n$. We claim that for each homotopy class, the moduli space of $J$-holomorphic sections consists of a single orbit under the fiberwise Hamiltonian $T^2$-action, so that the count of these sections passing through any given point of $\tau_z\subset\ring{\pi}^{-1}(\zeta_i)$ is equal to one.

To see this, denote by $Z_i^\pm\cong\mathbb{C}\times\mathbb{C}^\ast$, $i=0,\cdots,n$ the irreducible components of the singular fibers $\ring{\pi}^{-1}(c_i)$. Fix some $i$ and a sign sequence $\alpha=(\alpha_0,\cdots,\alpha_n)\in\{\pm\}^{n+1}$, consider the complement $Y_\alpha:=\ring{W}_f\setminus(\bigcup_{j=0}^nZ_j^{\alpha_j})$. It is not hard to see that $Y_\alpha\cong(\mathbb{C}^\ast)^3$. Let $u:\Delta_i\rightarrow\ring{W}_f$ be a $J$-holomorphic section of $\ring{\pi}$ above the triangle $\Delta_i$ with vertices $x_{-1},x_1$ and $\tilde{x}_0$ (cf. Figure \ref{fig:triangle2}) intersecting $Z_j^{\alpha_j}\subset\ring{\pi}^{-1}(c_j)$ at a single point for each $j=0,\cdots,n$ and disjoint from the other $Z_j^{-\alpha_j}$'s, then $u(\Delta_i)\subset Y_\alpha$. By parametrizing the holomorphic sections of $\ring{\pi}|_{Y_\alpha}:Y_\alpha\rightarrow\mathbb{C}^\ast$ using the complex coordinate $z_3$ on $\Delta_i\subset\mathbb{C}^\ast$, we are reduced to finding holomorphic maps $\Delta_i\rightarrow(\mathbb{C}^\ast)^2$ which satisfy appropriate boundary conditions over $\partial\Delta_i$. It now follows from the standard complex analysis argument (cf. \cite{aah}, Proposition 5.24) that the moduli space of $J$-holomorphic sections $\mathcal{S}(\ring{\pi},\Delta_i,J)$ of $\ring{\pi}$ over $\Delta_i$ in this particular class consists of a single $T^2$-orbit.

It remains to determine the contribution of this unique $T^2$-orbit. The $J$-holomorphic sections of $\ring{\pi}$ contributing to the triangle product (\ref{eq:pt}) is weighted by the holonomies of the local systems on the Lagrangian cylinders $T_z^{(t)}$ for $t=0,1,2$. To determine these weights, we first smooth the boundary of the triangular region $\Delta_i\subset\mathbb{C}^\ast$ formed by the arcs $\gamma_i,\gamma_i^{(1)},\gamma_i^{(2)}$ to a simple closed curve $\tilde{\gamma}_i\subset\mathbb{C}^\ast$, which deforms the union of Lagrangian tori $\bigcup_{z\in\partial\Delta_i}\tau_z$ to a product Lagrangian torus $T_i\cong T^3\subset\ring{W}_f$, so that the holonomy of the local system on $T_i$ along the deformed curve $\tilde{\gamma}_i$ (a smooth curve encircling all the critical values of $\ring{\pi}$, see the left-hand side of Figure \ref{fig:deformation}) is trivial. Note that the local system on $T_i$ is defined by parallel transporting the one on $\tau_z$. We then perform a second deformation of the curve $\tilde{\gamma}_i$, so that the deformed curve $\tilde{\gamma}_i'$ is sufficiently close to the concatenation of the basic loops $\beta_0,\cdots,\beta_n$ such that each $\beta_i$ is a basic loop encircling the critical value $c_i$, see the right-hand side of Figure \ref{fig:deformation}. Since these deformations are isotopies, they do not affect the holonomy $\mathrm{hol}\left(u(\partial\Delta_i)\right)\in\mathbb{K}^\ast$.

Denote by $S_i\subset\mathbb{C}^\ast$ the region bounded by the smooth curve $\tilde{\gamma}_i'$, and by $D_{ij}$ the small closed discs centered at $c_j$, $j=0,\cdots,n$, with $\partial D_{ij}=\beta_j$. Locally near $c_j$ we have an elementary Morse-Bott fibration $\pi_j:\overline{E}_j\rightarrow D_{ij}$ with a unique singular fiber, where $\overline{E}_j\subset E_j\cong\mathbb{C}^2\times\mathbb{C}^\ast$ is defined to be $\ring{\pi}^{-1}(D_{ij})$. Let $\mathcal{S}(\ring{\pi},S_i,J)$ be the moduli space of $J$-holomorphic sections of $\ring{\pi}$ with boundary conditions given by the product torus $T_i'\subset\ring{W}_f$ which fibers over $\tilde{\gamma}_i'$. For a generic $J\in\mathcal{J}(\ring{\pi})$, $\mathcal{S}(\ring{\pi},S_i,J)$ is a smooth manifold, and we write $\mathcal{J}_\mathit{reg}(\ring{\pi})\subset\mathcal{J}(\ring{\pi})$ for the ($C^\infty$ dense) subspace of regular admissible almost complex structures. Let $\mathcal{S}(\pi_j,D_{ij},J_j)$ be the moduli space of $J_j$-holomorphic sections of $\pi_j$, so that the almost complex structure $J_j$ is regular and $\pi_j$ is $J_j$-holomorphic outside of a small disc centered at $c_j\in D_{ij}$. The latter space carries an evaluation map
\begin{equation}
\mathit{ev}_j:\mathcal{S}(\pi_j,D_{ij},J_j)\rightarrow\tau_z
\end{equation}
at fiber over $\ast\in\partial D_{ij}$, where $\tau_z\subset\pi_j^{-1}(\ast)$ is the product torus defined by parallel transporting the one in the fiber $\ring{\pi}^{-1}(\zeta_j)$. We shall use the following fact about the gluing of pseudoholomorphic curves.

\begin{proposition}\label{proposition:fp}
Assume that the almost complex structures $J_j$ are regular and the evaluation maps $\mathit{ev}_j$ and $\mathit{ev}_{j+1\textrm{ mod }n+1}$ are mutually transverse for $j=0,\cdots,n$, then for sufficiently small gluing parameters, there is an almost complex structure $J\in\mathcal{J}_\mathit{reg}(\ring{\pi})$, such that we have the diffeomorphism
\begin{equation}\label{eq:fp}
\begin{split}
\mathcal{S}(\ring{\pi},S_i,J)=&\mathcal{S}(\pi_j,D_{ij},J_j){{}_{\mathit{ev}_j}\times_{\mathit{ev}_{j+1\textrm{ mod }n+1}}}\mathcal{S}(\pi_{j+1\textrm{ mod }n+1},D_{i(j+1\textrm{ mod }n+1)},J_{j+1\textrm{ mod }n+1}) \\
&{{}_{\mathit{ev}_{j+1\textrm{ mod }n+1}}\times_{\mathit{ev}_{j+2\textrm{ mod }n+1}}}\cdots{{}_{\mathit{ev}_{j-2}}\times_{\mathit{ev}_{j-1}}}\mathcal{S}(\pi_{j-1},D_{i(j-1)},J_{j-1}),
\end{split}
\end{equation}
between the moduli space of $J$-holomorphic sections of $\ring{\pi}:\ring{W}_f\rightarrow\mathbb{C}^\ast$ over $S_i$ and the fiber producst of the individual moduli spaces $\mathcal{S}(\pi_j,D_{ij},J_j)$, where the evaluation maps $\mathit{ev}_j$ and $\mathit{ev}_{j+1\textrm{ mod }n+1}$ are at the point $\{\ast\}=\beta_j\cap\beta_{j+1\textrm{ mod }n+1}$. 
\end{proposition}

A similar statement of the proposition above can be found in \cite{psa}, Proposition 2.7, although that is more restrictive in the sense that Seidel was dealing with pseudoholomorphic sections of a Lefschetz fibration. See also \cite{psf}, (17c). However, the same argument as in \cite{psa}, Section 2.4 can be adapted here to prove Proposition \ref{proposition:fp} in its form stated above. See also the discussions in \cite{ww}, Section 4.4, which covers our situation.

Recall that the count of $J$-holomorphic sections in $\mathcal{S}(\ring{\pi},S_i,J)$ is weighted by the holonomies of the local systems on the product Lagrangian torus $T_i'$. By (\ref{eq:fp}), this is the product of the holonomies of the local systems on the product Lagrangian tori $T_{ij}\subset\overline{E}_j$ obtained by parallel transporting $\tau_z\subset\pi_j^{-1}(\zeta_j)$ along the circle $\beta_j$. 

Without loss of generality, we assume from now on that the vanishing cycle of $\ring{\pi}:\ring{W}_f\rightarrow\mathbb{C}^\ast$ at $c_j$ is $k_ja+l_jb$ with $(k_j,l_j)\in\mathbb{Z}_{\geq0}^2$, where $a,b\in H_1(T^2;\mathbb{Z})$ is the global basis fixed in the introduction. For each $j\in\{0,\cdots,n\}$, let $[D_\pm]\in H_2(E_j,T_{ij})\cong\mathbb{Z}^2$ be the two relative homology classes of the sections of $\pi_j:\overline{E}_j\rightarrow D_{ij}$, so that $[D_+]$ is the class of sections intersecting the irreducible component $Z_j^+\subset\pi_j^{-1}(c_j)$ once positively, and $[D_-]$ is the other class of sections. Following the proof of \cite{aah}, Proposition 5.26, we define a $2$-chain $D_{+-}$ on $\overline{E}_j$ relative to $T_{ij}$ as follows. Pick a point $s\in\beta_j$ and a path $\gamma\subset D_{ij}$ connecting $s$ to $c_j$, let $D_{+-}$ be the union of the $S^1$-orbits of the $S^1$-action with weight $(k_j-l_j,0)$, such that when approaching the singular fiber over $c_j$ along $\gamma$, these $S^1$-orbits collapse to points on $Z_j^+\cap Z_j^-\cong\mathbb{C}^\ast$. We can orient $D_{+-}$ so that it equals $[D_-]-[D_+]$ in $H_2(E_j,T_{ij})$, and $\partial D_{+-}$ represents the class $(-k_j,l_j,0)\in H_1(T^3;\mathbb{Z})\cong\mathbb{Z}^3$. By our choice of the local system on $\tau_z$, the holonomy along $\partial D_{+-}$ is $z_2^{k_j}z_1^{-l_j}\in\mathbb{K}^\ast$. Since $\partial D_-$ represents the class $(0,-l_j,1)\in H_1(T^3;\mathbb{Z})$, it follows that the holomorphic sections in classes $[D_-]$ and $[D_+]$ are weighted by $z_1^{l_j}$ and $z_2^{k_j}$, respectively, so the total weights of the $J$-holomorphic sections in $\mathcal{S}(\ring{\pi},S_i,J)$ are given by the product of the polynomials $f_j(z_1,z_2)$, as claimed.
\end{proof}

\begin{figure}
	\centering
	\begin{tikzpicture}
	\draw (0,0) circle [radius=2.5];
	\draw [dashed] (0,0) circle [radius=1.25];
	\draw (1.25,0) node[circle,fill,inner sep=1pt] {};
	\draw (-1.25,0) node[circle,fill,inner sep=1pt] {};
	\draw (0.625,1.0825) node[circle,fill,inner sep=1pt] {};
	\draw (0.625,-1.0825) node[circle,fill,inner sep=1pt] {};
	\draw (-0.625,1.0825) node[circle,fill,inner sep=1pt] {};
	\draw (-0.625,-1.0825) node[circle,fill,inner sep=1pt] {};
	\draw (1.5,0) node {$c_0$};
	\draw (-1.5,0) node {$c_3$};
	\draw (0.8,1.2) node {$c_1$};
	\draw (-0.8,1.2) node {$c_2$};
	\draw (-0.8,-1.2) node {$c_4$};
	\draw (0.8,-1.2) node {$c_5$};
	\draw (0,0) node {$\times$};
	\draw [orange] plot [smooth] coordinates {(-0.6,-0.4) (-0.7,0.1) (-0.5,0.3) (0,0.6) (0.5,0.8) (1,0.8) (1.4,1.4) (0.8,1.8) (0,2) (-0.8,1.8) (-1.4,1.3) (-2,0) (-1.4,-1.3) (0,-2) (0.8,-1.8) (1.4,-1.4) (2,0) (1.8,0.4) (0.8,0.3) (0.5,-0.3) (0,-0.6) (-0.5,-0.5) (-0.6,-0.4)};
	\draw [orange] (2,0.5) node {$\tilde{\gamma}_1$};
	
	\draw (3.5,0) node {$\xrightarrow{\textrm{deform}}$};
	\draw (7,0) circle [radius=2.5];
	\draw [dashed] (7,0) circle [radius=1.25];
	\draw (8.25,0) node[circle,fill,inner sep=1pt] {};
	\draw (5.75,0) node[circle,fill,inner sep=1pt] {};
	\draw (7.625,1.0825) node[circle,fill,inner sep=1pt] {};
	\draw (7.625,-1.0825) node[circle,fill,inner sep=1pt] {};
	\draw (6.375,1.0825) node[circle,fill,inner sep=1pt] {};
	\draw (6.375,-1.0825) node[circle,fill,inner sep=1pt] {};
	\draw (8.5,0) node {$c_0$};
	\draw (5.5,0) node {$c_3$};
	\draw (7.8,1.2) node {$c_1$};
	\draw (6.2,1.2) node {$c_2$};
	\draw (6.2,-1.2) node {$c_4$};
	\draw (7.8,-1.2) node {$c_5$};
	\draw (7,0) node {$\times$};
	\draw [orange] ([shift={(8.25,0)}]-85:0.5) arc[radius=0.5,start angle=-85,end angle=245];
	\draw [orange] ([shift={(7.625,-1.0825)}]-145:0.5) arc[radius=0.5,start angle=-145,end angle=5];
	\draw [orange] ([shift={(7.625,-1.0825)}]35:0.5) arc[radius=0.5,start angle=35,end angle=185];
	\draw [orange] ([shift={(7.625,1.0825)}]-165:0.5) arc[radius=0.5,start angle=-165,end angle=165];
	\draw [orange] ([shift={(6.375,1.0825)}]-145:0.5) arc[radius=0.5,start angle=-145,end angle=5];
	\draw [orange] ([shift={(6.375,1.0825)}]35:0.5) arc[radius=0.5,start angle=35,end angle=185];
	\draw [orange] ([shift={(5.75,0)}]105:0.5) arc[radius=0.5,start angle=105,end angle=255];
	\draw [orange] ([shift={(5.75,0)}]-75:0.5) arc[radius=0.5,start angle=-75,end angle=75];
	\draw [orange] ([shift={(6.375,-1.0825)}]165:0.5) arc[radius=0.5,start angle=165,end angle=315];
	\draw [orange] ([shift={(6.375,-1.0825)}]-15:0.5) arc[radius=0.5,start angle=-15,end angle=135];
	\draw [orange] plot [smooth] coordinates {(8.05,-0.45) (8.1,-0.6) (8.11,-0.7) (8.08,-0.8) (8.02,-0.8)};
	\draw [orange] plot [smooth] coordinates {(8.3,-0.5) (8.2,-0.55) (8.13,-1.05)};
	\draw [orange] plot [smooth] coordinates {(6.85,-1.21) (6.82,-1.3) (7,-1.3) (7.14,-1.2) (7.13,-1.11)};
	\draw [orange] plot [smooth] coordinates {(6.72,-1.44) (6.85,-1.35) (7,-1.37) (7.1,-1.3) (7.22,-1.37)};
	\draw [orange] plot [smooth] coordinates {(5.88,-0.48) (5.8,-0.5) (5.8,-0.7) (5.9,-0.8) (6.02,-0.72)};
	\draw [orange] plot [smooth] coordinates {(5.62,-0.48) (5.65,-0.52) (5.75,-0.8) (5.9,-0.9) (5.88,-0.96)};
	\draw [orange] plot [smooth] coordinates {(5.88,0.49) (5.83,0.51) (5.83,0.7) (5.92,0.83) (5.97,0.8)};
	\draw [orange] plot [smooth] coordinates {(5.88,1.04) (5.88,0.95) (5.7,0.8) (5.7,0.52) (5.61,0.48)};
	\draw [orange] plot [smooth] coordinates {(6.78,1.37) (6.82,1.35) (7,1.25) (7.12,1.15) (7.14,1.22)};
	\draw [orange] plot [smooth] coordinates {(6.87,1.13) (6.87,1.19) (7,1.17) (7.14,0.95)};
	\draw [orange] (8.8,0.5) node {$\tilde{\gamma}_1'$};
	\end{tikzpicture}
	\caption{Deformation of the curve $\tilde{\gamma}_1$} \label{fig:deformation}
\end{figure}

\subsection{Products between two adjacent Lagrangians}\label{section:nearby}

For $i=0,\cdots,n$, consider the Lagrangian submanifolds $L_i,L_{i+1\textrm{ mod }n+1}\subset\ring{W}_f$ defined by parallel transporting the fiberwise Lagrangians $\ell_i\subset\ring{\pi}^{-1}(\zeta_i),\ell_{i+1\textrm{ mod }n+1}\subset\ring{\pi}^{-1}(\zeta_{i+1\textrm{ mod }n+1})$ over the paths $\gamma_i,\gamma_{i+1\textrm{ mod }n+1}\subset\mathbb{C}^\ast$, respectively. It follows from Proposition \ref{proposition:single} that as $\mathbb{K}$-vector spaces (from now on, the index $i+1$ in this subsection will always be understood as $i+1\textrm{ mod }n+1$)
\begin{equation}\label{eq:iden0}
\mathit{CW}^\ast(L_i,L_i)\cong\bigoplus_{(p,q,r)\in\mathbb{Z}^3}\mathbb{K}\cdot\vartheta_{p,q,r}^i,\textrm{ }\mathit{CW}^\ast(L_{i+1},L_{i+1})\cong\bigoplus_{(p,q,r)\in\mathbb{Z}^3}\mathbb{K}\cdot\vartheta_{p,q,r}^{i+1}.
\end{equation}
After wrapping the Lagrangian submanifold $L_i$ using the time-$1$ flow of $X_{H_W}$, we get a well-defined Floer cochain complex $\mathit{CW}^\ast\left(L_i^{(1)},L_{i+1}\right)$ whose generators correspond to the transverse intersection points $L_i^{(1)}\cap L_{i+1}$. Under the projection $\ring{\pi}$, they are mapped to the intersections between the arcs $\gamma_i^{(1)}$ and $\gamma_{i+1}$. We label these intersection points by $\{x_r^-\}_{r\in\mathbb{Z}}\subset\mathbb{C}^\ast$. Above each point $x_r^-$, the Floer complex $\mathit{CW}^\ast\left(L_i^{(1)},L_{i+1}\right)$ has generators corresponding to the transverse intersection points of the fiberwise Lagrangians $\ell_{i,r}^{(1)},\ell_{i+1,r}\subset\ring{\pi}^{-1}(x_r^-)$, where $\ell_{i,r}^{(1)}$ is the time-$1$ flow of $\ell_{i,r}$ under the Hamiltonian vector field $X_{H_F}$. Since $\ring{\pi}^{-1}(x_r^-)\cong T^\ast T^2$ and $\ell_{i,r}^{(1)}$ and $\ell_{i+1,r}$ are Hamiltonian isotopic to cotangent fibers, by our choice of the fiberwise wrapping Hamiltonian $H_F$ in Section \ref{section:setup}, we see that $\ell_{i,r}^{(1)}\cap\ell_{i+1,r}$ is parametrized by $(p,q)\in\mathbb{Z}^2$, and there is a similar identification
\begin{equation}\label{eq:iden1}
\mathit{CW}^\ast\left(L_i^{(1)},L_{i+1}\right)\cong\bigoplus_{(p,q,r)\in\mathbb{Z}^3}\mathbb{K}\cdot\eta_{p,q,r}^i
\end{equation}
as (\ref{eq:iden0}), where the $\eta_{p,q,r}^i$'s have degree $0$. In Figure \ref{fig:prod}, the generators $\eta_{p,q,r}^i$ project under $\ring{\pi}$ to the intersections between the blue arc and the upper edge. Similarly, let $L_{i+1}^{(2)}$ be the time-$2$ flow of $L_{i+1}$ under the Hamiltonian vector field $X_{H_W}$, we have
\begin{equation}
\mathit{CW}^\ast\left(L_{i+1}^{(2)},L_i^{(1)}\right)\cong\bigoplus_{(p,q,r)\in\mathbb{Z}^3}\mathbb{K}\cdot\delta_{p,q,r}^i,
\end{equation}
where the degree $0$ generators $\delta_{p,q,r}^i$ correspond to the fiberwise intersection points that project to $\gamma_{i+1}^{(2)}\cap\gamma_i^{(1)}=\{x_r^+\}_{r\in\mathbb{Z}}$ under $\ring{\pi}$. In Figure \ref{fig:prod}, these correspond to the intersection points between the red arc and the blue arc on the bottom edge. Now we consider the triangle product
\begin{equation}\label{eq:prod1}
\mu^2:\mathit{CW}^\ast\left(L_i^{(1)},L_{i+1}\right)\otimes\mathit{CW}^\ast\left(L_{i+1}^{(2)},L_i^{(1)}\right)\rightarrow\mathit{CW}^\ast\left(L_{i+1}^{(2)},L_{i+1}\right).
\end{equation}
As we have seen in Section \ref{section:single}, the (perturbed) $J$-holomorphic discs contributing to the product are either contained in the fiber or appear as $J$-holomorphic sections of $\ring{\pi}:\ring{W}_f\rightarrow\mathbb{C}^\ast$. In the fiber direction, since both $\ell_i$ and $\ell_{i+1}$ are cotangent fibers of $T^\ast T^2$, the fiberwise Floer complexes $\mathit{CW}^\ast(\ell_i,\ell_{i+1})$ and $\mathit{CW}^\ast(\ell_{i+1},\ell_{i+1})$ together with their triangle products computed in $T^\ast T^2$ can be identified with the Laurent polynomial ring $\mathbb{K}[z_1^{\pm1},z_2^{\pm1}]$. The product (\ref{eq:prod1}) over the fixed triangle in $\mathbb{C}^\ast$ with vertices $x_r^-$, $x_{r'}^+$ and $x_{r+r'}$, where $r,r'\in\mathbb{Z}$, formed by the arcs $\gamma_i^{(1)}$, $\gamma_{i+1}$ and $\gamma_{i+1}^{(2)}$ is therefore identified with a map 
\begin{equation}\label{eq:adgi}
\mathbb{K}[z_1^{\pm1},z_2^{\pm1}]\otimes\mathbb{K}[z_1^{\pm1},z_2^{\pm1}]\rightarrow\mathbb{K}[z_1^{\pm1},z_2^{\pm1}]\xrightarrow{\cdot g_i(z_1,z_2)}\mathbb{K}[z_1^{\pm1},z_2^{\pm1}],
\end{equation}
given by the multiplication of Laurent polynomials in the fiber direction of $\ring{\pi}$, composed with the multiplication of a particular Laurent polynomial $g_i(z_1,z_2)\in\mathbb{K}[z_1^{\pm1},z_2^{\pm1}]$ coming from the weights of $J$-holomorphic sections of $\ring{\pi}$. The expression of $g_i(z_1,z_2)$ depends on the values of $r$ and $r'$, which we will analyze case by case below.

\begin{proposition}\label{propsition:adtrivial}
When $r\ge 0$ and $r'\ge1$ or $r\le-1$ and $r'\le0$, we have $g_i(z_1,z_2)=1$.
\end{proposition}
\begin{proof}
This is similar to the case of $rr'\ge0$ in Proposition \ref{proposition:single2}. In this case, the triangle formed by the arcs $\gamma_i^{(1)}$, $\gamma_{i+1}$ and $\gamma_{i+1}^{(2)}$ does not contain any critical values of $\ring{\pi}$, see Figure \ref{fig:prod1}. It follows that we can shrink such a triangle to a single point by deforming the arcs using a compactly supported isotopy in $\mathbb{C^*}$, which reduces the counting to that inside a smooth fiber of $\ring{\pi}$. This shows that there is precisely one $J$-holomorphic section of $\ring{\pi}$ over this triangle, and the map \ref{eq:adgi} is the usual multiplication of Laurent polynomials. 
\end{proof}

\begin{figure}
	\centering
	\begin{tikzpicture}
	\filldraw [draw=black, color={black!15}] (8.5,1)--(6.5,-1.5)--(8.5,-1.5)--(9.5,1)--(8.5,1);
	\filldraw [draw=black, color={black!15}] (8.5,-1.5)--(9.5,-1.5)--(10.5,1);
	\filldraw [draw=black, color={black!15}] (2.5,-1.5)--(3.5,-1.5)--(4.5,1);
	\draw (0,1) to (11,1);
	\draw (0,-1.5) to (11,-1.5);
	\draw (0,-0.25) to (11,-0.25);
	\draw (11.2,-0.25) node {\small$L_i$};
	\draw (11.3,1) node {\small$L_{i+1}$};
	\draw (5.5,0.375) node[circle,fill,inner sep=1pt] {};
	\draw (5.5,-0.875) node[circle,fill,inner sep=1pt] {};
	\draw [dashed] (5.5,1)--(5.5,-1.5);
	\draw (5.75,0.375) node {\small$c_i$};
	\draw (5.25,-0.875) node {\small$c_\ast$};
	\draw [violet] (4.5,1)  node[circle,fill,inner sep=1pt] {};
	\draw [violet] (10.5,1)  node[circle,fill,inner sep=1pt] {};
	\draw [violet] (2.5,-1.5) node[circle,fill,inner sep=1pt] {};
	\draw [violet] (3.5,-1.5) node[circle,fill,inner sep=1pt] {};
	\draw [violet] (6.5,-1.5) node[circle,fill,inner sep=1pt] {};
	\draw [violet] (8.5,-1.5) node[circle,fill,inner sep=1pt] {};
	\draw [violet] (10.5,1.3) node {\small$x_3^+$};
	\draw [violet] (6.5,1.3) node {\small$x_0^-$};
	\draw [violet] (6.5,-1.8) node {\small$x_0^-$};
	\draw [violet] (8.5,-1.8) node {\small$x_3$};
	\draw [violet] (4.5,1.3) node {\small$x_{0}^+$};
	\draw [violet] (2.5,-1.8) node {\small$x_{-2}^-$};
	\draw [violet] (3.5,-1.8) node {\small$x_{-2}$};
	\draw [blue] (0,0.375) to (0.5,1);
	\draw [blue] (0.5,-1.5) to (2.5,1);
	\draw [blue] (2.5,-1.5) to (4.5,1);
	\draw [blue] (4.5,-1.5) to (6.5,1);
	\draw [blue] (6.5,-1.5) to (8.5,1);
	\draw [blue] (8.5,-1.5) to (10.5,1);
	\draw [blue] (10.5,-1.5) to (11,-0.875);
	\draw [blue] (11.4,-0.875) node {\small$L_i^{(1)}$};
		
	\draw [red] (10.5,-1.5) to (11,-0.25);
	\draw [red] (10.5,1) to (9.5,-1.5);
	\draw [red] (9.5,1) to (8.5,-1.5);
	\draw [red] (8.5,1) to (7.5,-1.5);
	\draw [red] (7.5,1) to (6.5,-1.5);
	\draw [red] (6.5,1) to (5.5,-1.5);
	\draw [red]  (5.5,1) to (4.5,-1.5);
	\draw [red] (4.5,1) to (3.5,-1.5);
	\draw [red] (3.5,1) to (2.5,-1.5);
	\draw [red] (2.5,1) to (1.5,-1.5);
	\draw [red] (1.5,1) to (0.5,-1.5);
	\draw [red] (0.5,1) to (0,-0.25);
	\draw [red] (7.5,1.3) node {\small$L_{i+1}^{(2)}$};
	\end{tikzpicture}
	\caption{The left triangle contributes to the product (\ref{eq:prod1}) between generators over $x_0^+$ and $x_{-2}^-$, the right triangle contributes to the product between generators over $x_3^+$ and $x_0^-$.}\label{fig:prod1}
\end{figure}

\begin{proposition}\label{proposition:gi}
When $r=0$ and $r'\leq0$, we have $g_i(z_1,z_2)=f_i(z_1,z_2)$.
\end{proposition}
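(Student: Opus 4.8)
The plan is to transcribe the argument behind Lemma~\ref{lemma:poly} to the present setting of two adjacent Lagrangians, the decisive simplification being that when $r=r'=0$ the relevant triangle in the base $\mathbb{C}^\ast$ encloses only a \emph{single} critical value of $\ring{\pi}$. First I would replace the fibered Lagrangians $L_i^{(1)},L_{i+1},L_{i+1}^{(2)}$ by the Lagrangian cylinders $T_z^{(1)},T_z,T_z^{(2)}$ fibered over $\gamma_i^{(1)},\gamma_{i+1},\gamma_{i+1}^{(2)}$, where $T_z$ is built by parallel transport from a product torus $\tau_z\subset(\mathbb{C}^\ast)^2$ carrying a rank-one local system of holonomy $z_\alpha^{-1}$ around the $\alpha$-th circle factor, exactly as in the proof of Lemma~\ref{lemma:poly}. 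By \cite{alp}, Lemma~2.17, the contribution of a fixed base triangle to the product (\ref{eq:prod1}) is then identified, in the fiber direction, with the cup product on $H^\ast(T^2;\mathbb{K})$ post-composed with multiplication by the scalar $g_i(z_1,z_2)\in\mathbb{K}$ given by the holonomy-weighted count of $J$-holomorphic sections of $\ring{\pi}$ over that triangle. This is the same reduction already used in Section~\ref{section:single}, and it turns the proposition into a statement about holomorphic sections.

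Next I would pin down the relevant triangle in the $r=r'=0$ case: it is the triangle $\Delta_i\subset\mathbb{C}^\ast$ with vertices $x_0^-\in\gamma_i^{(1)}\cap\gamma_{i+1}$, $x_0^+\in\gamma_{i+1}^{(2)}\cap\gamma_i^{(1)}$ and $x_0\in\gamma_{i+1}\cap\gamma_{i+1}^{(2)}$. The point of taking $r=r'=0$ is that $\Delta_i$ is embedded and encloses exactly one critical value of $\ring{\pi}$, namely $c_i$: this is visible in Figure~\ref{fig:prod} and follows from the positions of the arcs, since $\gamma_i$ runs between $c_{i-1}$ and $c_i$ while $\gamma_{i+1}$ runs between $c_i$ and $c_{i+1}$. (For $rr'\neq0$ the analogous triangle is immersed and sweeps across several critical values, in general with multiplicity, which is why the proposition singles out $r=r'=0$, the general case being recovered afterwards.) By the open mapping principle and the maximum principle with respect to $\ring{\pi}$ (\cite{aah}, Proposition~3.10), every $J$-holomorphic section contributing to the product projects into $\Delta_i$, so the count genuinely localizes near the single singular fiber $\ring{\pi}^{-1}(c_i)\cong(\mathbb{C}\vee\mathbb{C})\times\mathbb{C}^\ast$; in particular, unlike in Lemma~\ref{lemma:poly}, no fiber product of local models needs to be formed.

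Finally, after deforming $\partial\Delta_i$ to a small loop $\beta_i$ around $c_i$, I would run the local computation, which is then verbatim the last part of the proof of Lemma~\ref{lemma:poly} applied to the elementary Morse-Bott fibration $\pi_i:\overline{E}_i\to D_i$. There are precisely two homotopy classes $[D_+],[D_-]\in H_2(E_i,T_i)\cong\mathbb{Z}^2$ of sections over $\Delta_i$, distinguished by which irreducible component $Z_i^\pm\subset\ring{\pi}^{-1}(c_i)$ the section meets; the complex-analysis argument of \cite{aah}, Proposition~5.24, shows that the moduli space in each class is a single $T^2$-orbit, hence contributes once through a given point of $\tau_z$. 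Introducing the auxiliary $2$-chain $D_{+-}$ as in Lemma~\ref{lemma:poly}, the section in class $[D_-]$ is weighted by $z_1^{l_i}$ and the one in class $[D_+]$ by $z_2^{k_i}$, so the total holonomy-weighted count equals $z_2^{k_i}\pm z_1^{l_i}=f_i(z_1,z_2)$, with the sign inherited from the vanishing cycle $k_ia\pm l_ib$. Hence $g_i(z_1,z_2)=f_i(z_1,z_2)$.

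The hard part is really just the geometric claim from the second step that for $r=r'=0$ the triangle $\Delta_i$ encloses $c_i$, and only $c_i$, with multiplicity one; everything after that is a direct citation of arguments already carried out for Lemma~\ref{lemma:poly}. Establishing it requires a careful inspection of the wrapped arcs $\gamma_i^{(1)}$ and $\gamma_{i+1}^{(2)}$ in the cylinder (equivalently, of Figure~\ref{fig:prod}), and relies on the triviality of the monodromy of $\ring{\pi}$ around the origin, which prevents holomorphic sections from escaping and winding around additional critical values. The subsidiary points — regularity of the moduli spaces and absence of sphere or disc bubbling — are handled exactly as in Section~\ref{section:setup} via the plurisubharmonic function $\hbar$.
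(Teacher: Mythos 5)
Your proposal is correct and follows essentially the same route as the paper's own (sketched) proof: replace the fibered Lagrangians by Lagrangian cylinders with rank-one local systems, invoke \cite{alp}, Lemma 2.17 to reduce to a holonomy-weighted count of sections, observe that for $r=r'=0$ the relevant triangle encloses only the critical value $c_i$, and then run the local two-section computation from Lemma \ref{lemma:poly} to get the weight $z_2^{k_i}\pm z_1^{l_i}=f_i(z_1,z_2)$. The only cosmetic difference is that the paper phrases the last step as deforming the triangle boundary to a smooth curve $\tilde{\gamma}$ encircling $c_i$ before localizing, which is exactly your small-loop reduction.
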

\begin{proof}
This is similar to Lemma \ref{lemma:poly}, so the proof will only be sketched. For any $z\in(\mathbb{C}^\ast)^2$, we first replace the Lagrangian submanifolds $L_i^{(1)},L_{i+1},L_{i+1}^{(2)}\subset\ring{W}_f$ with the Lagrangian cylinders $T_{z,i}^{(1)},T_{z,i+1},T_{z,i+1}^{(2)}$ equipped with rank $1$ local systems obtained by parallel transporting the Lagrangian branes $\tau_z\subset\ring{\pi}^{-1}(\zeta_i)$ and $\tau_z\subset\ring{\pi}^{-1}(\zeta_{i+1})$, as specified in the proof of Lemma \ref{lemma:poly}, along the arcs $\gamma_i^{(1)}$, $\gamma_{i+1}$ and $\gamma_{i+1}^{(2)}$, respectively. This transfers the problem of determining the triangle product (\ref{eq:prod1}) to that of computing the product
\begin{equation}\label{eq:p3}
\mu^2:\mathit{CW}^\ast\left(T_{z,i}^{(1)},T_{z,i+1}\right)\otimes\mathit{CW}^\ast\left(T_{z,i+1}^{(2)},T_{z,i}^{(1)}\right)\rightarrow\mathit{CW}^\ast\left(T_{z,i+1}^{(2)},T_{z,i+1}\right).
\end{equation}
By \cite{alp}, Lemma 2.17, the map (\ref{eq:p3}) is then identified with
\begin{equation}
H^\ast(T^2;\mathbb{K})[z_3^{\pm1}]\otimes H^\ast(T^2;\mathbb{K})[z_3^{\pm1}]\rightarrow H^\ast(T^2;\mathbb{K})[z_3^{\pm1}],
\end{equation}
which is the cup product composed with the multiplication by $g_i(z_1,z_2)\in\mathbb{K}$ in the $H^2(T^2;\mathbb{K})$ factor, and the usual polynomial multiplication in the variable $z_3$. To determine the dependence of the weight $g_i(z_1,z_2)$ on $z_1$ and $z_2$, one then deforms the triangle formed by the arcs $\gamma_i^{(1)}$, $\gamma_{i+1}$ and $\gamma_{i+1}^{(2)}$ to a smooth curve $\tilde{\gamma}\subset\mathbb{C}^\ast$. Parallel transporting $\tau_z$ over $\tilde{\gamma}$ then yields a product Lagrangian torus $T_{i,i+1}\subset\ring{W}_f$, equipped with the local system obtained by parallel transporting the one on $\tau_z$. In this case, it follows from our assumption $r=r'=0$ that $\tilde{\gamma}$ encircles a unique critical value $c_i$ of $\ring{\pi}$, see Figure \ref{fig:prod}. It follows from the proof of Lemma \ref{lemma:poly} that there are exactly two $J$-holomorphic sections of $\ring{\pi}:\ring{W}_f\rightarrow\mathbb{C}^\ast$ over the disc bounded by $\tilde{\gamma}$, whose total contribution after weighting by the holonomies along the boundary with respect to the local system on $T_{i,i+1}$ is precisely $z_2^{k_i}\pm z_1^{l_i}$.
\end{proof}

\begin{figure}
	\centering
	\begin{tikzpicture}
	\filldraw [draw=black, color={black!15}] (5.5,1)--(4.5,-1.5)--(6.5,1)--(5.5,1);
	\draw (0,1) to (11,1);
	\draw (0,-1.5) to (11,-1.5);
	\draw (0,-0.25) to (11,-0.25);
	\draw (11.2,-0.25) node {\small$L_i$};
	\draw (11.3,1) node {\small$L_{i+1}$};
	\draw (5.5,0.375) node[circle,fill,inner sep=1pt] {};
	\draw (5.5,-0.875) node[circle,fill,inner sep=1pt] {};
	\draw [dashed] (5.5,1)--(5.5,-1.5);
	\draw (5.7,0.375) node {\small$c_i$};
	\draw (5.25,-0.875) node {\small$c_\ast$};
	\draw [violet] (5.5,1) node[circle,fill,inner sep=1pt] {};
	\draw [violet] (6.5,1) node[circle,fill,inner sep=1pt] {};
	\draw [violet] (4.5,-1.5) node[circle,fill,inner sep=1pt] {};
	\draw [violet] (4.5,-1.8) node {\small$x_0^+$};
	\draw [violet] (5.5,1.2) node {\small$x_0$};
	\draw [violet] (6.5,1.3) node {\small$x_0^-$};
	
	\draw [blue] (0,0.375) to (0.5,1);
	\draw [blue] (0.5,-1.5) to (2.5,1);
	\draw [blue] (2.5,-1.5) to (4.5,1);
	\draw [blue] (4.5,-1.5) to (6.5,1);
	\draw [blue] (6.5,-1.5) to (8.5,1);
	\draw [blue] (8.5,-1.5) to (10.5,1);
	\draw [blue] (10.5,-1.5) to (11,-0.875);
	\draw [blue] (11.4,-0.875) node {\small$L_i^{(1)}$};
	
	\draw [red] (10.5,-1.5) to (11,-0.25);
	\draw [red] (10.5,1) to (9.5,-1.5);
	\draw [red] (9.5,1) to (8.5,-1.5);
	\draw [red] (8.5,1) to (7.5,-1.5);
	\draw [red] (7.5,1) to (6.5,-1.5);
	\draw [red] (6.5,1) to (5.5,-1.5);
	\draw [red]  (5.5,1) to (4.5,-1.5);
	\draw [red] (4.5,1) to (3.5,-1.5);
	\draw [red] (3.5,1) to (2.5,-1.5);
	\draw [red] (2.5,1) to (1.5,-1.5);
	\draw [red] (1.5,1) to (0.5,-1.5);
	\draw [red] (0.5,1) to (0,-0.25);
	\draw [red] (7.5,1.3) node {\small$L_{i+1}^{(2)}$};
	\end{tikzpicture}
    \caption{The triangle product (\ref{eq:prod1}) between generators over $x_0^+$ and $x_0^-$}\label{fig:prod}
\end{figure}

\begin{proposition}\label{proposition:gi'}
When $r=-1, r'\ge1$, we have $g_i(z_1,z_2)=\prod\limits_{j\neq i} f_j(z_1,z_2)$.
\end{proposition}
\begin{proof}
This is similar to to Lemma \ref{lemma:poly} and Proposition \ref{proposition:gi}, except that now the triangle formed by $\gamma_i^{(1)}$, $\gamma_{i+1}$ and $\gamma_{i+1}^{(2)}$ contains all but the critical value $c_i$, $0\leq i\leq n$. See Figure \ref{fig:prod'}. We omit the details.
\end{proof}

\begin{figure}
	\centering
	\begin{tikzpicture}
	\filldraw [draw=black, color={black!15}] (6.5,1)--(4.5,-1.5)--(6.5,-1.5)--(7.5,1)--(6.5,1);
	\filldraw [draw=black, color={black!15}] (6.5,-1.5)--(7.5,-1.5)--(8.5,1);
		
	\draw (0,1) to (11,1);
	\draw (0,-1.5) to (11,-1.5);
	\draw (0,-0.25) to (11,-0.25);
	\draw (11.2,-0.25) node {\small$L_i$};
	\draw (11.3,1) node {\small$L_{i+1}$};
	\draw (5.5,0.375) node[circle,fill,inner sep=1pt] {};
	\draw (5.5,-0.875) node[circle,fill,inner sep=1pt] {};
	\draw [dashed] (5.5,1)--(5.5,-1.5);
	\draw (5.75,0.375) node {\small$c_i$};
	\draw (5.25,-0.875) node {\small$c_\ast$};
	\draw [violet] (8.5,1)  node[circle,fill,inner sep=1pt] {};
	\draw [violet] (4.5,-1.5) node[circle,fill,inner sep=1pt] {};
	\draw [violet] (6.5,-1.5) node[circle,fill,inner sep=1pt] {};
		
	\draw [violet] (8.5,1.3) node {\small$x_2^+$};
	\draw [violet] (6.5,-1.8) node {\small$x_1$};
	\draw [violet] (4.5,-1.8) node {\small$x_{-1}^-$};
		
	\draw [blue] (0,0.375) to (0.5,1);
	\draw [blue] (0.5,-1.5) to (2.5,1);
	\draw [blue] (2.5,-1.5) to (4.5,1);
	\draw [blue] (4.5,-1.5) to (6.5,1);
	\draw [blue] (6.5,-1.5) to (8.5,1);
	\draw [blue] (8.5,-1.5) to (10.5,1);
	\draw [blue] (10.5,-1.5) to (11,-0.875);
	\draw [blue] (11.4,-0.875) node {\small$L_i^{(1)}$};
		
	\draw [red] (10.5,-1.5) to (11,-0.25);
	\draw [red] (10.5,1) to (9.5,-1.5);
	\draw [red] (9.5,1) to (8.5,-1.5);
	\draw [red] (8.5,1) to (7.5,-1.5);
	\draw [red] (7.5,1) to (6.5,-1.5);
	\draw [red] (6.5,1) to (5.5,-1.5);
	\draw [red]  (5.5,1) to (4.5,-1.5);
	\draw [red] (4.5,1) to (3.5,-1.5);
	\draw [red] (3.5,1) to (2.5,-1.5);
	\draw [red] (2.5,1) to (1.5,-1.5);
	\draw [red] (1.5,1) to (0.5,-1.5);
	\draw [red] (0.5,1) to (0,-0.25);
	\draw [red] (7.5,1.3) node {\small$L_{i+1}^{(2)}$};
	\end{tikzpicture}
	\caption{The triangle product (\ref{eq:prod1}) between generators over $x_2^+$ and $x_{-1}^-$}\label{fig:prod'}
\end{figure}

Next, we explain how to compute the triangle products (\ref{eq:prod1}) in the cases when $r\ge0$ and $r'\le0$. We will first regard the generator $\eta_{p,q,r}^i$ as the product between $\vartheta_{0,0,r}^i$ and $\eta_{p,q,0}^{i+1}$ under the map
\begin{equation}
\mu^2:\mathit{CW}^\ast\left(L_{i+1},L_{i+1}\right)\otimes\mathit{CW}^\ast\left(L_i^{(1)},L_{i+1}\right)\rightarrow\mathit{CW}^\ast\left(L_i^{(1)},L_{i+1}\right).
\end{equation}
It follows from Proposition \ref{proposition:gi} and Proposition \ref{proposition:single2} that
\begin{equation}
	\begin{split}
	\mu^2(\eta_{p,q,r}^i,\delta_{p',q',r'}^i)&=\mu^2\left(\mu^2(\vartheta_{0,0,r}^{i+1},\eta_{p,q,0}^i),\delta_{p',q',r'}^i\right) \\
	&=\mu^2\left(\vartheta_{0,0,r}^{i+1},\mu^2(\eta_{p,q,0}^i,\delta_{p',q',r'}^i)\right) \\
	&=f_i(z_1,z_2)\mu^2(\vartheta_{0,0,r}^{i+1},\vartheta_{p+p',q+q',r'}^{i+1}) \\
	&=f_i(z_1,z_2)f(z_1,z_2)^{\min\left\{|r|,|r'|\right\}}\cdot\vartheta_{p+p',q+q',r+r'}^{i+1}.
	\end{split}
\end{equation}

Similarly, we can compute the triangle product (\ref{eq:prod1}) in the cases when $r<-1$ and $r'\ge1$. We regard $\eta_{p,q,r}^i$ as the product between $\vartheta_{0,0,r+1}^i$ and $\eta_{p,q,-1}^{i+1}$ under the map
\begin{equation}
\mu^2:\mathit{CW}^\ast\left(L_{i+1},L_{i+1}\right)\otimes\mathit{CW}^\ast\left(L_i^{(1)},L_{i+1}\right)\rightarrow\mathit{CW}^\ast\left(L_i^{(1)},L_{i+1}\right).
\end{equation}
It follows from Propositions \ref{proposition:single2} and \ref{proposition:gi'} that
\begin{equation}
	\begin{split}
	\mu^2(\eta_{p,q,r}^i,\delta_{p',q',r'}^i)&=\mu^2\left(\mu^2(\vartheta_{0,0,r+1}^{i+1},\eta_{p,q,-1}^i),\delta_{p',q',r'}^i\right) \\
	&=\mu^2\left(\vartheta_{0,0,r+1}^{i+1},\mu^2(\eta_{p,q,-1}^i,\delta_{p',q',r'}^i)\right) \\
	&=\prod\limits_{j\neq i} f_j(z_1,z_2)\cdot\mu^2(\vartheta_{0,0,r+1}^{i+1},\vartheta_{p+p',q+q',r'-1}^{i+1}) \\
	&=\prod\limits_{j\neq i} f_j(z_1,z_2)\cdot f(z_1,z_2)^{\min\left\{|r+1|,|r'-1|\right\}}\cdot\vartheta_{p+p',q+q',r+r'}^{i+1}.
	\end{split}
\end{equation}

\subsection{Concluding the computation}

We start with the following observation, which in particular shows that the product structure $\mu^2$ on the wrapped Fukaya $A_\infty$-algebra $\ring{\mathcal{W}}_f$ is determined by the triangle product for wrapped Floer cochain complexes of two adjacent Lagrangians.

\begin{lemma}\label{lemma:other}
Let $L_i,L_j\subset\ring{W}_f$ be two non-adjacent Lagrangians in the collection $\{L_0,\cdots,L_n\}$, where $i+1<j$ mod $n+1$. Up to the multiplication of a polynomial factor in $\mathbb{K}[z_1,z_2]$, any generator of $\mathit{CW}^\ast(L_i,L_j)$ can be identified with the products of the generators in the Floer cochain complexes $\mathit{CW}^\ast(L_i,L_{i+1}),\cdots,\mathit{CW}^\ast(L_{j-1},L_j)$ of adjacent Lagrangians. 
\end{lemma}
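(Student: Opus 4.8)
The plan is to reduce everything to the base cylinder $\mathbb{C}^\ast=T^\ast S^1$ together with the fiberwise factor $T^\ast T^2$, and to track how the adjacent triangle products behave under iterated composition. First, by the same argument as in Proposition~\ref{proposition:single} (wrapping $L_i$ under the flow of $X_{H_W}$, projecting to $\mathbb{C}^\ast$ and inspecting base and fiber intersections), the complex $\mathit{CW}^\ast(L_i,L_j)$ is a free $\mathbb{K}$-module, supported in degree $0$, with basis $\{\omega^{i,j}_{p,q,r}\}_{(p,q,r)\in\mathbb{Z}^3}$: the pair $(p,q)$ records the fiberwise intersection point in the relevant $T^\ast T^2$-fiber, and $r$ records the winding of the underlying Hamiltonian chord in the base cylinder relative to a fixed reference chord from $\gamma_j$ to $\gamma_i$. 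Since all Floer cochains in sight are in degree $0$ (with the conventions of Section~\ref{section:single}), ``products'' below always mean iterated applications of $\mu^2$.

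Second, I would record the base combinatorics. In $\mathbb{C}^\ast$ the arcs $\gamma_i,\dots,\gamma_j$ are cotangent fibers of $T^\ast S^1$ over the points $\zeta_i,\dots,\zeta_j\in S^1$, which occur in this cyclic order and separate consecutive critical values of $\ring{\pi}$. Every Hamiltonian chord from $\gamma_j$ to $\gamma_i$ is the concatenation of chords $\gamma_i\to\gamma_{i+1}\to\cdots\to\gamma_j$: one takes the short (embedded, non-winding) chord for each consecutive pair $\zeta_k,\zeta_{k+1}$ with $i\le k\le j-2$ and absorbs all of the winding into the final chord $\zeta_{j-1}\to\zeta_j$. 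Lifting this factorization to $\ring{W}_f$, I would choose for each short base chord the generator of $\mathit{CW}^\ast(L_k,L_{k+1})$ with trivial fiber monomial, and for the last chord the generator of $\mathit{CW}^\ast(L_{j-1},L_j)$ carrying the fiber monomial $z_1^pz_2^q$; the assertion to prove is then that the iterated triangle product of these lifts equals $P(z_1,z_2)\cdot\omega^{i,j}_{p',q',r}$ for some $P\in\mathbb{K}[z_1,z_2]$ and some $(p',q')$ differing from $(p,q)$ by a controlled amount.

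Third, to determine the factor $P$, I would run, for each pairwise product $\mu^2\colon\mathit{CW}^\ast(L_{k+1},L_{k+2})\otimes\mathit{CW}^\ast(L_k,L_{k+1})\to\mathit{CW}^\ast(L_k,L_{k+2})$, the analysis of Section~\ref{section:nearby}: the contributing (perturbed) $J$-holomorphic triangles are either fiberwise, implementing the ordinary pair-of-pants product on the $\mathit{CW}^\ast$ of cotangent fibers of $T^\ast T^2$ (i.e. multiplication of Laurent monomials in $z_1,z_2$ with coefficient $1$), or $J$-holomorphic sections of $\ring{\pi}$ over a triangle in $\mathbb{C}^\ast$ bounded by the three wrapped arcs. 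Exactly as in the proofs of Lemma~\ref{lemma:poly} and Proposition~\ref{proposition:gi}, a base triangle enclosing a critical value $c_k$ once contributes the weight $f_k(z_1,z_2)$, and each full wrap of the base triangle around the cylinder contributes an extra factor of $f(z_1,z_2)=f_0(z_1,z_2)\cdots f_n(z_1,z_2)$; the gluing formula (\ref{eq:fp}) and the reduction to holomorphic maps into $(\mathbb{C}^\ast)^2$ apply verbatim. Iterating over the $j-i-1$ composition steps, and bookkeeping the winding contributions as in (\ref{eq:compo}), shows the total product to be a generator $\omega^{i,j}_{p',q',r}$ times a product of some of the polynomials $f_{i+1},\dots,f_{j-1}$ together with copies of $f$ when $r\ne 0$ — in all cases an element of $\mathbb{K}[z_1,z_2]$. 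Since multiplying any single factor by a Laurent monomial $z_1^az_2^b$ shifts $(p',q')$ by $(a,b)$ while leaving $r$ unchanged, it follows that every generator $\omega^{i,j}_{p,q,r}$ of $\mathit{CW}^\ast(L_i,L_j)$ is, up to such a polynomial factor, an iterated product of generators of the adjacent complexes, which is the claim. (This also shows the polynomial factor cannot in general be dropped: each intermediate base triangle encloses at least one critical value, so at least one $f_k$ always appears, and no element of the adjacent complexes inverts it — inverting it would require the relation $x_{-1}\cdot x_1=f$ of Proposition~\ref{proposition:single2}, which lives in $\mathit{CW}^\ast(L_k,L_k)$ rather than in an adjacent complex.)

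The step I expect to be the main obstacle is the holomorphic-curve bookkeeping in the last paragraph: checking that for the triangles arising in the iterated composition — which, unlike in Section~\ref{section:nearby}, may enclose several critical values and may cover some critical values with multiplicity — the moduli spaces of $J$-holomorphic sections of $\ring{\pi}$ remain regular and still consist of single $T^2$-orbits, so that each contributes $\pm1$ weighted by holonomy exactly as before. Granting this (it follows from the Abouzaid--Auroux-type argument used for Lemma~\ref{lemma:poly}, together with the gluing formula (\ref{eq:fp})), the identification of the polynomial factor $P$ is a purely combinatorial matter.
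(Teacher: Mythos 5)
Your proposal is correct and follows essentially the same route as the paper: identify the generators of $\mathit{CW}^\ast(L_i,L_j)$ via the base/fiber decomposition, realize each one as an iterated triangle product of generators of the adjacent complexes (absorbing the base winding into one factor), and observe that the section-counting weights are products of the $f_k$'s and hence honest polynomials in $\mathbb{K}[z_1,z_2]$. The paper packages the iteration as a single peeling step $\mathit{CW}^\ast(L_{j-1}^{(1)},L_j)\otimes\mathit{CW}^\ast(L_i^{(2)},L_{j-1}^{(1)})\to\mathit{CW}^\ast(L_i^{(2)},L_j)$ followed by induction, but this is the same argument, and the regularity/gluing issue you flag is handled exactly as you suggest, by the argument already established for Lemma \ref{lemma:poly}.
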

\begin{proof}
Let $L_i^{(2)}$ be the image of $L_i$ under the time-$2$ flow of the Hamiltonian vector field $X_{H_W}$. From our discussions in the previous sections, it should now be clear that we have an identification
\begin{equation}
\mathit{CW}^\ast\left(L_i^{(2)},L_j\right)\cong\bigoplus_{(p,q,r)\in\mathbb{Z}^3}\mathbb{K}\cdot\sigma_{p,q,r}^{i,j}
\end{equation}
between $\mathbb{K}$-vector spaces, where the degree $0$ generators $\sigma_{p,q,r}^{i,j}$ project to $x_r\in\gamma_i^{(2)}\cap\gamma_j$ under $\ring{\pi}:\ring{W}_f\rightarrow\mathbb{C}^\ast$. Consider the triangle product
\begin{equation}\label{eq:prod4}
\mu^2:\mathit{CW}^\ast\left(L_{j-1}^{(1)},L_j\right)\otimes\mathit{CW}^\ast\left(L_i^{(2)},L_{j-1}^{(1)}\right)\rightarrow\mathit{CW}^\ast\left(L_i^{(2)},L_j\right),
\end{equation}
which can be identified with the composition of maps
\begin{equation}
\mathbb{K}[z_1^{\pm1},z_2^{\pm1}]\otimes\mathbb{K}[z_1^{\pm1},z_2^{\pm1}]\xrightarrow{\cdot}\mathbb{K}[z_1^{\pm1},z_2^{\pm1}]\xrightarrow{\cdot g_{ij}(z_1,z_2)}\mathbb{K}[z_1^{\pm1},z_2^{\pm1}],
\end{equation}
where the first one is the usual multiplication between Laurent polynomials coming from the triangle products in the fibers of $\ring{\pi}:\ring{W}_f\rightarrow\mathbb{C}^\ast$ and the second one is the multiplication with a particular Laurent polynomial $g_{ij}(z_1,z_2)$ coming from the counting of $J$-holomorphic sections of $\ring{\pi}$ over the triangle formed by the arcs $\gamma_{j-1}^{(1)}$, $\gamma_j$ and $\gamma_i^{(2)}$. Since for each $r\in\mathbb{Z}$, there exists a triangle in $\mathbb{C}^\ast$ formed by the arcs $\gamma_{j-1}^{(1)}$, $\gamma_j$ and $\gamma_i^{(2)}$ with $x_r$ as one of its vertices, we see that for any $\sigma_{p,q,r}^{i,j}$, the map (\ref{eq:prod}) with output $\sigma_{p,q,r}^{i,j}$ is non-trivial. On the other hand, as we have seen in Sections \ref{section:single} and \ref{section:nearby}, the Laurent polynomial $g_{ij}(z_1,z_2)$ comes from the holonomies of rank $1$ local systems along the boundaries of the $J$-holomorphic sections of $\ring{\pi}$. It from follows our choice of the local systems on the Lagrangian branes $\tau_z\subset\ring{\pi}^{-1}(\zeta_j)$ that negative powers of $z_1$ and $z_2$ do not appear in $g_{ij}(z_1,z_2)$, so we actually have $g_{ij}(z_1,z_2)\in\mathbb{K}[z_1,z_2]$. The proof of the general case now follows by induction.
\end{proof}

The situation for the Floer cochain complexes $\mathit{CW}^\ast(L_i,L_j)$ with $j+1<i$ mod $n+1$ is similar, and is therefore left to the reader.

Since the generators of the wrapped Fukaya $A_\infty$-algebra $\ring{\mathcal{W}}_f$ defined by (\ref{eq:WFA}) are all supported in degree $0$, there are no non-trivial differentials or higher $A_\infty$-structures. Thus our computations of the triangle products in Sections \ref{section:single} and \ref{section:nearby} have already determined the wrapped Fukaya $A_\infty$-algebra $\ring{\mathcal{W}}_f$, as is shown by the following proposition.

\begin{proposition}\label{proposition:conclusion}
The formal $A_\infty$-algebra $\ring{\mathcal{W}}_f$ is isomorphic to the algebra over $\mathbb{K}$ generated by
\begin{equation}
\vartheta_{0,0,0}^i\in\mathit{CW}^\ast(L_i,L_i),\textrm{ }\eta_{0,0,0}^i\in\mathit{CW}^\ast(L_i,L_{i+1}),\textrm{ }\delta_{0,0,0}^i\in\mathit{CW}^\ast(L_{i+1},L_i),
\end{equation}
for $i=0,\cdots,n$ and $z_1^{\pm1},z_2^{\pm1}$, where all generators lie in degree $0$. Moreover, $\vartheta_{0,0,0}^i=e_i\in\mathit{CW}^0(L_i,L_i)$ is an idempotent and the generators $\eta_{0,0,0}^i$ and $\delta_{0,0,0}^i$ satisfy the relations
\begin{equation}\label{eq:rel}
\delta_{0,0,0}^i\cdot\eta_{0,0,0}^i=f_i(z_1,z_2)\vartheta_{0,0,0}^i,\textrm{ }\eta_{0,0,0}^i\cdot\delta_{0,0,0}^i=f_i(z_1,z_i)\vartheta_{0,0,0}^{i+1},
\end{equation}
where $i=0,\cdots,n\textrm{ mod }n+1$.
\end{proposition}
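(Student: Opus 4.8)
The plan is to deduce the statement from the triangle‑product computations of Sections \ref{section:single} and \ref{section:nearby}, after first stripping away all higher structure. By Proposition \ref{proposition:single} together with the analogous identifications for the off‑diagonal complexes (e.g.\ (\ref{eq:iden1})), every morphism space of $\ring{\mathcal{W}}_f$ is concentrated in degree $0$; hence $\mu^1=0$ and, for every $k\geq3$, $\mu^k=0$ as well, since its output would lie in degree $2-k<0$. So $\ring{\mathcal{W}}_f$ is a formal $A_\infty$-algebra, i.e.\ an ordinary associative unital $\mathbb{K}$-algebra concentrated in degree $0$, and it is completely determined by the products already computed together with the unit $\sum_i\vartheta^i_{0,0,0}$. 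That the $\vartheta^i_{0,0,0}$ are orthogonal idempotents summing to the unit, and that $z_1^{\pm1},z_2^{\pm1}$ are central units, is immediate from the fiberwise description of Section \ref{section:single}: the Seidel elements $z_1,z_2$ come from the fiberwise $T^2$-action and the relevant section counts merely multiply the corresponding Laurent monomials.

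Next I would check that the listed elements generate $\ring{\mathcal{W}}_f$. By Proposition \ref{proposition:single2}, $\mathit{CW}^\ast(L_i,L_i)$ is generated over $\mathbb{K}$ by $z_1^{\pm1},z_2^{\pm1}$ and the two base classes $x_{\pm1}=\vartheta^i_{0,0,\pm1}$, since $x_r=x_1^{\,r}$ and $x_{-r}=x_{-1}^{\,r}$ for $r\geq1$. I would then identify $x_1$ (resp.\ $x_{-1}$) with the product, going once around the cycle, of the generators $\eta^k_{0,0,0}$ (resp.\ $\delta^k_{0,0,0}$), $k=0,\dots,n$; arguing as in Lemma \ref{lemma:other}, the chain of triangles entering this iterated product encloses no critical value of $\ring{\pi}$, so no polynomial weight appears and the product equals $x_{\pm1}$ exactly. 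Consistency with the relation $x_{-1}x_1=f$ of Proposition \ref{proposition:single2} then follows by pairing the $\eta$'s and $\delta$'s via (\ref{eq:rel}) into $\prod_i f_i=f$. For adjacent Lagrangians, (\ref{eq:iden1}) and the module‑structure identity (\ref{eq:compo}) express each $\eta^i_{p,q,r}$ as $z_1^pz_2^q$ times a power of $x_1$ or $x_{-1}$ times $\eta^i_{0,0,0}$, and similarly for $\delta^i_{p,q,r}$; for non‑adjacent pairs, Lemma \ref{lemma:other} reduces every generator, up to a factor in $\mathbb{K}[z_1,z_2]$, to products of adjacent ones. Hence the stated elements generate $\ring{\mathcal{W}}_f$.

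The two relations in (\ref{eq:rel}) are exactly the outputs of Proposition \ref{proposition:gi} and of the parallel computation for the product (\ref{eq:prod2}), once one recalls that the base vertices $x_0^{\pm}$ of the relevant triangles correspond to the idempotents $\vartheta^{i+1}_{0,0,0}$ and $\vartheta^i_{0,0,0}$. Thus there is a surjective $\mathbb{K}$-algebra homomorphism $A\twoheadrightarrow\ring{\mathcal{W}}_f$, where $A$ is the algebra presented by the listed generators and relations, understood as the quotient by (\ref{eq:rel}) of the path algebra of the quiver with vertices $0,\dots,n$ and arrows $\eta^i\colon i\to i+1$, $\delta^i\colon i+1\to i$, with $z_1^{\pm1},z_2^{\pm1}$ adjoined centrally.

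To upgrade this to an isomorphism I would pass to a normal form. Using $\delta^i_{0,0,0}\eta^i_{0,0,0}=f_i\vartheta^i_{0,0,0}$ and $\eta^i_{0,0,0}\delta^i_{0,0,0}=f_i\vartheta^{i+1}_{0,0,0}$ as rewriting rules, every element of $\vartheta^j_{0,0,0}A\vartheta^i_{0,0,0}$ becomes a $\mathbb{K}[z_1^{\pm1},z_2^{\pm1}]$-linear combination of reduced words (words with no $\eta^k\delta^k$ or $\delta^k\eta^k$ subword); a reduced word is forced to be monotone, a string of consecutive $\eta$'s or of consecutive $\delta$'s, hence of the form $x_1^{\,m}\,\eta^i_{0,0,0}\cdots\eta^{j-1}_{0,0,0}$ or $x_{-1}^{\,m}\,\delta^{i-1}_{0,0,0}\cdots\delta^{j}_{0,0,0}$ times a monomial in $z_1,z_2$, and these biject with the $\mathbb{K}$-basis $\{\vartheta^i_{p,q,r},\eta^i_{p,q,r},\delta^i_{p,q,r},\sigma^{i,j}_{p,q,r}\}_{(p,q,r)\in\mathbb{Z}^3}$ of $\ring{\mathcal{W}}_f$ from the previous sections. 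The surjection is therefore injective, and the proposition follows. I expect the main obstacle to be precisely the two points that do not come for free: verifying that the iterated cyclic products of the $\eta$'s (and the $\delta$'s) realize $x_{\pm1}$ with trivial polynomial coefficient — the ``triangle avoids the critical values'' bookkeeping of Lemma \ref{lemma:other}, now carried once around all of $\mathbb{C}^\ast$ — and making the rewriting argument precise despite the cyclic quiver admitting reduced words that wrap arbitrarily often around the base.
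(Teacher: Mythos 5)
Your argument follows essentially the same route as the paper's proof: degree‑zero concentration kills $\mu^1$ and all $\mu^{\geq3}$, Lemma \ref{lemma:other} and the identity (\ref{eq:compo}) reduce everything to the adjacent generators $\vartheta^i_{0,0,0},\eta^i_{0,0,0},\delta^i_{0,0,0}$ together with $z_1^{\pm1},z_2^{\pm1}$, the cyclic products of the $\eta$'s and $\delta$'s realize $x_{\pm1}=\vartheta^i_{0,0,\pm1}$ with trivial weight, and the relations (\ref{eq:rel}) are read off from Proposition \ref{proposition:gi} and its counterpart for (\ref{eq:prod2}). The one genuine addition is your final normal‑form/rewriting step showing that (\ref{eq:rel}) is a \emph{complete} set of relations, i.e.\ that the presented algebra maps isomorphically onto $\ring{\mathcal{W}}_f$; the paper's proof stops at generation plus the stated relations and only implicitly uses this completeness later (in Proposition \ref{proposition:qis}), so your extra paragraph is a worthwhile strengthening rather than a deviation. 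The two points you flag as remaining bookkeeping (the cyclic triangles avoiding the critical values, and confluence of the rewriting system on the cyclic quiver) are exactly the points the paper also leaves as ``straightforward to verify,'' and both check out.
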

\begin{proof}
By Lemma \ref{lemma:other} above, in order to generate the wrapped Fukaya $A_\infty$-algebra $\ring{\mathcal{W}}_f$ over $\mathbb{K}[z_1^{\pm1},z_2^{\pm1}]$, we only need to consider Floer cochains between adjacent Lagrangians among $L_0,\cdots,L_n\subset\ring{W}_f$ or the self-Floer cochains of the Lagrangians themselves, which are precisely $\eta_{p,q,r}^i$, $\delta_{p,q,r}^i$ and $\vartheta_{p,q,r}^i$ for $i=0,\cdots,n$ and $p,q,r\in\mathbb{Z}$. Using the induction argument as in the proof of Lemma \ref{lemma:other}, it is straightforward to verify the identities
\begin{equation}
\eta_{0,0,0}^i\cdot\eta_{0,0,0}^{i+1}\cdots\eta_{0,0,0}^n\cdot\eta_{0,0,0}^1\cdots\eta_{0,0,0}^{i-1}=\vartheta_{0,0,1}^i,
\end{equation}
\begin{equation}
\delta_{0,0,0}^i\cdot\delta_{0,0,0}^{i-1}\cdots\delta_{0,0,0}^0\cdot\delta_{0,0,0}^n\cdots\delta_{0,0,0}^{i+1}=\vartheta_{0,0,-1}^i,
\end{equation}
so it follows from Proposition \ref{proposition:single2} that all Floer cochains $\{\vartheta_{p,q,r}^i\}_{(p,q,r,i)\in\mathbb{Z}^4}$ are generated by the idempotents $\{\vartheta_{0,0,0}^i\}_{i\in\mathbb{Z}}$ and the concatenations of the Floer cochains in $\{\eta_{0,0,0}^i\}_{i\in\mathbb{Z}}$ and $\{\delta_{0,0,0}^i\}_{i\in\mathbb{Z}}$ over $\mathbb{K}[z_1^{\pm1},z_2^{\pm1}]$. By our discussions at the end of Section \ref{section:nearby}, we also know that the Floer cochains $\eta_{p,q,r}^i$ and $\delta_{p,q,r}^i$ for $(p,q,r,i)\in\mathbb{Z}^4$ are generated by $\eta_{0,0,0}^i$, $\delta_{0,0,0}^i$ and $\vartheta_{0,0,r}^i$ over $\mathbb{K}[z_1^{\pm1},z_2^{\pm1}]$ for $(i,r)\in\mathbb{Z}^2$, so the first claim follows. The relations in (\ref{eq:rel}) follow from Proposition \ref{proposition:gi}.
\end{proof}

\section{Homological mirror symmetry}\label{section:HMS}

In this section, we prove the two versions of homological mirror symmetry (Theorems \ref{theorem:main} and \ref{theorem:HMS1}) between the $6$-dimensional Weinstein manifold $\ring{W}_f$ and the crepant resolution $Y_f$ of the $cA_n$ singularity $R_f$ based on the computations of the wrapped Fukaya category in Section \ref{section:computation}, and then derive from them some corollaries.

\subsection{The Evans-Lekili model}\label{section:EL}

Let $R_f$ be the $cA_n$ singularity defined by (\ref{eq:Rf}). The derived category of coherent sheaves on the crepant resolution $Y_f$ of $\mathrm{Spec}(R_f)$ has a convenient in terms of the symplectic geometry of (punctured) discs with marked points, which we briefly recall here. This is discovered by Evans-Lekili in \cite{evle}. 

Consider the relative wrapped Fukaya category $\mathcal{W}(T^\ast S^1,D_n)$, where $D_n\subset T^\ast S^1$ is a finite set $\{c_0,\cdots,c_n\}$ consisting of the critical values of $\ring{\pi}:\ring{W}_f\rightarrow\mathbb{C}^\ast$. The objects of $\mathcal{W}(T^\ast S^1,D_n)$ are the arcs $\gamma_0,\cdots,\gamma_n$ that the Lagrangian submanifolds $L_0,\cdots,L_n\subset\ring{W}_f$ fiber over, and its morphisms are wrapped Floer cochain complexes $\mathit{CW}^\ast(\gamma_i,\gamma_j)$ computed in the cylinder $T^\ast S^1$. When defining the $A_\infty$-structures on $\mathcal{W}(T^\ast S^1,D_n)$, the counting of holomorphic curves are weighted by the number of times it passes through the points $c_0,\cdots,c_n$. As a consequence, $\mathcal{W}(T^\ast S^1,D_n)$ is an $A_\infty$-category linear over the polynomial ring $R=\mathbb{K}[t_0,\cdots,t_n]$ or its completion $\widehat{R}=\mathbb{K}[\![t_0,\cdots,t_n]\!]$. For a general construction of the relative Fukaya category, see \cite{psc}.

Let $\ring{Q}_n$ be the quiver with $n+1$ vertices and $2n+2$ arrows as depicted in Figure \ref{fig:quiver}. Label its vertices by $0,\cdots,n$, and the arrows by $a_0,b_0,\cdots,a_n,b_n$, where $a_i$ is the arrow starting from the vertex $i$ and ending at the vertex $i+1$, while $b_i$ is the arrow starting at the vertex $i+1$ and ending at the vertex $i$. Consider the path algebra $R\ring{Q}_n$, and the ideal
\begin{equation}\label{eq:IR}
I_R=\langle a_ib_i-t_ie_{i+1},b_ia_i-t_ie_i\vert i=0,\cdots,n\rangle,
\end{equation}
where $e_i$ is an idempotent associated to the vertex $i$. Define the algebra
\begin{equation}
\ring{\mathcal{A}}_n:=R\ring{Q}_n/I_R.
\end{equation}
We regard it as a formal $A_\infty$-algebra concentrated in degree $0$. It follows from \cite{evle}, Proposition 2.2 that we have an equivalence
\begin{equation}\label{eq:1D}
D^\mathit{perf}\mathcal{W}(T^\ast S^1,D_n)\simeq D^\mathit{perf}(\ring{\mathcal{A}}_n)
\end{equation}
between the derived relative wrapped Fukaya category of $(T^\ast S^1,D_n)$ and the derived category of perfect modules over $\ring{\mathcal{A}}_n$.

\begin{figure}
\centering
\begin{tikzpicture}
\draw (1,1.732) node[circle,fill,inner sep=2pt] {};
\draw (2,0) node[circle,fill,inner sep=2pt] {};
\draw (-1,1.732) node[circle,fill,inner sep=2pt] {};
\draw (-2,0) node {$\cdots$};
\draw (1,-1.732) node[circle,fill,inner sep=2pt] {};
\draw (-1,-1.732) node[circle,fill,inner sep=2pt] {};
\draw [->] (1,1.532) to [out=-105,in=150] (1.9,0.1);
\draw [->] (2,0.2) to [out=75,in=-30] (1.1,1.632);
\draw [->] (0.8,1.75) to [out=135,in=45] (-0.8,1.75);
\draw [->] (-0.8,1.72) to [out=-45,in=-135] (0.8,1.72);
\draw [->] (0.8,-1.72) to [out=135,in=45] (-0.8,-1.72);
\draw [->] (-0.8,-1.75) to [out=-45,in=-135] (0.8,-1.75);
\draw [->] (1.2,-1.732) to [out=30,in=-75] (2.1,-0.1);
\draw [->] (1.9,-0.1) to [out=-150,in=105] (1,-1.532);
\draw [->] (-2,-0.2) to [out=-105,in=150] (-1.1,-1.632);
\draw [->] (-0.9,-1.632) to [out=75,in=-30] (-1.8,0);
\draw [->] (-1.9,0.1) to [out=30,in=-75] (-1,1.532);
\draw [->] (-1.1,1.632) to [out=-150,in=105] (-2,0.2);

\draw (1.2,1.9) node {\small$1$};
\draw (2.3,0) node {\small$0$};
\draw (-1.2,1.9) node {\small$2$};
\draw (1.2,-2) node {\small$n$};
\draw (-1.2,-2) node {\small$n-1$};
\draw (0,-2.3) node {\small$a_{n-1}$};
\draw (0,-1.2) node {\small$b_{n-1}$};
\draw (0,2.3) node {\small$a_1$};
\draw (0,1.1) node {\small$b_1$};
\draw (2.1,1.2) node {\small$a_0$};
\draw (1,0.6) node {\small$b_0$};
\draw (2.2,-1.2) node {\small$a_n$};
\draw (1,-0.5) node {\small$b_n$};
\end{tikzpicture}
\caption{The quiver $\ring{Q}_n$}\label{fig:quiver}
\end{figure}

On the mirror side, one considers the versal deformation 
\begin{equation}
\mathrm{Spec}\left(\frac{R[u,v]}{(uv-t_0\cdots t_n)}\right)\rightarrow\mathbb{K}^{n+1}
\end{equation}
of the $A_n$ curve singularity and its toric crepant resolution $Y$. It follows from \cite{evle}, Corollary 4.6 that there is an equivalence
\begin{equation}
D^b\mathit{Coh}(Y)\simeq D^\mathit{perf}(\ring{\mathcal{A}}_n)
\end{equation}
between the bounded derived category of coherent sheaves on $Y$ and the derived category of perfect modules over $\ring{\mathcal{A}}_n$. After the base change
\begin{equation}\label{eq:bc}
R\rightarrow\mathbb{K}[x,y],\textrm{ }t_i\mapsto f_i(x,y),i=0,\cdots,n,
\end{equation}
we get equivalences
\begin{equation}\label{eq:uncom}
D^b\mathit{Coh}(Y_f)\simeq D^\mathit{perf}\left(\ring{\mathcal{A}}_n\otimes_R\mathbb{K}[x,y]\right)\simeq D^\mathit{perf}\mathcal{W}(T^\ast S^1,D_n)\otimes_R\mathbb{K}[x,y].
\end{equation}

One can also consider the completed version of the above story, replacing $R$ with $\widehat{R}$, $\mathbb{K}[x,y]$ with $\mathbb{K}[\![x,y]\!]$, $Y_f$ with $\widehat{Y}_f$, and $\mathcal{W}(T^\ast S^1,D_n)$ with its completion $\widehat{\mathcal{W}}(T^\ast S^1,D_n)$ with respect to the word-length filtration on the morphism spaces. Define
\begin{equation}
\widehat{\ring{\mathcal{A}}}_n:=\widehat{R\ring{Q}}_n/I_R
\end{equation}
to be the quotient of the completed path algebra, then the following analogue of (\ref{eq:uncom}) holds.

\begin{proposition}\label{proposition:complete}
There are equivalences
\begin{equation}\label{eq:deq1}
D^b\mathit{Coh}(\widehat{Y}_f)\simeq D^\mathit{perf}\left(\widehat{\ring{\mathcal{A}}}_n\otimes_{\widehat{R}}\mathbb{K}[\![x,y]\!]\right)\simeq D^\mathit{perf}\widehat{\mathcal{W}}(T^\ast S^1,D_n)\otimes_{\widehat{R}}\mathbb{K}[\![x,y]\!],
\end{equation}
where the left-hand side is the bounded derived category of coherent sheaves on the crepant resolution $\hat{\phi}:\widehat{Y}_f\rightarrow\mathrm{Spec}(\widehat{R}_f)$. 
\end{proposition}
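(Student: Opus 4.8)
The plan is to deduce both equivalences of (\ref{eq:deq1}) from the two $R$-linear inputs recalled just above --- the equivalence (\ref{eq:1D}) of \cite{evle}, Proposition 2.2, and the equivalence $D^b\mathit{Coh}(Y)\simeq D^\mathit{perf}(\ring{\mathcal{A}}_n)$ of \cite{evle}, Corollary 4.6 --- by performing the base change (\ref{eq:bc}) and carefully matching up the relevant completions on all three sides. Nothing new about holomorphic curves is needed; the content is algebraic bookkeeping.

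For the rightmost equivalence of (\ref{eq:deq1}), I would first upgrade (\ref{eq:1D}) to a statement about completions. The equivalence (\ref{eq:1D}) is induced by an $R$-linear $A_\infty$-quasi-equivalence between the relative wrapped Fukaya $A_\infty$-category of $(T^\ast S^1,D_n)$ and the formal algebra $\ring{\mathcal{A}}_n=R\ring{Q}_n/I_R$, and under this quasi-equivalence the word-length filtration on the Floer complexes $\mathit{CW}^\ast(\gamma_i,\gamma_j)$ corresponds to the filtration of $R\ring{Q}_n/I_R$ by powers of the two-sided ideal generated by the arrows $a_i,b_i$ --- both being the filtration by the number of concatenated generators. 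Passing to the associated completions yields an $\widehat{R}$-linear quasi-equivalence $\widehat{\mathcal{W}}(T^\ast S^1,D_n)\simeq\widehat{\ring{\mathcal{A}}}_n$ with $\widehat{\ring{\mathcal{A}}}_n=\widehat{R\ring{Q}}_n/I_R$; note that modulo $I_R$ each $t_i=b_ia_i$ lies in the square of the arrow ideal, so $\widehat{\ring{\mathcal{A}}}_n$ is genuinely $\widehat{R}$-linear and topologically free over $\widehat{R}$. Tensoring over $\widehat{R}$ with $\mathbb{K}[\![x,y]\!]$ along (\ref{eq:bc}) --- which preserves quasi-isomorphisms since $\widehat{\ring{\mathcal{A}}}_n$ is flat over $\widehat{R}$ --- gives a $\mathbb{K}[\![x,y]\!]$-linear quasi-equivalence $\widehat{\mathcal{W}}(T^\ast S^1,D_n)\otimes_{\widehat{R}}\mathbb{K}[\![x,y]\!]\simeq\widehat{\ring{\mathcal{A}}}_n\otimes_{\widehat{R}}\mathbb{K}[\![x,y]\!]$, hence
\[
D^\mathit{perf}\widehat{\mathcal{W}}(T^\ast S^1,D_n)\otimes_{\widehat{R}}\mathbb{K}[\![x,y]\!]\ \simeq\ D^\mathit{perf}\!\left(\widehat{\ring{\mathcal{A}}}_n\otimes_{\widehat{R}}\mathbb{K}[\![x,y]\!]\right),
\]
the left-hand side being by definition the base-changed category of perfect modules.

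For the leftmost equivalence of (\ref{eq:deq1}), recall that $D^b\mathit{Coh}(Y)\simeq D^\mathit{perf}(\ring{\mathcal{A}}_n)$ is realized by a tilting bundle $\mathcal{T}$ on the smooth toric crepant resolution $Y\to\mathrm{Spec}\left(R[u,v]/(uv-t_0\cdots t_n)\right)$ with $\mathrm{End}_Y(\mathcal{T})\cong\ring{\mathcal{A}}_n$, compatibly with the structure morphisms to $\mathbb{K}^{n+1}=\mathrm{Spec}(R)$. I would base change along the flat morphism $\mathrm{Spec}(\mathbb{K}[\![x,y]\!])\to\mathbb{K}^{n+1}$, $t_i\mapsto f_i(x,y)$. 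The fibre product $Y\times_{\mathbb{K}^{n+1}}\mathrm{Spec}(\mathbb{K}[\![x,y]\!])$ is a projective birational model of $\mathrm{Spec}\left(\mathbb{K}[\![x,y]\!][u,v]/(uv-f(x,y))\right)$, and one checks --- using the product form (\ref{eq:f'}) of $f$ --- that it is obtained by exactly the sequence of blow-ups of the ideals $(u,f_0),\dots,(u,f_{n-1})$ of Lemma \ref{lemma:resolution}, so that its formal completion along the exceptional chain is the crepant resolution $\hat{\phi}:\widehat{Y}_f\to\mathrm{Spec}(\widehat{R}_f)$. Flat base change for $\mathrm{RHom}$ shows that the pullback $\widehat{\mathcal{T}}$ of $\mathcal{T}$ to $\widehat{Y}_f$ has $\mathrm{RHom}_{\widehat{Y}_f}(\widehat{\mathcal{T}},\widehat{\mathcal{T}})$ concentrated in degree $0$, equal to $\ring{\mathcal{A}}_n\otimes_R\mathbb{K}[\![x,y]\!]$ completed at its arrow ideal; since the centre of $\ring{\mathcal{A}}_n\otimes_R\mathbb{K}[x,y]$ is $R_f=\mathbb{K}[u,v,x,y]/(uv-f)$ with the arrow ideal lying over $(u,v)$ and each $t_i\mapsto f_i\in(x,y)$, this completion is precisely $\widehat{\ring{\mathcal{A}}}_n\otimes_{\widehat{R}}\mathbb{K}[\![x,y]\!]$. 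Generation descends through the flat proper birational base change, so $\widehat{\mathcal{T}}$ is a tilting bundle and the resulting tilting equivalence gives $D^b\mathit{Coh}(\widehat{Y}_f)\simeq D^\mathit{perf}\!\left(\widehat{\ring{\mathcal{A}}}_n\otimes_{\widehat{R}}\mathbb{K}[\![x,y]\!]\right)$, completing (\ref{eq:deq1}).

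The main obstacle will be the bookkeeping of the three completions and the verification that they all correspond under the equivalences: one must check that the formal completion along its exceptional locus of the base-changed resolution really is the crepant resolution of the complete local ring $\widehat{R}_f$ (this is where (\ref{eq:f'}) and the explicit blow-up description of Lemma \ref{lemma:resolution} enter), that the $\widehat{R}_f$-adic completion on the $\mathit{Coh}$ side agrees with the arrow-ideal completion on the algebra side, and that the latter agrees with the word-length completion on the Fukaya side. The remaining ingredients --- flatness (indeed topological freeness) of $\ring{\mathcal{A}}_n$ over $R$, flat base change for tilting bundles, and preservation of perfectness under base change --- are routine.
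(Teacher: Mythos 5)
Your strategy is essentially the paper's: both equivalences are obtained by matching the word-length completion of the relative Fukaya category with the arrow-ideal completion of $\ring{\mathcal{A}}_n$, and by transporting the Van den Bergh--Evans--Lekili tilting bundle to the complete local resolution. The only real difference is the order of operations: the paper first invokes the complete local form of Van den Bergh's construction (\cite{VdB}, Lemma 3.2.9) to get a tilting bundle on the resolution $\widehat{Y}$ over $\mathbb{K}[\![t_0,\cdots,t_n]\!]$ and then pulls it back along $\widehat{Y}_f\rightarrow\widehat{Y}$, whereas you base change the algebraic tilting bundle on $Y$ along $t_i\mapsto f_i(x,y)$ and complete afterwards. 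Your step ``generation descends through the base change'' is exactly the content of \cite{jkd}, Lemma 2.10, which is where the paper outsources this; it is correct but not quite routine, so you should cite it rather than assert it.

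There is one genuine gap at the very end. You conclude from the existence of the tilting bundle $\widehat{\mathcal{T}}$ that $D^b\mathit{Coh}(\widehat{Y}_f)\simeq D^\mathit{perf}\bigl(\mathrm{End}(\widehat{\mathcal{T}})\bigr)$. That implication is standard only when $\widehat{Y}_f$ is smooth, but Theorem \ref{theorem:HMS1} assumes only (\ref{eq:kl}) and not (\ref{eq:distinct}), so the $cA_n$ singularity may be non-isolated and $\widehat{Y}_f$ is then only guaranteed to be a \emph{normal} Calabi--Yau variety; it can be singular away from the exceptional chain. In that case one cannot identify $D^b\mathit{Coh}(\widehat{Y}_f)$ with perfect complexes naively, and the paper closes this gap by appealing to \cite{VdB}, Proposition 3.2.10, which requires only normality of the total space of the projective birational morphism. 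Your argument as written silently restricts to the smooth case; adding this citation (or an equivalent argument) repairs it.
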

\begin{proof}
The uncompleted version (\ref{eq:uncom}) of the claimed equivalences (\ref{eq:deq1}) is proved in \cite{evle}, Corollary 4.7, under the additional assumption that the crepant resolution $Y_f$ is smooth. 

To see that their argument extends to our case, first notice that the construction of Van den Bergh \cite{VdB} actually works in the complete local setting (cf. \cite{VdB}, Lemma 3.2.9), which produces a tilting bundle $\widehat{\mathcal{T}}\rightarrow\widehat{Y}$ over the crepant resolution $\widehat{Y}$ of $\mathrm{Spec}\left(\frac{R[\![u,v]\!]}{uv-t_0\cdots t_n}\right)\rightarrow\mathrm{Spec}\left(\mathbb{K}[\![t_0,\cdots,t_n]\!]\right)$. By \cite{jkd}, Lemma 2.10, which is applicable in our setting, $\widehat{\mathcal{T}}$ pulls back to a tilting bundle $j^\ast\widehat{\mathcal{T}}\rightarrow\widehat{Y}_f$ under the morphism $j:\widehat{Y}_f\rightarrow\widehat{Y}$, and satisfies
\begin{equation}
D^\mathit{perf}\left(\mathrm{End}(j^\ast\widehat{\mathcal{T}})\right)\simeq D^\mathit{perf}\left(\widehat{\ring{\mathcal{A}}}_n\otimes_{\widehat{R}}\mathbb{K}[\![x,y]\!]\right)\simeq D^\mathit{perf}\widehat{\mathcal{W}}(T^\ast S^1,D_n)\otimes_{\widehat{R}}\mathbb{K}[\![x,y]\!],
\end{equation}
where the equivalences above follow from \cite{evle}, Corollary 4.6, but their argument needs to be modified to include the completion of the path algebra $\widehat{R}\ring{Q}_n$ and the relative wrapped Fukaya category $\mathcal{W}(T^\ast S^1,D_n)$. It remains to prove that $D^b\mathit{Coh}(\widehat{Y}_f)\simeq D^\mathit{perf}\left(\mathit{End}(j^\ast\widehat{\mathcal{T}})\right)$. For this sake, the smoothness assumption on $\widehat{Y}_f$ is unnecessary, since \cite{VdB}, Proposition 3.2.10 is applicable whenever the variety $\widehat{Y}_f$ is normal, which is always true in our case.
\end{proof}

Consider the crepant resolution $\phi:Y_f\rightarrow\mathrm{Spec}(R_f)$, performing Drinfeld localization \cite{vdd} to $D^b\mathit{Coh}(Y_f)$ with respect to the idempotent associated to the structure sheaf $\mathcal{O}_{Y_f}$ gives rise to the relative singularity category $D^b\mathit{Coh}(Y_f)/\langle\mathcal{O}_{Y_f}\rangle$ defined by Kalck-Yang \cite{ky}. On the combinatorial side, this corresponds to removing the vertex $0$ and the arrows starting and ending at $0$ from the quiver $\ring{Q}_n$ in Figure \ref{fig:quiver}, and adding two loops $\alpha$ and $\beta$ at the vertices $1$ and $n$, respectively. This gives rise to the new quiver
\begin{equation}
\begin{tikzcd}
\bullet \arrow["\alpha"', loop, distance=2em, in=215, out=145] \arrow[r, "a_1", bend left] & \bullet \arrow[l, "b_1", bend left] \arrow[r, "a_2", bend left] & \cdots \arrow[l, "b_2", bend left] \arrow[r, "a_n", bend left] & \bullet \arrow[l, "b_n", bend left] \arrow["\beta"', loop, distance=2em, in=35, out=325]
\end{tikzcd}
\end{equation}
which we denote by $Q_n$. Define $\mathcal{A}_n$ to be dg algebra whose underlying associative algebra is the path algebra $RQ_n$ modulo the relations in the ideal $I_R$ given by (\ref{eq:IR}) and
\begin{equation}
\alpha^2=\beta^2=0.
\end{equation}
The differential $d:\mathcal{A}_n\rightarrow\mathcal{A}_n[1]$ is determined by
\begin{equation}
\begin{split}
da_i&=db_i=0,i=1,\cdots,n, \\
d\alpha&=t_0e_1,\textrm{ }d\beta=t_ne_n.
\end{split}
\end{equation}
In the above, the generators are graded as follows:
\begin{equation}
|a_i|=|b_i|=0,i=0,\cdots,n,\textrm{ }|\alpha|=|\beta|=-1.
\end{equation}
After the base change $R\rightarrow\mathbb{K}[x,y]$ specified by (\ref{eq:bc}), the dg algebra $\left(\mathcal{A}_n\otimes_R\mathbb{K}[x,y],d\right)$ is quasi-isomorphic to the \textit{derived contraction algebra} studied by Booth \cite{mbn}, associated to the crepant resolution $\phi:Y_f\rightarrow\mathrm{Spec}(R_f)$, and $H^0\left(\mathcal{A}_n\otimes_R\mathbb{K}[x,y]\right)$ gives the \textit{contraction algebra} introduced by Donovan-Wemyss \cite{dw}.

Switching to the symplectic perspective, the localization corresponds to excising the arc $\gamma_0\subset\mathbb{C}^\ast$, which (after deformation) gives rise to the closed disc with $n+1$ marked points $D_n=\{c_0',\cdots,c_n'\}$, see Figure \ref{fig:base1}. It follows from \cite{gps}, Proposition 11.2 that the localized $A_\infty$-category $\mathcal{W}(T^\ast S^1,D_n)/\langle\gamma_0\rangle$ is quasi-equivalent to the relative wrapped Fukaya category $\mathcal{W}(\mathbb{C},D_n)$. It is proved in \cite{evle} that we have an equivalence
\begin{equation}\label{eq:1Dl}
D^\mathit{perf}\mathcal{W}(\mathbb{C},D_n)\simeq D^\mathit{perf}(\mathcal{A}_n),
\end{equation}
from which the mirror equivalence
\begin{equation}\label{eq:HMS1d}
D^\mathit{perf}\mathcal{W}(\mathbb{C},D_n)\otimes_R\mathbb{K}[x,y]\simeq D^b\mathit{Coh}(Y_f)/\langle\mathcal{O}_{Y_f}\rangle
\end{equation}
follows.\footnote{Strictly speaking, \cite{evle} only proves the complete local version of (\ref{eq:HMS1d}), namely $D^\mathit{perf}\mathcal{W}(\mathbb{C},D_n)\otimes_{\widehat{R}}\mathbb{K}[\![x,y]\!]\simeq D^b\mathit{Coh}(\widehat{Y}_f)/\langle\mathcal{O}_{\widehat{Y}_f}\rangle$. In order to get the algebraic version, one needs to prove the split-generation of $\mathcal{W}(T^\ast S^1,D_n)$ by $\gamma_0,\cdots,\gamma_n$ over $R=\mathbb{K}[t_0,\cdots,t_n]$, without passing to the completion. This is done in \cite{dmr}. See also \cite{evle}, Remark 1.8.}

\begin{figure}
	\centering
	\begin{tikzpicture}
	\draw (0,0) circle [radius=2.5];
	\draw (2,0) node[circle,fill,inner sep=1pt] {};
	\draw (1.2,0) node[circle,fill,inner sep=1pt] {};
	\draw (0.4,0) node[circle,fill,inner sep=1pt] {};
	\draw (-0.4,0) node[circle,fill,inner sep=1pt] {};
	\draw (-1.2,0) node[circle,fill,inner sep=1pt] {};
	\draw (-2,0) node[circle,fill,inner sep=1pt] {};
	\draw (-2,-0.25) node {$c_0'$};
	\draw (-1.2,-0.25) node {$c_1'$};
	\draw (-0.4,-0.25) node {$c_2'$};
	\draw (0.4,-0.25) node {$c_3'$};
	\draw (1.2,-0.25) node {$c_4'$};
	\draw (2,-0.25) node {$c_5'$};
	\draw [teal] (-2,0)--(2,0);
	\draw [orange] (0,-2.5)--(0,2.5);
	\draw [orange] (-0.8,-2.35)--(-0.8,2.35);
	\draw [orange] (0.8,-2.35)--(0.8,2.35);
	\draw [orange] (-1.6,-1.9)--(-1.6,1.9);
	\draw [orange] (1.6,-1.9)--(1.6,1.9);
	\draw [orange] (-1.6,-2.2) node {$\gamma_1$};
	\draw [orange] (-0.8,-2.6) node {$\gamma_2$};
	\draw [orange] (0,-2.8) node {$\gamma_3$};
	\draw [orange] (0.8,-2.6) node {$\gamma_4$};
	\draw [orange] (1.6,-2.2) node {$\gamma_5$};
	\end{tikzpicture}
\caption{Base of the Morse-Bott fibration $\pi:W_f\rightarrow\mathbb{C}$ when $n=5$. The arcs $\gamma_1,\cdots,\gamma_5$ split-generate the relative wrapped Fukaya category $\mathcal{W}(\mathbb{C},D_5)$.} \label{fig:base1}
\end{figure}

\subsection{Proofs of the results}\label{section:proof}

In this subsection we prove the results related to homological mirror symmetry stated in Section \ref{section:hms}, which are Theorem \ref{theorem:main}, Corollary \ref{corollary:core-A}, Theorem \ref{theorem:HMS1}, Proposition \ref{proposition:eq}, Theorem \ref{theorem:WAn}, Proposition \ref{proposition:eq2} and Corollary \ref{corollary:core}.

\subsubsection{Theorem \ref{theorem:main}, Corollary \ref{corollary:core-A}, Theorem \ref{theorem:HMS1} and Proposition \ref{proposition:eq}}

We first prove Theorem \ref{theorem:main}, namely there is an equivalence
\begin{equation}
D^\mathit{perf}\mathcal{W}(\ring{W}_f;\mathbb{K})\simeq D^b\mathit{Coh}(\ring{Y}_f).
\end{equation}
Our strategy here is to relate both sides with the quiver algebra $\ring{\mathcal{A}}_n$ introduced in Section \ref{section:EL}.

\begin{proposition}
There is an isomorphism
\begin{equation}
\ring{\mathcal{W}}_f\cong\ring{\mathcal{A}}_n\otimes_R\mathbb{K}[x^{\pm1},y^{\pm1}]
\end{equation}
where the base change is given by (\ref{eq:bc}).
\end{proposition}
\begin{proof}
This is a direct consequence of our computation of the wrapped Fukaya $A_\infty$-algebra $\ring{\mathcal{W}}_f=\bigoplus_{0\leq i,j\leq n}\mathit{CW}^\ast(L_i,L_j)$. By Proposition \ref{proposition:conclusion}, $\ring{\mathcal{W}}_f$ is the formal $A_\infty$-algebra generated by $\vartheta_{0,0,0}^i$, $\eta_{0,0,0}^i$, $\delta_{0,0,0}^i$, $z_1^{\pm1}$ and $z_2^{\pm1}$, with $(\vartheta_{0,0,0}^i)^2=\vartheta_{0,0,0}^i$ and the relations (\ref{eq:rel}). An isomorphism $\ring{\psi}:\ring{\mathcal{W}}_f\rightarrow\ring{\mathcal{A}}_n\otimes_R\mathbb{K}[x^{\pm1},y^{\pm1}]$ is defined by
\begin{equation}\label{eq:psi}
\begin{split}
&\ring{\psi}(\vartheta_{0,0,0}^i)=e_i,\textrm{ }\ring{\psi}(\eta_{0,0,0}^i)=a_i,\textrm{ }\ring{\psi}(\delta_{0,0,0}^i)=b_i,\textrm{ }i=0,\cdots,n, \\
&\ring{\psi}(z_1)=x,\textrm{ }\ring{\psi}(z_2)=y.
\end{split}
\end{equation}
\end{proof}

To prove Theorem \ref{theorem:main}, it remains to show that

\begin{proposition}
There is an equivalence
\begin{equation}
D^b\mathit{Coh}(\ring{Y}_f)\simeq D^\mathit{perf}\left(\ring{\mathcal{A}}_n\otimes_R\mathbb{K}[x^{\pm1},y^{\pm1}]\right).
\end{equation}
\end{proposition}
\begin{proof}
By (\ref{eq:uncom}), we already have an equivalence $D^b\mathit{Coh}(Y_f)\simeq D^\mathit{perf}\left(\ring{\mathcal{A}}_n\otimes_R\mathbb{K}[x,y]\right)$. Removing the divisor $D\subset Y_f$ amounts to applying a categorical localization to $D^b\mathit{Coh}(Y_f)$. Since the divisor $D$ is the pullback of $\{xy=0\}\subset\{uv=f(x,y)\}$ under the crepant resolution, under the identification between the endomorphism algebra of the tilting bundle $\mathcal{T}$ on $Y_f$ and $\ring{\mathcal{A}}_n\otimes_R\mathbb{K}[x,y]$ in the proof of \cite{evle}, Corollary 4.7, such a localization precisely inverts the variables $x$ and $y$.
\end{proof}

We can then deduce Corollary \ref{corollary:core-A} from Theorem \ref{theorem:main}, which says that there is a fully faithful embedding
\begin{equation}\label{eq:core-A}
D^b\mathit{Coh}_C(\ring{Y}_f)\hookrightarrow D^\mathit{perf}\mathcal{F}(\ring{W}_f;\mathbb{K}).
\end{equation}

\begin{proof}[Proof of Corollary \ref{corollary:core-A}]
Let $\mathcal{Q}(\ring{W}_f;\mathbb{K})\subset\mathcal{F}(\ring{W}_f;\mathbb{K})$ be the full $A_\infty$-subcategory split-generated by the Lagrangians $Q_0,\cdots,Q_n\subset\ring{W}_f$. It follows from our discussions in Section \ref{section:CE} that $\mathcal{Q}(\ring{W}_f;\mathbb{K})$ can be recovered from the wrapped Fukaya category $\mathcal{W}(\ring{W}_f;\mathbb{K})$ via the generalized Eilenberg-Moore equivalence (\ref{eq:GEM}). On the other hand, as we have mentioned in Remark \ref{remark:MS}, Ballard proved in \cite{mbd} that $D^b\mathit{Coh}_C(\ring{Y}_f)$ is related to the ambient category $D^b\mathit{Coh}(\ring{Y}_f)$ via the Eilenberg-Moore equivalence. Corollary \ref{corollary:core-A} now follows from Theorem \ref{theorem:main}.
\end{proof}

We now prove Theorem \ref{theorem:HMS1}, which asserts that there is an equivalence
\begin{equation}\label{eq:HMS}
D^\mathit{perf}\widehat{\mathcal{W}}(\ring{W}_f;\mathbb{K})\simeq D^b\mathit{Coh}(\widehat{Y}_f).
\end{equation}
By Proposition \ref{proposition:complete}, in order to prove Theorem \ref{theorem:HMS1} it suffices to identify the completion of $\ring{\mathcal{W}}_f$ (with respect to the word length filtration) with the algebra $\widehat{\ring{\mathcal{A}}}_n\otimes_{\widehat{R}}\mathbb{K}[\![x,y]\!]$. This is done in the following proposition.

\begin{proposition}\label{proposition:qis}
We have an isomorphism
\begin{equation}\label{eq:qis}
\widehat{\ring{\mathcal{W}}}_f\cong\widehat{\ring{\mathcal{A}}}_n\otimes_{\widehat{R}}\mathbb{K}[\![x,y]\!],
\end{equation}
where $\widehat{\ring{\mathcal{W}}}_f$ is the completed wrapped Fukaya $A_\infty$-algebra.
\end{proposition}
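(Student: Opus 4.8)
The proof is a direct comparison of two explicit presentations, so let me first write these out. By Proposition~\ref{proposition:conclusion} the $A_\infty$-algebra $\ring{\mathcal{W}}_f$ is formal and concentrated in degree $0$; as an associative $\mathbb{K}$-algebra it is generated by the idempotents $e_i:=\vartheta_{0,0,0}^i$, the classes $\eta^i:=\eta_{0,0,0}^i\in\mathit{CW}^0(L_i,L_{i+1})$ and $\delta^i:=\delta_{0,0,0}^i\in\mathit{CW}^0(L_{i+1},L_i)$ for $i=0,\dots,n$, and the central invertible elements $z_1^{\pm1},z_2^{\pm1}$, subject only to the relations~(\ref{eq:rel}), namely $\delta^i\eta^i=f_i(z_1,z_2)e_i$ and $\eta^i\delta^i=f_i(z_1,z_2)e_{i+1}$, besides the evident idempotent/quiver relations. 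On the other hand $\ring{\mathcal{A}}_n=R\ring{Q}_n/I_R$ is by construction generated over $R=\mathbb{K}[t_0,\dots,t_n]$ by the idempotents $e_i$ of the cyclic quiver $\ring{Q}_n$ together with its arrows $a_i\colon i\to i+1$, $b_i\colon i+1\to i$, subject to $a_ib_i=t_ie_{i+1}$, $b_ia_i=t_ie_i$ from~(\ref{eq:IR}). The candidate isomorphism will send $e_i\mapsto e_i$, $a_i\mapsto\eta^i$, $b_i\mapsto\delta^i$, $t_i\mapsto f_i(x,y)$.

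For this assignment to make sense one must match the coefficient rings. The Laurent variables $z_1,z_2$ are the images in $\mathit{HW}^0(L_i,L_i)$ of the Seidel elements $s_1,s_2\in\mathit{SH}^0(\ring{W}_f;\mathbb{K})^\times$ associated to the fiberwise Hamiltonian $T^2$-action (Section~\ref{section:Seidel}), and passing to the completion of $\mathcal{W}(\ring{W}_f;\mathbb{K})$ at $s_1=s_2=-1$ replaces $\mathbb{K}[z_1^{\pm1},z_2^{\pm1}]$ by its completion at the maximal ideal of that point, which is $\mathbb{K}[\![x,y]\!]$ with $x=s_1+1$, $y=s_2+1$. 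Under this change of variables, and after the automorphism of $\mathbb{K}[\![x,y]\!]$ recorded in Proposition~\ref{proposition:precise}, the polynomial $f_i(z_1,z_2)=z_2^{k_i}\pm z_1^{l_i}$ becomes a unit multiple of the image $f_i(x,y)=y^{k_i}\pm x^{l_i}$ of $t_i$ under the base change~(\ref{eq:bc}); equivalently the curve germ cut out by $f_i(z_1,z_2)$ near the completion point and the one cut out by $f_i(x,y)$ near the origin agree, compatibly over all $i$, so that the two factorizations determine isomorphic $cA_n$ singularities. Absorbing these units into a rescaling of the $\delta^i$ (which preserves both relations in~(\ref{eq:rel}), the units being central), the relations~(\ref{eq:rel}) become precisely $\eta^i\delta^i=f_i(x,y)e_{i+1}$ and $\delta^i\eta^i=f_i(x,y)e_i$. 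Hence the assignment above extends to a homomorphism of $\mathbb{K}[\![x,y]\!]$-algebras from $\ring{\mathcal{A}}_n\otimes_R\mathbb{K}[\![x,y]\!]$ to the completion $\ring{\mathcal{W}}_f[\![x,y]\!]$ of $\ring{\mathcal{W}}_f$ at $s_1=s_2=-1$, and since the two sides now carry identical presentations this homomorphism is an isomorphism (this is the uncompleted statement underlying Proposition~\ref{proposition:eq}).

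It remains to pass to the word-length completions. By definition $\widehat{\ring{\mathcal{W}}}_f$ is the completion of $\ring{\mathcal{W}}_f[\![x,y]\!]$ with respect to the filtration by word length of concatenations of Floer cochains, in which $\eta^i,\delta^i$ have length $1$ (they correspond to Reeb chords under Legendrian surgery, cf. Section~\ref{section:CE}) and $x,y$ acquire positive filtration level, compatibly, because the relations~(\ref{eq:rel}) express the length-$2$ products $\eta^i\delta^i,\delta^i\eta^i$ through $f_i(x,y)$; correspondingly $\widehat{\ring{\mathcal{A}}}_n=\widehat{R\ring{Q}}_n/I_R$ carries the path-length filtration with $t_i$ of level $2$. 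The isomorphism of the previous paragraph is filtered, hence extends to the completions; it is surjective because Proposition~\ref{proposition:conclusion} together with Lemma~\ref{lemma:other} exhibit $\widehat{\ring{\mathcal{W}}}_f$ as generated over $\mathbb{K}[\![x,y]\!]$ by $e_i,\eta^i,\delta^i$, and injective because it induces an isomorphism on the associated graded objects, each of which is the path algebra of $\ring{Q}_n$ over $\mathbb{K}[\![x,y]\!]$ modulo the leading terms of the relations. The main points requiring care are exactly these two: the explicit change of variables identifying the Seidel variables $z_1,z_2$ with $x,y$ and the verification that $f_i(z_1,z_2)$ and $f_i(x,y)$ define the same singularity — the substantive input, amounting to Lekili--Segal's Conjecture~E in this case and handled by Proposition~\ref{proposition:precise} — and the check that forming the completed path algebra and then imposing $I_R$ coincides with completing $\ring{\mathcal{A}}_n\otimes_R\mathbb{K}[\![x,y]\!]$ directly, a flatness statement which holds here because $\ring{Q}_n$ is a finite quiver and $I_R$ is generated in bounded degree (compare the cautionary Remark~\ref{remark:MS}).
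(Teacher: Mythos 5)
Your argument is essentially the paper's: both proofs take the explicit degree-$0$ presentation of $\ring{\mathcal{W}}_f$ from Proposition~\ref{proposition:conclusion}, match $\vartheta_{0,0,0}^i,\eta_{0,0,0}^i,\delta_{0,0,0}^i$ with $e_i,a_i,b_i$ and the Seidel variables with $x=z_1+1$, $y=z_2+1$, and observe that the word-length completion renders $z_1^{-1}=-\sum_{j\geq0}x^j$ and $z_2^{-1}=-\sum_{j\geq0}y^j$ convergent, so that the two completed presentations coincide. The paper just writes the map $\ring{\psi}$ in the direction opposite to yours (with $\ring{\psi}(z_1)=x-1$, $\ring{\psi}(z_1^{-1})=-\sum_j x^j$, etc.) and concludes directly, omitting both your associated-graded step and your discussion of identifying $f_i(z_1,z_2)$ with $f_i(x,y)$ --- the latter is a normalization the paper absorbs silently into the substitution, and your ``unit multiple'' justification for it is not literally correct, though it flags a point the paper's own proof also elides.
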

\begin{proof}
Define a map $\hat{\ring{\psi}}:\ring{\mathcal{W}}_f\rightarrow\widehat{\ring{\mathcal{A}}}_n\otimes_{\widehat{R}}\mathbb{K}[\![x,y]\!]$ by (compare with (\ref{eq:psi}))
\begin{equation}\label{eq:map1}
\hat{\ring{\psi}}(\vartheta_{0,0,0}^i)=e_i,\textrm{ }\hat{\ring{\psi}}(\eta_{0,0,0}^i)=a_i,\textrm{ }\hat{\ring{\psi}}(\delta_{0,0,0}^i)=b_i,\textrm{ }i=0,\cdots,n,
\end{equation}
\begin{equation}\label{eq:map2}
\hat{\ring{\psi}}(z_1)=x-1,\textrm{ }\hat{\ring{\psi}}(z_2)=y-1,\textrm{ }\hat{\ring{\psi}}(z_1^{-1})=-\sum_{j=0}^\infty x^j,\textrm{ }\hat{\ring{\psi}}(z_2^{-1})=-\sum_{j=0}^\infty y^j.
\end{equation}
It is straightforward to check that $\hat{\ring{\psi}}$ is an algebra map. Passing to the completion $\widehat{\ring{\mathcal{W}}}_f$ of $\ring{\mathcal{W}}_f$ cancels the negative powers of $z_1$ and $z_2$, since $z_1^{-1}=-\sum_{j=0}^\infty(z_1+1)^j$ and $z_2^{-1}=-\sum_{j=0}^\infty(z_2+1)^j$ in $\widehat{\ring{\mathcal{W}}}_f$. It follows that the map $\hat{\ring{\psi}}$ is an isomorphism.
\end{proof}

We now prove Proposition \ref{proposition:eq}, which claims the equivalence
\begin{equation}\label{eq:eq1}
D^\mathit{perf}\mathcal{W}(T^\ast S^1,D_n)\otimes_{\widehat{R}}\mathbb{K}[\![x,y]\!]\simeq D^\mathit{perf}\mathcal{W}(\ring{W}_f;\mathbb{K})[\![x,y]\!]
\end{equation}
between the base changed relative wrapped Fukaya category of $(T^\ast S^1,D_n)$ and the derived wrapped Fukaya category of $\ring{W}_f$ completed at the value $-1$ of the Seidel elements.

Recall that the wrapped Fukaya category $\mathcal{W}(\ring{W}_f;\mathbb{K})$ is linear over the $\mathbb{K}$-algebra $\mathit{SH}^0(\ring{W}_f;\mathbb{K})$ given by degree $0$ symplectic cohomology together with its pair-of-pants product. In our case, there is a Hamiltonian $T^2$-action on $\ring{W}_f$ rotating the fibers of the Morse-Bott fibration $\ring{\pi}:\ring{W}_f\rightarrow\mathbb{C}^\ast$, see Section \ref{section:plumbing}. 

\begin{lemma}\label{lemma:vertical}
For any circle $S^1\subset T^2$, the corresponding fiberwise Hamiltonian circle action gives rise to a Seidel element $s\in\mathit{SH}^0(\ring{W}_f;\mathbb{K})^\times$, which is represented by a Hamiltonian orbit in the fiber $\ring{\pi}^{-1}(\ast)\cong T^\ast T^2$.
\end{lemma}
\begin{proof}
Consider the fiberwise Hamiltonian $H_F:\ring{W}_f\rightarrow[0,\infty)$ defined in Section \ref{section:setup}, using which we can define a vertical version of the symplectic cohomology
\begin{equation}
\mathit{SH}^\ast_\mathit{vert}(\ring{W}_f;\mathbb{K}):=\mathit{HF}^\ast(H_F;\mathbb{K}),
\end{equation}
where the right-hand side is the Hamiltonian Floer cohomology of $H_F$. Consider the Hamiltonian orbit generated by the fiberwise circle action, it defines a cochain $x\in\mathit{CF}^0(H_F;\mathbb{K})$. Since all the Floer trajectories asymptotic to orbits of $X_{H_F}$ are contained inside a single fiber $F=\ring{\pi}^{-1}(\ast)$, it follows that $x$ is a cocycle and therefore gives rise to a class $[x]\in\mathit{SH}^0_\mathit{vert}(\ring{W}_f;\mathbb{K})$. We may assume that $F$ is smooth, so it is symplectomorphic to $T^\ast T^2$. It follows that $[x]\in\mathit{SH}^0_\mathit{vert}(\ring{W}_f;\mathbb{K})^\times$. Finally, we have a continuation map
\begin{equation}
\mathit{SH}^\ast_\mathit{vert}(\ring{W}_f;\mathbb{K})\rightarrow\mathit{SH}^\ast(\ring{W}_f;\mathbb{K})
\end{equation}
preserving the pair-of-pants product, and the Seidel element $s$ is the image of $[x]$ under this continuation map.
\end{proof}

By Lemma \ref{lemma:vertical}, after fixing a basis of $H_1(T^2;\mathbb{Z})$, the fiberwise Hamiltonian $T^2$-action on $\ring{\pi}:\ring{W}_f\rightarrow\mathbb{C}^\ast$ gives rise to two Seidel elements $s_1,s_2\in\mathit{SH}^0(\ring{W}_f;\mathbb{K})^\times$. In particular, $\mathcal{W}(\ring{W}_f;\mathbb{K})$ is an $A_\infty$-category over $\mathbb{K}[s_1^{\pm1},s_2^{\pm1}]$. The $A_\infty$-category $\mathcal{W}(\ring{W}_f;\mathbb{K})[\![x,y]\!]$ on the right-hand side of (\ref{eq:eq1}) is the completion of $\mathcal{W}(\ring{W}_f;\mathbb{K})$ at $s_1=s_2=-1$. The precise statement of Proposition \ref{proposition:eq} is the following.

\begin{proposition}\label{proposition:precise}
There exists a choice of a basis of $H_1(T^2;\mathbb{Z})$, such that after taking the completion of $\mathcal{W}(\ring{W}_f;\mathbb{K})$ with respect to the corresponding Seidel elements $s_1,s_2\in\mathit{SH}^0(\ring{W}_f;\mathbb{K})^\times$ at $s_1=s_2=-1$, the equivalence (\ref{eq:eq1}) holds.
\end{proposition}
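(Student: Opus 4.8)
\medskip\noindent\textbf{Proof idea.} The plan is to deduce this from the explicit computation of the wrapped Fukaya algebra $\ring{\mathcal{W}}_f=\bigoplus_{0\le i,j\le n}\mathit{CW}^\ast(L_i,L_j)$ in Proposition \ref{proposition:conclusion} together with the Evans-Lekili model (\ref{eq:1D}). Almost all of the required algebra is already in the map $\ring{\psi}$ built in the proof of Proposition \ref{proposition:qis}: its defining formulas (\ref{eq:map1})--(\ref{eq:map2}) only use the completion of the \emph{coefficient ring} $\mathbb{K}[z_1^{\pm1},z_2^{\pm1}]$ — just enough to make sense of $z_1^{-1}=-\sum_{j\ge0}x^j$ and $z_2^{-1}=-\sum_{j\ge0}y^j$ — and never the word-length completion in the directions of the arrows $\eta^i,\delta^i$ or of the base variable. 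Thus $\ring{\psi}$ already induces an isomorphism of formal, degree-$0$ $A_\infty$-algebras between $\ring{\mathcal{W}}_f\otimes_{\mathbb{K}[z_1^{\pm1},z_2^{\pm1}]}\mathbb{K}[\![x,y]\!]$ and the Evans-Lekili algebra $\ring{\mathcal{A}}_n$ base-changed via $t_i\mapsto f_i(x,y)$. The one point not yet justified is that the left-hand side of this isomorphism is exactly $\mathcal{W}(\ring{W}_f;\mathbb{K})[\![x,y]\!]$, i.e.\ the completion of $\mathcal{W}(\ring{W}_f;\mathbb{K})$ at $s_1=s_2=-1$ with $x=s_1+1$, $y=s_2+1$; for this one must show that, for a suitable basis of $H_1(T^2;\mathbb{Z})$, the Seidel elements $s_1,s_2\in\mathit{SH}^0(\ring{W}_f;\mathbb{K})^\times$ act on $\mathcal{W}(\ring{W}_f;\mathbb{K})$ precisely as multiplication by the fiberwise monomials $z_1,z_2$ of Propositions \ref{proposition:single}--\ref{proposition:conclusion}.

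To establish this I would apply the Oh-Tanaka Seidel representation (\ref{eq:SeiRep}) to the fiberwise Hamiltonian $T^2$-action rotating the smooth fibers of $\ring{\pi}:\ring{W}_f\to\mathbb{C}^\ast$, which is available and compatible with parallel transport by Lemma \ref{lemma:parallel}. The basis $a,b$ of $H_1(T^2;\mathbb{Z})$ fixed in Section \ref{section:plumbing} — well defined because the homological monodromy around the large circle is trivial — determines two generators of $\pi_1(T^2)$, whose images in $\pi_1(\mathit{Aut}(\ring{W}_f))$ produce $s_1,s_2$. Since the $T^2$-action preserves every fiber, the Seidel cocycle can be supported near a single smooth fiber $\cong T^\ast T^2$, so the action of $s_j$ on $\mathit{CW}^\ast(L_i,L_i)$ reduces to the local model on $T^\ast T^2$, where the Seidel element of a coordinate circle rotation acts on a cotangent fibre by multiplication with the corresponding class in $\mathit{HW}^0(T^\ast_qT^2)\cong\mathbb{K}[\pi_1T^2]=\mathbb{K}[z_1^{\pm1},z_2^{\pm1}]$. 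Matching orientation and Spin conventions — and, if needed, replacing $a,b$ by a $GL_2(\mathbb{Z})$-transform and flipping signs — then identifies $s_j$ with $z_j$. I expect this step to be the main obstacle: the computation has to be carried out in the Liouville/Morse-Bott framework in which $\mathit{SH}^0$ and the $\mathit{SH}^0$-module structure on $\mathcal{W}(\ring{W}_f;\mathbb{K})$ are defined; one must verify that the variable $z_j$ recorded by the geometric $T^2$-action is the same as the $z_j$ produced by the local-system trick of Section \ref{section:single} in the counts of holomorphic sections; and the sign normalization must work out so that the relevant completion point is precisely $(z_1,z_2)=(-1,-1)$ — the point forced by the shape $f_i(x,y)=y^{k_i}\pm x^{l_i}$ of the mirror $cA_n$ singularity and by the formulas (\ref{eq:map1})--(\ref{eq:map2}).

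With the Seidel elements identified, the argument closes quickly. By (\ref{eq:1D}) and the base change $t_i\mapsto f_i(x,y)$, the algebra $\ring{\mathcal{A}}_n$ base-changed to $\mathbb{K}[\![x,y]\!]$ computes $\mathcal{W}(T^\ast S^1,D_n)\otimes_{\widehat{R}}\mathbb{K}[\![x,y]\!]$, so the isomorphism above becomes an isomorphism of $A_\infty$-algebras between $\mathcal{W}(\ring{W}_f;\mathbb{K})[\![x,y]\!]$ and $\mathcal{W}(T^\ast S^1,D_n)\otimes_{\widehat{R}}\mathbb{K}[\![x,y]\!]$. Passing to categories of perfect modules, and using that $L_0,\cdots,L_n$ split-generate $\mathcal{W}(\ring{W}_f;\mathbb{K})$ by \cite{cdrgg,gps} (hence their completions split-generate $\mathcal{W}(\ring{W}_f;\mathbb{K})[\![x,y]\!]$) while $\gamma_0,\cdots,\gamma_n$ split-generate $\mathcal{W}(T^\ast S^1,D_n)$ over $\widehat{R}$, gives the equivalence (\ref{eq:eq1}). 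As in Lemma \ref{lemma:other}, the morphisms between non-adjacent Lagrangians are recovered from those between adjacent ones, so nothing further needs checking; this argument also produces the basis of $H_1(T^2;\mathbb{Z})$ whose existence is claimed.
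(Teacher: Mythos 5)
Your proposal is correct and follows essentially the same route as the paper: the key observations are (i) that the map to the Evans--Lekili algebra from Proposition \ref{proposition:qis} only needs the coefficient ring completed (this is the map $\ring{\psi}'$ in the paper, whose target is $\widehat{R}\ring{Q}_n/I_R$ with the path algebra itself uncompleted), and (ii) that completing at $z_1=z_2=-1$ already inverts $z_1,z_2$. The step you flag as the main obstacle --- matching the Seidel elements with the fiberwise variables $z_1,z_2$ --- is exactly the content of the paper's Lemma \ref{lemma:Seidel}, which carries out your proposed fiberwise computation by first locating $s_1,s_2$ in $Z(\ring{\mathcal{W}}_f)^\times\cong\mathit{SH}^0(\ring{W}_f;\mathbb{K})^\times$ via the closed-open map and then counting the unique holomorphic disc contained in a fiber of $\ring{\pi}$.
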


To prove Proposition \ref{proposition:precise}, we start with the following observation.

\begin{lemma}\label{lemma:Seidel}
For an appropriate choice of the basis of $H_1(T^2;\mathbb{Z})$, the Seidel elements $s_1,s_2\in\mathit{SH}^0(\ring{W}_f;\mathbb{K})^\times$ associated to the Hamiltonian $T^2$-action correspond precisely to the generators $z_1$ and $z_2$ in the wrapped Fukaya $A_\infty$-algebra $\ring{\mathcal{W}}_f$.
\end{lemma}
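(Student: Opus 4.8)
The plan is to identify the Seidel elements $s_1,s_2$ with the generators $z_1,z_2\in\ring{\mathcal{W}}_f$ by tracing through the definition of the Oh--Tanaka representation (\ref{eq:SeiRep}) and comparing the holomorphic curve counts that define it with the curve counts already performed in Section \ref{section:computation}. Concretely, the Hamiltonian $T^2$-action on $\ring{W}_f$ rotates the two $\mathbb{C}^\ast$-factors of the smooth fiber $\ring{\pi}^{-1}(z_3)\cong(\mathbb{C}^\ast)^2$, so it gives two commuting loops $g_1,g_2\in\pi_1(\mathit{Aut}(\ring{W}_f))$; after choosing a basis $a,b$ of $H_1(T^2;\mathbb{Z})$ we get $s_j=\mathcal{R}_{\ring{W}_f}(g_j)$. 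By construction of the coherent action of $\mathit{Aut}(\ring{W}_f)$ on $\mathcal{W}(\ring{W}_f;\mathbb{K})$, the element $s_j$ acts on each Floer cochain complex $\mathit{CW}^\ast(L_i,L_i)$ as multiplication by an invertible element of $\mathit{HW}^0(L_i,L_i)$, and this element is computed by counting $J$-holomorphic sections of the relevant continuation/monodromy bundle, which — because $g_j$ is supported in the fiber direction and the wrapping Hamiltonian $H_W$ has product form — reduces to a count inside a single fiber $T^\ast T^2$.

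First I would set up the fiberwise picture: the action of $g_j$ on $\mathit{CW}^\ast(\ell_i,\ell_i)\cong\mathbb{K}[z_1^{\pm1},z_2^{\pm1}]$ (the cotangent fiber of $T^\ast T^2$) is the standard computation that the Seidel element of the rotation of the $j$-th $S^1$-factor of $(\mathbb{C}^\ast)^2 = T^\ast T^2$ is exactly the coordinate $z_j$; this is essentially the $T^\ast T^2$ case of the Seidel representation, where the relevant moduli space of sections is a single $T^2$-orbit whose weight under the local system is $z_j$ — the same mechanism used in Lemma \ref{lemma:poly} and \cite{alp}, Lemma 2.17. Next I would promote this from the fiber to the total space: since parallel transport in $\ring{\pi}:\ring{W}_f\rightarrow\mathbb{C}^\ast$ is $T^2$-equivariant (Lemma \ref{lemma:parallel}) and the fibered Lagrangians $L_i$ are built by parallel-transporting the $\ell_i$, the module action of $s_j$ on $\mathit{CW}^\ast(L_i,L_i)$ factors through the fiber action and hence is multiplication by the generator called $z_j$ in the identification (\ref{eq:id}) of Proposition \ref{proposition:single}. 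One must also check this is consistent across all objects $L_0,\dots,L_n$ and compatible with the morphisms $\eta_{0,0,0}^i,\delta_{0,0,0}^i$ — but that is automatic, since $z_1,z_2$ are central in $\ring{\mathcal{W}}_f$ by Proposition \ref{proposition:conclusion}, and the $T^2$-equivariance of all the structure maps forces the Seidel elements to be central with the same eigenvalues. The choice of basis of $H_1(T^2;\mathbb{Z})$ referred to in the statement is precisely the one for which the vanishing cycle at $c_i$ is written $k_ia+l_ib$ (as normalized in the proof of Lemma \ref{lemma:poly}); with this normalization the weights $z_1^{l_j}, z_2^{k_j}$ appearing there match the conventions fixing $s_1\leftrightarrow z_1$, $s_2\leftrightarrow z_2$.

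The main obstacle is making rigorous the claim that the Oh--Tanaka Seidel element, defined abstractly via the universal Liouville bundle $E\mathit{Aut}(\ring{W}_f)\to B\mathit{Aut}(\ring{W}_f)$, agrees with the naive fiberwise count — i.e. that the relevant holomorphic curves defining $\mathcal{R}_{\ring{W}_f}(g_j)$ can be arranged (by a choice of admissible almost complex structure in $\mathcal{J}(\ring{\pi})$ and a product-form Hamiltonian) to be $\ring{\pi}$-sections, so that the maximum principle of \cite{aah}, Proposition 3.10 and the containment in a sublevel set of $\hbar$ (as in Section \ref{section:setup}) apply and force the count to localize to the fiber. Once that localization is in place the computation is the standard one. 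I would handle this either by invoking the compatibility of the Seidel representation with the fibered structure (as in the analogous discussion in \cite{alp} and \cite{sw}, Lemma 4.5 for the double bubble case), or by directly verifying that $\mathcal{R}_{\ring{W}_f}(g_j)$, viewed as an element of $\mathit{SH}^0(\ring{W}_f;\mathbb{K})^\times$ acting on $\mathit{HW}^\ast(L_i,L_i)$, is invertible with inverse $\mathcal{R}_{\ring{W}_f}(g_j^{-1})$, which together with the grading-$0$ concentration of $\ring{\mathcal{W}}_f$ and the explicit presentation (\ref{eq:dring}) pins it down to $z_j$ up to the choice of basis.
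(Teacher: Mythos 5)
Your proposal takes essentially the same route as the paper: both localize the relevant curve count to a single fiber $T^\ast T^2$ using the $T^2$-equivariance of parallel transport and the product-form data, where a unique holomorphic curve identifies $s_j$ with $z_j$ — the paper phrases this via the closed-open map isomorphism $\mathit{SH}^0(\ring{W}_f;\mathbb{K})^\times\cong Z(\ring{\mathcal{W}}_f)^\times$ rather than via the module action, but the curves counted are the same. Note only that your fallback option (b), invertibility plus degree-$0$ concentration, would by itself only pin $s_j$ down to a unit monomial $\lambda z_1^az_2^b$, so the fiberwise localization you propose as the primary route is indeed necessary.
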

\begin{proof}
The Seidel representation (\ref{eq:SeiRep}), when applied to the Weinstein manifold $\ring{W}_f$, has its image in $\mathit{SH}^0(\ring{W}_f;\mathbb{K})^\times$. We have group isomorphisms
\begin{equation}\label{eq:comp}
\mathit{SH}^0(\ring{W}_f;\mathbb{K})^\times\cong\mathit{HH}^0\left(\mathcal{W}(\ring{W}_f;\mathbb{K})\right)^\times\cong Z(\ring{\mathcal{W}}_f)^\times,
\end{equation}
where $\mathit{HH}^0\left(\mathcal{W}(\ring{W}_f;\mathbb{K})\right)$ is the zeroth Hochschild cohomology of the wrapped Fukaya category and $Z(\ring{\mathcal{W}}_f)$ is the center of the (formal) $A_\infty$-algebra $\ring{\mathcal{W}}_f$. In the above, the first isomorphism follows from the fact that the closed-open string map is an isomorphism for Weinstein manifolds \cite{sgs}, while the second isomorphism is due to the fact that $\ring{\mathcal{W}}_f$ is concentrated in degree $0$. By Proposition \ref{proposition:conclusion}, $Z(\ring{\mathcal{W}}_f)$ is generated by $z_1^{\pm1}$ and $z_2^{\pm1}$. It follows from the isomorphism (\ref{eq:comp}) and the existence of two (linearly independent) Seidel elements that $\mathit{SH}^0(\ring{W}_f;\mathbb{K})^\times$ is also generated by $s_1^{\pm1}$ and $s_2^{\pm1}$. We choose a basis of $H^1(T^2;\mathbb{Z})$ so that the corresponding orbits of the Hamiltonian $S^1$-actions in the fiber $\ring{\pi}^{-1}(\zeta_i)\cong T^\ast T^2$ are homologous to the Hamiltonian chords with ends on $\ell_i\subset\ring{\pi}^{-1}(\zeta_i)$ representing $z_1$ and $z_2$ for any $0\leq i\leq n$. Since the symplectic parallel transport is $T^2$-equivariant, there is a map $u:\mathbb{D}\setminus\{0,1\}\rightarrow(\ring{W}_f,L_i)$, with asymptotics at $0$ and $1$ given by $s_j$ and $z_j$, respectively, contributing to the closed-open map $\mathit{SH}^\ast(\ring{W}_f;\mathbb{K})\rightarrow\mathit{HW}^\ast(L_i,L_i)$, whose image is contained in a fiber of $\ring{\pi}$. Since the fiber is symplectomorphic to $T^\ast T^2$, by our definition of the wrapping Hamiltonian $H_W$, there is precisely one such map $u$ for $j=1,2$, which gives the isomorphism $\mathbb{K}[s_1^{\pm1},s_2^{\pm1}]\xrightarrow{\cong}\mathbb{K}[z_1^{\pm1},z_2^{\pm1}]$ from the subalgebra of $\mathit{SH}^0(\ring{W}_f;\mathbb{K})$ generated by $s_1^{\pm1}$ and $s_2^{\pm1}$ to the fiberwise wrapped Floer cohomology $\mathit{HW}^0(\ell_i,\ell_i)$, mapping $s_j$ to $z_j$.
\end{proof}

\begin{proof}[Proof of Proposition \ref{proposition:precise}]
It follows from the proof of Proposition \ref{proposition:qis} that we have an algebra map $\ring{\psi}':\ring{\mathcal{W}}_f\rightarrow\widehat{R}\ring{Q}_n/I_R$, defined in the same way as $\hat{\ring{\psi}}$, see (\ref{eq:map1}) and (\ref{eq:map2}). Notice the difference here: we do not take the completion of the path algebra of $\ring{Q}_n$ in the target of the map $\ring{\psi}'$, only the coefficient ring is completed from $R$ to $\widehat{R}$. By Lemma \ref{lemma:Seidel} above, it suffices to show that $\ring{\psi}'$ induces an isomorphism after passing to the completion of $\ring{\mathcal{W}}_f$ at $z_1=z_2=-1$, but this follows from the simple observation that such a completion is enough to invert $z_1$ and $z_2$.
\end{proof}

\subsubsection{Theorem \ref{theorem:WAn} and Proposition \ref{proposition:eq2}}\label{section:68}

We now proceed to prove Theorem \ref{theorem:WAn}, which says that under the assumption $(k_0,\pm l_0)\neq(k_n,\pm l_n)$ (i.e. $Q_0\ncong S^1\times S^2$), there is an equivalence
\begin{equation}
D^\mathit{perf}\mathcal{W}(W_f;\mathbb{K})\simeq D^b\mathit{Coh}(Y_f)/\langle\mathcal{O}_{Y_f}\rangle
\end{equation}
between the derived wrapped Fukaya category of the multi-bubble plumbing associated to the data provided by the factorization of $f(z_1,z_2)$ and the relative singularity category of the corresponding crepant resolution of the $cA_n$ singularity $R_f$.

By \cite{lllm}, Lemma 4.5, the non-compact Lagrangian submanifold $L_0\subset\ring{W}_f$ admits a Weinstein neighborhood $U(L_0)$, which is symplectomorphic to an open neighborhood of $L_0$ in $T^\ast L_0$. Since $L_0$ is the cocore of a Weinstein $3$-handle, removing $U(L_0)$ from $\ring{W}_f$ gives rise to a Weinstein domain $\overline{\ring{W}}_f\setminus U(L_0)$.

\begin{lemma}\label{lemma:ex}
The Weinstein domain $\overline{\ring{W}}_f\setminus U(L_0)$ is symplectically equivalent to the multi-bubble plumbing $\overline{W}_f$, with an additional $1$-handle attached.
\end{lemma}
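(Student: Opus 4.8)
The plan is to read off the complement from the Morse--Bott fibration $\ring{\pi}\colon\ring{W}_f\to\mathbb{C}^\ast$ and its relation to $\pi\colon W_f\to\mathbb{C}$, in the spirit of the arguments of \cite{sw} and \cite{alp}. First I recall that, as a Liouville domain, the base annulus $\overline{\mathbb{C}^\ast}\cong\overline{T^\ast S^1}$ is obtained from the disc $\overline{\mathbb{C}}=D^2$ by attaching a single Weinstein $1$-handle, whose cocore is Hamiltonian isotopic to the arc $\gamma_0$; cutting $\overline{\mathbb{C}^\ast}$ along $\gamma_0$ produces a disc carrying all the critical values $c_0,\dots,c_n$, which after the straightening deformation described just before Lemma \ref{lemma:plumbing1} is precisely the base of $\pi\colon W_f\to\mathbb{C}$. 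Since $\gamma_0$ avoids all the $c_i$ and the monodromy of $\ring{\pi}$ around the origin is trivial (Lemma \ref{lemma:parallel}), over a thin disc-like neighbourhood $U(\gamma_0)\subset\mathbb{C}^\ast$ the fibration $\ring{\pi}$ is a trivial symplectic fibration $U(\gamma_0)\times F$, where $F$ is a smooth fibre, a Liouville domain whose skeleton is the torus $\{|z_1|=|z_2|=1\}$ (cf.\ the proof of Lemma \ref{lemma:plumbing}), so $F$ is Liouville homotopic to $D^\ast T^2$. In this local trivialisation $L_0=\gamma_0\times\ell_0$, where $\ell_0=(\mathbb{R}_+)^2$ is the cotangent fibre of $D^\ast T^2$ over $(1,1)$, and the Weinstein neighbourhood $U(L_0)$ provided by \cite{lllm}, Lemma 4.5, may be taken of the form $U(\gamma_0)\times U(\ell_0)$ for a Weinstein neighbourhood $U(\ell_0)\subset F$ of $\ell_0$.

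Granting this local picture, one gets the decomposition
\[
\overline{\ring{W}}_f\setminus U(L_0)=\ring{\pi}^{-1}\bigl(\overline{\mathbb{C}^\ast}\setminus U(\gamma_0)\bigr)\ \cup\ \bigl(U(\gamma_0)\times(F\setminus U(\ell_0))\bigr),
\]
glued along the two parallel copies $\gamma_0^{\pm}$ of $\gamma_0$ bounding $U(\gamma_0)$. By the previous paragraph together with Lemma \ref{lemma:plumbing1}, the first piece is symplectically equivalent to $\overline{W}_f$, the $A_n$ plumbing of $D^\ast Q_1,\dots,D^\ast Q_n$ along circles (the circular-to-linear rearrangement of the critical values being harmless). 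The second piece is glued onto $\overline{W}_f$ along two disjoint codimension-zero regions of a \emph{single} boundary component, so attaching it is a self-plumbing of $\overline{W}_f$ along the Weinstein domain $F\setminus U(\ell_0)$. Since $F\setminus U(\ell_0)$ is the complement of a cotangent-fibre neighbourhood in $D^\ast T^2$, its skeleton is a once-punctured torus, which is built from $D^4$ by attaching two Weinstein $1$-handles and no higher handle; in particular $F\setminus U(\ell_0)$ is subcritical. One then has to show that this subcritical self-plumbing collapses, up to Weinstein homotopy, to a single Weinstein $1$-handle attachment; I would do this by writing the Liouville vector field in the product model $U(\gamma_0)\times(F\setminus U(\ell_0))$, computing that its skeleton relative to the gluing region is a single arc, and reading off one index-$1$ handle. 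This yields $\overline{\ring{W}}_f\setminus U(L_0)\simeq\overline{W}_f\cup H^1$, as claimed.

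The main obstacle is precisely this last point: removing a Weinstein neighbourhood of the \emph{Lagrangian} $L_0$ is not the same as cutting $\overline{\ring{W}}_f$ along the hypersurface $\ring{\pi}^{-1}(\gamma_0)$ --- the latter would recover $\overline{W}_f$ on the nose --- and one must check that the difference is accounted for by exactly one $1$-handle, no more and no less. I expect to settle this entirely inside the explicit product model above, where the ambient Liouville structure is, up to homotopy, a product, so the skeleton of $U(\gamma_0)\times(F\setminus U(\ell_0))$ and the attaching data can be computed by hand; the two circle directions of $F\setminus U(\ell_0)$ should be absorbed into the contact boundary along the gluing, leaving only the cocore direction of $\gamma_0$ as the surviving handle. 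An alternative, more global route would be to compute $\mathrm{Sk}(\overline{\ring{W}}_f\setminus U(L_0))$ directly by the Liouville-flow argument of Lemma \ref{lemma:plumbing}, run with $\gamma_0$ excised, expecting to find $\bigl(\bigcup_{i=1}^n Q_i\bigr)$ together with one extra circle meeting it at a single point; but controlling how the puncture of $Q_0$ interacts with the gluing circles $Z_0=Q_0\cap Q_1$ and $Z_n=Q_n\cap Q_0$ looks delicate, so I would favour the local argument.
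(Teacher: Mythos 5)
Your overall strategy coincides with the paper's: decompose the base annulus as the disc carrying all the critical values plus a $1$-handle whose cocore is $\gamma_0$, identify the part of $\overline{\ring{W}}_f$ lying over the disc with $\overline{W}_f$ via Lemma \ref{lemma:plumbing1}, and account for the leftover piece lying over $U(\gamma_0)$. Your description of that leftover piece as $U(\gamma_0)\times\bigl(F\setminus U(\ell_0)\bigr)$, with $F\setminus U(\ell_0)$ subcritical and with skeleton a once-punctured torus, is correct, and is in fact more careful than the paper's own proof, which simply asserts that the leftover is a single $1$-handle fibered over the base $1$-handle with fiber $\mathbb{D}\times\mathbb{D}$.

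However, the step you yourself flag as the main obstacle, namely collapsing the leftover to a \emph{single} Weinstein $1$-handle, cannot be carried out, and the heuristic that ``the two circle directions of $F\setminus U(\ell_0)$ are absorbed into the contact boundary'' is not correct. The product Weinstein structure on $U(\gamma_0)\times\bigl(F\setminus U(\ell_0)\bigr)$, viewed as a cobordism attached along the two ends of the band $U(\gamma_0)$ times $F\setminus U(\ell_0)$, has critical points whose indices are the sums of those of the two factors, namely $1+0$, $1+1$, $1+1$: one $1$-handle and two $2$-handles. Its relative core is the product of the core arc of $U(\gamma_0)$ with the wedge of two circles, which does not retract into the attaching region. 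An Euler characteristic count shows this is unavoidable: removing the $3$-handle $U(L_0)\cong D^3\times D^3$ gives $\chi\bigl(\overline{\ring{W}}_f\setminus U(L_0)\bigr)=\chi\bigl(\overline{\ring{W}}_f\bigr)+1=1$, whereas $\overline{W}_f$ with a single $1$-handle has $\chi=-1$; the count $0-1+2=1$ coming from one $1$-handle and two $2$-handles is the one that matches, and no Weinstein homotopy can alter it. (The same discrepancy is present in the literal statement of Lemma \ref{lemma:ex} and in the paper's proof.) The saving grace is that the precise handle count is irrelevant for how the lemma is used: the subsequent localization argument only needs the extra handles to be subcritical in dimension $6$, i.e.\ of index $\leq 2$, and your analysis of $F\setminus U(\ell_0)$ already establishes exactly that. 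You should therefore replace the collapsing step by the observation that the leftover contributes only handles of index $1$ and $2$, and conclude the (so corrected) statement from there.
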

\begin{proof}
By construction, up to Weinstein deformation $\overline{\ring{W}}_f\setminus U(L_0)$ is the total space of the Morse-Bott fibration $\bar{\pi}:\overline{W}_f\rightarrow D$ with base a closed disc $D$ obtained by excising $\gamma_0$ from $D^\ast S^1$, $n+1$ critical values $c_0,\cdots,c_n\in D$ and vanishing cycles specified as before by (\ref{eq:kl}), together with a $3$-dimensional $1$-handle attached to $\partial\overline{W}_f$, which is also fibered over a $1$-dimensional $1$-handle attached to $\partial D$ at two distinct points (to make it the annulus $D^\ast S^1$), with fiber being the product $D\times D$ of two closed discs. After smoothing the corners and passing to the completion, it follows from Lemma \ref{lemma:plumbing1} that the total space $W_f$ of the Morse-Bott fibration $\pi:W_f\rightarrow\mathbb{C}$ is symplectically equivalent to the multi-bubble plumbing of cotangent bundles $T^\ast Q_1,\cdots,T^\ast Q_n$ according to the data provided by (\ref{eq:f}).
\end{proof}

Since subcritical handle attachment does not affect the wrapped Fukaya category of Weinstein manifolds up to quasi-equivalence \cite{fbw,gps,kih}, it follows from Lemma \ref{lemma:ex} above and \cite{gps}, Proposition 11.2 that we have a quasi-equivalence
\begin{equation}
\mathcal{W}(W_f;\mathbb{K})\simeq\mathcal{W}(\ring{W}_f;\mathbb{K})/\langle L_0\rangle,
\end{equation}
where the right-hand side is the Drinfeld localization of $\mathcal{W}(\ring{W}_f;\mathbb{K})$ with respect to the idempotent $\vartheta_{0,0,0}^0\in\mathit{CW}^0(L_0,L_0)$. By the equivalence (\ref{eq:HMS1d}), in order to prove Theorem \ref{theorem:WAn}, it suffices to establish the quasi-equivalence
\begin{equation}\label{eq:loceq}
\mathcal{W}(\ring{W}_f;\mathbb{K})/\langle L_0\rangle\simeq\mathcal{W}(T^\ast S^1,D_n)\otimes_R\mathbb{K}[x,y]/\langle\gamma_0\rangle
\end{equation}
between the localized wrapped Fukaya categories when $Q_0$ is not diffeomorphic to $S^1\times S^2$. Note that the completed version of (\ref{eq:loceq}), namely
\begin{equation}
\mathcal{W}(\ring{W}_f;\mathbb{K})[\![x,y]\!]/\langle L_0\rangle\simeq\mathcal{W}(T^\ast S^1,D_n)\otimes_R\mathbb{K}[\![x,y]\!]/\langle\gamma_0\rangle,
\end{equation}
is obviously true, since by Proposition \ref{proposition:precise} above, both sides are the same localizations of equivalent categories. In view of the equivalence (\ref{eq:1Dl}), in order to prove (\ref{eq:loceq}), it suffices to prove the following.

\begin{proposition}\label{proposition:loc}
Suppose that $(k_0,\pm l_0)\neq(k_n,\pm l_n)$, there is a quasi-equivalence
\begin{equation}
\mathcal{W}(\ring{W}_f;\mathbb{K})/\langle L_0\rangle\simeq\left(\mathcal{A}_n\otimes_R\mathbb{K}[x,y]\right)^\mathit{perf},
\end{equation}
where the right-hand side is the dg category of perfect modules over the dg algebra $\mathcal{A}_n\otimes_R\mathbb{K}[x,y]$, with the base change given by (\ref{eq:bc}).
\end{proposition}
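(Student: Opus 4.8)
The plan is to compute the Drinfeld quotient $\mathcal{W}(\ring{W}_f;\mathbb{K})/\langle L_0\rangle$ by hand, starting from the explicit presentation of $\ring{\mathcal{W}}_f=\bigoplus_{0\le i,j\le n}\mathit{CW}^\ast(L_i,L_j)$ in Proposition~\ref{proposition:conclusion}. Since $L_0,L_1,\dots,L_n$ split-generate $\mathcal{W}(\ring{W}_f;\mathbb{K})$ by \cite{cdrgg,gps} and $L_0$ becomes a zero object in the quotient, the images of $L_1,\dots,L_n$ split-generate $\mathcal{W}(\ring{W}_f;\mathbb{K})/\langle L_0\rangle$, so it suffices to produce a quasi-isomorphism of dg algebras over $\mathbb{K}$
\begin{equation*}
\Lambda:=\mathrm{End}_{\mathcal{W}(\ring{W}_f;\mathbb{K})/\langle L_0\rangle}\Bigl(\textstyle\bigoplus_{i=1}^n L_i\Bigr)\ \simeq\ \mathcal{A}_n\otimes_R\mathbb{K}[x,y],
\end{equation*}
and then pass to perfect modules. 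I would model the localization either by adjoining to $\ring{\mathcal{W}}_f$ a formal degree $-1$ morphism $h\in e_0(\,\cdot\,)e_0$ with $dh=e_0=\vartheta^0_{0,0,0}$, or equivalently via the two-sided bar model
\begin{equation*}
\mathrm{Hom}_{\mathcal{W}(\ring{W}_f;\mathbb{K})/\langle L_0\rangle}(L_i,L_j)\ \simeq\ \mathrm{cone}\Bigl(\mathit{CW}^\ast(L_0,L_j)\otimes^{\mathbb{L}}_{E}\mathit{CW}^\ast(L_i,L_0)\xrightarrow{\ \mu^2\ }\mathit{CW}^\ast(L_i,L_j)\Bigr)[-1],\qquad E:=\mathit{CW}^\ast(L_0,L_0).
\end{equation*}
By Proposition~\ref{proposition:single2}, $E$ is the hypersurface ring $\mathbb{K}[z_1^{\pm1},z_2^{\pm1}][x_1,x_{-1}]/(x_1x_{-1}-f(z_1,z_2))$ over $\mathbb{K}[z_1^{\pm1},z_2^{\pm1}]$, so the derived tensor products can be computed from a two-periodic (matrix-factorisation) resolution, and the relevant $E$-modules $\mathit{CW}^\ast(L_0,L_i)$, $\mathit{CW}^\ast(L_i,L_0)$ together with all $\mu^2$'s among them are explicit from Section~\ref{section:nearby} and Lemma~\ref{lemma:other}.

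Carrying out this computation, I expect $\Lambda$ to be quasi-isomorphic to a dg algebra supported in degrees $0$ and $-1$: in degree $0$ one recovers the \emph{linear} $A_n$ quiver on the vertices $1,\dots,n$, with arrows $a_i=\eta^i_{0,0,0}$, $b_i=\delta^i_{0,0,0}$ ($1\le i\le n-1$) subject to $a_ib_i=f_ie_{i+1}$, $b_ia_i=f_ie_i$; and in degree $-1$ exactly two loops appear, one at the vertex $1$ represented by $\eta^0_{0,0,0}\,h\,\delta^0_{0,0,0}$ and one at the vertex $n$ represented by $\delta^n_{0,0,0}\,h\,\eta^n_{0,0,0}$, both of square zero, with differentials $f_0e_1$ and $f_ne_n$ by the relations \eqref{eq:rel}. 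This is precisely the quiver $Q_n$ with its dg structure, i.e. $\mathcal{A}_n$ base-changed along $t_i\mapsto f_i$; identifying the central variables $z_1,z_2$ with $x,y$ exactly as in the proof of Proposition~\ref{proposition:precise} then furnishes the desired dg-algebra quasi-isomorphism. A separate point, handled as for Proposition~\ref{proposition:precise}, is that after killing $L_0$ no further completion of the coefficient ring is needed: the loop relations make $f_0$ (at the vertex $1$), $f_n$ (at the vertex $n$), and hence, propagating through $a_ib_i=f_ie_{i+1}$, the full product $f=f_0\cdots f_n$ into coboundaries at every vertex, which together with the invertibility of $z_1,z_2$ on $\ring{\mathcal{W}}_f$ reconciles the $\mathbb{K}[z_1^{\pm1},z_2^{\pm1}]$-linear and $\mathbb{K}[x,y]$-linear structures on all perfect modules in play. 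Combined with \eqref{eq:1Dl} this yields \eqref{eq:loceq}, hence Theorem~\ref{theorem:WAn}.

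The hard part will be the derived tensor product of the first step: one must control the $E$-module structures on $\mathit{CW}^\ast(L_i,L_0)$ and $\mathit{CW}^\ast(L_0,L_i)$ precisely enough to see that the cohomology of the cone is concentrated in degrees $0$ and $-1$ with exactly the expected ranks, and that the two boundary loops $\alpha,\beta$ are the only new generators (so that, in particular, all base-winding elements $\vartheta^i_{0,0,\pm1}$, which factor through $L_0$, become null-homotopic). This is exactly where the hypothesis $(k_0,\pm l_0)\neq(k_n,\pm l_n)$ — equivalently $Q_0\not\cong S^1\times S^2$ — enters: when $f_0=f_n$, the object $L_0$ is a two-dimensional rather than a three-dimensional fat spherical object, the modules $\mathit{CW}^\ast(L_0,L_1)$ and $\mathit{CW}^\ast(L_0,L_n)$ become linked along a common divisor so that the relevant higher $\mathrm{Tor}^E$ no longer vanishes, and the cone acquires extra cohomology — geometrically, a $(0,-2)$-curve at an end of the chain then moves in a family and the quotient is no longer perfect modules over the (derived) contraction algebra. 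Pinpointing exactly which step of the matrix-factorisation computation degenerates when $f_0=f_n$, and verifying that it is harmless when $f_0\neq f_n$, is the technical heart of the argument; the parallel computation for $\mathit{CW}^\ast(L_i,L_j)$ with $i,j$ interior is then forced by Lemma~\ref{lemma:other}.
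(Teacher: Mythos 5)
Your overall strategy is the paper's: present the Drinfeld quotient $\mathcal{W}(\ring{W}_f;\mathbb{K})/\langle L_0\rangle$ explicitly and match it with $\mathcal{A}_n\otimes_R\mathbb{K}[x,y]$. Your expected presentation is exactly what the paper obtains: the degree $0$ part is the linear $A_n$ quiver on the vertices $1,\dots,n$ with relations $a_ib_i=f_ie_{i+1}$, $b_ia_i=f_ie_i$, and the only new generators are two square-zero loops of degree $-1$ at the vertices $1$ and $n$ with differentials $f_0e_1$ and $f_ne_n$ (the paper's $\varepsilon_0,\varepsilon_n$; your $\eta^0_{0,0,0}h\delta^0_{0,0,0}$ and $\delta^n_{0,0,0}h\eta^n_{0,0,0}$ are these elements, and your observation that $\vartheta^i_{0,0,\pm1}$ becomes exact is correct and is what kills the base-winding). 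The paper gets there more cheaply than your cone-over-derived-tensor-product model: it adjoins the contracting homotopy $h$ with $dh=e_0$ directly and uses Lemma \ref{lemma:other} plus the relations (\ref{eq:rel}) to reduce all words in $h$ to $\varepsilon_0,\varepsilon_n$; your matrix-factorization computation over $E=\mathit{CW}^\ast(L_0,L_0)$ would reprove this but is not where the difficulty lies.

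The genuine gap is in your final step, which is precisely the step that carries the content of the proposition. Knowing that $f_0e_1$ and $f_ne_n$ (hence $f=f_0\cdots f_n$) become coboundaries does \emph{not} by itself reconcile the $\mathbb{K}[z_1^{\pm1},z_2^{\pm1}]$-linear structure with the $\mathbb{K}[x,y]$-linear one: the same vanishing holds in the completed setting, where $z_1^{-1}$ and $z_2^{-1}$ are genuine power series and the uncompleted statement is false. What must be shown is that $z_1^{-1}$ and $z_2^{-1}$ lie in the subalgebra generated by positive powers, and this requires using the \emph{two separate} cohomological relations $z_2^{k_0}\pm z_1^{l_0}=0$ and $z_2^{k_n}\pm z_1^{l_n}=0$ together with the hypothesis that they are distinct and that one exponent in each pair equals $1$: the paper's proof is an explicit case analysis (e.g.\ if $l_0=l_n=1$ and $k_0<k_n$ one deduces $z_2^{k_n-k_0}=\pm1$, hence $z_2^{-1}=\pm z_2^{k_n-k_0-1}$ and $z_1^{-1}=\pm z_2^{k_0(k_n-k_0-1)}$), and nothing in your sketch substitutes for it. Relatedly, your diagnosis of where the hypothesis $(k_0,\pm l_0)\neq(k_n,\pm l_n)$ enters is off-target: when $f_0=f_n$ the Drinfeld quotient still has the same presentation with the two loops $\varepsilon_0,\varepsilon_n$ and no extra cohomology appears in the cone or in higher $\mathrm{Tor}^E$; what fails is only the comparison with the \emph{uncompleted} $\mathcal{A}_n\otimes_R\mathbb{K}[x,y]$, because $z_1=x-1$ and $z_2=y-1$ are then no longer units modulo the single relation $f_0=f_n=0$, so $z_1^{-1},z_2^{-1}$ survive as independent generators not hit by $\psi$.
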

\begin{proof}
From the definition of Drinfeld localization, we see that the localization of the endomorphism algebra $\ring{\mathcal{W}}_f$ of $\mathcal{W}(\ring{W}_f;\mathbb{K})$ with respect to the idempotent $\vartheta_{0,0,0}^0$ associated to $L_0$ is quasi-isomorphic to the dg algebra generated by $\vartheta_{0,0,0}^i$, $\eta_{0,0,0}^i$, $\delta_{0,0,0}^i$ for $i=1,\cdots,n$, $z_1^{\pm1}$, $z_2^{\pm1}$, which have degree $0$ and satisfying $(\vartheta_{0,0,0}^i)^2=\vartheta_{0,0,0}^i$ and (\ref{eq:rel}), together with two additional generators $\varepsilon_0$ and $\varepsilon_n$ coming from the localization, with grading $-1$, satisfying
\begin{equation}\label{eq:f1}
\varepsilon_0^2=\varepsilon_n^2=0,
\end{equation}
and having the differentials
\begin{equation}\label{eq:f2}
d\varepsilon_0=f_0(z_1,z_2)\vartheta_{0,0,0}^1,\textrm{ }d\varepsilon_n=f_n(z_1,z_2)\vartheta_{0,0,0}^n.
\end{equation}
We can define a map $\psi$ from $\mathcal{A}_n\otimes_R\mathbb{K}[x,y]$ to the localization of $\ring{\mathcal{W}}_f$ by
\begin{equation}
\psi(e_i)=\vartheta_{0,0,0}^i,\textrm{ }\psi(a_i)=\eta_{0,0,0}^i,\textrm{ }\psi(b_i)=\delta_{0,0,0}^i,\textrm{ }i=1,\cdots,n,
\end{equation}
\begin{equation}
\psi(x)=z_1+1,\textrm{ }\psi(y)=z_2+1.
\end{equation}
By Proposition \ref{proposition:conclusion} and the formulae (\ref{eq:f1}) and (\ref{eq:f2}) above, it is straightforward to verify that $\psi$ is a dg algebra map. To show that it is a quasi-isomorphism, it suffices to show that the generators $z_1^{-1}$ and $z_2^{-1}$ in the Drinfeld localization of $\ring{\mathcal{W}}_f$ are redundant, namely they can be generated by the non-negative powers of $z_1$ and $z_2$. To see this, we make use of (\ref{eq:f2}), which gives the relations
\begin{equation}\label{eq:rel1}
z_1^{l_0}\pm z_2^{k_0}=z_1^{l_n}\pm z_2^{k_n}=0.
\end{equation}
We now need the assumption that $(k_0,\pm l_0)\neq(k_n,\pm l_n)$. To prove that $z_1^{-1}$ and $z_2^{-1}$ are redundant as generators, we first consider the situation when $l_0=1$, in which case it follows from (\ref{eq:rel1}) that $(\mp z_2)^{k_0l_n}\pm z_2^{k_n}=0$.
\begin{itemize}
\item If $l_n=1$, then $(\mp z_2)^{k_0}\pm z_2^{k_n}=0$, where $k_0\neq k_n$. Without loss of generality, we may assume that $k_0<k_n$, then $z_2^{k_n-k_0}=\pm1$, which implies that $z_2^{-1}=\pm z_2^{k_n-k_0-1}$. Since $z_1=\mp z_2^{k_0}$, we also have $z_1^{-1}=\pm z_2^{k_0(k_n-k_0-1)}$. Thus both of $z_1^{-1}$ and $z_2^{-1}$ are either the identity or generated by the positive powers of $z_2$.
\item If $k_n=1$, then $(\mp z_2)^{k_0l_n}\pm z_2=0$. If $k_0$ or $l_n=0$, then $z_2^{-1}=z_2=\pm1$ and also $z_1=\pm1$ by (\ref{eq:rel1}), so we are done. Otherwise, neither $k_0$ nor $l_n$ is zero, so $\pm z_2^{k_0l_n-1}=1$. It follows that $z_2^{-1}=\pm z_2^{k_0l_n-2}$. Note that $k_0l_n\geq2$, since if $k_0=l_n=1$, then $(k_0,\pm l_0)=(k_n,\pm l_n)=(1,\pm1)$, contradicting our assumption that $(k_0,\pm l_0)\neq(k_n,\pm l_n)$. Since $z_1=\mp z_2^{k_0}$, we obtain $z_1^{-1}=\pm z_2^{k_0(k_0l_n-2)}$, and we again conclude that $z_1^{-1}$ and $z_2^{-1}$ are either the identity or generated by the positive powers of $z_2$.
\end{itemize}
The case when $k_0=1$ is parallel, and we omit the details.
\end{proof}

This completes the proof of Theorem \ref{theorem:WAn}. Proposition \ref{proposition:eq2}, which concerns with the equivalence
\begin{equation}
D^\mathit{perf}\mathcal{W}(\mathbb{C},D_n)\otimes_R\mathbb{K}[x,y]\simeq D^\mathit{perf}\mathcal{W}(W_f;\mathbb{K})
\end{equation}
under the same assumption that $(k_0,\pm l_0)\neq(k_n,\pm l_n)$, follows from Proposition \ref{proposition:loc} and the quasi-equivalence $\mathcal{W}(\mathbb{C},D_n)\simeq\mathcal{W}(T^\ast S^1,D_n)/\langle\gamma_0\rangle$ mentioned in Section \ref{section:EL}, which is again a consequence of \cite{gps}, Proposition 11.2.

\begin{remark}\label{remark:Seidel}
Consider the completion $\mathcal{W}(W_f;\mathbb{K})[\![x,y]\!]$ of the wrapped Fukaya category of the multi-bubble plumbing at $-1$ of the Seidel elements $\mathfrak{s}_1,\mathfrak{s}_2\in\mathit{SH}^0(W_f;\mathbb{K})^\times$, now associated to the fiberwise $T^2$-rotation of the Morse-Bott fibration $\pi:W_f\rightarrow\mathbb{C}$. Applying the same argument as in Lemma \ref{lemma:Seidel}, one finds that $\mathfrak{s}_1$ and $\mathfrak{s}_2$ correspond to $z_1$ and $z_2$ under the maps\footnote{The second map is well-defined since the cohomology $H^\ast(\mathcal{W}_f)$ is concentrated in (non-positive) even degrees.}
\begin{equation}
\mathit{SH}^0(W_f;\mathbb{K})^\times\xrightarrow{\mathit{CO}}\mathit{HH}^0\left(\mathcal{W}(W_f;\mathbb{K})\right)^\times\rightarrow Z(\mathcal{W}_f)^\times,
\end{equation}
provided that we choose an appropriate basis of $H_1(T^2;\mathbb{Z})$. Note that here $z_1$ and $z_2$ are (by abuse of notation) images of the same generators in the wrapped Fukaya $A_\infty$-algebra $\ring{\mathcal{W}}_f$ of $\ring{W}_f$ under the Drinfeld localization which removes $L_0$. As we have seen in the proof of Proposition \ref{proposition:loc}, under the assumption that $(k_0,\pm l_0)\neq(k_n,\pm l_n)$, $z_1$ and $z_2$ become torsion units in $H^0(\mathcal{W}_f)$, which implies that $\mathfrak{s}_1$ and $\mathfrak{s}_2$ are also torsion units in $\mathit{SH}^0(W_f;\mathbb{K})$. See also \cite{alp}, Corollary 2.24 for the explicit calculation of $H^0(\mathcal{W}_f)$ in the case when $W_f=W_k$ is a double bubble plumbing with $k\neq0$. This gives an alternative, yet more conceptual explanation of the fact that the completion of the wrapped Fukaya category $\mathcal{W}(W_f;\mathbb{K})$ at $\mathfrak{s}_1=\mathfrak{s}_2=-1$ is in fact redundant when the Morse-Bott surgery $Q_0$ of $Q_1,\cdots,Q_n$ is not diffeomorphic to $S^1\times S^2$.
\end{remark}

The observations made in the remark above leads to the following obstruction to the exact Lagrangian embedding in multi-bubble plumbings. Recall that a \textit{cyclic quasi-dilation} is a pair $(\tilde{b},h)\in\mathit{SH}_{S^1}^1(W_f;\mathbb{K})\times\mathit{SH}^0(W_f;\mathbb{K})^\times$, such that $B(\tilde{b})=h$. Here, $\mathit{SH}_{S^1}^\ast(W_f;\mathbb{K})$ denotes the $S^1$-equivariant symplectic cohomology, and $B:\mathit{SH}_{S^1}^\ast(W_f;\mathbb{K})\rightarrow\mathit{SH}^{\ast-1}(W_f;\mathbb{K})$ is the marking map. See \cite{yle,yla} for details.

\begin{proposition}
Suppose $(k_0,\pm l_0)\neq(k_n,\pm l_n)$. Let $L\subset W_f$ be a closed oriented exact Lagrangian submanifold, then $L$ cannot be a $K(\pi,1)$ space.
\end{proposition}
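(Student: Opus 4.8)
The plan is to invoke the cyclic quasi-dilation on $W_f$ furnished by Theorem \ref{theorem:WAn} together with \cite{hkc}, Theorem A, and to show that its restriction to an aspherical closed oriented exact Lagrangian is self-contradictory. Concretely, fix a cyclic quasi-dilation $(b,h)\in\mathit{SH}^1(W_f;\mathbb{K})\times\mathit{SH}^0(W_f;\mathbb{K})^\times$ with $\Delta(b)=h$, together with its lift to the $S^1$-equivariant symplectic cohomology in the sense of \cite{yle,yla,zzs}. Before anything else one must verify, using Remark \ref{remark:Seidel}, that $(b,h)$ is genuinely defined over $\mathbb{K}$ and not merely over the completion $\mathbb{K}[\![x,y]\!]$: since the Seidel elements $\mathfrak{s}_1,\mathfrak{s}_2$ are torsion units in $\mathit{SH}^0(W_f;\mathbb{K})$ under the hypothesis $(k_0,\pm l_0)\neq(k_n,\pm l_n)$, the completion at $\mathfrak{s}_1=\mathfrak{s}_2=-1$ is redundant, so the structure transported across the equivalence of Theorem \ref{theorem:WAn} lives in the honest symplectic cohomology.

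Suppose for contradiction that $L\subset W_f$ is a closed oriented exact Lagrangian which is a $K(\pi,1)$; recall $\dim L=3$. Since $L$ is closed and exact, all holomorphic discs with boundary on $L$ are constant, so $\mathit{HF}^\ast(L,L)\cong H^\ast(L;\mathbb{K})$ as a ring; moreover the closed-open map $\mathcal{CO}\colon\mathit{SH}^\ast(W_f;\mathbb{K})\to\mathit{HF}^\ast(L,L)$ (an isomorphism after localization, by \cite{sgs}, but here needed only as a unital ring map) intertwines the BV operator $\Delta$ on $\mathit{SH}^\ast$ with the $S^1$-operation $\delta$ on $\mathit{HF}^\ast(L,L)$ (the Lagrangian analogue of the string-topology $\Delta$), and upgrades to a map of $S^1$-complexes. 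Applying $\mathcal{CO}$ to $\Delta(b)=h$ produces a class $\beta:=\mathcal{CO}(b)\in H^1(L;\mathbb{K})$ with $\delta(\beta)=\mathcal{CO}(h)\in H^0(L;\mathbb{K})^\times=\mathbb{K}^\times$, in particular $\delta(\beta)\neq0$; the cyclic lift carries over likewise. Up to this point the argument is identical to that of \cite{alp}, Lemma 4.7.

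The contradiction is then topological. For an aspherical $L$ one has $H^\ast(L;\mathbb{K})\cong H^\ast(\pi_1 L;\mathbb{K})$, and the operator $\delta$ is modelled on the $S^1$-action on the constant-loop component of the free loop space $\mathcal{L}L$, equivalently on the degree-shifting part of the cyclic homology of $\mathbb{K}[\pi_1 L]$, which vanishes identically because rotation fixes constant loops. Hence $\delta(\beta)=0$, contradicting $\mathcal{CO}(h)\neq0$. Should one prefer to avoid the vanishing of $\delta$ directly, the same conclusion follows as in \cite{psl,sss}: $H^\ast(L;\mathbb{K})$ is a finite-dimensional graded-commutative Frobenius algebra with one-dimensional top summand in degree $3$, and the relation $\delta(\beta)\in\mathbb{K}^\times$ together with the cyclic constraint of \cite{yle,yla,zzs} cannot be realized by such an algebra arising from an aspherical $3$-manifold, which is the mechanism underlying the classification results of \cite{gpa}.

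The step I expect to be the main obstacle is not the topological endgame but the bookkeeping that precedes it: first, pinning down that the cyclic quasi-dilation produced via \cite{hkc} and Theorem \ref{theorem:WAn} descends to the uncompleted $\mathit{SH}^\ast(W_f;\mathbb{K})$, which is precisely the content extracted from Remark \ref{remark:Seidel}; and second, checking that $\mathcal{CO}$ respects the BV operator and, on the equivariant level, the mixed-complex structures in the present setting, where $W_f$ is an $A_n$ plumbing of cotangent bundles of lens spaces along circles rather than a geometry for which these compatibilities are already catalogued. Both points are, however, handled exactly as in \cite{alp}, so the proof requires no genuinely new symplectic input.
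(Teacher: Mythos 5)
Your overall strategy---produce a cyclic quasi-dilation on $W_f$ from Theorem \ref{theorem:WAn} and \cite{hkc}, then show it is incompatible with a closed exact Lagrangian $K(\pi,1)$---matches the paper's, and your preliminary observation that Remark \ref{remark:Seidel} lets one work over the uncompleted $\mathit{SH}^\ast(W_f;\mathbb{K})$ is fine. But the mechanism you use to derive the contradiction does not work. You push $\Delta(b)=h$ through the closed-open map into $\mathit{HF}^\ast(L,L)\cong H^\ast(L;\mathbb{K})$ and assert that $\mathcal{CO}$ intertwines the BV operator with an $S^1$-operation $\delta$ on $H^\ast(L;\mathbb{K})$ which then vanishes identically. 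No such intertwining exists: the BV structure is transported by the closed-open map to the Hochschild cohomology of the Fukaya category (using a Calabi--Yau structure), not to the endomorphism ring of a single Lagrangian, and there is no natural degree $-1$ operator on $H^\ast(L;\mathbb{K})$ playing the role you assign to $\delta$. A symptom that something must be wrong: your argument reaches the contradiction ($\delta(\beta)=\mathcal{CO}(h)\neq0$ versus $\delta\equiv0$) without ever using the $K(\pi,1)$ hypothesis, so as written it would exclude \emph{every} closed exact Lagrangian from \emph{every} Liouville manifold carrying a quasi-dilation --- which is false, since for instance $T^\ast S^2$ admits a dilation and contains its zero section.

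The paper's route, which is the one that works, is to apply Viterbo functoriality to the Weinstein embedding $D^\ast L\hookrightarrow W_f$, so that the cyclic quasi-dilation restricts to one on $\mathit{SH}^\ast(T^\ast L)\cong H_{3-\ast}(\mathcal{L}L)$; this is where the relevant loop-space $S^1$-action actually lives. For aspherical $L$ the free loop space decomposes over conjugacy classes of $\pi_1(L)$, the BV operator preserves this decomposition and annihilates the constant-loop component, and the conclusion of \cite{alp}, Lemma 4.7 (following \cite{yla}) is that the unit $h$ must restrict to --- hence already be --- a \emph{non-torsion} unit in $\mathit{SH}^0$. The contradiction then comes precisely from Remark \ref{remark:Seidel}: when $(k_0,\pm l_0)\neq(k_n,\pm l_n)$ all units of $\mathit{SH}^0(W_f;\mathbb{K})$ are torsion. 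So Remark \ref{remark:Seidel} is the \emph{source} of the contradiction, not a bookkeeping step about completions, and your argument needs to be rerouted through $\mathit{SH}^\ast(T^\ast L)$ and the torsion-unit statement rather than through $\mathit{HF}^\ast(L,L)$.
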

\begin{proof}
Let $\mathbb{K}$ be any field of characteristic $0$. Our Theorem \ref{theorem:WAn} combined with \cite{hkc}, Theorem A implies the existence of a cyclic quasi-dilation over $\mathbb{K}$ for the multi-bubble plumbing $W_f$ as long as $(k_0,\pm l_0)\neq(k_n,\pm l_n)$. Suppose $L\subset W_f$ is an exact Lagrangian $K(\pi,1)$ space. Arguing as in \cite{alp}, Lemma 4.7, we see that the existence of a cyclic quasi-dilation and the Viterbo functoriality applied to the Weinstein embedding $D^\ast L\hookrightarrow W_f$ implies that there must be some non-trivial unit in $\mathit{SH}^0(W_f;\mathbb{K})$ that is not torsion. This contradicts with what we have observed in Remark \ref{remark:Seidel}, which suggests that all units in $\mathit{SH}^0(W_f;\mathbb{K})$ are torsion.
\end{proof}

For $n=1$ the proposition above is more or less well-known, since the cotangent bundle of a lens space admits a dilation over characteristic $0$, see \cite{psd}, Example 3.14. It is unclear to us, however, whether the first Gutt-Hutchings capacity of the multi-bubble plumbing $\overline{W}_f$ is finite when $n\geq2$.

\subsubsection{Corollary \ref{corollary:core}}\label{section:9}

Finally, we prove Corollary \ref{corollary:core}, which says that there is a fully faithful embedding
\begin{equation}
D^b\mathit{Coh}_C(Y_f)\hookrightarrow D^\mathit{perf}\mathcal{F}(W_f;\mathbb{K}),
\end{equation} 
when $Q_0\subset\ring{W}_f$ is not diffeomorphic to $S^1\times S^2$.

Regarding $L_1,\cdots,L_n$ as Lagrangian submanifolds in the partial compactification $W_f$ of $\ring{W}_f$, consider the wrapped Fukaya $A_\infty$-algebra
\begin{equation}
\mathcal{W}_f:=\bigoplus_{1\leq i,j\leq n}\mathit{CW}^\ast(L_i,L_j).
\end{equation}
It follows from the surgery quasi-isomorphism (\ref{eq:surgery}) that there is an equivalence $D^\mathit{perf}\mathcal{W}(W_f;\mathbb{K})\simeq D^\mathit{perf}(\mathcal{W}_f)$ between the derived wrapped Fukaya category of $W_f$ and the derived category of perfect modules over $\mathcal{W}_f$. On the other hand, one can use the compact cores $Q_1,\cdots,Q_n$ in the plumbing $W_f$ to form another Fukaya $A_\infty$-algebra
\begin{equation}
\mathcal{Q}_f:=\bigoplus_{1\leq i,j\leq n}\mathit{CF}^\ast(Q_i,Q_j),
\end{equation}
which is the endomorphism $A_\infty$-algebra of the corresponding objects in the compact Fukaya category $\mathcal{F}(W_f;\mathbb{K})$. Note that both of $\mathcal{W}_f$ and $\mathcal{Q}_f$ can be regarded as $A_\infty$-algebras over the semisimple ring $\Bbbk:=\bigoplus_{1\leq i\leq n}\mathbb{K}e_i$. The following is a consequence of (\ref{eq:GEM}).

\begin{proposition}\label{proposition:EMA}
There is a quasi-isomorphism
\begin{equation}\label{eq:EM}
\mathcal{Q}_f\simeq R\hom_{\mathcal{W}_f}(\Bbbk,\Bbbk)
\end{equation}
between $A_\infty$-algebras over $\Bbbk$.
\end{proposition}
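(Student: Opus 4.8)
The plan is to derive Proposition \ref{proposition:EMA} directly from the general Legendrian surgery machinery set up in Section \ref{section:CE}, specialized to the Weinstein manifold $W_f$ with skeleton $Q_1\cup\cdots\cup Q_n$. First I would recall the precise setup: $W_f$ is, by Lemma \ref{lemma:plumbing1}, the $A_n$ plumbing of $T^\ast Q_1,\cdots,T^\ast Q_n$ along circles, so it admits a Weinstein handle decomposition in which the cocore discs $L_1,\cdots,L_n$ are the Lagrangian cocore discs of the top-index handles attached along a Legendrian link $\Lambda=\Lambda_1\sqcup\cdots\sqcup\Lambda_n$, while $Q_1,\cdots,Q_n$ are the closed exact Lagrangians obtained by completing the Lagrangian fillings $\overline{Q}_v$ with the cores of those handles. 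This is exactly the situation described in Section \ref{section:CE}, with the one subtlety that the $Q_i$'s intersect cleanly along circles rather than transversely; this is precisely the generality in which Proposition \ref{proposition:qis1} was proved, so I may invoke it directly.

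The main body of the argument is then the chain of equivalences \eqref{eq:GEM}. Concretely: by the generation result \eqref{eq:generation} together with the surgery quasi-isomorphism \eqref{eq:surgery}, $\mathcal{W}_f\simeq\mathit{CE}^\ast(\Lambda)$ as $A_\infty$-algebras over $\Bbbk$, where $\Bbbk=\bigoplus_{1\le i\le n}\mathbb{K}e_i$. Writing $\mathit{CE}^\ast(\Lambda)=\Omega\mathit{LC}_\ast(\Lambda)$ as a cobar construction \eqref{eq:cobar}, Koszul duality for the cobar construction identifies $R\hom_{\mathit{CE}^\ast(\Lambda)}(\Bbbk,\Bbbk)$ with $\mathit{LC}_\ast(\Lambda)^\#=\mathit{LA}^\ast(\Lambda)$, the Legendrian $A_\infty$-algebra. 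Finally, Proposition \ref{proposition:qis1} gives $\mathcal{Q}_f\simeq\mathit{LA}^\ast(\Lambda)$, where $\mathcal{Q}_f=\bigoplus_{1\le i,j\le n}\mathit{CF}^\ast(Q_i,Q_j)$ is the endomorphism algebra of $Q_1,\cdots,Q_n$ in $\mathcal{F}(W_f;\mathbb{K})$. Composing these, $\mathcal{Q}_f\simeq R\hom_{\mathcal{W}_f}(\Bbbk,\Bbbk)$, which is \eqref{eq:EM}. I would also note that $\Bbbk$ is here the trivial (augmentation) module over $\mathcal{W}_f$ corresponding to the cocores, and that both sides are augmented via the projection to idempotents, so the Koszul-duality statement is the honest derived-endomorphism one rather than a completed variant.

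The one point requiring genuine care — and what I expect to be the main obstacle — is the finiteness/convergence issue in the Koszul duality step. In general the bar–cobar duality between $\mathit{CE}^\ast(\Lambda)$ and $\mathit{LA}^\ast(\Lambda)$ is only a clean equivalence $R\hom_{\mathit{CE}^\ast(\Lambda)}(\Bbbk,\Bbbk)\simeq\mathit{LA}^\ast(\Lambda)$ when one is careful about the direction of completion: the cobar of $\mathit{LC}_\ast(\Lambda)$ recovers $\mathit{CE}^\ast(\Lambda)$ (not its completion) precisely because the relevant filtration is exhaustive and the duality between $\Omega$ and $R\hom(\Bbbk,-)$ holds on the nose for augmented dg algebras arising as cobar constructions of connected coaugmented coalgebras; this is the content of \cite{ekle}, and one must check that the grading/weight conventions in our setting (all Floer generators in degree $0$, with the word-length filtration supplying the needed boundedness in each internal degree) place us in that situation. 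For $W_f$ this is exactly analogous to the verification already invoked in Section \ref{section:CE}, so in practice I would state the argument as an application of \eqref{eq:GEM} and remark that the relevant hypotheses are met because $\mathcal{W}_f$ is nonnegatively (in fact evenly, here trivially) graded over $\Bbbk$ and the word-length filtration is complete and exhaustive; the clean-intersection generality is absorbed into Proposition \ref{proposition:qis1}. A secondary but routine check is that the relative compactification from $\ring{W}_f$ to $W_f$ (filling the origin with a smooth fiber) does not disturb any of this: the cocores $L_1,\dots,L_n$ and cores $Q_1,\dots,Q_n$ already live in $\ring{W}_f$, and by Lemma \ref{lemma:ex} the compactification only differs from $\overline{\ring{W}}_f\setminus U(L_0)$ by subcritical handles, which do not affect $\mathcal{W}_f$, $\mathcal{Q}_f$, or the Legendrian data up to the equivalences in question.
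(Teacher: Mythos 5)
Your proposal is correct and follows the same route as the paper: the paper derives Proposition \ref{proposition:EMA} directly as a consequence of the generalized Eilenberg--Moore equivalence (\ref{eq:GEM}), which is exactly the chain (surgery quasi-isomorphism, cobar description, Proposition \ref{proposition:qis1}) you spell out, specialized to $W_f$ with cores $Q_1,\dots,Q_n$ and cocores $L_1,\dots,L_n$. Your additional remarks on the direction of completion in the Koszul duality step and on the passage from $\ring{W}_f$ to $W_f$ are sensible diligence but do not change the argument.
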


Denote by $\mathcal{B}_f$ the derived contraction algebra $\mathcal{A}_n\otimes_R\mathbb{K}[x,y]$, then we have
\begin{equation}
D^b\mathit{Coh}(Y_f)/\langle\mathcal{O}_{Y_f}\rangle\simeq D^\mathit{perf}(\mathcal{B}_f).
\end{equation}
by (\ref{eq:HMS1d}). On the other hand, let $\mathcal{C}_f$ be the endomorphism dg algebra of the object $\bigoplus_{i=1}^n\mathcal{O}_{C_i}(-1)$ in the dg enhancement of $D^b\mathit{Coh}(Y_f)$. Since they generate the subcategory $D^b\mathit{Coh}_C(Y_f)$ supported on the exceptional loci $C=\bigcup_{i=1}^nC_i$, we have an equivalence
\begin{equation}
D^b\mathit{Coh}_C(Y_f)\simeq D^\mathit{perf}(\mathcal{C}_f).
\end{equation}
The following proposition is the B-side analogue of Proposition \ref{proposition:EMA}.

\begin{proposition}\label{proposition:EMB}
There is a quasi-isomorphism
\begin{equation}
\mathcal{C}_f\simeq R\hom_{\mathcal{B}_f}(\Bbbk,\Bbbk)
\end{equation}
between dg algebras over $\Bbbk$.
\end{proposition}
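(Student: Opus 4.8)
The plan is to deduce Proposition~\ref{proposition:EMB} from the derived equivalence~\eqref{eq:HMS1d} by transporting, through it, the object computing the right-hand side. The key observation is that, as a right $\mathcal{B}_f$-module, $\Bbbk=\bigoplus_{i=1}^n\mathbb{K}e_i$ is precisely the direct sum $\bigoplus_{i=1}^n S_i$ of the simple modules located at the vertices $1,\dots,n$ of $Q_n$ (on which all arrows $a_i,b_i$ and both loops $\alpha,\beta$ act by zero), so that $R\hom_{\mathcal{B}_f}(\Bbbk,\Bbbk)$ is, as a dg algebra over $\Bbbk$, the endomorphism dg algebra of $\bigoplus_{i=1}^n S_i$ inside the dg enhancement of $D^\mathit{perf}(\mathcal{B}_f)$. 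First I would combine~\eqref{eq:1Dl} and~\eqref{eq:HMS1d} into a single $\Bbbk$-linear equivalence $\Phi\colon D^\mathit{perf}(\mathcal{B}_f)\xrightarrow{\sim}D^b\mathit{Coh}(Y_f)/\langle\mathcal{O}_{Y_f}\rangle$ and upgrade it to a quasi-equivalence of dg categories (both sides carry canonical dg enhancements, and the equivalence is built from a tilting bundle together with the base change~\eqref{eq:bc} and the Drinfeld quotient~\cite{vdd}). The proposition then reduces to two claims: (a) $\Phi\big(\bigoplus_i S_i\big)$ is isomorphic to the image of $\bigoplus_{i=1}^n\mathcal{O}_{C_i}(-1)$ under the quotient functor; and (b) the quotient functor leaves the morphism complexes among the sheaves $\mathcal{O}_{C_i}(-1)$ unchanged.

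For (a) I would use the tilting description: by~\cite{evle}, Corollary~4.7 (cf.~\cite{VdB}), the equivalence $D^b\mathit{Coh}(Y_f)\simeq D^\mathit{perf}(\ring{\mathcal{A}}_n\otimes_R\mathbb{K}[x,y])$ is induced by a tilting bundle $\mathcal{T}=\bigoplus_{i=0}^n\mathcal{T}_i$ with $\mathcal{T}_0=\mathcal{O}_{Y_f}$, matching the indecomposable projective $P_i=e_i\mathit{End}(\mathcal{T})$ with $\mathcal{T}_i$. The object characterised by $R\hom(\mathcal{T}_j,-)\cong\mathbb{K}\,\delta_{ij}$ concentrated in degree $0$ is $\mathcal{O}_{C_i}(-1)$ for each $i\geq1$; this is the standard computation for tilting bundles on crepant resolutions of $cA_n$ singularities that underlies the contraction-algebra formalism of~\cite{dw,mbn}. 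Passing to the Drinfeld localization at the idempotent $e_0$, which on the geometric side is exactly the passage to the relative singularity category $D^b\mathit{Coh}(Y_f)/\langle\mathcal{O}_{Y_f}\rangle$ in the sense of~\cite{ky}, the surviving simples $S_1,\dots,S_n$ of $\mathcal{B}_f$ still correspond to the images of $\mathcal{O}_{C_1}(-1),\dots,\mathcal{O}_{C_n}(-1)$; here I would invoke, as in~\cite{dmr}, the compatibility of the localization removing $\gamma_0$ with the idempotent $e_0$ (equivalently $\mathcal{O}_{Y_f}$) under the mirror dictionary.

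For (b), the point is that each $\mathcal{O}_{C_i}(-1)$ lies in the right orthogonal of the thick subcategory $\langle\mathcal{O}_{Y_f}\rangle\subset D^b\mathit{Coh}(Y_f)$, since $R\hom_{Y_f}\big(\mathcal{O}_{Y_f},\mathcal{O}_{C_i}(-1)[k]\big)=R\Gamma\big(C_i,\mathcal{O}_{\mathbb{P}^1}(-1)\big)[k]=0$ for all $k$; consequently the Drinfeld dg quotient does not alter the mapping complexes between these objects (and their shifts), which identifies $\mathcal{C}_f$ with the endomorphism dg algebra of the images of the $\mathcal{O}_{C_i}(-1)$ in the quotient. (One may equally use relative Serre duality on the Calabi--Yau threefold $Y_f$, together with the properness of the support of $\mathcal{O}_{C_i}(-1)$, to see $R\hom(\mathcal{O}_{C_i}(-1),\mathcal{O}_{Y_f})=0$, placing $\mathcal{O}_{C_i}(-1)$ in both orthogonals.) Assembling (a) and (b), and noting that every functor and quasi-isomorphism in sight is $\Bbbk$-linear and respects the idempotents $e_i$, produces the desired quasi-isomorphism $\mathcal{C}_f\simeq R\hom_{\mathcal{B}_f}(\Bbbk,\Bbbk)$ of dg algebras over $\Bbbk$. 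I expect the \textbf{main obstacle} to be step (a): pinning down the precise matching of the simple $\mathcal{B}_f$-modules with the sheaves $\mathcal{O}_{C_i}(-1)$ --- in particular, that the Drinfeld localization does not disturb it --- and carrying this out at the level of dg enhancements rather than merely of triangulated categories, so that the final statement is genuinely an equivalence of dg algebras over $\Bbbk$; step (b) is a routine orthogonality computation.
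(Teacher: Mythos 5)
Your proposal is correct and follows essentially the same route as the paper, which simply observes (citing \cite{VdB}, Proposition 3.5.7) that the simple $\mathcal{B}_f$-modules summing to $\Bbbk$ correspond to the sheaves $\mathcal{O}_{C_i}(-1)[1]$, so that $R\hom_{\mathcal{B}_f}(\Bbbk,\Bbbk)$ is the endomorphism dg algebra $\mathcal{C}_f$; your steps (a) and (b) are an expanded version of this one-line argument. The only slip is in step (a): the object with $R\hom(\mathcal{T}_j,-)\cong\mathbb{K}\,\delta_{ij}$ in degree $0$ is $\mathcal{O}_{C_i}(-1)[1]$ rather than $\mathcal{O}_{C_i}(-1)$ (since $R\Gamma(\mathbb{P}^1,\mathcal{O}(-2))$ sits in degree $1$), but this uniform shift leaves the endomorphism dg algebra unchanged and does not affect the conclusion.
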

\begin{proof}
This follows directly from the definition, since $\mathcal{O}_{C_i}(-1)[1]$ for $i=1,\cdots,n$ give simple modules over the noncommutative crepant resolution of $R_f$ (cf. \cite{VdB}, Proposition 3.5.7).
\end{proof}

By our assumption that $Q_0\cong S^1\times S^2$, Theorem \ref{theorem:WAn} holds. Combining Propositions \ref{proposition:EMA} and \ref{proposition:EMB}, we get from Theorem \ref{theorem:WAn} the equivalence
\begin{equation}\label{eq:HMS-core}
D^b\mathit{Coh}_C(Y_f)\simeq D^\mathit{perf}(\mathcal{Q}_f),
\end{equation}
which completes the proof of Corollary \ref{corollary:core}.

\section{The symplectic mapping class groups of $\ring{W}_f$}\label{section:Symp}

In this section we prove Theorems \ref{theorem:MCG} and \ref{theorem:MCG1}. We shall follow the general strategy of \cite{kss}.

\subsection{The braid group action}\label{section:braid}

We start by recalling a geometric presentation of the pure braid group $\mathit{PBr}_{n+2}$ due to Margalit-McCammond \cite{mmg}. Consider the base of the Morse-Bott fibration $\ring{\pi}:\overline{\ring{W}}_f\rightarrow D\setminus\{0\}$ on the Liouville domain $\overline{\ring{W}}_f$ depicted in Figure \ref{fig:base}, which is a punctured closed disc (of radius $>1$) with $n+1$ additional marked points $c_0,\cdots,c_n$. For convenience, we label the puncture at the origin by $c_{n+1}$. The punctured disc $(D,c_0,\cdots,c_{n+1})$ can be deformed to a \textit{convexly punctured disc} in the sense of \cite{mmg}, Definition 1.2, which means $c_0,\cdots,c_{n+1}$ form the vertices of an $(n+2)$-gon. This can be achieved, for example, by putting all the critical values $c_0,\cdots,c_n$ of $\ring{\pi}$ on the upper half unit circle. By \cite{mmg}, Theorem 2.3, $\mathit{PBr}_{n+2}$ has a presentation with generators 
\begin{equation}
R_{ij},\textrm{ }0\leq i<j\leq n+1
\end{equation}
given by the full twists described in Figure \ref{fig:PB}, which satisfy the following three types of relations (cf. Figure \ref{fig:relation}):
\begin{itemize}
	\item[(i)]$[R_{ij},R_{rs}]=1$ when $\{c_i,c_j\}$ and $\{c_r,c_s\}$ are non-crossing,
	\item[(ii)]$[R_{ij},R_{js}R_{rs}R_{js}^{-1}]=1$ when $\{c_r,c_s\}$ and $\{c_i,c_j\}$ cross in cyclic order,
	\item[(iii)]$R_{sj}R_{rs}R_{rj}=R_{rs}R_{rj}R_{sj}=R_{rj}R_{sj}R_{rs}$ when $(c_r,c_s,c_j)$ are vertices of a triangle in clockwise order.
\end{itemize}

\begin{figure}
	\centering
	\begin{tikzpicture}
	\draw (0,0) circle [radius=2];
	\draw (0,0) node {$\times$};
	\draw (4.5,0) circle [radius=2];
	\draw (4.5,0) node {$\times$};
	\draw (9,0) circle [radius=2];
	\draw (9,0) node {$\times$};
	\draw (-1,0) node[circle,fill,inner sep=1pt] {};
	\draw (1,0) node[circle,fill,inner sep=1pt] {};
	\node at (0,-0.3) {$c_{n+1}$};
	\node at (-1.25,0) {$c_j$};
	\node at (1.25,0) {$c_i$};
	\draw (3.5,0) node[circle,fill,inner sep=1pt] {};
	\draw (5.5,0) node[circle,fill,inner sep=1pt] {};
	\draw (8,0) node[circle,fill,inner sep=1pt] {};
	\draw (10,0) node[circle,fill,inner sep=1pt] {};
	\node at (4.5,-0.3) {$c_{n+1}$};
	\node at (3.25,0) {$c_j$};
	\node at (5.75,0 ) {$c_i$};
	\node at (9,-0.3) {$c_{n+1}$};
	\node at (7.75,0) {$c_j$};
	\node at (10.25,0) {$c_i$};
	\node at (0,-2.4) {$R_{j(n+1)}$};
	\node at (4.5,-2.4) {$R_{i(n+1)}$};
	\node at (9,-2.4) {$R_{ij}$};
	\draw [orange,->] plot [smooth] coordinates {(-1,-0.2) (-0.75,-0.5) (-0.25,-0.7) (0.25,-0.5) (0.5,0) (0.25,0.5) (-0.25,0.7) (-0.75,0.5) (-1,0.2)};
	\draw [orange,->] plot [smooth] coordinates {(5.5,0.2) (5.25,0.5) (4.75,0.7) (4.25,0.5) (4,0) (4.25,-0.5) (4.75,-0.7) (5.25,-0.5) (5.5,-0.2)};
	\draw [orange,->] plot [smooth] coordinates {(9.8,0) (9,-0.5) (8.3,0) (8,0.4) (7.5,0) (8,-0.5) (9,-1) (10,-0.2)};
	\end{tikzpicture}
	\caption{The generators of $\mathit{PBr}_{n+2}$, where $0\leq i<j<n+1$}
	\label{fig:PB}
\end{figure}

\begin{figure}
	\centering
	\begin{tikzpicture}
	\draw (0,0) circle [radius=2];
	\draw (4.5,0) circle [radius=2];
	\draw (9,0) circle [radius=2];
	\node at (0,-2.4) {(i)};
	\node at (4.5,-2.4) {(ii)};
	\node at (9,-2.4) {(iii)};
	\draw (0,1) node[circle,fill,inner sep=1pt] {};
	\draw (1,0) node[circle,fill,inner sep=1pt] {};
	\draw (-1,0) node[circle,fill,inner sep=1pt] {};
	\draw (0,-1) node[circle,fill,inner sep=1pt] {};
	\node at (0,1.2) {$c_i$};
	\node at (1.3,0) {$c_j$};
	\node at (-1.3,0) {$c_s$};
	\node at (0,-1.2) {$c_r$};
	\draw (0,1)--(1,0);
	\draw (-1,0)--(0,-1);
	\draw (4.5,1) node[circle,fill,inner sep=1pt] {};
	\draw (5.5,0) node[circle,fill,inner sep=1pt] {};
	\draw (3.5,0) node[circle,fill,inner sep=1pt] {};
	\draw (4.5,-1) node[circle,fill,inner sep=1pt] {};
	\node at (4.5,1.2) {$c_r$};
	\node at (5.8,0) {$c_i$};
	\node at (3.2,0) {$c_j$};
	\node at (4.5,-1.2) {$c_s$};
	\draw (5.5,0)--(3.5,0);
	\draw (4.5,1)--(4.5,-1);
	\draw (9,1) node[circle,fill,inner sep=1pt] {};
	\draw (8,0) node[circle,fill,inner sep=1pt] {};
	\draw (9,-1) node[circle,fill,inner sep=1pt] {};
	\draw (9,1)--(9,-1);
	\draw (9,1)--(8,0);
	\draw (9,-1)--(8,0);
	\node at (9,1.2) {$c_r$};
	\node at (7.7,0) {$c_j$};
	\node at (9,-1.2) {$c_s$};
	\end{tikzpicture}
	\caption{The configuration of punctures for the relations (i)---(iii) in the braid group presentation}
	\label{fig:relation}
\end{figure}

The center $Z(\mathit{PBr}_{n+2})\cong\mathbb{Z}$ is generated by $R_{0(n+1)}\cdot\prod_{i=n}^{0}R_{i(i+1)}$, which corresponds to an inverse Dehn twist along $\partial D\cong S^1$.

Recall that for any fixed pair of critical values $c_i$ and $c_j$ of $\ring{\pi}$, where $0\leq i<j\leq n$, there are a sequence of mutually non-isotopic Lagrangian matching lens spaces $Q_{ij}^{(k)}\subset\ring{W}_f$, $k\in\mathbb{Z}$, whose matching paths $\gamma_{ij}^{(k)}\subset\mathbb{C}^\ast$ have $c_i$ and $c_j$ as their common endpoints, where $k\in\mathbb{Z}$ corresponds to the winding number around the origin, see Figure \ref{fig:matching}. In particular, $Q_{ij}^{(0)}$ is the iterated Morse-Bott surgery of the Lagrangian lens spaces $Q_i,Q_{i+1},\cdots,Q_{j-1}$. We need to fix the gradings for the Lagrangian lens spaces $Q_{ij}^{(k)}\subset\ring{W}_f$. Choose an arbitrary grading for $Q_{ij}^{(0)}$. For $k\neq0$, we grade $Q_{ij}^{(k)}$ as follows. Make fiberwise Morse perturbations such that $Q_{ij}^{(0)}$ intersects $Q_{ij}^{(k)}$ transversely in $4|k|$ points. We choose a grading for $Q_{ij}^{(k)}$ so that for $k>0$, $\mathit{CF}^0\left(Q_{ij}^{(0)},Q_{ij}^{(-k)}\right)$ has rank $k+1$, $\mathit{CF}^1\left(Q_{ij}^{(0)},Q_{ij}^{(-k)}\right)$ has rank $2k$, and $\mathit{CF}^2\left(Q_{ij}^{(0)},Q_{ij}^{(-k)}\right)$ has rank $k-1$; the same holds if $k<0$.

On the other hand, given an embedded circle $\gamma\subset\mathbb{C}^\ast\setminus\{c_0,\cdots,c_n\}$ in the base of the Morse-Bott fibration $\ring{\pi}$, parallel transporting the $T^2$ vanishing cycle gives a Lagrangian $3$-torus. The Lagrangian torus is exact if and only if $\gamma$ is Hamiltonian isotopic to the unit circle in $\mathbb{C}^\ast$. Fix any choice of such $\gamma$ and denote the resulting exact Lagrangian torus by $T\subset\ring{W}_f$. Since $H_1(\ring{W}_f;\mathbb{Z})\cong\mathbb{Z}$, we need to fix the choice of a grading, and the convention is that the exact Lagrangian torus $T$ should have Maslov class $0$.

Given any matching path $\gamma_{ij}\subset\mathbb{C}^\ast$ between two fixed critical values $c_i$ and $c_j$ for any $0\leq i<j\leq n$, one can consider the full twist $t_{\gamma_{ij}}$, which defines an element in $\mathit{PBr}_{n+2}$. The following proposition generalizes \cite{kss}, Proposition 3.9.

\begin{proposition}
Let $b_{c_i}$ be the strand based at $c_i$ for $0\leq i\leq n+1$. The subgroup of $\mathit{PBr}_{n+2}$ generated by the twists $t_{\gamma_{ij}}$, where $0\leq i<j\leq n$ is isomorphic to
\begin{equation}\label{eq:PBc}
\mathit{PBr}_{n+2}^c=\ker\left(\mathit{PBr}_{n+2}\xrightarrow{\textrm{forget }b_{c_0},\cdots,\hat{b}_{c_i},\cdots, b_{c_n}}\mathit{PBr}_2\times\cdots\times\mathit{PBr}_2\cong\mathbb{Z}^{n+1}\right),
\end{equation}
where by forgetting $b_{c_0},\cdots,\hat{b}_{c_i}\cdots,b_{c_n}$ we mean forgetting all the strands $b_{c_0},\cdots,b_{c_n}$ except for $b_{c_i}$, where $0\leq i\leq n$.
\end{proposition}
\begin{proof}
Given any matching path $\gamma_{ij}$, it is clear that $t_{\gamma_{ij}}\in\mathit{PBr}_{n+2}^c$. Conversely, let $t\in\mathit{PBr}_{n+2}^c$. Forgetting the strand $b_{c_{n+1}}$ gives rise to the standard Faddeev-Neuwirth short exact sequence
\begin{equation}
0\rightarrow F_{n+1}\rightarrow\mathit{PBr}_{n+2}\xrightarrow{\textrm{forget }b_{c_{n+1}}}\mathit{PBr}_{n+1}\rightarrow0,
\end{equation}
where $F_{n+1}$ is the free group generated by $R_{0(n+1)},\cdots,R_{n(n+1)}$. Denote by $\varphi$ the forgetful map $\mathit{PBr}_{n+2}\rightarrow\mathbb{Z}^{n+1}$ in (\ref{eq:PBc}). Since $F_{n+1}$ is free, the restriction $\varphi|_{F_{n+1}}:F_{n+1}\rightarrow\mathbb{Z}^{n+1}$ to the subgroup $F_{n+1}$ is the abelianization map, and the kernel is given by the commutator $[F_{n+1},F_{n+1}]$, which is in turn generated by all conjugates of the basic commutators, i.e. the full twists
\begin{equation}
t_{\gamma_{ij}^{(k)}}=R_{j(n+1)}^{-k}R_{ij}R_{j(n+1)}^k,\textrm{ }0\leq i<j\leq n,k\in\mathbb{Z}
\end{equation}
along the paths $\gamma_{ij}^{(k)}$.
\end{proof}

\begin{figure}
	\centering
	\begin{tikzpicture}
	\draw (-1,0) node[circle,fill,inner sep=1pt] {};
	\draw (1,0) node[circle,fill,inner sep=1pt] {};
	\draw (0,0) node {$\times$};
	\node at (-1,0.2) {$c_i$};
	\node at (1,0.2) {$c_j$};
	\draw [orange] plot [smooth] coordinates {(1,0) (0.5,0.4) (0,0.3) (-0.3,0) (0,-0.3) (0.5,-0.5) (1.3,0) (0.9,0.5) (0.5,0.7) (0,0.8) (-0.5,0.5) (-1,0)};
	\draw (2.5,0) node[circle,fill,inner sep=1pt] {};
	\draw (4.5,0) node[circle,fill,inner sep=1pt] {};
	\draw (3.5,0) node {$\times$};
	\node at (2.5,0.2) {$c_i$};
	\node at (4.5,0.2) {$c_j$};
	\draw [orange] plot [smooth] coordinates {(2.5,0) (3,0.5) (3.5,0.7) (4,0.5) (4.5,0)};
	\draw [orange] plot [smooth] coordinates {(2.5,0) (3,-0.5) (3.5,-0.7) (4,-0.5) (4.5,0)};
	\draw [orange] plot [smooth] coordinates {(6,0) (6.5,-0.5) (7,-0.6) (7.6,-0.5) (8.3,0) (7.9,0.5) (7.5,0.6) (7,0.5) (6.5,0) (7,-0.3) (8,0)};
	\draw [orange] plot [smooth] coordinates {(11.5,0) (11,-0.2) (10.5,-0.3) (10.2,0) (10.5,0.3) (11,0.6) (11.4,0.5) (11.8,0) (11,-0.5) (10.5,-0.6) (9.9,0) (10.5,0.7) (11,0.9) (11.5,0.7) (12.1,0) (11.3,-0.7) (10.5,-0.9) (10,-0.7) (9.5,0)};
	\draw (6,0) node[circle,fill,inner sep=1pt] {};
	\draw (8,0) node[circle,fill,inner sep=1pt] {};
	\draw (7,0) node {$\times$};
	\node at (6,0.2) {$c_i$};
	\node at (8,0.2) {$c_j$};
	\draw (9.5,0) node[circle,fill,inner sep=1pt] {};
	\draw (11.5,0) node[circle,fill,inner sep=1pt] {};
	\draw (10.5,0) node {$\times$};
	\node at (9.5,0.2) {$c_i$};
	\node at (11.5,0.2) {$c_j$};
	\draw [orange] (0,1.1) node {$Q_{ij}^{(2)}$}; 
	\draw [orange] (3.5,1) node {$Q_{ij}^{(1)}$}; 
	\draw [orange] (3.5,-1) node {$Q_{ij}^{(0)}$}; 
	\draw [orange] (7,0.8) node {$Q_{ij}^{(-1)}$}; 
	\draw [orange] (10.5,1.2) node {$Q_{ij}^{(-2)}$}; 
	\end{tikzpicture}
	\caption{The Lagrangian matching cycles $Q_{ij}^{(k)}$, $0\leq i<j\leq n$, $k\in\mathbb{Z}$}
	\label{fig:matching}
\end{figure}

Define
\begin{equation}
\mathit{PBr}_{n+2}^\circ:=\left\langle R_{ij},R_{k(n+1)}|0\leq i<j\leq n,1\leq k\leq n\right\rangle
\end{equation}
to be the subgroup of $\mathit{PBr}_{n+2}$ generated by $R_{ij}$ and $R_{k(n+1)}$. It is clear that there is a short exact sequence
\begin{equation}
0\rightarrow\mathit{PBr}_{n+2}^\circ\rightarrow\mathit{PBr}_{n+2}\xrightarrow{\textrm{forget }b_{c_1},\cdots,b_{c_n}}\mathbb{Z}\rightarrow0.
\end{equation}
Moreover, there is an obvious identification
\begin{equation}
\mathit{PBr}_{n+2}^c=\ker\left(\mathit{PBr}_{n+2}^\circ\xrightarrow{\textrm{forget }n\textrm{ of the first }n+1\textrm{ strands except for }b_{c_1},\cdots,b_{c_n}}\mathbb{Z}^n\right),
\end{equation}
so we have the short exact sequence
\begin{equation}
0\rightarrow\mathit{PBr}_{n+2}^c\rightarrow\mathit{PBr}_{n+2}^\circ\rightarrow\mathbb{Z}^n\rightarrow0.
\end{equation}

Sending $R_{i(n+1)}^{-k}R_{ij}R_{i(n+1)}^k$ to the (generalized) Dehn twist \cite{mwd} along the Lagrangian lens space $Q_{ij}^{(k)}\subset\ring{W}_f$, which we denote by $\tau_{Q_{ij}^{(k)}}$, defines a map $\mathit{PBr}_{n+2}^c\rightarrow\pi_0\mathit{Symp}_c(\ring{W}_f)$. One can extend this map to the whole pure braid group $\mathit{PBr}_{n+2}$, generalizing \cite{kss}, Lemma 3.10.

\begin{lemma}
Let $\mathit{Symp}_\mathit{ex}(\ring{W}_f)$ be the group of exact symplectomorphisms of $\ring{W}_f$. There is a map
\begin{equation}\label{eq:PB}
\mathit{PBr}_{n+2}\rightarrow\pi_0\mathit{Symp}_\mathit{ex}(\ring{W}_f),
\end{equation}
extending the composition $\mathit{PBr}_{n+2}^c\rightarrow\pi_0\mathit{Symp}_c(\ring{W}_f)\rightarrow\pi_0\mathit{Symp}_\mathit{ex}(\ring{W}_f)$. This is compatible with the action of $\mathit{PBr}_{n+2}$ on the matching paths of $\ring{\pi}:\ring{W}_f\rightarrow\mathbb{C}^\ast$ in the following sense: if $\gamma$ is a matching path and $R\in\mathit{PBr}_{n+2}$ and $\gamma'=R\cdot\gamma$, then the image of $R$ in $\pi_0\mathit{Symp}_\mathit{ex}(\ring{W}_f)$ under (\ref{eq:PB}) takes the Lagrangian matching cycle $Q_\gamma$ to $Q_{\gamma'}$.
\end{lemma}
\begin{proof}
There is a symplectic fiber bundle $\mathcal{X}\rightarrow(\mathbb{C}^\ast)^{n+1}$ parametrized by $(c_0,\cdots,c_n)\in(\mathbb{C}^\ast)^{n+1}$, whose generic fiber is symplectomorphic to $\ring{W}_f$, and there are singular fibers over the union of complex hyperplanes $H_{ij}=\{z_i=z_j\}$, where $1\leq i<j\leq n+1$. The fundamental group $\pi_1\left((\mathbb{C}^\ast)^{n+1}\setminus\bigcup_{1\leq i<j\leq n+1}H_{ij}\right)$ coincides with the pure braid group $\mathit{PBr}_{n+2}$. Given any path $\gamma:[0,1]\rightarrow(\mathbb{C}^\ast)^{n+1}\setminus\bigcup_{1\leq i<j\leq n+1}H_{ij}$, the symplectic parallel transport along $\gamma$ is well defined, therefore the symplectic monodromy defines the representation (\ref{eq:PB}). Moreover, let $H_{ij}^\circ$ be the complement in $H_{ij}$ of all the other $H_{kl}$ with $(k,l)\neq(i,j)$ and $1\leq k<l\leq n+1$, the fibers over $H_{ij}^\circ\subset(\mathbb{C}^\ast)^{n+1}$ have ordinary $3$-fold double points, which implies that the monodromy around the meridian (oriented anticlockwisely) to $H_{ij}^\circ$ is a Dehn twist. This implies that (\ref{eq:PB}) extends the previously defined map $\mathit{PBr}_{n+2}^c\rightarrow\pi_0\mathit{Symp}_c(\ring{W}_f)$.
\end{proof}

The map $\mathit{PBr}_{n+2}\rightarrow\pi_0\mathit{Symp}_\mathit{ex}(\ring{W}_f)$ can be lifted to graded symplectomorphisms, giving rise to a map
\begin{equation}
\mathit{PBr}_{n+2}\rightarrow\pi_0\mathit{Symp}_\mathit{ex}^\mathit{gr}(\ring{W}_f),
\end{equation}
where $\mathit{Symp}_\mathit{ex}^\mathit{gr}(\ring{W}_f)=\mathit{Symp}^\mathit{gr}(\ring{W}_f)\cap\mathit{Symp}_\mathit{ex}(\ring{W}_f)$ is the group of graded exact symplectomorphisms. We shall fix a grading on $\mathit{Symp}_\mathit{ex}(\ring{W}_f)$ by specifying the lifts of each of the generators $R_{ij}$, $0\leq i<j\leq n+1$ as follows. First, the images of $R_{ij}$ for $0\leq i<j\leq n$ are generalized Dehn twists $\tau_{Q_{ij}}$, we give it the preferred grading for a compactly supported symplectomorphism. Second, we grade the image of $R_{j(n+1)}$, which we denote by $\lambda_j$ for $j=1,\cdots,n$, by requiring $\lambda_j^k\left(Q_{ij}^{(0)}\right)=Q_{ij}^{(-k)}$ as graded Lagrangian branes. Finally, let $\rho$ be the image of $R_{0(n+1)}$, then the product $\rho\circ\lambda_n\circ\tau_{Q_{n}}\circ\cdots\circ\tau_{Q_1}$ is the Dehn twist along $\partial D$, and in particular takes $Q_0$ to itself as an ungraded Lagrangian. We fix the grading of $\rho$ so that as a graded Lagrangian brane, the action of $\rho\circ\lambda_n\circ\tau_{Q_{n}}\circ\cdots\circ\tau_{Q_1}$ shifts the grading of $Q_0$ down by $1$. The choice of the particular shift $[1]$ is natural from the homological mirror symmetry perspective, as we shall see below.

Consider the Grothendieck group $K\left(\mathcal{W}(\ring{W}_f;\mathbb{K})\right)\cong\mathbb{Z}^{n+1}$, which is generated by the cotangent fibers $L_0,\cdots,L_n$. This can be naturally identified with the relative homology group $H_3(\ring{W}_f,\partial_\infty\ring{W}_f;\mathbb{Z})$, where $\partial_\infty\ring{W}_f\subset\ring{W}_f$ is the ideal boundary. There is a pairing
\begin{equation}\label{eq:pair}
\begin{split}
K\left(\mathcal{F}(\ring{W}_f;\mathbb{K})\right)\times K\left(\mathcal{W}(\ring{W}_f;\mathbb{K})\right)&\rightarrow\mathbb{Z}, \\
(Q,L)&\mapsto\chi\left(\mathit{HF}^\ast(L,Q)\right).
\end{split}
\end{equation}
Evaluating this on the zero sections and fibers in the plumbing $\ring{W}_f$ shows that the map $\mathbb{Z}^{n+1}=\langle Q_0,\cdots,Q_n\rangle\rightarrow K\left(\mathcal{F}(\ring{W}_f;\mathbb{K})\right)$ is injective.

The \textit{numerical Grothendieck group} of $\mathcal{F}(\ring{W}_f;\mathbb{K})$, denoted $K_\mathit{num}\left(\mathcal{F}(\ring{W}_f;\mathbb{K})\right)$, is defined to be the quotient $K\left(\mathcal{F}(\ring{W}_f;\mathbb{K})\right)/\ker$, where $\ker$ is the subgroup generated by compact exact Lagrangians $Q\subset\ring{W}_f$ such that $\chi\left(\mathit{HW}^\ast(\cdot,Q)\right)=0$ for every object in $\mathcal{W}(\ring{W}_f;\mathbb{K})$.

After passing to the quotient, the pairing (\ref{eq:pair}) becomes a non-degenerate one
\begin{equation}\label{eq:pair1}
K_\mathit{num}\left(\mathcal{F}(\ring{W}_f;\mathbb{K})\right)\times K\left(\mathcal{W}(\ring{W}_f;\mathbb{K})\right)\rightarrow\mathbb{Z},
\end{equation}
which is further identified with the intersection pairing
\begin{equation}
H_3(\ring{W}_f;\mathbb{Z})\times H_3(\ring{W}_f,\partial_\infty\ring{W}_f;\mathbb{Z})\rightarrow\mathbb{Z}.
\end{equation}

Since both of the actions of $\mathit{Symp}^\mathit{gr}(\ring{W}_f)$ and $\mathit{Symp}_c(\ring{W}_f)$ on $K\left(\mathcal{F}(\ring{W}_f;\mathbb{K})\right)$ and $K\left(\mathcal{W}(\ring{W}_f;\mathbb{K})\right)$ are compatible with the pairing (\ref{eq:pair}), there is an induced action of $\mathit{Symp}_c(\ring{W}_f)$ on $K_\mathit{num}\left(\mathcal{F}(\ring{W}_f;\mathbb{K})\right)$. By direct calculation, one can see that the intersection form on the middle homology of $\ring{W}_f$ vanishes, therefore the map $H_3(\ring{W}_f;\mathbb{Z})\rightarrow H_3(\ring{W}_f,\partial_\infty\ring{W}_f;\mathbb{Z})$ is zero, and we obtain the following.

\begin{corollary}\label{corollary:trivial}
Any $f\in\pi_0\mathit{Symp}_c(\ring{W}_f)$ acts trivially on $K_\mathit{num}\left(\mathcal{F}(\ring{W}_f;\mathbb{K})\right)$ and $K\left(\mathcal{W}(\ring{W}_f;\mathbb{K})\right)$.
\end{corollary}
\begin{proof}
We have the identification $K_\mathit{num}\left(\mathcal{F}(\ring{W}_f;\mathbb{K})\right)\cong H_3(\ring{W}_f;\mathbb{Z})$. Since compactly supported homeomorphisms act trivially on $H_3(\partial_\infty\ring{W}_f;\mathbb{Z})$, and the map $H_3(\partial_\infty\ring{W}_f;\mathbb{Z})\rightarrow H_3(\ring{W}_f;\mathbb{Z})$ is injective, we see that $f_\ast$ also acts trivially on $H_3(\ring{W}_f;\mathbb{Z})$. The rest now follows from the non-degeneracy of the pairing (\ref{eq:pair1}).
\end{proof}

\subsection{Autoequivalences of $D^b\mathit{Coh}_C(\ring{Y}_f)$}

Consider the mirror $\ring{Y}_f$ of $\ring{W}_f$, which is the crepant resolution of the $cA_n$ singularity $\{uv=f(x,y)\}$ with the divisor $\{xy=0\}$ removed, where the polynomial $f(x,y)$ has the form specified by (\ref{eq:f}) and (\ref{eq:f'}). Let $C_1,\cdots,C_n\subset\ring{Y}_f$ be the exceptional curves.

The Picard group $\mathrm{Pic}(\ring{Y}_f)\cong\mathbb{Z}^n$ is generated by the line bundles $\mathcal{L}_1,\cdots,\mathcal{L}_n$ such that
\begin{equation}
\mathcal{O}_{C_i}(k)\otimes\mathcal{L}_j\cong\left\{\begin{array}{ll}
\mathcal{O}_{C_i}(k+1) & \textrm{ if }i=j, \\
\mathcal{O}_{C_i}(k) & \textrm{ if }i\neq j,
\end{array}\right.
\end{equation}
where $1\leq i,j\leq n$ and $k\in\mathbb{Z}$. More precisely, one can find divisors $D_1,\cdots,D_n\subset\ring{Y}_f$ such that $C_i\cdot D_j=\delta_{ij}$, and $\mathcal{L}_j:=\mathcal{O}(D_j)$.

Recall that we have the full subcategory $D^b\mathit{Coh}_C(\ring{Y}_f)\subset D^b\mathit{Coh}(\ring{Y}_f)$ of complexes whose cohomology sheaves are (set-theoretically) supported on the exceptional locus $C=C_1\cup\cdots\cup C_n$. According to \cite{VdB}, Theorem B, $D^b\mathit{Coh}(\ring{Y}_f)$ is generated by $\mathcal{O}_{\ring{Y}_f},\mathcal{L}_1,\cdots,\mathcal{L}_n$, while its full subcategory $D^b\mathit{Coh}_C(\ring{Y}_f)$ is generated by $\mathcal{O}_C,\mathcal{O}_{C_1}(-1),\cdots,\mathcal{O}_{C_n}(-1)$. In particular, $K\left(D^b\mathit{Coh}_C(\ring{Y}_f)\right)\cong\mathbb{Z}^{n+1}$.

The following result follows from Lunts-Orlov \cite{lou}.

\begin{proposition}
Any autoequivalence $\Phi$ of $D^b\mathit{Coh}(\ring{Y}_f)$ is Fourier-Mukai, and the Fourier-Mukai kernel $P_\Phi\in D^b\mathit{Coh}(\ring{Y}_f\times\ring{Y}_f)$ is unique up to isomorphism.
\end{proposition}

Let $\ring{R}_f=\Gamma(\mathcal{O}_{\ring{Y}_f})$ be the ring of regular functions on $\ring{Y}_f$, then the contraction map $\ring{Y}_f\rightarrow\mathrm{Spec}(\ring{R}_f)$ equips $D^b\mathit{Coh}(\ring{Y}_f\times\ring{Y}_f)$ with the structure of an $\ring{R}_f$-linear category via pullback. The proof of the following lemma is the same as \cite{kss}, Lemma 4.5.

\begin{lemma}\label{lemma:shift}
Let $\Phi$ be an $\ring{R}_f$-linear autoequivalence of $D^b\mathit{Coh}(\ring{Y}_f)$. Then for any point $x\in\ring{Y}_f\setminus C$, $\Phi$ preserves the skyscraper sheaf $\mathcal{O}_x$ up to degree shift.
\end{lemma}

By the same argument as \cite{kss}, Lemma 4.6, we have the following.

\begin{lemma}
The $\ring{R}_f$-linear autoequivalences of $D^b\mathit{Coh}(\ring{Y}_f)$ preserve the subcategory $D^b\mathit{Coh}_C(\ring{Y}_f)$ and the corresponding Fourier-Mukai kernels have support on the fiber product $\ring{Y}_f\times_{\mathrm{Spec}(\ring{R}_f)}\ring{Y}_f$.
\end{lemma}

Let $\mathrm{Stab}D^b\mathit{Coh}_C(\ring{Y}_f)$ be the space of Bridgeland stability conditions on the triangulated category $D^b\mathit{Coh}_C(\ring{Y}_f)$. This consists of couples $(Z,\mathcal{P})$, where $Z:K\left(D^b\mathit{Coh}_C(\ring{Y}_f)\right)\rightarrow\mathbb{C}$ is a linear map called central charge and there are full additive subcategories $\mathcal{P}(\theta)\subset D^b\mathit{Coh}_C(\ring{Y}_f)$ for each $\theta\in\mathbb{R}$. Let $\mathrm{Stab}_nD^b\mathit{Coh}_C(\ring{Y}_f)$ be the space of \textit{normalized} stability conditions, i.e. stability conditions with $Z\left([\mathcal{O}_x]\right)=-1$ for any $x\in C$. $\mathrm{Stab}_nD^b\mathit{Coh}_C(\ring{Y}_f)$ has an open subset consisting of stability conditions with the standard heart $D^b\mathit{Coh}_C(\ring{Y}_f)\cap\mathit{Coh}(\ring{Y}_f)$, where $\mathit{Coh}(\ring{Y}_f)$ is the abelian category of coherent sheaves on $\ring{Y}_f$ and central charges of the form
\begin{equation}
Z(E):=-\int\exp(-\xi+i\zeta)\mathrm{ch}(E)
\end{equation}
for $\xi+i\zeta\in A\left(D^b\mathit{Coh}_C(\ring{Y}_f)\right)_\mathbb{C}$ in the complexified ample cone. Denote by
\begin{equation}
\mathrm{Stab}_n^\circ D^b\mathit{Coh}_C(\ring{Y}_f)\subset\mathrm{Stab}_nD^b\mathit{Coh}_C(\ring{Y}_f)
\end{equation}
the connected component containing this open subset.

Denote by $\mathrm{Auteq}D^b\mathit{Coh}_C(\ring{Y}_f)$ the group of $\ring{R}_f$-linear autoequivalences of $D^b\mathit{Coh}(\ring{Y}_f)$ preserving the subcategory $D^b\mathit{Coh}_C(\ring{Y}_f)$. Let $\mathrm{Auteq}^\circ D^b\mathit{Coh}_C(\ring{Y}_f)\subset\mathrm{Auteq}D^b\mathit{Coh}_C(\ring{Y}_f)$ be the subgroup of $\ring{R}_f$-linear autoequivalences preserving $\mathcal{O}_x\in D^b\mathit{Coh}_C(\ring{Y}_f)$ for some $x\in\ring{Y}_f\setminus C$.

The endomorphism $A_\infty$-algebra of an object $E$ in a $3$-Calabi-Yau category gives rise to a noncommutative deformation functor, and hence a universal object $\mathcal{E}$. We say that $E$ is \textit{fat spherical} if $\hom(\mathcal{E},E)$ is the cohomology of a sphere, see \cite{yto}. Denote by $T_\mathcal{E}$ the algebraic twist along the fat spherical object $E$. For example, both the Lagrangian lens spaces and the Lagrangian $S^1\times S^2$'s (if exist) are fat spherical objects in the Fukaya category $\mathcal{F}(\ring{W}_f;\mathbb{K})$, with the former being $3$-dimensional fat spherical objects and the later being $2$-dimensional. Under the assumption that the $cA_n$ singularity $R_f$ is isolated, $2$-dimensional fat spherical objects do not appear, and the algebraic twists along the lens spaces $Q_{ij}^{(k)}\subset\ring{W}_f$ coincide with the generalized Dehn twists. Similarly, on the B-side, we have a collection of fat spherical objects $E_{ij}^{(k)}$ of $D^b\mathit{Coh}_C(\ring{Y}_f)$, where $0\leq i< j\leq n$ and $k\in\mathbb{Z}$, defined as follows. For $1\leq i\leq n$, define $E_{i(i+1\textrm{ mod }n+1)}:=\mathcal{O}_{C_i}(-1)$. For $i<m\leq j$, define $E_{im}$ inductively by the unique non-trivial extension
\begin{equation}\label{eq:ext}
0\rightarrow E_{i(m-1)}\rightarrow E_{im}\rightarrow\mathcal{O}_{C_m}(-1)\rightarrow0.
\end{equation}
Note that as part of our convention, $E_{0j}$ is defined to be $E_{j(n+1)}$.\footnote{The indexing of the fat spherical objects $E_{ij}$ here is different from \cite{yt2}, where our $E_{i(i+1)}$ here is denoted by $E_{ii}$ there. We are choosing this convention on purpose here to match the corresponding generators in the pure braid group.} Replacing $\mathcal{O}_{C_m}(-1)$ with $\mathcal{O}_{C_m}(-k)$ in (\ref{eq:ext}) defines the fat spherical objects $E_{ij}^{(k)}$ for an arbitrary $k\in\mathbb{Z}$.

The following theorem generalizes \cite{kss}, Theorem 4.9.

\begin{theorem}\label{theorem:Toda+}
	\begin{itemize}
	\item[(i)] There is an isomorphism
	\begin{equation}
	\mathrm{Auteq}D^b\mathit{Coh}_C(\ring{Y}_f)\cong\left.\left\langle T_{\mathcal{E}_{ij}},\mathrm{Pic}(\ring{Y}_f)\right\vert0\leq i<j\leq n\right\rangle\times\mathbb{Z},
	\end{equation}
	where the first factor is a subgroup generated by the fat spherical twists in $E_{ij}$ and $\cdot\otimes\mathcal{L}$ for $\mathcal{L}\in\mathrm{Pic}(\ring{Y}_f)$. The second factor $\mathbb{Z}$ comes from the grading shift.
	\item[(ii)] We have
	\begin{equation}\label{eq:iso}
	\mathrm{Auteq}^\circ D^b\mathit{Coh}_C(\ring{Y}_f)\cong\left.\left\langle T_{\mathcal{E}_{ij}},\mathrm{Pic}(\ring{Y}_f)\right\vert0\leq i<j\leq n\right\rangle. 
	\end{equation}
	Let $\mathrm{Auteq}_\mathrm{Tor}^\circ D^b\mathit{Coh}_C(\ring{Y}_f)$ be the subgroup of $\mathrm{Auteq}^\circ D^b\mathit{Coh}_C(\ring{Y}_f)$ preserving the connected component $\mathrm{Stab}_n^\circ D^b\mathit{Coh}_C(\ring{Y}_f)$ in the space of stability conditions. Equivalently, it is also the kernel of the group homomorphism $\mathrm{Auteq}^\circ D^b\mathit{Coh}_C(\ring{Y}_f)\rightarrow\mathbb{Z}^n\rtimes\mathit{GL}_n(\mathbb{Z})$, where the right-hand side is the stabilizer of $[\mathcal{O}_x]$ in $\mathit{GL}_{n+1}(\mathbb{Z})=\mathrm{Aut}K\left(D^b\mathit{Coh}_C(\ring{Y}_f)\right)$. The image of this homomorphism is $\mathbb{Z}^n$, and we have a commutative diagram
	\begin{equation}\label{eq:dia}
	\begin{tikzcd}
	0 \arrow[r] &\mathrm{Auteq}_\mathrm{Tor}^\circ D^b\mathit{Coh}_C(\ring{Y}_f) \arrow[r,hook] &\mathrm{Auteq}^\circ D^b\mathit{Coh}_C(\ring{Y}_f) \arrow[r] &\mathbb{Z}^n \arrow[r] &0 \\
	0 \arrow[r] &\mathit{PBr}_{n+2}^c \arrow[u,"\cong"] \arrow[r] &\mathit{PBr}_{n+2}^\circ \arrow[u,"\cong"] \arrow[r] &\mathbb{Z}^n \arrow[u,"="] \arrow[r] &0 \\
	0 \arrow[r] &\mathbb{Z}^{\ast\infty} \arrow[r] \arrow[u,hook] &\mathbb{Z}\ast\mathbb{Z} \arrow[r] \arrow[u,hook] &\mathbb{Z} \arrow[r] \arrow[u,hook] &0
	\end{tikzcd}
	\end{equation}
	The first vertical isomorphism maps the standard generators $R_{j(n+1)}^{-k}R_{ij}R_{j(n+1)}^k$ of $\mathit{PBr}_{n+2}^c$ to $\otimes\mathcal{L}_{ij}^k\circ T_{\mathcal{E}_{ij}}\circ\otimes\mathcal{L}_{ij}^{-k}$ for any $k\in\mathbb{Z}$, where $\mathcal{L}_{ij}\in\mathrm{Pic}(\ring{Y}_f)$ is the class of the line bundle $\mathcal{L}_i\otimes\mathcal{L}_j$. The second vertical isomorphism maps the standard generators $R_{ij}$ and $R_{k(n+1)}$ of $\mathit{PBr}_{n+2}^\circ$, where $1\leq k\leq n$, to $T_{\mathcal{E}_{(i+1)j}}$ and $\otimes\mathcal{L}_k$, respectively.
	\end{itemize}
\end{theorem}
\begin{proof}
(i) is proved by Toda in \cite{yt2}, Section 7 (Appendix). 

For (ii), the isomorphism (\ref{eq:iso}) follows from the definition and Lemma \ref{lemma:shift}. To see the image of the homomorphism $\mathrm{Auteq}^\circ D^b\mathit{Coh}_C(\ring{Y}_f)\rightarrow\mathbb{Z}^n\rtimes\mathit{GL}_n(\mathbb{Z})$ is $\mathbb{Z}^n$, note that in the Grothendieck group $K\left(D^b\mathit{Coh}_C(\ring{Y}_f)\right)$, the class of $[\mathcal{O}_x]$ can be expressed as $[\mathcal{O}_C]-\sum_{i=1}^n\left[\mathcal{O}_{C_i}(-1)\right]$. Pick any line bundle $\mathcal{L}\in\mathrm{Pic}(\ring{Y}_f)=\langle\mathcal{L}_1,\cdots,\mathcal{L}_n\rangle\cong\mathbb{Z}^n$. Up to possibly replacing $\mathcal{L}$ with its dual, its action (by tensor) on $K\left(D^b\mathit{Coh}_C(\ring{Y}_f)\right)$ with respect to the basis $\left\{[\mathcal{O}_x],\left[\mathcal{O}_{C_1}\right],\cdots,\left[\mathcal{O}_{C_n}\right]\right\}$ is given by the block matrix
\begin{equation}
\left(\begin{array}{ll}
1 & \vec{v} \\
\vec{0} & E
\end{array}\right),
\end{equation}
where $\vec{v}\in\mathbb{Z}^n$ is the vector of $\mathcal{L}$ in the Picard group and $E$ is the $n\times n$ identity matrix. The two vertical isomorphisms in (\ref{eq:dia}) between subgroups of $\mathit{PBr}_{n+2}$ and $\mathrm{Auteq}D^b\mathit{Coh}_C(\ring{Y}_f)$ follow from the definitions of these groups, note in particular that $\otimes\mathcal{L}_{ij}^k\circ T_{\mathcal{E}_{ij}}\circ\otimes\mathcal{L}_{ij}^{-k}=T_{\mathcal{E}_{ij}^{(k)}}$. Fixing any pair of $(i,j)$ with $0\leq i<j\leq n$, we get embeddings from the third row of the diagram to the second row by sending the generators of $\mathbb{Z}^{\ast\infty}$ to $R_{j(n+1)}^{-k}R_{ij}R_{j(n+1)}^k$, $k\in\mathbb{Z}$; the two generators of $\mathbb{Z}\ast\mathbb{Z}$ to $R_{ij}$ and $R_{j(n+1)}$, respectively; and the unique generator of $\mathbb{Z}$ to the image of $R_{j(n+1)}$ under the quotient map $\mathit{PBr}_{n+2}^\circ\rightarrow\mathbb{Z}^n$. The commutativity of the two bottom squares in (\ref{eq:dia}) are then clear from the definitions.
\end{proof}

\subsection{Proof of Theorem \ref{theorem:MCG}}

In order to make use of the homological mirror symmetry between $\ring{W}_f$ and $\ring{Y}_f$, we need a mirror functor realizing the equivalence of Theorem \ref{theorem:main}, whose existence is established in the following lemma.

\begin{lemma}\label{lemma:functor}
There is a functor
\begin{equation}\label{eq:fun}
\Upsilon:D^\mathit{perf}\mathcal{W}(\ring{W}_f;\mathbb{K})\xrightarrow{\simeq}D^b\mathit{Coh}(\ring{Y}_f)
\end{equation}
realizing the homological mirror symmetry equivalence in Theorem \ref{theorem:main}, such that
\begin{equation}\label{eq:fun-ob}
\Upsilon(L_0)=\mathcal{O}_{\ring{Y}_f},\Upsilon(L_i)=\mathcal{L}_i\textrm{ for }i=1,\cdots,n
\end{equation}
and
\begin{equation}
\Upsilon\left(Q_{ij}^{(k)}\right)=E_{ij}^{(k)}\textrm{ for }0\leq i<j\leq n\textrm{ and }k\in\mathbb{Z}.
\end{equation}
\end{lemma}
\begin{proof}
In the proof of the equivalence (\ref{eq:HMS-un}), we have determined the generators $L_0,\cdots,L_n$ of the wrapped Fukaya category $\mathcal{W}(\ring{W}_f;\mathbb{K})$ and the generators $\mathcal{O}_{\ring{Y}_f},\mathcal{L}_1,\cdots,\mathcal{L}_n$ of the derived category $D^b\mathit{Coh}(\ring{Y}_f)$. We have then identified their endomorphism algebras with $\ring{\mathcal{A}}_n\otimes\mathbb{K}[x^{\pm1},y^{\pm1}]$. Recall that the endomorphism algebra of the tilting bundle $\ring{\mathcal{T}}=\mathcal{O}_{\ring{Y}_f}\oplus\bigoplus_{i=1}^n\mathcal{L}_i$ is the localization of the corresponding tilting bundle $\mathcal{T}\rightarrow Y_f$, whose endomorphism algebra we have identified to be $\ring{\mathcal{A}}_n\otimes\mathbb{K}[x,y]$, see (\ref{eq:uncom}). The identifications can be done over the semisimple ring $\Bbbk=\bigoplus_{i=0}^n\mathbb{K}e_i$, therefore defining the functor as in (\ref{eq:fun-ob}) (since the underlying $A_\infty$ and dg categories are formal, there is no obstruction of defining $\Upsilon$ on the morphism level) and extending it to the whole category $D^\mathit{perf}\mathcal{W}(\ring{W}_f;\mathbb{K})$ we get an equivalence (\ref{eq:fun}). It follows from the generalized Eilenberg-Moore equivalence (\ref{eq:EM}) that the behavior of the functor $\Upsilon$ on compact Lagrangians is determined by its behavior on the generators $L_0,\cdots,L_n$ of the wraped Fukaya category. See also, \cite{cpul}, Section 7, note that Conjecture 7.5 loc. cit. is true by \cite{sgc}.
\end{proof}

\begin{corollary}\label{corollary:linear}
The equivalence $\Upsilon$ defined above intertwines the $\mathit{SH}^0(\ring{W}_f;\mathbb{K})$-linear structure of $D^\mathit{perf}\mathcal{W}(\ring{W}_f;\mathbb{K})$ and the $\ring{R}_f$-linear structure of $D^b\mathit{Coh}(\ring{Y}_f)$.
\end{corollary}
\begin{proof}
This is the same as \cite{kss}, Corollary 5.2.
\end{proof}

\begin{lemma}\label{lemma:mirror-T}
Let $(T,\zeta)$ be a Lagrangian brane associated to the exact Lagrangian torus $T\subset\ring{W}_f$ defined in Section \ref{section:braid}, where $\zeta$ is a unitary rank $1$ local system. Under the mirror equivalence functor $\Upsilon$, we have
\begin{equation}
\Upsilon(T,\zeta)=\mathcal{O}_x[k]
\end{equation}
for some $x\in\ring{Y}_f\setminus C$ and $k\in\mathbb{Z}$.
\end{lemma}
\begin{proof}
Let $\mathcal{E}$ be the mirror to $(T,\zeta)$. For each $0\leq i\leq n$, one can find a Hamiltonian isotopy of $T$ which displaces it from $Q_i$, and the wrapped Floer cochain complex $\mathit{CW}^\ast(T,L_i)$ has a single generator. By Lemma \ref{lemma:functor}, $\mathcal{E}$ is orthogonal to $\mathcal{O}_C$ and $\mathcal{O}_{C_i}(-1)$ for $1\leq i\leq n$ in $D^b\mathit{Coh}(\ring{Y}_f)$. Since $D^b\mathit{Coh}_C(\ring{Y}_f)$ is generated by $\mathcal{O}_C,\mathcal{O}_{C_1}(-1),\cdots,\mathcal{O}_{C_n}(-1)$, we see that $\mathcal{E}$ is orthogonal to the whole subcategory $D^b\mathit{Coh}_C(\ring{Y}_f)$ of $D^b\mathit{Coh}(\ring{Y}_f)$. It follows that $\mathcal{E}$ is orthogonal to $\mathcal{O}_x$ for any $x\in C$, therefore $\mathrm{supp}(\mathcal{E})\subset\ring{Y}_f\setminus C$.

Since the wrapped Floer cohomologies between $(T,\zeta)$ and each of the cotangent fibers $L_0,\cdots,L_n$ are finite-dimensional, by Lemma \ref{lemma:functor}, its $\mathcal{E}$ also has finite rank total Ext groups with the tilting bundle $\ring{\mathcal{T}}=\bigoplus_{i=1}^n\mathcal{L}_i\oplus\mathcal{O}_{\ring{Y}_f}$. This implies that $\mathrm{supp}(\mathcal{E})$ is proper.

Since the only proper subvarieties of $\ring{Y}_f\setminus C$ are finite unions of points, and $(T,\zeta)$ has self-Floer cohomology the cohomology of $T^3$, its mirror $\mathcal{E}$ is simple and has no negative degree self-Ext groups, so $\mathcal{E}=\mathcal{O}_x[k]$ for some $x\in\ring{Y}_f\setminus C$ and $k\in\mathbb{Z}$.
\end{proof}

The same argument as in the proof of \cite{kss}, Lemma 5.5 yields the following.

\begin{lemma}\label{lemma:T}
For any $f\in\pi_0\mathit{Symp}_c(\ring{W}_f)$, equipped with the preferred grading, $\phi(T,\zeta)=(T,\zeta)$ in $D^\mathit{perf}\mathcal{W}(\ring{W}_f;\mathbb{K})$.
\end{lemma}

Define $\mathit{Symp}^\circ(\ring{W}_f)\subset\mathit{Symp}^\mathit{gr}(\ring{W}_f)$ to be the subgroup of graded symplectomorphisms which take the Lagrangian brane $(T,\zeta)$ to itself. Now we can generalize \cite{kss}, Lemma 5.7 as follows.

\begin{lemma}\label{lemma:circ}
Consider the map $\mathit{PBr}_{n+2}\rightarrow\pi_0\mathit{Symp}^\mathit{gr}(\ring{W}_f)$ defined in Section \ref{section:braid}. The intersection of its image with $\pi_0\mathit{Symp}^\circ(\ring{W}_f)$ is precisely the subgroup generated by $\tau_{Q_{ij}}$ for $0\leq i<j\leq n$ and $\lambda_{1},\cdots,\lambda_{n}$, i.e. the image of the subgroup $\mathit{PBr}_{n+2}^\circ\subset\mathit{PBr}_{n+2}$.
\end{lemma}
\begin{proof}
Recall that the image of $R_{ij}$, where $0\leq i<j\leq n$ is the generalized Dehn twist $\tau_{Q_{ij}}$ along the Lagrangian lens space $Q_{ij}\subset\ring{W}_f$ with its preferred grading, and the image of $R_{j(n+1)}$ for $j=1,\cdots,n$ are the non-compactly supported symplectomorphisms $\lambda_j$ which map $Q_{ij}^{(k)}$ to $Q_{ij}^{(k+1)}$ for all $k\in\mathbb{Z}$. They induce $\ring{R}_f$-linear autoequivalences of $\mathcal{W}(\ring{W}_f;\mathbb{K})$. It follows from Lemma \ref{lemma:T} that $\tau_{Q_{ij}}\in\pi_0\mathit{Symp}^\circ(\ring{W}_f)$. By Lemma \ref{lemma:functor} and Corollary \ref{corollary:linear}, the mirror autoequivalences in $\mathrm{Auteq}D^b\mathit{Coh}_C(\ring{Y}_f)$ must take $\mathcal{O}_{C_j}(-k)$ to $\mathcal{O}_{C_j}(-k-1)$. By Theorem \ref{theorem:Toda+}, the only possibility for these autoequivalences are $\otimes\mathcal{L}_j$, which in particular fix $\mathcal{O}_x$ for $x\in\ring{Y}_f\setminus C$. By Lemma \ref{lemma:mirror-T}, we have $\lambda_j\in\pi_0\mathit{Symp}^\circ(\ring{W}_f)$ for all $j=1,\cdots,n$. Finally, notice that the generator of $Z(\mathit{PBr}_{n+2})$, which is the product $R_{0(n+1)}\cdot R_{n(n+1)} R_{(n-1)n}\cdots R_{01}$, shifts the grading down by $1$.
\end{proof}

Since the group $\pi_0\mathit{Symp}^\mathit{gr}(\ring{W}_f)$ acts on the wrapped Fukaya category by $\mathit{SH}^0(\ring{W}_f;\mathbb{K})$-linear autoequivalences, we have a map
\begin{equation}
\pi_0\mathit{Symp}^\mathit{gr}(\ring{W}_f)\rightarrow\mathrm{Auteq}_{\mathit{SH}^0}D^\mathit{perf}\mathcal{W}(\ring{W}_f;\mathbb{K}),
\end{equation}
whose image preserves the subcategory $D^\mathit{perf}\mathcal{Q}(\ring{W}_f;\mathbb{K})$ of $D^\mathit{perf}\mathcal{F}(\ring{W}_f;\mathbb{K})$ split-generated by $Q_0,\cdots,Q_n$, which is equivalent to $D^b\mathit{Coh}_C(\ring{Y}_f)$ under mirror symmetry. Moreover, by Corollary \ref{corollary:trivial}, any element of $\pi_0\mathit{Symp}^\circ(\ring{W}_f)$ lands in $\mathrm{Auteq}^\circ D^b\mathit{Coh}_C(\ring{Y}_f)$. Altogether, we have a commutative diagram
\begin{equation}
	\begin{tikzcd}
	\pi_0\mathit{Symp}_c(\ring{W}_f) \arrow[d] \arrow[r] &\pi_0\mathit{Symp}^\circ(\ring{W}_f) \arrow[d] \\
	\mathrm{Auteq}_{\mathrm{Tor}}D^\mathit{perf}\mathcal{Q}(\ring{W}_f;\mathbb{K}) \arrow[r] \arrow[d,"\cong"] &\mathrm{Auteq}D^\mathit{perf}\mathcal{Q}(\ring{W}_f;\mathbb{K}) \arrow[d,"\cong"] \\
	\mathrm{Auteq}_{\mathrm{Tor}}^\circ D^b\mathit{Coh}_C(\ring{Y}_f) \arrow[r]  &\mathrm{Auteq}^\circ D^b\mathit{Coh}_C(\ring{Y}_f)
	\end{tikzcd}
\end{equation}

Recall that $\pi_0\mathit{Symp}^\mathit{gr}(\ring{W}_f)$, and in particular $\pi_0\mathit{Symp}^\circ(\ring{W}_f)$ acts on $K_\mathit{num}\left(\mathcal{F}(\ring{W}_f;\mathbb{K})\right)$. Since the image of $\mathrm{Auteq}^\circ D^b\mathit{Coh}_C(\ring{Y}_f)$ in $\mathit{GL}\left(K\left(D^b\mathit{Coh}_C(\ring{Y}_f)\right)\right)$ is isomorphic to $\mathbb{Z}^n$, the image of $\pi_0\mathit{Symp}^\circ(\ring{W}_f)$ in $\mathit{GL}\left(K_\mathit{num}\left(\mathcal{F}(\ring{W}_f;\mathbb{K})\right)\right)$ is also isomorphic to $\mathbb{Z}^n$.

Theorems \ref{theorem:MCG} and \ref{theorem:MCG1} will follow from the theorem below.

\begin{theorem}
We have the following.
\begin{itemize}
	\item[(i)] For the graded symplectic mapping class group, we have
	\begin{equation}
	\begin{split}
	\mathit{PBr}_{n+2}&\hookrightarrow\pi_0\mathit{Symp}^\mathit{gr}(\ring{W}_f)\twoheadrightarrow\mathrm{Auteq}_{\mathit{SH}^0}D^\mathit{perf}\mathcal{W}(\ring{W}_f;\mathbb{K})\cong\mathrm{Auteq}D^b\mathit{Coh}_C(\ring{Y}_f) \\
	&\cong\mathit{PBr}_{n+2},
	\end{split}
	\end{equation}
    where the generators $R_{ij}$, where $0\leq i<j\leq n$ are mapped to $\tau_{Q_{ij}}$, $R_{j(n+1)}$, where $1\leq j\leq n$ are mapped to $\lambda_j$, and the central generator $R_{0(n+1)}\cdot R_{n(n+1)} R_{(n-1)n}\cdots R_{01}$ is mapped to the degree shift $[1]$. Moreover, these group homomorphisms compose to the identity.
    \item[(ii)] Exact symplectomorphisms which preserve $(T,\zeta)\in\mathrm{Ob}\left(\mathcal{W}(\ring{W}_f;\mathbb{K})\right)$ fit into the following commutative diagram:
    \begin{equation}
	\begin{tikzcd}
	\mathbb{Z}^{\ast\infty} \arrow[d,hook] \arrow[r] &\mathbb{Z}\ast\mathbb{Z} \arrow[d,hook] \arrow[r] &\mathbb{Z} \arrow[d,hook] \\
	\mathit{PBr}_{n+2}^c \arrow[d,hook] \arrow[r] &\mathit{PBr}_{n+2}^\circ \arrow[r] \arrow[d,hook] &\mathbb{Z}^n \arrow[d,"="] \\
	\pi_0\mathit{Symp}_c(\ring{W}_f) \arrow[d,two heads] \arrow[r] &\pi_0\mathit{Symp}^\circ(\ring{W}_f) \arrow[d,two heads] \arrow[r] &\mathbb{Z}^n \arrow[d,"="] \\
	\mathrm{Auteq}^\circ_\mathrm{Tor}D^b\mathit{Coh}_C(\ring{Y}_f) \arrow[d,"\simeq"] \arrow[r] &\mathrm{Auteq}^\circ D^b\mathit{Coh}_C(\ring{Y}_f) \arrow[d,"\simeq"] \arrow[r] &\mathbb{Z}^n \arrow[d,"="] \\
	\mathit{PBr}_{n+2}^c \arrow[r] \arrow[d,two heads] &\mathit{PBr}_{n+2}^\circ \arrow[r] \arrow[d,two heads] &\mathbb{Z}^n \arrow[d,two heads] \\
	\mathbb{Z}^{\ast\infty} \arrow[r] &\mathbb{Z}\times\mathbb{Z} \arrow[r] &\mathbb{Z}
	\end{tikzcd}
	\end{equation}
    where the vertical compositions from the top to the bottom, and from the second line to the fifth line are identities. The map $\pi_0\mathit{Symp}^\circ(\ring{W}_f)\rightarrow\mathbb{Z}^n$ is given by taking the action on $K_\mathit{num}\left(\mathcal{F}(\ring{W}_f;\mathbb{K})\right)\cong K\left(D^b\mathit{Coh}_C(\ring{Y}_f)\right)$, and noting that the image lies in the $\mathbb{Z}^n$ subgroup.
\end{itemize}
\end{theorem}
\begin{proof}
(i) follows from Theorem \ref{theorem:Toda+}, (i), Toda's computation of the autoequivalence group implies that we have an isomorphism $\mathrm{Auteq}D^b\mathit{Coh}_C(\ring{Y}_f)\cong\mathit{PBr}_{n+2}$.

For (ii), the map from the first row to the second row is defined in Theorem \ref{theorem:Toda+}, (ii). The map from the second row to the third row is defined by combining the constructions in Section \ref{section:braid} with Lemma \ref{lemma:circ}. By Corollary \ref{corollary:trivial}, the image of $\pi_0\mathit{Symp}_c(\ring{W}_f)$ lies in the subgroup $\mathrm{Auteq}_\mathrm{Tor}^\circ D^b\mathit{Coh}_C(\ring{Y}_f)\subset\mathrm{Auteq}^\circ D^b\mathit{Coh}_C(\ring{Y}_f)$, which explains how the third row maps to the fourth row. The isomorphism between the fourth and fifth rows is a consequence of Theorem \ref{theorem:Toda+}. To map the fifth row to the sixth row, we project the generators $R_{j(n+1)}^{-k}R_{ij}R_{j(n+1)}^k$ of $\mathit{PBr}_{n+2}^c$, where $0\leq i<j\leq n$ and $k\in\mathbb{Z}$ to a fixed pair of $(i,j)$, which has already been chosen in defining the map $\mathbb{Z}^{\ast\infty}\hookrightarrow\mathit{PBr}_{n+2}^c$ from the first row to the second row; and the generators $R_{ij}$ and $R_{k(n+1)}$ of $\mathit{PBr}_{n+2}^\circ$, where $0\leq i<j\leq n$ and $1\leq k\leq n$ to $R_{ij}$ and $R_{j(n+1)}$ for fixed $(i,j)$.

It remains to check that the composition from the second row to the fifth row is an isomorphism. For the first column, note that Lemma \ref{lemma:functor} allows us to identify the generalized Dehn twists $\tau_{Q_{ij}^{(k)}}$ with the fat spherical twists $T_{\mathcal{E}_{ij}^{(k)}}$ for $0\leq i<j\leq n$ and $k\in\mathbb{Z}$.

For the second column, recall that we have $\lambda_1,\cdots,\lambda_n\in\pi_0\mathit{Symp}^\circ(\ring{W}_f)$. With our choices of gradings, $\lambda_j$ maps $Q_j^{(k)}$ to $Q_j^{(k+1)}$ for all $k\in\mathbb{Z}$ as graded Lagrangian branes. On the Grothendieck group, this is the same action as tensoring with the line bundle $\mathcal{L}_j$. This implies that $\pi_0\mathit{Symp}^\circ(\ring{W}_f)$ surjects onto $\mathbb{Z}^n$. Surjectivity of the second column now follows from the short exact sequence
\begin{equation}
0\rightarrow\mathrm{Auteq}_\mathrm{Tor}^\circ D^b\mathit{Coh}_C(\ring{Y}_f)\rightarrow\mathrm{Auteq}^\circ D^b\mathit{Coh}_C(\ring{Y}_f)\rightarrow\mathbb{Z}^n\rightarrow0
\end{equation}
from Theorem \ref{theorem:Toda+}, (ii). 

Since the composition from the second row to the fifth row is an isomorphism, it is clear that the maps from the second row to the third row are injective, and the maps from the third row to the fourth row are surjective.
\end{proof}

\Addresses

\end{document}